\documentclass[preprint,aap]{imsart}

\RequirePackage{amsthm,amsmath,amsfonts,amssymb}
\RequirePackage[numbers]{natbib}
\RequirePackage[colorlinks,citecolor=blue,urlcolor=blue]{hyperref}

\startlocaldefs

\usepackage{fix-cm}
\usepackage{graphicx}
\usepackage{mathtools,  stmaryrd}
\usepackage{ulem}
\usepackage[english]{babel}
\usepackage{todonotes}
\usepackage[T1]{fontenc}
\usepackage{appendix}

\usepackage{latexsym,amsfonts,amssymb,mathrsfs,float}
\usepackage{amsthm}
\usepackage{amsmath}

\usepackage{tikz}
\usetikzlibrary{arrows.meta}
\usetikzlibrary{calc}
\usepackage{xcolor}
\usepackage{mathtools,stmaryrd}
\usepackage{ulem}
\usepackage[english]{babel}
\usepackage{todonotes}
\usepackage[T1]{fontenc}
\usepackage{enumitem}

\newcommand{\Prune}{\mathsf{Prune}}
\newcounter{customitem}
\makeatletter
\newcommand{\customitemlabel}[2]{\refstepcounter{customitem}\def\@currentlabel{#1}\label{#2}}
\makeatother
\allowdisplaybreaks

\newtheorem{theorem}{Theorem}[section]
\newtheorem{corollary}[theorem]{Corollary}
\newtheorem{lemma}[theorem]{Lemma}

\newtheorem{proposition}[theorem]{Proposition}
\newtheorem{definition}[theorem]{Definition}

\newtheorem{remark}[theorem]{Remark}

\numberwithin{equation}{section}

\newcommand{\norm}[1]{\left\lVert #1 \right\rVert}
\newcommand{\abs}[1]{\left\lvert #1 \right\rvert}

\endlocaldefs

\begin{document}

\begin{frontmatter}
\title{Loop pruning and downward deviations for maximum local time of discrete-time simple random walks}
\runtitle{Loop Pruning and Downward Deviations}

\begin{aug}
\author[A]{\fnms{Xinyi}~\snm{Li}\ead[label=e1]{xinyili@bicmr.pku.edu.cn}}
\and
\author[B]{\fnms{Yushu}~\snm{Zheng}\ead[label=e2]{yszheng666@gmail.com}}
\address[A]{Beijing International Center for Mathematical Research, Peking University\printead[presep={,\ }]{e1}}
\address[B]{Academy of Mathematics and Systems Science, Chinese Academy of Sciences\printead[presep={,\ }]{e2}}
\end{aug}

\begin{abstract}
We study downward deviations of the maximum local time of the discrete-time simple random walk on $\mathbb{Z}^d$, $d\ge 3$. 
In our previous paper \cite{li2026ldmaxlocal}, the corresponding upper bound was established, while the matching lower bound was left open. 
In the present paper, we prove this lower bound and hence obtain the sharp asymptotic formula for the downward-deviation probability. 
To provide a discrete-time analogue of the jump-chain/holding-time structure used in the continuous-time argument, we introduce a new random structure which we name as {\it loop-pruned random walk} and the associated loop-pruning decomposition, which is also of independent interest.
\end{abstract}

\begin{keyword}[class=MSC]
\kwd[Primary ]{60F10, 60J55}
\kwd[; secondary ]{60G70}
\end{keyword}

\begin{keyword}
\kwd{discrete-time simple random walk}
\kwd{maximum local time}
\kwd{downward deviations}
\kwd{loop pruning}
\kwd{loop-pruned random walk}
\end{keyword}

\end{frontmatter}

\section{Introduction}

A basic question in the study of random walks is how strongly the path can concentrate its visits on a small number of sites. 
A natural quantity for measuring this concentration is the maximum local time.
For a random walk $(S_n)_{n\ge 0}$ on $\mathbb{Z}^d$, let
\[
\xi(n,x):=\sum_{k=0}^n \mathbf{1}_{\{S_k=x\}},
\qquad
\xi^*(n):=\max_{x\in\mathbb{Z}^d}\xi(n,x)
\]
denote the local time at $x$ up to time $n$ and the maximum local time, respectively. 
In transient dimensions $d\ge 3$, it is classical, going back to Erd\H{o}s and Taylor~\cite{ErdosTaylor1960}, that
\[
\lim_{n\to\infty}\frac{\xi^*(n)}{\log n}=\alpha
\qquad \text{almost surely}.
\]
Here
\[
\alpha:=-\frac{1}{\log(1-\gamma)};
\qquad
\gamma:=\mathbb{P}(S_n\neq 0\ \text{for all }n\ge 1).
\]
See also R\'ev\'esz~\cite{revesz2004maximum,revesz2013random} for further background on maximum local time and related favorite-site questions in the transient regime.

The almost-sure law above identifies the typical height of the most visited site, but it does not describe the probability of atypical behavior. 
This naturally leads to the large-deviation problem for $\xi^*(n)$.
In the transient regime, the upper and lower tails have rather different structures. 
Upper deviations are produced by creating an unusually thick point, or a small cluster of such points, and are therefore essentially local in nature. 
Downward deviations are global: to keep $\xi^*(n)$ below a level $\beta\alpha\log n$, one must suppress all potential $\beta$-thick points created along a trajectory of length $n$. 
Since a typical site has probability of order $n^{-\beta}$ to become $\beta$-thick, the walk has about $n^{1-\beta}$ opportunities to create such points, suggesting a stretched-exponential downward probability of order $\exp\{-\mathrm{const}\cdot n^{1-\beta}\}$. See our companion work~\cite{li2026ldmaxlocal} for a more detailed introduction and for relevant calculations. See also \cite{li2024large} for discussions on similar phenomena on cover times of tori in $d\geq 3$.

This paper is the concluding and technically most difficult part of our study of the large deviations of the maximum local time of simple random walk in transient dimensions. 
In our companion work~\cite{li2026ldmaxlocal}, the corresponding upper bounds and the continuous-time mechanisms were developed, and the discrete-time downward-deviation upper bound was obtained. 
The remaining problem was the matching lower bound for the discrete-time walk. 
This is the most delicate part of the program because the continuous-time argument relies crucially on the jump-chain/holding-time decomposition: after a site becomes dangerous, one can force the corresponding holding times to be short. 
No such independent holding-time variables are available in discrete time. 
The main contribution of the present paper is to replace that missing structure by a new discrete-time decomposition, based on loop pruning, and thereby complete the sharp large-deviation asymptotics for $\xi^*(n)$.

\subsection{Main results}

We now state the main result of the present paper.

\begin{theorem}\label{thm:downward}
Assume $d\ge 3$. For any $\beta\in(0,1]$ and $u\in\mathbb{R}$, as $n\to\infty$,
\begin{equation}\label{eq:discrete_DD}
\mathbb{P}\bigl(\xi^*(n)\le \beta \alpha \log n + u\bigr)
\ge
\exp\left\{-(1+o(1))\,\gamma\,n\,(1-\gamma)^{\lfloor \beta\alpha\log n+u\rfloor}\right\}.
\end{equation}
\end{theorem}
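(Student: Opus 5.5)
Write $m:=\lfloor \beta\alpha\log n+u\rfloor$, so that the target rate is $\gamma n(1-\gamma)^m$. The plan follows the continuous-time mechanism of \cite{li2026ldmaxlocal}, with the jump-chain/holding-time split replaced by a loop-pruning decomposition.

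\textbf{Decomposition.} Decompose the discrete path into a loop-pruned skeleton plus the excursions (loops) that return to a site already visited. For a site $x$ visited by the skeleton, the number of returns to $x$ from its first visit onward is geometric with parameter $\gamma$, since each return happens independently with probability $1-\gamma$. Conditionally on the skeleton, these return counts should play the role of the holding times. The precise statement I aim for is a Markov-type decomposition: the path of length $n$ is built from a transient "escape" skeleton together with, at each visited site, an independent geometric number of loops based there, each loop distributed as an excursion conditioned to return. Loops based at $x$ also visit other sites, so the local time at $y$ equals its skeleton visits plus visits from loops based elsewhere. I would handle this by pruning loops recursively, so that $\xi(n,y)$ is controlled by the number of loops based at $y$ plus a lower-order contribution.

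\textbf{Tilting step.} Impose the event $\{\text{number of loops at each site}\le m\}$, restricted to sites whose geometric variable actually matters within time $n$. Each visited site fails this independently with probability $(1-\gamma)^{m+1}$, up to corrections. The number of distinct sites visited by time $n$ is about $\gamma n$, so the probability that all sites pass is
\[
\bigl(1-(1-\gamma)^{m+1}\bigr)^{\gamma n(1+o(1))}\ge \exp\{-(1+o(1))\gamma n(1-\gamma)^{m}\}.
\]
In fact the base $(1-\gamma)^{m+1}$ is slightly better than needed, so the extra $+1$ gives slack. Under this conditioning, the truncated geometric loops shorten the time the walk spends, so the skeleton must run somewhat longer to reach total length $n$. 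Since the truncation only removes an $O((1-\gamma)^m)=o(1)$ fraction, the range is still $(1+o(1))\gamma n$, and I would control this by a standard LLN for the range under the tilted law.

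\textbf{Main obstacle.} The hardest step is twofold. First, local times at $x$ also come from loops rooted at other sites. Second, the conditioning on the loop counts must not distort the range count. For the first issue, my first attempt would be a coarse bound: with probability $1-o(1)$ under the tilted measure, every site receives at most $o(\log n)$ visits from loops rooted elsewhere. This should follow because typical loops are short and heavy-tailed overlaps are rare, via a union bound with exponential moments of loop lengths. I would absorb that contribution by running the scheme with threshold $m-\varepsilon\log n$ for a fixed $\varepsilon>0$. Then I would let $\varepsilon\to0$, using $(1-\gamma)^{-\varepsilon\log n}=n^{\varepsilon/\alpha}$. This may be too lossy, and a sharper argument would be needed. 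One candidate is a definition of pruning in which loop visits are attributed exactly to their base site, so that the local time equals one plus the number of pruned loops there. That would remove the issue entirely.
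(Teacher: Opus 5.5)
You have a genuine gap exactly at the step you flag as the main obstacle, and the fallback you suggest cannot give the statement.

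\textbf{The product structure does not exist.} There are two readings of ``loops at $x$'', and neither works.
\begin{itemize}
\item If ``loops at $x$'' means all returns to $x$, these counts are not conditionally independent given any skeleton. Returns to $x$ occur inside loops based at other sites, and a single loop contributes to the counts of several sites.
\item If it means the loops based at $x$ in a skeleton decomposition, they are conditionally independent (essentially Proposition~\ref{prop:condi}, with laws depending on the skeleton's past). But they do not determine local times. Every nontrivial loop based at $x$ visits other sites, off-skeleton sites are reached only through loops, and one loop may revisit a site many times.
\end{itemize}
So the event whose probability you compute as a product is not contained in $\{\xi^*(n)\le m\}$. For the same reason, the ``exact attribution'' pruning you hope for cannot exist.

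\textbf{The $\varepsilon$-fix loses the constant.} Lowering the threshold by $\varepsilon\log n$ multiplies the exponent by $(1-\gamma)^{-\varepsilon\log n}=n^{\varepsilon/\alpha}$. Sending $\varepsilon\to0$ afterwards only yields $-\log\mathbb P\le n^{1-\beta+o(1)}$, not the $(1+o(1))$ constant. Even lowering the integer threshold by $1$ changes the constant by a factor $(1-\gamma)^{-1}$, so cross-contributions cannot be absorbed additively at all.

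\textbf{Off-by-one.} Local time is one plus the number of returns, so ``at most $m$ loops'' permits local time $m+1$. You need at most $m-1$, which fails with probability exactly $(1-\gamma)^m$. There is no slack.

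\textbf{Time change.} Converting skeleton length into real time $n$ is left to an unspecified LLN. In the paper this is why $E$ is a \emph{finite} loop family: finiteness gives exponential mixing of the retained steps and hence Theorem~\ref{thm:moderate}, which loop erasure does not provide.

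\textbf{The missing idea} is to separate where the sharp cost is paid from where the threshold is enforced.
\begin{enumerate}
\item The paper pays the main cost through the tilt $\mathbb Q=\mathbb P(\cdot\mid A_n)$, with $A_n=\bigcap_j\{\xi^*(I'_j)\le\Lambda_n\}$ over mesoscopic blocks. Its probability, $\exp\{-(1+o(1))c_{\beta,n,0}\gamma n^{1-\beta}\}$, comes from the upward-deviation asymptotics of \cite{li2026ldmaxlocal} and the Markov property. The sharp constant is inherited from there, not from a per-site product.
\item Under $\mathbb Q$, the finite family $E$ is chosen rich enough that the pruned skeleton already has small local times (Proposition~\ref{prop:Sprime-localtime-tail}). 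Only $\mathcal N_n\le n^b$ exceptional places remain, with $b<1-\beta$.
\item The exact threshold is enforced by reinserting the pruned loops in canonical tree order. At every boundary where the next insertion could raise the local time of a site already at $\Lambda_n$, that insertion is forbidden. This never requires attributing visits to base sites.
\item Each forbidding has conditional probability at least $\gamma$ (Theorem~\ref{thm:stop-prob}). Given the explored history, the remaining parent-level piece has law $\propto p(\cdot)$ on a segment class of total mass at most $\gamma^{-1}$.
\item Hence the extra cost is $\gamma^{n^b}=e^{-o(n^{1-\beta})}$, which is negligible against the main term.
\end{enumerate}
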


Combined with the matching upper bound established in~\cite[Theorem~1.2]{li2026ldmaxlocal}, Theorem~\ref{thm:downward} yields the sharp asymptotic formula
\begin{equation}\label{eq:discrete_DD_full}
\mathbb{P}\bigl(\xi^*(n)\le \beta \alpha \log n + u\bigr)
=
\exp\left\{-(1+o(1))\,\gamma\,n\,(1-\gamma)^{\lfloor \beta\alpha\log n+u\rfloor}\right\}.
\end{equation}

Because the local time is integer-valued, the downward asymptotics exhibit a mild arithmetic oscillation. 
To make this explicit, define
\[
c_{\beta,n,u}
:=
n^\beta(1-\gamma)^{-u}(1-\gamma)^{\lfloor \beta\alpha\log n+u\rfloor}
\in[1,(1-\gamma)^{-1}).
\]
Then \eqref{eq:discrete_DD_full} can be rewritten as
\[
\mathbb{P}\bigl(\xi^*(n)\le \beta \alpha \log n + u\bigr)
=
\exp\!\left\{-(1+o(1))\,c_{\beta,n,u}\,\gamma(1-\gamma)^u\,n^{1-\beta}\right\}.
\]
In particular, up to the bounded oscillatory factor $c_{\beta,n,u}$, the downward-deviation probability is of stretched-exponential order $\exp\{-\mathrm{const}\cdot n^{1-\beta}\}$. 
Heuristically, this reflects that a fixed site is $\beta$-thick with probability of order $n^{-\beta}$, so one expects about $n^{1-\beta}$ candidate $\beta$-thick points up to time $n$, and suppressing all of them costs a constant-order factor per candidate.

\subsection{Loop-pruning and lower-bound strategy}
To motivate our approach, let us first recall the continuous-time strategy from~\cite{li2026ldmaxlocal}. 
One fixes a preliminary threshold slightly below the target level and monitors the sites whose local time first reaches this threshold. 
Once a site becomes dangerous, one forces its subsequent holding times to be very short, so that the total additional local-time accumulation remains negligible and the target level is never exceeded.

What makes this strategy work is the jump-chain/holding-time decomposition of the continuous-time walk. 
Indeed, conditionally on the jump chain, the holding times form an i.i.d.\ family of exponential random variables, which makes the above forcing procedure possible. 
Our goal is therefore to extract from the discrete-time simple random walk a structure that can play a similar role.

This is the motivation for introducing the loop-pruned random walk. 
The corresponding structural tool is the loop-pruning decomposition (see \eqref{eq:decomposition}), which separates the trajectory into a retained part and a collection of erased pieces, and thereby provides a discrete-time framework for controlling the return patterns responsible for large local times.

The loop-pruning construction is related in spirit to the classical loop-erasure procedure, in the definition of {\it loop-erased random walk} but differs from it in a fundamental way. 
In loop erasure, one chronologically removes {\it all} loops from the trajectory, thereby producing a self-avoiding path. 
In our setting, by contrast, we fix {\it in advance} a finite family of loop shapes and delete only those loops. 
These loop shapes are taken to be simple (apart from the common starting and ending point, they have no self-intersections), so each time the path completes one of them, the loop to be pruned is unambiguous. 
This restricted pruning rule preserves enough of the original trajectory to retain useful statistical structure. 
In particular, unlike loop erasure, the retained-step process arising from loop pruning enjoys an exponentially mixing property (see Proposition~\ref{prop:mixing}), and this plays an important role in our proof.

We now briefly describe the lower-bound strategy. 
We first fix a finite family of loops and apply the pruning procedure to the path $S[0,n]$. 
The loop family is chosen large enough so that the pruned path typically has maximum local time below the level $\beta\alpha\log n+u$. 
Starting from this pruned path, we then add back the pruned loops one by one. 
Whenever the insertion of a loop would cause the maximum local time to exceed the target level $\beta\alpha\log n+u$, we simply forbid that insertion. 
By construction, this guarantees that the maximum local time never exceeds the prescribed threshold. 
The main point is therefore to estimate the cost of forbidding exactly those dangerous loop insertions, in a way inspired by~\cite{li2026ldmaxlocal}.

\begin{remark}\label{rem:scope-extensions}
We state and prove the main result for nearest-neighbor simple random walk. 
The loop-pruning mechanism is, however, not intrinsically tied to this particular jump distribution. 
For more general transient random walks with finite-range increments, one expects an analogous approach to apply. 
The main additional input would be the corresponding control of long-range returns and block intersections.

For a general increment law $\mu$, a natural parameter is the genuine dimension
\[
d_{\mathrm{gen}}
:=
\dim\bigl(\operatorname{span}(\operatorname{supp}\mu)\bigr).
\]
In the centered case, the relevant return estimates are governed by this dimension; under the usual nondegeneracy assumptions, transience corresponds to $d_{\mathrm{gen}}\ge 3$, and one has polynomial return tails of the form
\[
\mathbb{P}\bigl(0\in S[n,\infty)\bigr)
\asymp n^{-(d_{\mathrm{gen}}/2-1)}.
\]
In the non-centered case, the walk has a drift and the corresponding return tails are typically exponential. 
Both regimes should be compatible with the block-multiplicity estimates used in the present proof, although the constants and some technical details would have to be modified.
We do not pursue such extensions here. 

Let us also mention the corresponding moderate-deviation consequence. If \(a_n\) is positive, \(a_n\to\infty\), and \(a_n=o(\log n)\), then
\begin{align*}
\mathbb P\bigl(\xi^*(n)>\alpha\log n+a_n\bigr)
&=(1+o(1))\,\gamma\,n\,(1-\gamma)^{\lfloor \alpha\log n+a_n\rfloor},\\
\mathbb P\bigl(\xi^*(n)\le \alpha\log n-a_n\bigr)
&=\exp\left\{-(1+o(1))\,\gamma\,n\,(1-\gamma)^{\lfloor \alpha\log n-a_n\rfloor}\right\}.
\end{align*}
These estimates follow from the same methods as the large-deviation results.

\end{remark}

\subsection{Organization}
The rest of the paper is organized as follows. 
In Section~\ref{sec:loop-pruned-decomposition}, we introduce the operation of loop pruning and develop its basic structural properties, including the relevant representations and factorization formulas. 
In Section~\ref{sec:loop-pruning-srw}, we specialize this framework to simple random walk and study the retained portion, the local time of the pruned path, and the conditional distribution of the erased pieces. 
Section~\ref{sec:proof-downward} is devoted to the proof of Theorem~\ref{thm:downward}. 
Finally, Section~\ref{sec:cut} develops the auxiliary structural estimates needed to prove the exponentially mixing property of the retained-step process.
\begin{funding}
XL is supported by the National Key R\&D Program of China (No.~2021YFA1002700). YZ is supported by the China Postdoctoral Science Foundation (Nos.~2023M743721 and 2025T180850).
\end{funding}
\section{Loop-pruned decomposition}\label{sec:loop-pruned-decomposition}
\subsection{Basic definitions}\label{sec:basic}
Throughout this section, we regard a finite sequence 
\[
\eta = (\eta_0, \eta_1, \ldots, \eta_m) \in (\mathbb{Z}^d)^{m+1}, \quad m \in \mathbb{N},
\]
as a \textit{path} in \(\mathbb{Z}^d\).  
The \textit{length} of such a path is defined as \( |\eta| := m \). 
(In particular, a single point \((s_0)\) is a path of length \(0\).)

For any path \(\eta\) and \(x \in \mathbb{Z}^d\), we denote by \(x+\eta\) the translated path
\[
x+\eta := (x+\eta_0, x+\eta_1, \ldots, x+\eta_m).
\]

\subsubsection*{Loop-pruned path}
As a first step, we introduce the notion of loops in \( \mathbb{Z}^d \), along with the operation of pruning loops from a path.

\begin{definition}
A \textit{simple loop} is a path \( e = (e_0, e_1, \ldots, e_m) \) with \( m \in \mathbb{N}^+ \) such that
\[
e_0=e_m=0
\qquad \text{and} \qquad
e_i\neq e_j \ \text{for all } 0\le i<j<m.
\]
\end{definition}
In this paper, whenever we write \textit{loop}, we mean a \textit{simple loop}.

We first define the one-step pruning operation.
\begin{definition}
Let \(\mathbf{s} = (s_n : 0 \le n \le N)\), where \(N\) may be finite or infinite, be a path in \(\mathbb{Z}^d\).  
Let \(E\) be a finite collection of loops. Define
\begin{align}\label{def:tau}
	\tau := \inf \Big\{ i \ge 0 : s[i-|e|,i] = s_{i-|e|} + e \ \text{for some } e \in E,\ |e|\le i \Big\}
\end{align}
to be the first time a loop from \(E\) occurs in \(\mathbf{s}\). 
Note that, since a loop does not revisit \(0\) before its endpoint, at time \(\tau\) there can be at most one such loop.

If \(\tau < \infty\), we define a new path \(\mathbf{s}' = (s'_n : 0\le n\le N-|e|)\) by removing this occurrence of \(e\) from \(\mathbf{s}\):
\[
s'_n := 
\begin{cases}
s_n, & \text{for } n \le \tau - |e|, \\
s_{n + |e|}, & \text{for } \tau - |e| < n \le N-|e|.
\end{cases}
\]
If \(\tau = \infty\), i.e., no such loop appears in \(\mathbf{s}\), we simply set \(\mathbf{s}' := \mathbf{s}\).
This operation is denoted by \(\mathsf{Prune}_1(\mathbf{s}, E) := \mathbf{s}'\). 
\end{definition}

Iterating the one-step operation gives the full pruning map.
\begin{definition}
Define recursively
\[
\mathsf{Prune}_k(\mathbf{s}, E) := \mathsf{Prune}_1(\mathsf{Prune}_{k-1}(\mathbf{s}, E), E), \quad \text{for } k \ge 2,
\]
with the initial step \( \mathsf{Prune}_1(\mathbf{s}, E) \) as above.

If the sequence \( \{ \mathsf{Prune}_k(\mathbf{s}, E) \}_{k \ge 1} \) converges componentwise, we define
\[
\mathsf{Prune}(\mathbf{s}, E) := \lim_{k \to \infty} \mathsf{Prune}_k(\mathbf{s}, E).
\]
\end{definition}
\begin{remark}
Note that if the segment \( (s_{j+1}, s_{j+2}, \ldots, s_{j+m}) \) is removed during the first pruning, this does not imply that the initial segment \( (s_0, s_1, \ldots, s_j) \) will be preserved in all subsequent prunings. It is easy to construct examples in which part of this initial segment is removed in the second or later steps of the pruning procedure. 

This illustrates that the pruning process may affect earlier portions of the path through interactions between overlapping or nested loops.
\end{remark}

\begin{proposition}\label{prop:prune}
For any transient path \(\mathbf{s}\) (i.e., $\abs{s_n} \rightarrow \infty, \; \text{as } n \rightarrow \infty$) and finite set $E$ of loops,
\begin{align*}
    \mathsf{Prune}(\mathbf{s}, E) = \lim_{k \to \infty} \mathsf{Prune}_k(\mathbf{s}, E)\text{ exists.}
\end{align*}
Moreover, $\mathsf{Prune}(\mathbf{s}, E)$ is also a transient path.
\end{proposition}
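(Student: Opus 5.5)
The plan is to track the pruned paths as subsequences of the original path and to prove, by induction on the index $n$, that only finitely many one-step prunings ever touch coordinate $n$. Write $\mathbf{s}^{(k)}:=\mathsf{Prune}_k(\mathbf{s},E)$ and $L:=\max_{e\in E}|e|<\infty$. Each one-step pruning deletes a block of consecutive entries and keeps the order of the others. So $\mathbf{s}^{(k)}$ is a subsequence of $\mathbf{s}$: there are strictly increasing maps $\sigma_k$ with $s^{(k)}_j=s_{\sigma_k(j)}$. A pruning at step $k+1$ removes a loop occupying indices $\tau_{k+1}-|e|,\dots,\tau_{k+1}$ of $\mathbf{s}^{(k)}$. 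It leaves $\sigma$ unchanged on indices $\le \tau_{k+1}-|e|$ and shifts it on later indices. Hence $\sigma_k(j)$ is nondecreasing in $k$, and it changes at step $k+1$ only if the loop starts at an index $<j$. I will prove the stronger claim that, for each $n$, $\sigma_k(n)$ is eventually constant. This gives componentwise convergence of $s^{(k)}_n=s_{\sigma_k(n)}$.

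The base case is $n=0$. A loop starts at index $\tau-|e|\ge 0$, so index $0$ is never shifted and $\sigma_k(0)=0$ for all $k$.

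For the inductive step, assume $\sigma_k(j)$ is constant for $k\ge K$ and all $j<n$. Then the finitely many values $s^{(k)}_0,\dots,s^{(k)}_{n-1}$ are frozen for $k\ge K$. Let $B$ be the finite set of points within $\ell^\infty$-distance $L$ (or graph distance, after noting loop increments are bounded) of these frozen values. Consider any pruning after step $K$ that shifts index $n$. Its loop starts at some index $j\le n-1$, so the loop is $s^{(k)}_j+e$ with $s^{(k)}_j$ frozen. Its deleted entries $s^{(k)}_{j+1},\dots,s^{(k)}_{j+|e|}$ therefore all lie in $B$. At least one entry is deleted because $|e|\ge 1$.

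These deleted entries are original entries $s_i$ with $s_i\in B$, and a deleted original index is never restored. So the number of such prunings is at most $\#\{i: s_i\in B\}$, which is finite because $\mathbf{s}$ is transient and $B$ is finite. Hence $\sigma_k(n)$ changes only finitely often, which closes the induction. If $\mathbf{s}$ has finite length, the same argument applies verbatim, and the path length can drop only finitely often anyway.

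For transience of the limit, let $\sigma(n):=\lim_k\sigma_k(n)$, which is finite by the above. Since each $\sigma_k$ is strictly increasing, so is $\sigma$, and therefore $\sigma(n)\to\infty$. We have $\mathsf{Prune}(\mathbf{s},E)_n=s_{\sigma(n)}$, so $|\mathsf{Prune}(\mathbf{s},E)_n|\to\infty$ by transience of $\mathbf{s}$.

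The main point needing care is the inductive step. A naive count of prunings near the front of the path fails, because infinitely many original entries could in principle be shifted into the first $n$ positions. The fix is to use the frozen prefix values to confine every relevant loop to the bounded set $B$. Transience then bounds the total number of deletions inside $B$.
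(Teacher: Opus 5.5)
Your proof is correct. It takes a different route from the one the paper points to.

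The paper gives no proof of its own. It says only that the argument is similar to the loop-erased random walk case in Lawler's book. There the limit for a transient path has an explicit last-exit description, and the truncations stabilize once the path has left a finite set for good. Loop pruning has no such closed-form description, because only $E$-loops are removed. As the remark after the definition of $\mathsf{Prune}_k$ notes, later prunings can also remove parts of earlier retained segments.

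Your argument avoids this difficulty and works directly with the iterated one-step map on the infinite path. You write $\mathsf{Prune}_k(\mathbf s,E)$ as a subsequence $s_{\sigma_k(\cdot)}$ and freeze coordinates one index at a time. The frozen prefix traps every deleted entry that could shift index $n$ in a finite set $B$. Since deleted original indices are never restored, transience bounds the number of such prunings.

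What your route buys:
\begin{itemize}
\item It is self-contained.
\item It does not use simplicity of the loops or the first-occurrence rule, so it works for any order of $E$-loop deletions.
\item It identifies the limit as $s_{\sigma(n)}$ with $\sigma$ strictly increasing, which makes transience of the limit immediate.
\end{itemize}
What the paper's suggested route would buy, if carried out, is a proof phrased through finite truncations $\mathsf{Prune}(\mathbf s[0,N],E)$. That fits more naturally with the staging property and the quantities $N_n$ used later.

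One detail to tidy up. Paths in the paper are arbitrary sequences in $\mathbb Z^d$, not nearest-neighbor paths, so the length bound $\max_{e\in E}|e|$ does not control displacement. Define $B$ instead as the set of points $x+e_i$, where $x$ ranges over the frozen values, $e\in E$, and $0\le i\le |e|$. This set is finite because $E$ is finite, which is all your argument needs.
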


The proof is similar to the corresponding argument for loop-erased random walk, and is therefore omitted; see \cite[Section 7.3]{lawler2012intersections}. The proposition also extends trivially to finite paths. 

Let \(\eta = (\eta_0, \ldots, \eta_j)\) and \(\eta' = (\eta'_0, \ldots, \eta'_{j'})\) be two finite paths.
If \(\eta_j=\eta'_0\), their \textit{direct concatenation} is defined by
\begin{align}
\eta \circ_d \eta' &:= (\eta_0, \ldots, \eta_j, \eta'_1, \ldots, \eta'_{j'}). \label{eq:direct-concatenation}
\end{align}
If \(\eta'_0=0\), their \textit{translated concatenation} is defined by
\begin{align}
\eta \circ_t \eta' &:= (\eta_0, \ldots, \eta_j, \eta_j + \eta'_1, \ldots, \eta_j + \eta'_{j'}). \label{eq:translated-concatenation}
\end{align}

We now record a simple staging property of loop-pruning, which will be used later and follows directly from the definition.

\begin{proposition}\label{prop:stage}
Let $\mathbf{s}[0,n]$ be a finite path and $E$ a finite set of loops. Then for any $0<m<n$,
\[
\mathsf{Prune}(\mathbf{s}[0,n],E)
= \mathsf{Prune}\Big(\, \mathsf{Prune}(\mathbf{s}[0,m],E)\ \circ_d\ \mathbf{s}[m,n],\ E\Big).
\]
\end{proposition}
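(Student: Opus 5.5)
The plan is to view the iterated pruning as a sequence of one-step prunings and to show that running them on the whole path $\mathbf{s}[0,n]$ or on the staged path $\mathsf{Prune}(\mathbf{s}[0,m],E)\circ_d\mathbf{s}[m,n]$ produces the same sequence of removals. The key observation is that the definition of $\tau$ in \eqref{def:tau} detects the \emph{first time} a loop of $E$ is completed. The portion of the path after that time plays no role in which loop is removed. Moreover, a removal only shifts the later part of the path, which is carried along unchanged.

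\textbf{Step 1 (locality of one-step pruning).} Suppose $\tau\le m$ for the path $\mathbf{s}[0,n]$. Then $\tau$ and the loop $e$ coincide with those of $\mathbf{s}[0,m]$. Consequently
\[
\mathsf{Prune}_1(\mathbf{s}[0,n],E)=\mathsf{Prune}_1(\mathbf{s}[0,m],E)\circ_d\mathbf{s}[m,n].
\]
This holds because the condition at time $i$ only involves $s[i-|e|,i]$, and because removing $s[\tau-|e|+1,\tau]$ leaves every entry after $\tau$ untouched, merely reindexed. Conversely, if $\mathbf{s}[0,m]$ contains no loop of $E$, then $\mathsf{Prune}_1(\mathbf{s}[0,m],E)=\mathbf{s}[0,m]$.

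\textbf{Step 2 (induction on the number of removals).} I would define a general statement for arbitrary finite paths $\eta$ and $\zeta$ with $\eta$'s endpoint equal to $\zeta_0$:
\[
\mathsf{Prune}(\eta\circ_d\zeta,E)=\mathsf{Prune}(\mathsf{Prune}(\eta,E)\circ_d\zeta,E).
\]
The proof is by induction on $|\eta|$, or on the number of loops removed while pruning $\eta$. If $\eta$ contains no loop of $E$, both sides are literally the same expression. Otherwise, let $\eta'=\mathsf{Prune}_1(\eta,E)$. By Step 1, $\mathsf{Prune}_1(\eta\circ_d\zeta,E)=\eta'\circ_d\zeta$. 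Since $\mathsf{Prune}$ of a finite path is the terminal value of the iteration, which is eventually constant by finiteness (Proposition~\ref{prop:prune}), we get $\mathsf{Prune}(\eta\circ_d\zeta)=\mathsf{Prune}(\eta'\circ_d\zeta)$. Likewise $\mathsf{Prune}(\eta)=\mathsf{Prune}(\eta')$. Because $|\eta'|<|\eta|$, the induction hypothesis applied to $\eta'$ gives
\[
\mathsf{Prune}(\eta'\circ_d\zeta)=\mathsf{Prune}(\mathsf{Prune}(\eta')\circ_d\zeta)=\mathsf{Prune}(\mathsf{Prune}(\eta)\circ_d\zeta),
\]
which closes the induction. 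Taking $\eta=\mathbf{s}[0,m]$ and $\zeta=\mathbf{s}[m,n]$ yields the proposition; note that the endpoints match, since pruning closed loops preserves the final point.

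\textbf{Main obstacle.} The only delicate point is Step 1. It must be checked that a loop of $E$ completed within $\eta$ is indeed the \emph{first} completed loop in $\eta\circ_d\zeta$. This is automatic, since any completion time $\le|\eta|$ depends only on $\eta$. One also needs the simplicity of the loops to guarantee that at most one $e$ is matched at time $\tau$, which the paper already notes. The remaining bookkeeping is the index shift by $|e|$ in the definition of $\mathbf{s}'$, which is exactly compatible with the direct concatenation $\circ_d$.
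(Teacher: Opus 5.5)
Your proof is correct, and it matches what the paper intends when it says the staging property ``follows directly from the definition''; the paper itself gives no written proof. Your argument supplies the details: the first completion time $\tau$ of an $E$-loop depends only on the prefix, so $\mathsf{Prune}_1(\eta\circ_d\zeta,E)=\mathsf{Prune}_1(\eta,E)\circ_d\zeta$ whenever $\eta$ contains an $E$-loop, and induction on $|\eta|$, quantified over all $\zeta$, then closes the argument.
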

In words, pruning may be performed in stages: one first prunes the initial segment and then continues pruning after appending the remaining path, without changing the final result.

\subsubsection*{Loop-pruned decomposition}
We now present a decomposition of a transient or finite path that arises naturally from the loop-pruning procedure

Let \(\mathbf{s}\) be a transient or finite path and let \(E\) be a finite set of loops. Let
\[
\mathbf{s}' = (\mathbf{s}'_n : n \ge 0) := \mathsf{Prune}(\mathbf{s}, E).
\]
For each \(n \in [0,\abs{\mathbf{s}}]\), define
\begin{equation}\label{def:Nn}
N_n=N_n(\mathbf{s},E)
:=
\sup\big\{
m\ge 0:\;
\mathsf{Prune}(\mathbf{s}[0,i],E)[0,m]
=
\mathbf{s}'[0,m]
\ \forall\, i\in[n,\abs{\mathbf{s}}]
\big\}.
\end{equation}
That is, \(N_n\) is the longest initial segment of \(\mathbf{s}'\) that remains unchanged under further pruning of \(\mathbf{s}[0, i]\) for \(i \in [n,\abs{\mathbf{s}}]\).

We define the inverse of \(N_n\) as
\[
N^{-1}(m) := \inf\{ n \in [0,\abs{\mathbf{s}}] : N_n = m \},\quad\text{for }0\le m\le \abs{\mathbf{s}'},
\]
which marks the ``creation time'' of the \(m\)-th step of \(\mathbf{s}'\). 
For \(n\ge 1\), a time \(n\) is of the form \(n=N^{-1}(m)\) for some \(1\le m\le \abs{\mathbf{s}'}\) if and only if \(N_n-N_{n-1}=1\). All the steps \((\mathbf{s}_{n-1}, \mathbf{s}_n)\) with such \(n\) form the ``skeleton'' of \(\mathbf{s}\) that is retained in \(\mathbf{s}'\), while all others are removed.

We define the \textit{\(m\)-th pruned segment} as
\begin{align}\label{def:xi_m}
\begin{aligned}
    \xi_m = \xi_m(\mathbf{s}, E) &:= \mathbf{s}\big[N^{-1}(m),\, N^{-1}(m+1)\big) - \mathbf{s}'_m, \quad \text{for } 0 \le m < \abs{\mathbf{s}'}.
\end{aligned}
\end{align}
(Note that \(\mathbf{s}'_m = \mathbf{s}_{N^{-1}(m)}\).)
If \(\mathbf{s}\) is finite, we further define the final pruned segment as
\[
\xi_{\abs{\mathbf{s}'}} = \xi_{\abs{\mathbf{s}'}}(\mathbf{s}, E) := \mathbf{s}\big[N^{-1}(\abs{\mathbf{s}'}),\, \abs{\mathbf{s}}\big] - \mathbf{s}'_{\abs{\mathbf{s}'}}.
\]
This yields a decomposition of the original path \(\mathbf{s}\) into its skeleton \(\mathbf{s}'\) together with the pruned segments \((\xi_m)_{m \in [0,|\mathbf{s}'|]}\), written as
\begin{align}\label{eq:decomposition}
  \mathbf{s} = \mathbf{s}' \oplus (\xi_m)_{m \in [0,\abs{\mathbf{s}'}]}.
\end{align}
By inserting each segment \(\xi_m\) at the \(m\)-th step of \(\mathbf{s}'\), one recovers the original path \(\mathbf{s}\).

\subsubsection*{Prunings for two-sided paths}

In addition to one-sided paths, we also consider the pruning procedure for two-sided paths.
To this end, we introduce the notion of $E$-cut steps, which allow us to define a two-sided pruning by patching together finite-window prunings.

Recall that in the pruning of a transient one-sided path $\mathbf{s}[0,\infty)$, the retained steps are exactly those $(\mathbf{s}_{n-1},\mathbf{s}_n)$ with $N_n-N_{n-1}=1$.

\begin{definition}[$E$-cut steps]\label{def:E_cut_steps}
Let \(E\) be a finite set of loops and let \(\mathbf{s}=\mathbf{s}(-\infty,\infty)\) be a two-sided path.
A step \((\mathbf{s}_{i-1},\mathbf{s}_i)\), \(i\in\mathbb{Z}\), is called an \textit{\(E\)-cut step} for \(\mathbf{s}\) if for every \(m_1< i\le m_2\), the step \((\mathbf{s}_{i-1},\mathbf{s}_i)\) is retained in the pruning
\[
\mathsf{Prune}\big(\mathbf{s}[m_1,m_2],E\big).
\]
Equivalently, writing $N_n := N_n\big(\mathbf{s}^{(m_1)}[0,m_2-m_1],E\big)$ for the translated path $\mathbf{s}^{(m_1)}_k := \mathbf{s}_{k+m_1}$, the condition is
\[
N_{i-m_1} - N_{i-m_1-1} = 1.
\]
\end{definition}

We define the two-sided pruning map as follows.
\begin{definition}\label{def:erase_two_sided}
Let \(E\) be a finite set of loops and let \(\mathbf{s}=\mathbf{s}(-\infty,\infty)\) be a two-sided path. 
Suppose that the set of indices \(i\in\mathbb Z\) for which \((\mathbf{s}_{i-1},\mathbf{s}_i)\) is an \(E\)-cut step is unbounded both above and below.

We say that a step \((\mathbf{s}_{i-1},\mathbf{s}_i)\) is retained in the pruning of \(\mathbf{s}\) by \(E\) if there exist indices \(m_1< i\le m_2\) such that both boundary steps \((\mathbf{s}_{m_1-1},\mathbf{s}_{m_1})\) and \((\mathbf{s}_{m_2-1},\mathbf{s}_{m_2})\) are \(E\)-cut steps, and \((\mathbf{s}_{i-1},\mathbf{s}_i)\) is retained in the finite-window pruning
\[
\mathsf{Prune}\big(\mathbf{s}[m_1,m_2],E\big).
\]
(It is straightforward to check that this notion does not depend on the particular choice of \(m_1,m_2\).)

Collecting all retained steps in chronological order yields a two-sided path (defined up to translation), which we denote by \(\mathsf{Prune}(\mathbf{s},E)\).
\end{definition}

\subsection{Structure of pruned segments}\label{sec:segment}
Return to the transient or finite one-sided path $\mathbf{s}$.
We next focus on the internal structure of each pruned segment \(\xi_m\) in the loop-pruned decomposition. 

By construction, for the pruned path \(\mathbf{s}' = \mathsf{Prune}(\mathbf{s}, E)\), each pruned segment \(\xi_m\) satisfies the following properties:
\begin{enumerate}[label=(\roman*)]
    \item\label{item:seg1} It is \textit{\(E\)-erasable}, meaning that
    \[
    \mathsf{Prune}(\eta,E)=(0).
    \]

    \item\label{item:seg2} It is compatible with the retained prefix \(\mathbf{s}'[0,m]\): after appending \(\eta\) to this prefix, all steps of \(\mathbf{s}'[0,m]\) remain retained, namely
    \begin{equation}\label{eq:compatible-retained-prefix}
    N_n(\mathbf{s}'[0, m] \circ_t \eta, E) \ge m \quad \text{for all } n \in [m, m + |\eta|].
    \end{equation}
\end{enumerate}

This motivates the following definition.
\begin{definition}[Segment classes]\label{def:Seg-classes}
For any finite set $E$ of loops, we call a finite path \(s'\) \textit{\(E\)-pruned} if \(\mathsf{Prune}(s',E)=s'\). Notice that, if \(\mathbf{s}'=\mathsf{Prune}(\mathbf{s},E)\), then every retained prefix \(\mathbf{s}'[0,m]\) is \(E\)-pruned.
Let \(\mathsf{Seg}(E)\) denote the collection of finite \(E\)-erasable paths.
For an \(E\)-pruned finite path $s'$, define
\begin{align}\label{def:Seg}
\begin{aligned}
	\mathsf{Seg}(s',E):=\big\{\eta\in \mathsf{Seg}(E): N_n(s'\circ_t \eta,E)\ge |s'|\quad\forall\,n\in[|s'|,|s'|+\abs{\eta}]\big\}.	
\end{aligned}
\end{align}

We further set
\begin{align}\label{def:prod_seg}
\begin{aligned}
	\mathsf{Seg}^{\otimes}(s',E)&:=\prod_{m=0}^{\abs{s'}} \mathsf{Seg}(s'[0,m], E).
\end{aligned}
\end{align}
\end{definition}
\begin{proposition}\label{pro:one-to-one}
Let $E$ be a finite set of loops and let $s'$ be an \(E\)-pruned finite path. Define
\[
\mathsf{P}(s',E) := \left\{ \text{finite paths } s : \mathsf{Prune}(s, E) = s' \right\}.
\]
Then the map
\begin{align*}
	s \mapsto \left( \xi_m(s, E) : m \in [0,\abs{s'}] \right)
\end{align*}
defines a one-to-one correspondence between \(\mathsf{P}(s',E)\) and $\mathsf{Seg}^{\otimes}(s',E)$.
\end{proposition}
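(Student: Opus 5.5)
Our plan is to show that the segment map is well defined into $\mathsf{Seg}^{\otimes}(s',E)$, injective, and surjective. Surjectivity will come from an explicit inverse: the insertion map $(\eta_m)_{m\in[0,|s'|]}\mapsto s'\oplus(\eta_m)$. The whole argument rests on the staging property, Proposition~\ref{prop:stage}, together with one locality observation. Pruning acts only through occurrences of loops from $E$. Hence if a prefix $p$ is $E$-pruned and an appended piece gets erased without disturbing the steps of $p$ (this is the condition $N_n\ge |p|$), then the pruning of the concatenation is $p$ followed by the pruning of the piece "relative to $p$".

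\textbf{Step 1: well-definedness.} Take $s\in\mathsf{P}(s',E)$. Properties \ref{item:seg1} and \ref{item:seg2} stated before Definition~\ref{def:Seg-classes} say exactly that $\xi_m(s,E)\in\mathsf{Seg}(s'[0,m],E)$. We will justify them as follows. By definition of $N^{-1}$, at time $N^{-1}(m)$ the prefix $s[0,N^{-1}(m)]$ prunes to $s'[0,m]$. By Proposition~\ref{prop:stage}, the pruning of $s[0,i]$ for $i\in[N^{-1}(m),N^{-1}(m+1))$ equals $\mathsf{Prune}(s'[0,m]\circ_t \xi_m[0,i-N^{-1}(m)],E)$. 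Since $N_i=m$ throughout this range, the first $m$ steps stay retained, which is condition \eqref{eq:compatible-retained-prefix}. The pruned length also returns to $m$ before step $m+1$ is created. Combining these gives $\mathsf{Prune}(s'[0,m]\circ_t\xi_m,E)=s'[0,m]$.

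From this we want $\mathsf{Prune}(\xi_m,E)=(0)$. The loops removed while pruning the concatenation never touch the first $m$ steps. Each such loop therefore lies inside the translated copy of $\xi_m$ (after earlier removals), so the same sequence of removals prunes $\xi_m$ alone. Conversely, no occurrence of a loop can straddle the junction: such a loop would delete a step of $s'[0,m]$, contradicting $N\ge m$. For the final segment $\xi_{|s'|}$ the same argument applies with $N^{-1}(m+1)$ replaced by $|s|$.

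\textbf{Step 2: inverse map.} Given $(\eta_m)\in\mathsf{Seg}^\otimes(s',E)$, form $s:=\eta_0'\circ_d(s'_0,s'_1)\circ_d\eta_1'\circ_d\cdots$, where $\eta_m'=s'_m+\eta_m$. We prove by induction on $m$ that $\mathsf{Prune}(s[0,t_m],E)=s'[0,m]$. Here $t_m$ is the time just after $\eta_m'$ is traversed, and the induction also tracks that the $N$-values stay $\ge m$ along $\eta'_m$.

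The inductive step uses Proposition~\ref{prop:stage}, which reduces the claim to pruning $s'[0,m]\circ_t\eta_m$, and the defining condition of $\mathsf{Seg}(s'[0,m],E)$. Appending the step $(s'_m,s'_{m+1})$ then gives $s'[0,m+1]$, which is $E$-pruned because $s'$ is. We must check that this new step is retained for all later times. That follows at the next stage from the condition $N\ge m+1$ in $\mathsf{Seg}(s'[0,m+1],E)$.

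This identifies the creation times $N^{-1}(m)$ as the times the step $(s'_{m-1},s'_m)$ is appended, so $\xi_m(s,E)=\eta_m$. Conversely, inserting $\xi_m(s,E)$ into $s'$ recovers $s$, by \eqref{eq:decomposition}. The two maps are therefore mutually inverse, which proves bijectivity.

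\textbf{Main obstacle.} The delicate point is the locality claim in Step 1 and Step 2. We must show that pruning $p\circ_t\eta$ with $N\ge|p|$ on the whole range is the same as prefixing $p$ to the pruning of $\eta$ run on its own, and in particular that $\mathsf{Prune}$ of the concatenation returns exactly to $p$. The dangerous case is a loop occurrence that straddles the junction, or one created after some removals inside $\eta$; the condition $N_n\ge|p|$ is what rules both out. We would handle this by induction on the number of one-step prunings $\mathsf{Prune}_1$. At each step the first loop occurrence in $p\circ_t(\text{current }\eta)$ ends inside $\eta$, since $p$ is $E$-pruned. Its starting index is $\ge |p|$, since otherwise a step of $p$ would be deleted at that time, contradicting $N\ge|p|$. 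Finally, it coincides with the first loop occurrence in the current $\eta$ alone, because a loop occurrence depends only on the increments over its own window.
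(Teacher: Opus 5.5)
Your proposal is correct. The paper omits this proof as straightforward, and your argument is the natural one implied by properties (i)--(ii) stated before Definition~\ref{def:Seg-classes}: it combines Proposition~\ref{prop:stage} with the locality fact $\mathsf{Prune}(p\circ_t\eta,E)=p\circ_t\mathsf{Prune}(\eta,E)$, valid whenever $N_n(p\circ_t\eta,E)\ge|p|$ for all $n\in[|p|,|p|+|\eta|]$.

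One ingredient you use without stating it is the push/pop property. A single appended step can close at most one simple loop, so $\mathsf{Prune}(s[0,n],E)$ is either $\mathsf{Prune}(s[0,n-1],E)$ extended by one step or a prefix of it. This yields $N_n=\min_{i\ge n}|\mathsf{Prune}(s[0,i],E)|$, and you rely on it in two places. It shows that the pruned path equals exactly $s'[0,m]$ at times $N^{-1}(m)$ and $N^{-1}(m+1)-1$. It also turns a deletion of a step of $p$ during the iterated $\mathsf{Prune}_1$ into a violation of $N\ge|p|$.
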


The proof is straightforward and omitted.

\subsection{Representations of $\mathsf{Seg}(E)$}\label{sec:represent}
From this subsection onward, we assume that the finite loop family \(E\) satisfies \(|e|\ge2\) for every \(e\in E\).

Note that all segment classes \(\mathsf{Seg}(s',E)\) are contained in the common set \(\mathsf{Seg}(E)\).  
We now study the internal structure of \(\mathsf{Seg}(E)\), presenting two useful representations -- tree representation and elementary-sequence representation.

The idea behind both representations is simple. 
An \(E\)-erasable segment can be viewed as being generated by successively inserting loops from the prescribed family \(E\). 
The tree representation records the genealogy of these insertions as a tree carrying marks that record the types of the inserted loops. 
The elementary-sequence representation is then a marked analogue of the classical Ulam--Harris representation: besides the vertex address, each vertex also carries a sequence encoding the marks, namely the corresponding loop types.

\subsubsection{Tree representation}
In this section, we relate the class \(\mathsf{Seg}(E)\) to a specific family of marked trees, which we will formally define as \textit{E-segmented marked trees}. 
\subsubsection*{$E$-segmented marked trees}
Let \(E\) be a finite set of loops such that \(|e|\ge 2\) for every \(e\in E\). 
Fix once and for all an enumeration
\[
E=\{e^{(1)},\ldots,e^{(\# E)}\}.
\]
We consider marked trees \(T^*=(T,\sigma)\), where
\begin{itemize}
	\item \(T\) is a finite rooted ordered tree with vertex set \(V(T)\);
	\item \(\sigma\) is a labeling map
	\[
	\sigma: V(T)\setminus\{\mathrm{root}\}\longrightarrow \{1,\ldots,\# E\},
	\]
	assigning a label in \(\{1,\ldots,\# E\}\) to each non-root vertex of \(T\).
\end{itemize}

Before defining the class of \(E\)-segmented marked trees, we first introduce the notion of an \textit{elementary sequence}.

\begin{definition}[Elementary sequence]\label{def:elementary-sequence}
For \(j\in\{1,\ldots,\#E\}\), set
\(
\lambda_j:=|e^{(j)}|-1.
\)
We also set \(\lambda_0:=0\).
The elementary sequence of type \(j\) is the sequence consisting of \(\lambda_j\) repetitions of the label \(j\), namely
\[
(\overbrace{j,j,\ldots,j}^{\lambda_j}).
\]
The number \(\lambda_j\) corresponds to the possible insertion positions along \(e^{(j)}\), excluding the common endpoint at \(0\).
\end{definition}

\begin{definition}[$E$-segmented marked trees]
	We define the collection \(\mathbf{T}^*=\mathbf{T}^*_E\), also referred to as the \textit{$E$-segmented marked trees}, as the set of all marked trees \(T^*=(T,\sigma)\) satisfying the following condition: for every vertex \(v\in V(T)\), let \(u_1<u_2<\cdots< u_L\) be the children of \(v\), ordered according to the tree ordering. Then the sequence of their labels, \((\sigma(u_1), \ldots, \sigma(u_L))\), must be a concatenation of elementary sequences. That is, it must be of the form \(K_1 \circ K_2 \circ \cdots \circ K_P\) for some integer \(P \ge 0\), where each \(K_j\) is an elementary sequence of some type.

		See Figure~\ref{fig:tree1} for an example of an $E$-segmented marked tree.
		\end{definition}

		\begin{figure}[!b]
		\centering
		\resizebox{0.441\textwidth}{!}{
		\begin{tikzpicture}[
		  every node/.style={circle, draw, minimum size=6mm, fill=gray!20, inner sep=0pt},
		  level distance=12mm, sibling distance=12mm]

	\def\rad{0.38}
	\def\step{0.16}

	\node (r) at (0,0) {};

	\node (a1) at (-2.5,1.2) {};
	\node (a2) at (0,1.2)   {};
	\node (a3) at (2.5,1.2) {};

	\draw (r) -- (a1);
	\draw (r) -- (a2);
	\draw (r) -- (a3);

	\node (b1) at (-3,2.4) {};
	\node (b2) at (-2,2.4) {};
	\draw (a1) -- (b1);
	\draw (a1) -- (b2);

	\node (c1) at (1.8,2.4) {};
	\node (c2) at (2.5,2.4) {};
	\node (c3) at (3.2,2.4) {};
	\draw (a3) -- (c1);
	\draw (a3) -- (c2);
	\draw (a3) -- (c3);

	\node (d1) at (1.8,3.6) {};
	\draw (c1) -- (d1);

	\node[draw=none, fill=none, left=-8.5pt]  at (a1) {\(1\)};
	\node[draw=none, fill=none, right=-8.5pt] at (a2) {\(2\)};
	\node[draw=none, fill=none, right=-8.5pt] at (a3) {\(2\)};
	\node[draw=none, fill=none, left=-8.5pt]  at (b1) {\(2\)};
	\node[draw=none, fill=none, left=-8.5pt]  at (b2) {\(2\)};
	\node[draw=none, fill=none, left=-8.5pt]  at (c1) {\(2\)};
	\node[draw=none, fill=none, left=-8.5pt]  at (c2) {\(2\)};
	\node[draw=none, fill=none, left=-8.5pt]  at (c3) {\(1\)};
	\node[draw=none, fill=none, left=-8.5pt]  at (d1) {\(1\)};

	\draw[very thick, red, rounded corners=4pt, ->, >=stealth]
	  ($(r)+(-\rad,-\rad)$)
	   -- ($(a1)+(-\rad,-\rad)$); 
	\draw[very thick, red, rounded corners=4pt, ->, >=stealth]   
	   ($(a1)+(-\rad,-\rad)$) -- ($(a1)+(-\rad,\rad)$)
	  -- ($(b1)+(-\rad,-\rad)$) -- ($(b1)+(-\rad,\rad)$) -- ($(b1)+(\rad,\rad)$)
	  -- ($(b2)+(-\rad,\rad)$) -- ($(b2)+(\rad,\rad)$);
	 \draw[very thick, red, rounded corners=4pt, ->, >=stealth]  
	  ($(b2)+(\rad,\rad)$) -- ($(b2)+(\rad,-\rad)$)
	  -- ($(a1)+(\rad,\rad)$) -- ($(a1)+(\rad,0)$) --  ($(r)+(-\rad,\rad)$);
	  \draw[very thick, red, rounded corners=4pt, ->, >=stealth]
	   ($(r)+(-\rad,\rad)$) -- ($(a2)+(-\rad,-\rad)$) -- ($(a2)+(-\rad,\rad)$) -- ($(a2)+(\rad,\rad)$)
	   -- ($(a3)+(-\rad,\rad)$);
	   \draw[very thick, red, rounded corners=4pt, ->, >=stealth]
	 ($(a3)+(-\rad,\rad)$) -- ($(c1)+(-\rad,-\rad)$) -- ($(c1)+(-\rad,\rad)$) -- ($(d1)+(-\rad,-\rad)$)
	  -- ($(d1)+(-\rad,\rad)$) -- ($(d1)+(\rad,\rad)$) -- ($(d1)+(\rad,-\rad)$)
	  -- ($(c1)+(\rad,\rad)$) 
	  -- ($(c2)+(\rad/1.2,\rad)$);
	  \draw[very thick, red, rounded corners=4pt, ->, >=stealth]
	  ($(c2)+(\rad/1.2,\rad)$) -- ($(c2)+(\rad/1.2,-\rad)$) -- ($(a3)+(0,\rad)$) --($(a3)+(\rad/2,\rad)$)
	  -- ($(c3)+(-\rad/1.2,-\rad)$) -- ($(c3)+(-\rad/1.2,\rad)$) -- ($(c3)+(\rad,\rad)$)
	  -- ($(c3)+(\rad,-\rad)$)
	  -- ($(a3)+(\rad,\rad)$) -- ($(a3)+(\rad,-\rad)$);
	  \draw[very thick, red, rounded corners=4pt, ->, >=stealth]
	  ($(a3)+(\rad,-\rad)$)
	  -- ($(r)+(\rad,-\rad)$) -- ($(r)+(-\rad/1.5,-\rad)$);

	\fill[black] ($(r)+(-\rad,-\rad)$) circle (1.2pt);

		\end{tikzpicture}
		}
		\caption{An example of an $E$-segmented marked tree from \(\mathbf{T}^*\) with \(E=\{e^{(1)},e^{(2)}\}\), where \(|e^{(1)}|=2\) and \(|e^{(2)}|=3\), together with the associated $E$-depth-first traversal.}
		\label{fig:tree1}
		\end{figure}

\subsection*{$E$-depth-first traversal}
We now introduce a variant of the classical depth-first traversal, adapted to the segmented structure. 

\begin{definition}[$E$-depth-first traversal]\label{def:E-DFS}
Let $T^*=(T,\sigma)\in \mathbf{T}^*$ be an $E$-segmented marked tree with root $\rho$.  
We define the \textit{$E$-depth-first traversal} of $T^*$ as a finite sequence 
\[
f=f_{T^*}:\{0,1,\ldots,M\}\longrightarrow V(T),
\]
constructed recursively as follows.

\begin{enumerate}
    \item The sequence starts at the root:
    \[
    f(0)=\rho.
    \]
    \item Suppose $f(0),\ldots,f(m)$ have already been defined for some $m\in \mathbb{N}$. We then determine $f(m+1)$ according to the position of $f(m)$ in the tree:
    \begin{itemize}
        \item[(i)] If $f(m)$ has at least one child not contained in $\{f(0),\ldots,f(m)\}$, then $f(m+1)$ is the leftmost such child with respect to the ordering of $T$.
        \item[(ii)] If all children of $f(m)$ have already appeared in the sequence and $f(m)\neq \rho$, let $u_1<u_2<\cdots <u_L$ denote $f(m)$ and all its siblings (i.e. the children of the parent of $f(m)$). Assume $f(m)=u_j$ and that the labels of $u_1,\ldots, u_L$ decompose into a concatenation of elementary sequences
        \[
        (\sigma(u_1),\ldots, \sigma(u_L))=K_1\circ K_2\circ \cdots \circ K_P.
        \]
        If the $j$-th element of this concatenation belongs to $K_\ell$, then
        \begin{itemize}
            \item if it is not the last element of $K_\ell$, we define $f(m+1)=u_{j+1}$, the next sibling of $f(m)$;
            \item if it is the last element of $K_\ell$, we define $f(m+1)$ as the parent of $f(m)$.
        \end{itemize}
    \end{itemize}
    \item The recursion terminates once the root $\rho$ is reached and all children of $\rho$ have appeared in the sequence.
\end{enumerate}

Figure~\ref{fig:tree1} illustrates this construction. The red contour line is drawn in analogy with the usual contour of a depth-first traversal: it starts from the root and follows the edges of the tree along the traversal. The only difference is that, when encountering a family of vertices corresponding to an elementary sequence, the contour does not immediately return to their parent after each visit; instead, it keeps moving through the sequence until the traversal has completely finished with the last vertex of this family (i.e., after its entire subtree has been explored), only then does it return to the parent. 

The order in which the vertices are visited along this contour line coincides with the $E$-depth-first traversal defined above.
\end{definition}

\subsubsection*{Canonical tree representation}
We now assign to every path in the class \(\mathsf{Seg}(E)\) a canonical representation by an $E$-segmented marked tree in $\mathbf{T}^*$.

We begin by introducing the insertion operation. Let $\eta=(\eta_0,\ldots,\eta_k)$ be a path and $e=(e_0,\ldots,e_\ell)$ a loop.  
For any $0\le j\le k$, define
\begin{equation}\label{eq:loop-insertion-operation}
\eta\triangleleft_j e := \big(\eta_0,\ldots,\eta_{j-1},\eta_j+e_0,\ldots,\eta_j+e_\ell,\eta_{j+1},\ldots,\eta_k\big).
\end{equation}
In words, the loop $e$ is inserted into $\eta$ immediately after its $j$-th step.

With this notation in place, we can recursively define the map
\[
\mathsf{T}^* = \mathsf{T}^*_E: \mathsf{Seg}(E) \longrightarrow \mathbf{T}^*.
\] 
\begin{itemize}
    \item For the trivial path \((0)\), set \(\mathsf{T}^*((0))\) to be the one-vertex tree consisting only of the root.
    \item Suppose \(\mathsf{T}^*(\eta)\) has been defined for all \(\eta\in \mathsf{Seg}(E)\) with \(|\eta|\le m-1\).  
    Let \(\eta\in \mathsf{Seg}(E)\) have length \(|\eta|=m\).  
    The pruning of a single loop $\mathsf{Prune}_1(\eta,E)$ gives the decomposition
    \[
    \eta = \mathsf{Prune}_1(\eta,E)\triangleleft_{\tau-|e|} e,
    \qquad e=e^{(i)},
    \]
    where $\tau$ is defined in~\eqref{def:tau}.  
    By the inductive hypothesis, \(\mathsf{T}^*(\mathsf{Prune}_1(\eta,E))\) is already defined.  
    Let $f=f(\mathsf{T}^*(\mathsf{Prune}_1(\eta,E)))$ denote its $E$-depth-first traversal.  
    We then define \(\mathsf{T}^*(\eta)\) by attaching \(|e|-1\) ordered children to the vertex \(f(\tau-|e|)\), placing them immediately after all the children already visited in $\{f(0),\ldots, f(\tau-|e|)\}$ and before all remaining children, and assigning each new child the label $i$. 
\end{itemize}
This completes the recursive definition of the map $\mathsf{T}^*$ for all paths in $\mathsf{Seg}(E)$.
We record the resulting injectivity statement.

\begin{proposition}
\label{pro:tree_represent}
The map $\mathsf{T}^*$ defines an injection from $\mathsf{Seg}(E)$ into $\mathbf{T}^*$.
\end{proposition}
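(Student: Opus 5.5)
Two things need to be shown: that $\mathsf{T}^*(\eta)$ really lies in $\mathbf{T}^*$, and that $\eta$ can be read back from its tree. Both go by induction on $|\eta|$. The approach is to build, alongside the tree, an explicit \emph{reconstruction map} $\mathsf{R}:\mathbf{T}^*\to\{\text{finite paths}\}$ and prove $\mathsf{R}(\mathsf{T}^*(\eta))=\eta$ for every $\eta\in\mathsf{Seg}(E)$. Injectivity then follows at once.

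\textbf{Definition of $\mathsf{R}$ and well-definedness.} Take the $E$-depth-first traversal $f$ of $T^*$ and set $\mathsf{R}(T^*)_0:=0$. For each $m$, the increment $\mathsf{R}(T^*)_{m+1}-\mathsf{R}(T^*)_m$ is determined locally as follows.
\begin{itemize}
\item If $f(m+1)$ is a child of $f(m)$, it is the first child of some elementary block of type $j$, and the increment is the first step $e^{(j)}_1-e^{(j)}_0$.
\item If $f(m+1)$ is the next sibling inside a block of type $j$ and $f(m)$ is the $r$-th member of that block, the increment is the $(r+1)$-st step of $e^{(j)}$.
\item If $f(m+1)$ is the parent of $f(m)$ and $f(m)$ is the last member of a block of type $j$, the increment is the last step of $e^{(j)}$.
\end{itemize}
In words, $\mathsf{R}$ walks the loop $e^{(j)}$ step by step while moving along the block, with the subtrees hanging below the block members playing the role of inserted sub-loops. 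Membership $\mathsf{T}^*(\eta)\in\mathbf{T}^*$ is immediate by induction: each step attaches $|e|-1=\lambda_i$ consecutive children labelled $i$, i.e.\ one full elementary sequence, and places it between two existing blocks, namely after all already-visited children. So the children's label sequence remains a concatenation of elementary sequences.

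\textbf{Key inductive lemma.} The heart of the induction is the following traversal-insertion lemma. Let $T'$ be obtained from $T^*$ by attaching a block of $\lambda_i$ children labelled $i$ at $v=f(k)$, right after the children of $v$ visited among $f(0),\dots,f(k)$. Then the traversal of $T'$ equals the traversal of $T^*$ with the segment
\[
(u_1,\ldots,u_{\lambda_i},v)
\]
inserted right after position $k$, where $u_1,\ldots,u_{\lambda_i}$ are the new leaves. Consequently $\mathsf{R}(T')=\mathsf{R}(T^*)\triangleleft_k e^{(i)}$.

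To prove the lemma, follow rule (i) of Definition~\ref{def:E-DFS}. At time $k$ the leftmost unvisited child of $v$ is now $u_1$. The new vertices are leaves, so the traversal steps through $u_1,\dots,u_{\lambda_i}$ via rule (ii), which increments along a block. It returns to $v$ when $u_{\lambda_i}$, the last element of the block, is reached. After that it continues exactly as before, because the visited and unvisited status of every other vertex is unchanged. The increments read off along the inserted segment are precisely the $|e^{(i)}|$ steps of $e^{(i)}$, and the pieces of the path before and after the insertion point are unchanged.

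\textbf{Induction step and main obstacle.} Write $\eta=\mathsf{Prune}_1(\eta,E)\triangleleft_{\tau-|e|}e$. The lemma together with the inductive hypothesis $\mathsf{R}(\mathsf{T}^*(\mathsf{Prune}_1\eta))=\mathsf{Prune}_1\eta$ gives $\mathsf{R}(\mathsf{T}^*(\eta))=\eta$. We also need $\mathsf{Prune}_1(\eta,E)\in\mathsf{Seg}(E)$, which holds because pruning one loop first and then pruning fully gives the same result as pruning fully, by the definition of $\mathsf{Prune}$ as an iterated limit.

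The main obstacle is the index bookkeeping in the lemma. One must check that the traversal of $\mathsf{T}^*(\mathsf{Prune}_1\eta)$ has length exactly $|\mathsf{Prune}_1\eta|$, so that $f(\tau-|e|)$ is defined and matches path position $\tau-|e|$. One must also check that the placement rule ``after visited children, before remaining ones'' is exactly what makes the traversal visit $u_1$ at time $k+1$. If a traversal rule such as rule (ii) behaves differently when $v=\rho$ or when $v$ is itself a block member, I would handle these as separate cases inside the lemma. The length identity itself follows by the same induction, since each insertion adds $|e|$ entries to both the traversal and the path.
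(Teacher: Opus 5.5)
Your proposal is correct and is essentially the paper's argument. The paper gives no formal proof; it justifies injectivity by the step-by-step correspondence between $\eta$ and the $E$-depth-first traversal of $\mathsf{T}^*(\eta)$, along which each elementary block of type $k$ traces one copy of $e^{(k)}$ (Remarks~\ref{rem:link} and~\ref{rem:reconstruct1}), and your reconstruction map $\mathsf{R}$ together with the traversal-insertion lemma is a rigorous version of exactly this. One point is worth stating explicitly: a children's label sequence parses uniquely into elementary blocks, since each type $j$ has fixed length $\lambda_j$; this makes $\mathsf{R}$ well defined and ensures that inserting a whole block at a block boundary leaves the parse of the other blocks unchanged.
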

\begin{remark}[Link between $\eta$ and the $E$-depth-first traversal of $\mathsf{T}^*(\eta)$]\label{rem:link}
The construction of the map $\mathsf{T}^*$ reveals a one-to-one correspondence between the steps of a path $\eta$ and the steps of the $E$-depth-first traversal of its tree $\mathsf{T}^*(\eta)$. Under this correspondence, when the traversal moves across a family of vertices forming an elementary sequence with label $k$, the path $\eta$ is exactly tracing an occurrence of the loop $e^{(k)}$.
\end{remark}
\begin{remark}[Reconstruct $\eta$ from $\mathsf{T}^*(\eta)$]\label{rem:reconstruct1}
To provide a more intuitive illustration of the map $\mathsf{T}^*$, we again consider the $E$-segmented marked tree in Figure~\ref{fig:tree1}. Suppose it is the canonical tree representation of a path $\eta$. We now describe how to reconstruct $\eta$ from its tree representation.

The tree encodes the path
\[ \eta = \big(e^{(1)} \triangleleft_{1} e^{(2)}\big) \ \circ_t\ \Big\{\, e^{(2)} \triangleleft_{2} \big[\,(e^{(2)} \triangleleft_{1} e^{(1)}) \circ_t e^{(1)}\,\big] \Big\}. \]
The reconstruction proceeds as follows:

\begin{enumerate}[label=(\roman*)]
\item The root has three children labeled $1,2,2$, indicating that, starting from the trivial path $(0)$, we first insert $e^{(1)}$ (creating a child labeled $1$) and then insert $e^{(2)}$ (creating two children labeled $2$);
\item The left child of the root (label $1$) has two children labeled $2$, corresponding to the insertion of $e^{(2)}$ at that position, which realizes the operation $e^{(1)} \triangleleft_1 e^{(2)}$;
\item The right child of the root (label $2$) has children labeled $2,2,1$, meaning that at the second step of that loop, we first insert $e^{(2)}$ (two children, placed first) and then another $e^{(1)}$ (one child, placed last), realizing $e^{(2)} \triangleleft_{2}(\cdots)$;
\item The leftmost grandchild under that right child has one child labeled $1$, encoding the concatenated $\triangleleft_{1}e^{(1)}$ inside the brackets.
\end{enumerate}
Thus, the canonical tree representation fully encodes the path $\eta$, with vertex labels and hierarchical structure reflecting precisely the order and positions of the loop insertions.
\end{remark}

\subsubsection{Elementary-sequence representation}\label{sec:ES_represent}

We now introduce a second representation of paths in $\mathsf{Seg}(E)$, which records the structure of the canonical $E$-segmented marked tree in terms of the types of elementary sequences attached to each vertex.

Let $T^* = (T,\sigma) \in \mathbf{T}^*$ be an $E$-segmented marked tree.  
For each vertex $v \in V(T)$, consider its ordered list of children, which determines a concatenation of elementary sequences
\[
K_1 \circ K_2 \circ \cdots \circ K_q,
\]
where each $K_j$ is of type $i_j$.  

We define
\[
\mathbf{K}(k) := \big\{(a_j)_{j \ge 1} \in \{0,\ldots,k\}^\infty : a_j = 0 \text{ for all } j \ge \inf\{\ell \ge 1 : a_\ell = 0\} \big\},
\]
the space of absorbing-zero sequences over $\{0,\ldots,k\}$.  
To each vertex $v$ we associate the sequence
\[
\mathbf{k}(v) := (i_1,\ldots,i_q,0,0,\ldots) \in \mathbf{K}(\# E).
\]

We view $V(T)$ canonically as a prefix-stable subset of
\[
\mathcal{U} := \bigcup_{n=0}^\infty (\mathbb{N}^+)^n, \qquad (\mathbb{N}^+)^0 := \{\emptyset\},
\]
via the usual \textit{Ulam--Harris identification} of rooted ordered trees.
For $v \in \mathcal{U}\setminus V(T)$, we set $\mathbf{k}(v) := (0,0,\ldots)$.  

This construction yields a map
\[
\mathcal{E}: \mathbf{T}^* \longrightarrow \mathbf{K}(\# E)^{\mathcal{U}}, 
\qquad 
T^* \mapsto (\mathbf{k}(v))_{v \in \mathcal{U}},
\]
which we call the \textit{elementary-sequence representation} (or \textit{ES-representation}) of the $E$-segmented marked tree.  
For a path $\eta \in \mathsf{Seg}(E)$, we define its \textit{ES-representation} by
\[
\mathcal{E}(\eta):=\mathcal{E}(\mathsf{T}^*(\eta)) \in \mathbf{K}(\# E)^{\mathcal{U}}.
\]

Since \(\mathsf{T}^*:\mathsf{Seg}(E)\to\mathbf{T}^*\) is injective by Proposition~\ref{pro:tree_represent}, and \(\mathcal{E}:\mathbf{T}^*\to\mathbf{K}(\#E)^{\mathcal U}\) is injective by construction, we immediately obtain the following.

\begin{proposition}\label{prop:ES-injection}
The map \(\mathcal{E}:\mathsf{Seg}(E)\to \mathbf{K}(\# E)^{\mathcal{U}}\), \(\eta\mapsto \mathcal{E}(\eta)\), is injective.
\end{proposition}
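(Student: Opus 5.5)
The injectivity of $\mathcal{E}$ on $\mathsf{Seg}(E)$ factors as the composition of $\mathsf{T}^*:\mathsf{Seg}(E)\to\mathbf{T}^*$, injective by Proposition~\ref{pro:tree_represent}, and $\mathcal{E}:\mathbf{T}^*\to\mathbf{K}(\#E)^{\mathcal U}$. A composition of injections is injective, so the plan reduces to verifying that $\mathcal{E}$ is injective on $\mathbf{T}^*$. I will do this by reconstructing a marked tree $T^*=(T,\sigma)$ from $(\mathbf{k}(v))_{v\in\mathcal U}$.

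\textbf{Determining the vertex set.} I will recover $V(T)$ by induction on depth in the Ulam--Harris tree.
\begin{itemize}
\item The root $\emptyset$ always belongs to $V(T)$.
\item Suppose $v\in V(T)$ and $\mathbf{k}(v)=(i_1,\ldots,i_q,0,\ldots)$, where $i_j\ge1$. Since the labels of the children of $v$ form $K_1\circ\cdots\circ K_q$ with $K_j$ of type $i_j$ and length $\lambda_{i_j}$, $v$ has exactly $L(v)=\sum_{j=1}^q\lambda_{i_j}$ children.
\item In Ulam--Harris coordinates these children are $v1,\ldots,vL(v)$.
\end{itemize}
Here it matters that $\lambda_{i}=|e^{(i)}|-1\ge1$ for every $i\ge1$, which is the standing assumption $|e|\ge2$. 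Because of this, the number $q$ of nonzero entries is well defined: the sequence lies in $\mathbf K(\#E)$, so the first zero marks the end. The map from a sequence to $L(v)$ is therefore a function of $\mathbf{k}(v)$ alone. One should not try to recover $V(T)$ by asking whether $\mathbf{k}(v)\neq0$, because leaves also carry the zero sequence; the inductive recovery avoids this ambiguity.

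\textbf{Determining the labels.} The labels of the children $v1,\ldots,vL(v)$ are read off as the concatenation $(i_1^{\lambda_{i_1}},\ldots,i_q^{\lambda_{i_q}})$. Thus $\sigma$ is a function of $\mathcal E(T^*)$. The ordering of $T$ is also determined, since it is encoded in the Ulam--Harris identification.

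\textbf{Conclusion and main point.} Two trees in $\mathbf T^*$ with the same image under $\mathcal E$ therefore have the same vertex set, the same order and the same labels, so they coincide. The only subtle step is checking that $V(T)$ is genuinely determined and not merely the set of vertices with nonzero $\mathbf k$. The depth induction handles this. It relies only on $\lambda_i\ge1$ and on the fact that the decomposition into elementary sequences is recorded explicitly by $\mathbf k(v)$, rather than having to be inferred from the labels. Combining this with Proposition~\ref{pro:tree_represent} gives the claim.
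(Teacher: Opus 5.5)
Your proposal is correct and follows essentially the same route as the paper, which also obtains the result as the composition of the injection \(\mathsf{T}^*\) from Proposition~\ref{pro:tree_represent} with \(\mathcal{E}:\mathbf{T}^*\to\mathbf{K}(\#E)^{\mathcal U}\). The paper asserts that this second map is injective ``by construction''; your depth-induction reconstruction of \(V(T)\) and \(\sigma\) simply makes that step explicit.
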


\begin{remark}[Reconstruct \(\eta\) from \(\mathcal E(\eta)\)]\label{rem:reconstruct2}
Finally, let us note how one can recover the path \(\eta\) from its ES-representation. 
For each \(v\in\mathcal U\), the associated sequence records, in order, the loop types to be inserted at the position encoded by \(v\).
Starting from the trivial path \((0)\), one reads these positions recursively and inserts the prescribed loops in the corresponding order.
This reconstructs the whole segment \(\eta\).
\end{remark}

Recall from Definition~\ref{def:prod_seg} the product space \(\mathsf{Seg}^{\otimes}(s',E)\). We now extend the ES-representation to \(\mathsf{Seg}^{\otimes}(s',E)\) componentwise.

\begin{definition}[Product ES-representation]\label{def:E-tensor}
For any \(E\)-pruned finite path \(s'\), define
\[
\mathcal{E}^{\otimes}=\mathcal{E}^{\otimes}_{s'}:\mathsf{Seg}^{\otimes}(s',E)\longrightarrow \big(\mathbf{K}(\#E)^{\mathcal U}\big)^{|s'|+1},
\quad
(\eta_0,\ldots,\eta_{|s'|})\longmapsto \big(\mathcal{E}(\eta_0),\ldots,\mathcal{E}(\eta_{|s'|})\big).
\]
\end{definition}

\begin{proposition}\label{prop:E-tensor-injective}
For every \(E\)-pruned finite path \(s'\), the map \(\mathcal{E}^{\otimes}\) is injective.
\end{proposition}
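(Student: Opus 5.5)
The map $\mathcal{E}^{\otimes}_{s'}$ is a product of copies of the single map $\mathcal{E}:\mathsf{Seg}(E)\to\mathbf{K}(\#E)^{\mathcal U}$, restricted coordinatewise. The plan is to deduce its injectivity directly from Proposition~\ref{prop:ES-injection}.

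First, I check that the definition makes sense. By Definition~\ref{def:Seg-classes}, each factor $\mathsf{Seg}(s'[0,m],E)$ of $\mathsf{Seg}^{\otimes}(s',E)$ is, by \eqref{def:Seg}, a subset of $\mathsf{Seg}(E)$. Hence every coordinate $\eta_m$ of an element $(\eta_0,\ldots,\eta_{|s'|})\in\mathsf{Seg}^{\otimes}(s',E)$ lies in the domain of $\mathcal{E}$, and $\mathcal{E}^{\otimes}$ is well defined. Note also that the prefixes $s'[0,m]$ are $E$-pruned, since $s'$ is; this is only needed so that the factor sets are as in Definition~\ref{def:Seg-classes}.

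Next, I prove injectivity. Suppose $(\eta_0,\ldots,\eta_{|s'|})$ and $(\eta'_0,\ldots,\eta'_{|s'|})$ in $\mathsf{Seg}^{\otimes}(s',E)$ have the same image. Comparing the $m$-th coordinates gives $\mathcal{E}(\eta_m)=\mathcal{E}(\eta'_m)$ for each $m\in[0,|s'|]$. Proposition~\ref{prop:ES-injection} then yields $\eta_m=\eta'_m$ for every $m$, so the two tuples coincide.

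There is no genuine obstacle here: a product of injective maps is injective, and restricting to a subset of the domain preserves injectivity. All the substantive content sits in Proposition~\ref{prop:ES-injection}, which rests on the injectivity of $\mathsf{T}^*$ in Proposition~\ref{pro:tree_represent}. We take that as given.

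Combining with Proposition~\ref{pro:one-to-one}, one also gets that $s\mapsto\mathcal{E}^{\otimes}((\xi_m(s,E))_m)$ is injective on $\mathsf{P}(s',E)$. This composite form is presumably how the result will be used later.
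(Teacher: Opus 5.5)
Your argument is correct and matches the paper's approach. The paper gives no proof for this proposition and treats it as an immediate componentwise consequence of Proposition~\ref{prop:ES-injection}, using that each factor $\mathsf{Seg}(s'[0,m],E)$ is contained in $\mathsf{Seg}(E)$.
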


\subsection{Decomposition and fiber factorization}\label{subsec:decomposition-fiber-factorization}

The goal of this subsection is to decompose the path according to vertices of the \(E\)-segmented marked tree. 
Roughly speaking, a vertex \(v\) of the tree identifies a time on the path. 
Relative to this time, the path is split into the history before that time, the loop cluster inserted there, and the part of the path that comes afterward. 
We make this decomposition precise below and then use it to obtain a fiber factorization theorem. 
In Sections~\ref{sec:loop-pruning-srw} and~\ref{sec:proof-downward}, this will describe the possible future configurations when loops are inserted back into the pruned path step by step.

Recall from Definition~\ref{def:elementary-sequence} that \(\lambda_j=|e^{(j)}|-1\).

\subsubsection*{Vertex and boundary addresses}

We begin by introducing vertex and boundary addresses, which are required to formulate the decomposition relative to $v$.

\begin{definition}[Vertex addresses]
Let \(W\in \mathcal E(\mathsf{Seg}(E))\), and let \(T^*\in\mathbf T^*\) be the marked tree corresponding to \(W\). 
We define \(V(W)\subset\mathcal U\) as the set of Ulam--Harris addresses of the vertices of \(T^*\).
Elements of \(V(W)\) are called vertex addresses of \(W\).
\end{definition}

\begin{definition}[Boundary addresses]\label{def:boundary-addresses}
For \(u\in V(W)\), write
\[
W(u)=(a_1,\dots,a_\ell,0,0,\dots),
\qquad a_i\neq 0 \ \ (1\le i\le \ell).
\]
We then define
\[
V^{\partial}(W):=
\bigl\{\, u\,b_k(u) : u\in V(W),\ 0\le k\le \ell,\ b_k(u)=1+\sum_{i=1}^k \lambda_{a_i} \,\bigr\}.
\]
Elements of \(V^{\partial}(W)\) are called boundary addresses of \(W\).

For \(v\in V^{\partial}(W)\), write \(v=u\,b_k(u)\) in the above form, and define
\[
\operatorname{pre}_W(v):=k,
\qquad
\operatorname{Next}_W(v):=a_{k+1}.
\]
Equivalently, writing \(\mathrm{par}(v)\) for the parent of \(v\), namely \(\mathrm{par}(v):=(v_1,\ldots,v_{n-1})\) if \(v=(v_1,\ldots,v_n)\), we have \(v=\mathrm{par}(v)j\) with
\(
j=1+\sum_{i=1}^{\operatorname{pre}_W(v)}\lambda_{a_i}.
\)
\end{definition}
In words, \(V^{\partial}(W)\) records the block-boundary positions at each vertex address \(u\):
the start of the first block (\(k=0\)), the starts immediately after the first \(k\)
completed blocks (\(1\le k< \ell\)), and the first position after the last block (\(k=\ell\)).
For \(k<\ell\), the corresponding address belongs to \(V(W)\); for \(k=\ell\), it does not belong to \(V(W)\).
The number \(\operatorname{pre}_W(v)\) is the number of elementary blocks of \(W(\mathrm{par}(v))\) completed before the boundary \(v\), while \(\operatorname{Next}_W(v)\) is the type of the bottom-most loop in the next elementary block at \(\mathrm{par}(v)\); the value \(0\) means that there is no next block.

\subsubsection*{Decomposition at a boundary address}

We now state the structural decomposition relative to a fixed boundary address $v$, which will be the basis of the later factorization result.
For $w,v\in\mathcal U$, we write
\begin{align*}
w\prec_{\mathrm{anc}} v
&\quad\Longleftrightarrow\quad
w \text{ is a proper prefix of } v,\\
w\prec_{\mathrm{lex}} v
&\quad\Longleftrightarrow\quad
w \text{ is lexicographically smaller than } v.
\end{align*}
We use \(\preceq_{\mathrm{anc}}\) and \(\preceq_{\mathrm{lex}}\) for the corresponding non-strict orders.

\begin{definition}[Relative address decomposition]
\label{def:V-decomposition-relative-v}
Let $W\in \mathbf{K}(\#E)^{\mathcal U}$ and $v\in V^{\partial}(W)$, and write
$v=(v_1,\ldots,v_\ell)$.
For two addresses $x,y\in\mathcal U$, write $x\wedge y$ for their longest common prefix
(their youngest common ancestor in the Ulam--Harris tree).

Define
\[
V_{\mathrm{exp}}^{v,W}
=
\big\{u\in V(W):\ u\prec_{\mathrm{lex}} v\big\}.
\]
Define the \(\ell\)-th younger-sibling set by
\[
\widehat{\mathcal D}_{\ell}^{v,W}
:=
\{u\in V(W): \mathrm{par}(v)\prec_{\mathrm{anc}}u,\ v\preceq_{\mathrm{lex}} u\},
\quad
\mathcal D_{\ell}^{v,W}:=\widehat{\mathcal D}_{\ell}^{v,W}\cup\{\mathrm{par}(v)\}.
\]
Namely, $\mathcal D_{\mathrm{par}}^{v,W}$ is the subtree generated by descendants of $v$ together with those of its younger siblings, with ``root'' \(\mathrm{par}(v)\); \(\widehat{\mathcal D}_{\mathrm{par}}^{v,W}\) is the same subtree with the root removed.

For each $1\le q\le \ell-1$, define the $q$-th younger-sibling set
\[
\mathcal Y_q^{v,W}
:=
\big\{u\in V(W):\ |u|=q,\ |u\wedge v|=q-1,\ v\prec_{\mathrm{lex}}u\big\}.
\]
For $u\in\bigcup_{q=1}^{\ell-1} \mathcal Y_q^{v,W}$, define its descendant block
\[
\mathcal D_u^W:=\{x\in V(W):\ u\preceq_{\mathrm{anc}} x\}.
\]

Then
\[
V(W)
=
V_{\mathrm{exp}}^{v,W}
\ \sqcup
\widehat{\mathcal D}_{\mathrm{par}}^{v,W}
\ \sqcup
\bigsqcup_{q=1}^{\ell-1}\ \bigsqcup_{u\in\mathcal Y_q^{v,W}} \mathcal D_u^W.
\]
The decomposition is illustrated in Figure~\ref{fig:V-decomposition-relative-v}. In particular, these sets form a partition of $V(W)$.
\end{definition}

\begin{figure}[H]
\centering
\begin{minipage}[t]{0.48\textwidth}
\centering
\resizebox{0.86\linewidth}{!}{
\begin{tikzpicture}[
  scale=0.88,
  every node/.style={font=\small},
  vertex/.style={circle, draw=none, minimum size=2.2mm, inner sep=0pt, fill=black!70},
  chain/.style={vertex, fill=blue!70!black},
  side/.style={vertex, fill=orange!85!black},
  old/.style={vertex, fill=gray!65},
  edge/.style={draw=black!80, line width=0.55pt},
  block/.style={rounded corners=4pt, thick}
]
\filldraw[block, fill=gray!10, draw=gray!55, dashed] (-3.2,-0.25) rectangle (0.35,4.25);
\filldraw[block, fill=orange!8, draw=orange!70!black, dashed] (1.55,0.32) rectangle (2.15,1.12);
\filldraw[block, fill=orange!8, draw=orange!70!black, dashed] (1.05,1.38) rectangle (2.15,2.28);
\filldraw[block, fill=orange!8, draw=orange!70!black, dashed] (2.55,1.38) rectangle (3.15,2.28);
\filldraw[block, fill=orange!5, draw=orange!45, dashed] (1.05,2.48) rectangle (2.95,3.02);
\filldraw[block, fill=blue!7, draw=blue!65, dashed] (-0.25,3.7) rectangle (2.35,5.3);

\node[old, label=left:{root}] (root) at (0,0) {};
\node[old] (a1) at (0,1.05) {};
\node[old] (a2) at (0,2.1) {};
\node[old, label=left:\(\mathrm{par}(v)\)] (p) at (0,3.15) {};
\node[chain, label=left:\(v\)] (v) at (0,3.85) {};
\node[chain] (vc) at (0,5.02) {};
\draw[edge] (root)--(a1)--(a2)--(p)--(v)--(vc);

\node[old] (o1) at (-1.75,1.05) {};
\node[old] (o1a) at (-2.25,1.75) {};
\node[old] (o1b) at (-1.35,1.75) {};
\node[old] (o2) at (-1.75,2.1) {};
\node[old] (o2a) at (-2.25,2.78) {};
\node[old] (o2b) at (-1.35,2.78) {};
\node[old] (o3) at (-2.75,0.85) {};
\node[old] (op) at (-1.0,3.85) {};
\draw[edge] (root)--(o1);
\draw[edge] (o1)--(o1a);
\draw[edge] (o1)--(o1b);
\draw[edge] (a1)--(o2);
\draw[edge] (o2)--(o2a);
\draw[edge] (o2)--(o2b);
\draw[edge] (root)--(o3);
\draw[edge] (p)--(op);
\node[anchor=east, black!68] at (-3.28,2.0) {\(V_{\mathrm{exp}}^{v,W}\)};

\node[side] (yr1) at (1.85,0.78) {};
\draw[edge] (root)--(yr1);
\node[anchor=west, orange!75!black] at (3.38,0.82) {\(\mathcal D_u^W,\ u\in\mathcal Y_1^{v,W}\)};

\node[side] (y1) at (1.55,1.62) {};
\node[side] (y1a) at (1.2,2.15) {};
\node[side] (y1b) at (1.9,2.15) {};
\node[side] (y1c) at (2.85,1.62) {};
\node[side] (y1ca) at (2.85,2.15) {};
\draw[edge] (a1)--(y1);
\draw[edge] (y1)--(y1a);
\draw[edge] (y1)--(y1b);
\draw[edge] (a1)--(y1c);
\draw[edge] (y1c)--(y1ca);
\node[anchor=west, orange!75!black] at (3.38,1.78) {\(\mathcal D_u^W,\ u\in\mathcal Y_2^{v,W}\)};

\node[anchor=west, orange!65!black] at (3.08,2.65) {\(\mathcal Y_3^{v,W}=\emptyset\)};

\node[chain] (ys1) at (1.1,3.85) {};
\node[chain] (ys2) at (1.9,3.85) {};
\node[chain] (ys1a) at (1.1,5.02) {};
\draw[edge] (p)--(ys1);
\draw[edge] (p)--(ys2);
\draw[edge] (ys1)--(ys1a);

\node[anchor=east, blue!70] at (-0.35,4.85) {\(\widehat{\mathcal D}_{\mathrm{par}}^{v,W}\)};
\end{tikzpicture}}
\par\smallskip\textup{(a) Decomposition of \(V(W)\)}
\end{minipage}\hfill
\begin{minipage}[t]{0.48\textwidth}
\centering
\resizebox{0.86\linewidth}{!}{
\begin{tikzpicture}[
  scale=0.88,
  every node/.style={font=\small},
  vertex/.style={circle, draw=none, minimum size=2.2mm, inner sep=0pt, fill=black!70},
  chain/.style={vertex, fill=blue!70!black},
  side/.style={vertex, fill=orange!85!black},
  old/.style={vertex, fill=gray!65},
  edge/.style={draw=black!80, line width=0.55pt},
  block/.style={rounded corners=4pt, thick},
  contourgray/.style={black!68, line width=0.95pt, rounded corners=4pt, ->, >=stealth, line cap=round, line join=round},
  contourorange/.style={orange!80!black, very thick, rounded corners=4pt, ->, >=stealth, line cap=round, line join=round},
  contourblue/.style={blue!70, very thick, rounded corners=4pt, ->, >=stealth, line cap=round, line join=round}
]
\filldraw[block, fill=gray!10, draw=gray!55, dashed] (-3.2,-0.25) rectangle (0.35,4.25);
\filldraw[block, fill=orange!8, draw=orange!70!black, dashed] (1.55,0.32) rectangle (2.15,1.12);
\filldraw[block, fill=orange!8, draw=orange!70!black, dashed] (1.05,1.38) rectangle (2.15,2.28);
\filldraw[block, fill=orange!8, draw=orange!70!black, dashed] (2.55,1.38) rectangle (3.15,2.28);
\filldraw[block, fill=orange!5, draw=orange!45, dashed] (1.05,2.48) rectangle (2.95,3.02);
\filldraw[block, fill=blue!7, draw=blue!65, dashed] (-0.25,3.7) rectangle (2.35,5.3);

\node[old, label=left:{root}] (root) at (0,0) {};
\node[old] (a1) at (0,1.05) {};
\node[old] (a2) at (0,2.1) {};
\node[old, label=left:\(\mathrm{par}(v)\)] (p) at (0,3.15) {};
\node[chain, label=left:\(v\)] (v) at (0,3.85) {};
\node[chain] (vc) at (0,5.02) {};
\draw[edge] (root)--(a1)--(a2)--(p)--(v)--(vc);

\node[old] (o1) at (-1.75,1.05) {};
\node[old] (o1a) at (-2.25,1.75) {};
\node[old] (o1b) at (-1.35,1.75) {};
\node[old] (o2) at (-1.75,2.1) {};
\node[old] (o2a) at (-2.25,2.78) {};
\node[old] (o2b) at (-1.35,2.78) {};
\node[old] (o3) at (-2.75,0.85) {};
\node[old] (op) at (-1.0,3.85) {};
\draw[edge] (root)--(o1);
\draw[edge] (o1)--(o1a);
\draw[edge] (o1)--(o1b);
\draw[edge] (a1)--(o2);
\draw[edge] (o2)--(o2a);
\draw[edge] (o2)--(o2b);
\draw[edge] (root)--(o3);
\draw[edge] (p)--(op);
\node[anchor=east, black!68] at (-3.28,2.0) {\(V_{\mathrm{exp}}^{v,W}\)};

\node[side] (yr1) at (1.85,0.78) {};
\draw[edge] (root)--(yr1);
\node[anchor=west, orange!75!black] at (3.38,0.82) {\(\mathcal D_u^W,\ u\in\mathcal Y_1^{v,W}\)};

\node[side] (y1) at (1.55,1.62) {};
\node[side] (y1a) at (1.2,2.15) {};
\node[side] (y1b) at (1.9,2.15) {};
\node[side] (y1c) at (2.85,1.62) {};
\node[side] (y1ca) at (2.85,2.15) {};
\draw[edge] (a1)--(y1);
\draw[edge] (y1)--(y1a);
\draw[edge] (y1)--(y1b);
\draw[edge] (a1)--(y1c);
\draw[edge] (y1c)--(y1ca);
\node[anchor=west, orange!75!black] at (3.38,1.78) {\(\mathcal D_u^W,\ u\in\mathcal Y_2^{v,W}\)};

\node[anchor=west, orange!65!black] at (3.08,2.65) {\(\mathcal Y_3^{v,W}=\emptyset\)};

\node[chain] (ys1) at (1.1,3.85) {};
\node[chain] (ys2) at (1.9,3.85) {};
\node[chain] (ys1a) at (1.1,5.02) {};
\draw[edge] (p)--(ys1);
\draw[edge] (p)--(ys2);
\draw[edge] (ys1)--(ys1a);

\def\crad{0.24}
\def\gcrad{0.30}
\draw[contourgray]
  ($(root)+(-\crad,-\crad)$)
  -- ($(o3)+(-\crad,-\crad)$) -- ($(o3)+(-\crad,\crad)$) -- ($(o3)+(\crad,\crad)$) -- ($(o3)+(\crad,-\crad)$)
  -- ($(root)+(-\crad,\crad)$)
  -- ($(o1)+(-\crad,-\crad)$) -- ($(o1)+(-\crad,\crad)$)
  -- ($(o1a)+(-\crad,-\crad)$) -- ($(o1a)+(-\crad,\crad)$) -- ($(o1a)+(\crad,\crad)$) -- ($(o1a)+(\crad,-\crad)$)
  -- ($(o1)+(-0.02,\crad)$)
  -- ($(o1b)+(-\crad,-\crad)$) -- ($(o1b)+(-\crad,\crad)$) -- ($(o1b)+(\crad,\crad)$) -- ($(o1b)+(\crad,-\crad)$)
  -- ($(o1)+(\crad,\crad)$) -- ($(o1)+(\crad,-\crad)$)
  -- ($(root)+(0,\crad)$) -- ($(root)+(\crad,\crad)$)
  -- ($(a1)+(-\crad,-\crad)$) -- ($(a1)+(-\crad,\crad)$)
  -- ($(o2)+(-\crad,-\crad)$) -- ($(o2)+(-\crad,\crad)$)
  -- ($(o2a)+(-\crad,-\crad)$) -- ($(o2a)+(-\crad,\crad)$) -- ($(o2a)+(\crad,\crad)$) -- ($(o2a)+(\crad,-\crad)$)
  -- ($(o2)+(-0.02,\crad)$)
  -- ($(o2b)+(-\crad,-\crad)$) -- ($(o2b)+(-\crad,\crad)$) -- ($(o2b)+(\crad,\crad)$) -- ($(o2b)+(\crad,-\crad)$)
  -- ($(o2)+(\crad,\crad)$) -- ($(o2)+(\crad,-\crad)$)
  -- ($(a1)+(\crad,\crad)$)
  -- ($(a2)+(-\crad,-\crad)$) -- ($(a2)+(-\crad,\crad)$)
  -- ($(p)+(-\crad,-\crad)$) -- ($(p)+(-\crad,\crad)$)
  -- ($(op)+(-\crad,-\crad)$) -- ($(op)+(-\crad,\crad)$) -- ($(op)+(\crad,\crad)$) -- ($(op)+(\crad,-\crad)$)
  -- ($(p)+(\crad,\crad)$)
  -- ($(p)+(\crad,-\crad)$)
  -- ($(a2)+(\crad,\crad)$) -- ($(a2)+(\crad,-\crad)$)
  -- ($(a1)+(\crad,\crad)$)
  -- ($(y1)+(-\gcrad,-\gcrad)$) -- ($(y1)+(-\gcrad,\gcrad)$) -- ($(y1)+(\gcrad,\gcrad)$) -- ($(y1)+(\gcrad,-\gcrad)$)
  -- ($(a1)+(0.42,0.06)$) -- ($(a1)+(0.64,-0.20)$)
  -- ($(y1c)+(-\gcrad,-\gcrad)$) -- ($(y1c)+(-\gcrad,\gcrad)$) -- ($(y1c)+(\gcrad,\gcrad)$) -- ($(y1c)+(\gcrad,-\gcrad)$)
  -- ($(a1)+(0.46,-0.32)$) -- ($(a1)+(\crad,-\crad)$)
  -- ($(root)+(\crad,\crad)$)
  -- ($(yr1)+(-\gcrad,-\gcrad)$) -- ($(yr1)+(-\gcrad,\gcrad)$) -- ($(yr1)+(\gcrad,\gcrad)$) -- ($(yr1)+(\gcrad,-\gcrad)$)
  -- ($(root)+(\crad,-\crad)$)
  -- ($(root)+(-\crad,-\crad)$);
\draw[contourorange]
  ($(yr1)+(-\crad,-\crad)$) -- ($(yr1)+(-\crad,\crad)$) -- ($(yr1)+(\crad,\crad)$) -- ($(yr1)+(\crad,-\crad)$) -- ($(yr1)+(-\crad,-\crad)$);
\draw[contourorange]
  ($(y1)+(-\crad,-\crad)$) -- ($(y1)+(-\crad,\crad)$)
  -- ($(y1a)+(-\crad,-\crad)$) -- ($(y1a)+(-\crad,\crad)$) -- ($(y1a)+(\crad,\crad)$) -- ($(y1a)+(\crad,-\crad)$)
  -- ($(y1)+(-0.02,\crad)$)
  -- ($(y1b)+(-\crad,-\crad)$) -- ($(y1b)+(-\crad,\crad)$) -- ($(y1b)+(\crad,\crad)$) -- ($(y1b)+(\crad,-\crad)$)
  -- ($(y1)+(\crad,\crad)$) -- ($(y1)+(\crad,-\crad)$) -- ($(y1)+(-\crad,-\crad)$);
\draw[contourorange]
  ($(y1c)+(-\crad,-\crad)$) -- ($(y1c)+(-\crad,\crad)$)
  -- ($(y1ca)+(-\crad,-\crad)$) -- ($(y1ca)+(-\crad,\crad)$) -- ($(y1ca)+(\crad,\crad)$) -- ($(y1ca)+(\crad,-\crad)$)
  -- ($(y1c)+(\crad,\crad)$) -- ($(y1c)+(\crad,-\crad)$) -- ($(y1c)+(-\crad,-\crad)$);
\draw[contourblue]
  ($(p)+(\crad,\crad)$)
  -- ($(v)+(-\crad,-\crad)$) -- ($(v)+(-\crad,\crad)$)
  -- ($(vc)+(-\crad,-\crad)$) -- ($(vc)+(-\crad,\crad)$) -- ($(vc)+(\crad,\crad)$) -- ($(vc)+(\crad,-\crad)$)
  -- ($(v)+(\crad,\crad)$) -- ($(v)+(\crad,-\crad)$)
  -- ($(p)+(0.30,\crad)$)
  -- ($(ys1)+(-\crad,-\crad)$) -- ($(ys1)+(-\crad,\crad)$)
  -- ($(ys1a)+(-\crad,-\crad)$) -- ($(ys1a)+(-\crad,\crad)$) -- ($(ys1a)+(\crad,\crad)$) -- ($(ys1a)+(\crad,-\crad)$)
  -- ($(ys1)+(\crad,\crad)$) -- ($(ys1)+(\crad,-\crad)$)
  -- ($(p)+(0.40,\crad)$)
  -- ($(ys2)+(-\crad,-\crad)$) -- ($(ys2)+(-\crad,\crad)$) -- ($(ys2)+(\crad,\crad)$) -- ($(ys2)+(\crad,-\crad)$)
  -- ($(p)+(0.55,\crad)$);

\node[anchor=east, blue!70] at (-0.35,4.85) {\(\widehat{\mathcal D}_{\mathrm{par}}^{v,W}\)};
\end{tikzpicture}}
\par\smallskip\textup{(b) Induced path decomposition}
\end{minipage}
\caption{Schematic decomposition relative to a boundary address \(v\). Panel \textup{(a)} shows the decomposition of \(V(W)\) into \(V_{\mathrm{exp}}^{v,W}\), \(\widehat{\mathcal D}_{\mathrm{par}}^{v,W}\), and the descendant blocks \(\mathcal D_u^W\). Panel \textup{(b)} shows, on the same tree, the gray, blue, and orange traversal intervals that induce the corresponding path decomposition. The schematic figure depicts the case where all loops in \(E\) have length \(2\).}
\label{fig:V-decomposition-relative-v}
\end{figure}

\begin{definition}[Truncated exploration data]\label{def:truncated-exploration-data}
Let \(v\in V^{\partial}(W)\).
Write
\[
W(\mathrm{par}(v))=(a_1,\ldots,a_r,0,0,\ldots).
\]
Set
\[
\bigl(W(\mathrm{par}(v))\bigr)_{<v}:=(a_1,\ldots,a_{\operatorname{pre}_W(v)}),
\]
with the convention that \(\bigl(W(\mathrm{par}(v))\bigr)_{<v}\) is the empty tuple when
\(\operatorname{pre}_W(v)=0\).
We further define
\[
\operatorname{Res}_v W
:=
\Bigl(W\big|_{V_{\mathrm{exp}}^{v,W}\setminus\{\mathrm{par}(v)\}},\,
\bigl(W(\mathrm{par}(v))\bigr)_{<v}\Bigr).
\]
The truncated exploration data at \(v\) are
\[
\mathfrak H^{v,W}
:=
\bigl(v,V_{\mathrm{exp}}^{v,W},\operatorname{Res}_v W\bigr).
\]
This data records the previously explored part, but at the current parent
\(\mathrm{par}(v)\) it records only the prefix completed before the
boundary \(v\). In particular, it does not reveal
\(\operatorname{Next}_W(v)\), the type of the bottom-most loop in the next elementary block at \(\mathrm{par}(v)\).

\end{definition}

\begin{remark}
The sets $\mathcal Y_q^{v,W}$ are determined by the restriction
\(\operatorname{Res}_v W\).
\end{remark}

\subsubsection*{Induced path decomposition}
\phantomsection\label{subsubsec:induced-path-decomposition}

The decomposition of \(V(W)\) above has a direct interpretation along the \(E\)-depth-first traversal. 
Let \(\eta\in\mathsf{Seg}(E)\) with \(\mathcal E(\eta)=W\), and let \(f=f_{\mathsf T^*(\eta)}\) be the \(E\)-depth-first traversal of \(\mathsf T^*(\eta)\). 
Recall from Remark~\ref{rem:link} that the steps of \(\eta\) are in one-to-one correspondence with the steps of \(f\). 
Using the notation in Definition~\ref{def:V-decomposition-relative-v}, write
\[
W(\mathrm{par}(v))=(a_1,\ldots,a_r,0,0,\ldots),
\qquad
v=\mathrm{par}(v)j,
\qquad
j=1+\sum_{i=1}^{\operatorname{pre}_W(v)} \lambda_{a_i}.
\]
Let
\[
\tau_{\mathrm{exp}}^{v,W}
:=
\inf\Bigl\{t\ge 0:\ \#\{0\le r\le t:\ f(r)=\mathrm{par}(v)\}=\operatorname{pre}_W(v)+1\Bigr\}
\]
be the time of the \((\operatorname{pre}_W(v)+1)\)-st visit to \(\mathrm{par}(v)\), and, for every \(u\in V(W)\), set
\[
\tau_u^-:=\inf\{m\ge 0:\ f(m)=u\},
\qquad
\tau_u^+:=\sup\{m\ge 0:\ f(m)=u\}.
\]
We also write
\(
\tau_{\mathrm{par}}^{v,W}:=\tau_{\mathrm{par}(v)}^+
\)
for the last visit to \(\mathrm{par}(v)\). 

The boundary address \(v=\mathrm{par}(v)j\) and the vertices \(u\in\bigcup_{q=1}^{\ell-1}\mathcal Y_q^{v,W}\) locate the relevant times on the path through the traversal \(f\). 
The address \(v\) determines the interval \([\tau_{\mathrm{exp}}^{v,W},\tau_{\mathrm{par}}^{v,W}]\), while each \(u\in\bigcup_{q=1}^{\ell-1}\mathcal Y_q^{v,W}\) determines the interval \([\tau_u^-,\tau_u^+]\). 
In terms of the decomposition of \(V(W)\), \(f[\tau_{\mathrm{exp}}^{v,W},\tau_{\mathrm{par}}^{v,W}]\) explores \(\mathcal D_{\mathrm{par}}^{v,W}\), and \(f[\tau_u^-,\tau_u^+]\) explores \(\mathcal D_u^W\). The restriction of \(f\) to the remaining portions of its time interval gives the pruned contour corresponding to \(V_{\mathrm{exp}}^{v,W}\).
This correspondence is illustrated by the blue, orange, and gray contour segments in Figure~\ref{fig:V-decomposition-relative-v}.
Using these same intervals, we decompose the path \(\eta\) by
\[
I_{\mathrm{exp}}^{v,W}
:=
\{0,\ldots,|\eta|\}\setminus
\Bigg(
(\tau_{\mathrm{exp}}^{v,W},\tau_{\mathrm{par}}^{v,W}]
\cup
\bigcup_{q=1}^{\ell-1}\bigcup_{u\in\mathcal Y_q^{v,W}}
(\tau_u^-,\tau_u^+]
\Bigg).
\]
Writing \(I_{\mathrm{exp}}^{v,W}=\{r_0<r_1<\cdots<r_L\}\), set
\[
\begin{aligned}
\eta_{\mathrm{exp}}^{v,W}
&:=\eta\big|_{I_{\mathrm{exp}}^{v,W}}
:=(\eta_{r_0},\eta_{r_1},\ldots,\eta_{r_L}),\\
\eta_{\mathrm{par}}^{v,W}
&:=\eta[\tau_{\mathrm{exp}}^{v,W},\tau_{\mathrm{par}}^{v,W}]-\eta_{\tau_{\mathrm{exp}}^{v,W}},
\qquad
\eta_u^W:=\eta[\tau_u^-,\tau_u^+]-\eta_{\tau_u^-}.
\end{aligned}
\]
These objects give the path-level decomposition induced by the decomposition of \(V(W)\) relative to \(v\). 

Intuitively, \(\eta_{\mathrm{exp}}^{v,W}\) first follows the initial segment \(\eta[0,\tau_{\mathrm{exp}}^{v,W}]\), and then follows the remaining part of \(\eta\) after the loop families
\(\eta_{\mathrm{par}}^{v,W}\) and \(\eta_u^W\),
\(u\in\bigcup_{q=1}^{\ell-1}\mathcal Y_q^{v,W}\),
have been pruned out.
The segment \(\eta_{\mathrm{par}}^{v,W}\) records the loop cluster attached at the site \(\eta_{\tau_{\mathrm{exp}}^{v,W}}\). 
For \(u\in\bigcup_{q=1}^{\ell-1}\mathcal Y_q^{v,W}\), the segment \(\eta_u^W\) records the loop cluster attached at the site \(\eta_{\tau_u^-}\). 
Thus \(\eta_{\mathrm{exp}}^{v,W}\) serves as the backbone path: the segments \(\eta_{\mathrm{par}}^{v,W}\) and \(\eta_u^W\) are inserted at the retained sites \(\eta_{\tau_{\mathrm{exp}}^{v,W}}\) and \(\eta_{\tau_u^-}\), respectively.

\subsubsection*{Pruned prefixes and compatibility condition}

We now record the compatibility conditions satisfied by \(\eta_{\mathrm{par}}^{v,W}\) and \(\eta_u^W\).
For \(u\in V(W)\), define the pruned prefix before \(u\) by
\[
\pi_u^{v,\mathrm{exp}}:=\mathsf{Prune}(\eta[0,\tau_u^-],E).
\]
The notation emphasizes that these prefixes are determined by the truncated exploration data \(\mathfrak H^{v,W}\): when pruning the paths above, \(\eta_{\mathrm{par}}^{v,W}\) and any previously encountered segments \(\eta_w^W\) are pruned in the corresponding \(\mathsf{Prune}(\cdot,E)\) operations. 
Consequently, the resulting pruned prefix depends only on \(\operatorname{Res}_v W\).

Observe that \(\eta_u^W\) is the final pruned segment in the loop-pruned decomposition of \(\eta[0,\tau_u^+]\), and hence satisfies the compatibility condition \eqref{eq:compatible-retained-prefix} with retained prefix \(\pi_u^{v,\mathrm{exp}}\):
\begin{equation}\label{eq:compatibility-descendant-piece}
\eta_u^W\in \mathsf{Seg}(\pi_u^{v,\mathrm{exp}},E).
\end{equation}
For the parent segment, \(\eta_{\mathrm{par}}^{v,W}\) is the part of the final pruned segment of \(\eta[0,\tau_{\mathrm{par}}^{v,W}]\) corresponding to \(\mathcal D_{\mathrm{par}}^{v,W}\). 
The part of this final segment that comes before \(\mathcal D_{\mathrm{par}}^{v,W}\) is always pruned in \(\mathsf{Prune}(\eta[0,\tau_{\mathrm{par}}^{v,W}],E)\). 
Therefore it is not hard to see that
\begin{equation}\label{eq:compatibility-parent-piece}
\eta_{\mathrm{par}}^{v,W}\in \mathsf{Seg}(\pi_{\mathrm{par}(v)}^{v,\mathrm{exp}},E).
\end{equation}

The conditions \eqref{eq:compatibility-descendant-piece} and \eqref{eq:compatibility-parent-piece} were stated for \(W\in\mathcal E(\mathsf{Seg}(E))\). 
They extend directly to the case \(W\in\mathcal E(\mathsf{Seg}(s',E))\), where \(s'\) is a fixed \(E\)-pruned finite path. 
In this case the retained prefix is shifted from \(\pi\) to \(s'\circ_t\pi\), and the corresponding conditions become
\begin{equation}\label{eq:compatibility-pieces-prefix}
\eta_u^W\in \mathsf{Seg}(s'\circ_t\pi_u^{v,\mathrm{exp}},E)\quad
\forall\,u\in\bigcup_{q=1}^{|v|-1}\mathcal Y_q^{v,W},
\qquad
\eta_{\mathrm{par}}^{v,W}\in \mathsf{Seg}(s'\circ_t\pi_{\mathrm{par}(v)}^{v,\mathrm{exp}},E).
\end{equation}

\subsubsection*{Fiber classes and factorization theorem}

We now make precise, in the same spirit as Proposition~\ref{pro:one-to-one}, that the compatibility conditions \eqref{eq:compatibility-pieces-prefix} are not merely necessary: after the truncated exploration data are fixed, they give the precise admissible classes for \(\eta_{\mathrm{par}}^{v,W}\) and the \(\eta_u^W\)'s.

For an \(E\)-pruned finite path \(s'\), define the set of possible truncated exploration data by
\[
\mathcal{H}_{\mathrm{tr}}^{s'}
:=
\bigl\{(v,A,\omega^\partial):\ \exists\,\eta\in\mathsf{Seg}(s',E),\
W=\mathcal E(\eta),\ v\in V^{\partial}(W),\
A=V_{\mathrm{exp}}^{v,W},\
\omega^\partial=\operatorname{Res}_v W\bigr\}.
\]
For \(h=(v,A,\omega^\partial)\in\mathcal{H}_{\mathrm{tr}}^{s'}\), define the path fiber
\[
\mathsf{Path}^{s'}(h)
:=
\{\eta\in\mathsf{Seg}(s',E):\ W=\mathcal E(\eta),\ v\in V^{\partial}(W),\
V_{\mathrm{exp}}^{v,W}=A,\ \operatorname{Res}_v W=\omega^\partial\}.
\]
If \(h=(v,A,\omega^\partial)\) and the second component of \(\omega^\partial\) is
\((a_1,\ldots,a_q)\), then \(\operatorname{pre}_{W}(v)=q\) for every
\(W=\mathcal E(\eta)\) with \(\eta\in\mathsf{Path}^{s'}(h)\). We write this common value as
\(\operatorname{pre}(h)\).

\begin{theorem}[Fiber factorization]
\label{thm:fiber-factorization-fixed-H}
Fix an \(E\)-pruned finite path \(s'\) and \(h=(v,A,\omega^\partial)\in\mathcal{H}_{\mathrm{tr}}^{s'}\).
The sets \(\mathcal Y_q^{v,W}\) and the pruned prefixes \(\pi_u^{v,\mathrm{exp}}\), for \(u\in\{\mathrm{par}(v)\}\cup\bigcup_{q=1}^{|v|-1}\mathcal Y_q^{v,W}\), are constant over \(W=\mathcal E(\eta)\) with \(\eta\in\mathsf{Path}^{s'}(h)\). 
Denote these common values by \(\mathcal Y_q^{h}\) and \(\pi_u^{h}\).
Then the map
\[
\begin{aligned}
\Psi_h:\mathsf{Path}^{s'}(h)
&\longrightarrow
\mathsf{Seg}(s'\circ_t \pi_{\mathrm{par}(v)}^{h},E)
\times\prod_{q=1}^{|v|-1}\prod_{u\in\mathcal Y_q^{h}}
\mathsf{Seg}(s'\circ_t \pi_u^{h},E),\\
\eta&\longmapsto
\Bigl(\eta_{\mathrm{par}}^{v,W},\
\bigl(\eta_u^W\bigr)_{1\le q\le |v|-1,\ u\in\mathcal Y_q^{h}}\Bigr),
\qquad W=\mathcal E(\eta),
\end{aligned}
\]
is a bijection.

\end{theorem}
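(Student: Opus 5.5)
The plan has three stages: show that the relevant data are constant on the fiber, define an explicit inverse map by reinsertion, and then check that the inverse lands in the fiber and composes to the identity in both orders. All of this is carried out through the ES-representation, using Proposition~\ref{prop:ES-injection} and Proposition~\ref{pro:one-to-one} as the injectivity backbone.

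\textbf{Constancy.} First I will prove the constancy claims. The sets $\mathcal Y_q^{v,W}$ consist of vertices $u$ with $|u|=q$, $|u\wedge v|=q-1$ and $v\prec_{\mathrm{lex}}u$. Such a $u$ is a younger sibling of an ancestor of $v$ at generation $q$. Its existence and its address are determined by the elementary sequence $W(w)$ at the ancestor $w$ of $v$ of length $q-1$. Since $w\prec_{\mathrm{anc}}\mathrm{par}(v)$, we have $w\prec_{\mathrm{lex}}v$ and $w\neq\mathrm{par}(v)$. So $W(w)$ is recorded in $\operatorname{Res}_v W$, hence fixed by $h$.

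For the pruned prefixes $\pi_u^{v,\mathrm{exp}}=\mathsf{Prune}(\eta[0,\tau_u^-],E)$, I will argue via Proposition~\ref{prop:stage}. The path $\eta[0,\tau_u^-]$ is obtained from the backbone visited up to time $\tau_u^-$ by inserting completed loop clusters. Each such cluster is $E$-erasable and is removed by pruning. What survives is determined by the traversal of $V_{\mathrm{exp}}^{v,W}$ before $u$, together with the prefix $(W(\mathrm{par}(v)))_{<v}$. Concretely, the traversal $f$ up to $\tau_u^-$ only reads $\mathbf k$-values of vertices $x\prec_{\mathrm{lex}}u$ or ancestors of $u$. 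For $u\in\mathcal Y_q^h$ or $u=\mathrm{par}(v)$, these are either in $V_{\mathrm{exp}}^{v,W}\setminus\{\mathrm{par}(v)\}$ or descendants inside $\mathcal D_{\mathrm{par}}^{v,W}$ or earlier $\mathcal D_{u'}^W$. The latter are fully traversed and pruned away, so they do not affect the prefix. Hence $\pi_u^h$ is a function of $h$ alone.

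\textbf{Inverse map.} Next I will define the candidate inverse $\Phi_h$. Given $(\zeta,(\zeta_u))$ in the product, start from the backbone path determined by $h$: the path encoded by $W$ restricted to $V_{\mathrm{exp}}$, with $\mathrm{par}(v)$ carrying only the prefix $(a_1,\dots,a_{\operatorname{pre}(h)})$. This is itself an element of $\mathsf{Seg}$ fixed by $h$, and the path $\eta_{\mathrm{exp}}^{v,W}$ coincides with it. Then insert $\zeta$ at the backbone time corresponding to the $(\operatorname{pre}(h)+1)$-st visit to $\mathrm{par}(v)$, and insert each $\zeta_u$ at the time corresponding to the site before $u$. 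At the level of ES-representations, this means grafting $\mathcal E(\zeta)$ so that its root's sequence is appended after $(a_1,\dots,a_{\operatorname{pre}(h)})$ at $\mathrm{par}(v)$, with children shifted by $b_{\operatorname{pre}(h)}-1$. Likewise, $\mathcal E(\zeta_u)$ is grafted at $u$, with the convention that its root sequence becomes $W(u)$. This identifies the fiber of $h$ in $\mathbf K(\#E)^{\mathcal U}$ with the product of ES-representations. Given Proposition~\ref{prop:ES-injection}, this yields injectivity of $\Psi_h$ immediately, since $\eta$ is recovered from $W$, which is recovered from $h$ and the images of the pieces. One small check: $\zeta_u$ must have a nonempty root sequence, because $u\in V(W)$ forces a first block. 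This is automatic when $\zeta_u$ is recorded as the cluster at the site $\eta_{\tau_u^-}$, and I will verify the convention carefully.

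\textbf{Surjectivity.} The main obstacle is surjectivity. I must show that $\Phi_h(\zeta,(\zeta_u))$ lies in $\mathsf{Seg}(s',E)$ and that its canonical tree $\mathsf T^*$ really is the grafted tree. Otherwise, some inserted loop could combine with backbone steps to form a new occurrence of an $E$-loop earlier than intended, and the canonical pruning order would change.

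To handle this, I will use compatibility. The condition $\zeta_u\in\mathsf{Seg}(s'\circ_t\pi_u^h,E)$ says precisely that appending $\zeta_u$ after the retained prefix never causes a retained step of $s'\circ_t\pi_u^h$ to be pruned. Combined with Proposition~\ref{prop:stage} applied at times $\tau_u^-$ and $\tau_u^+$, this shows the following: pruning $\Phi_h(\cdot)[0,\tau_u^+]$ equals pruning $s'\circ_t\pi_u^h\circ_t\zeta_u$, which returns exactly $s'\circ_t\pi_u^h$, with $\zeta_u$ as the final pruned segment. So the subsequent evolution agrees with the backbone's.

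I will proceed by induction over the pieces in traversal order: first the $\mathcal D_u$ blocks with $u\prec_{\mathrm{lex}}v$ in increasing lex order (equivalently decreasing $q$), then $\zeta$ at $\mathrm{par}(v)$, then the remaining blocks. Throughout, I use Proposition~\ref{pro:one-to-one} to identify each inserted piece with a legitimate pruned segment at that stage. This gives $\Phi_h(\cdot)\in\mathsf{Seg}(s',E)$, that $\mathcal E$ of it is the grafted configuration, and that it has the truncated data $h$. Therefore $\Psi_h\circ\Phi_h=\mathrm{id}$, and together with injectivity this proves that $\Psi_h$ is a bijection.
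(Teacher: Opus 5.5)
Your overall plan is the intended one. The paper omits this proof as a direct variant of Proposition~\ref{pro:one-to-one}, with the inverse given by the reinsertion of Proposition~\ref{prop:path-fiber-reconstruction}. Your core mechanism is exactly what is needed: the sets $\mathcal Y_q^{v,W}$ are read off from $W(w)$ at proper ancestors $w$ of $\mathrm{par}(v)$, which lie in $\operatorname{Res}_vW$; and compatibility plus Proposition~\ref{prop:stage} show that each inserted piece is erased without touching the retained prefix, so the pruning dynamics outside the inserted windows are those of $\eta_{\mathrm{exp}}^h$. However, two statements in your write-up are wrong.

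\emph{The ``small check'' is false.} You claim $\zeta_u$ must have a nonempty root sequence. But a vertex $u\in\mathcal Y_q^h$ lies in $V(W)$ because it is a child of $\mathrm{par}(u)$, and that is encoded in $W(\mathrm{par}(u))$, which is part of $h$. The sequence $W(u)$ is not part of $h$ and may be $0$, so $u$ can be a leaf with $\eta_u^W=(0)$. Since $(0)\in\mathsf{Seg}(s'\circ_t\pi_u^h,E)$, this element must be in the image; restricting $\Phi_h$ to nonempty $\zeta_u$ would break surjectivity. Drop the check: grafting $W(u)=0$ causes no problem.

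\emph{Your induction order is reversed.} By definition every $u\in\mathcal Y_q^h$ satisfies $v\prec_{\mathrm{lex}}u$, so no block $\mathcal D_u^W$ has $u\prec_{\mathrm{lex}}v$. In the traversal, $\zeta$ is inserted first, at $\tau_{\mathrm{exp}}^h$. Only afterwards come the blocks $\mathcal D_u^W$, in increasing lexicographic order (equivalently, decreasing $q$). Your constancy paragraph already uses this order, and it is needed so that the prune state at the insertion time of $\zeta_u$ equals $s'\circ_t\pi_u^h$.

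Finally, injectivity does not need your ES detour. As written, that detour relies on the unproved claim that $\mathcal E(\eta_{\mathrm{par}}^{v,W})$ is the shifted restriction of $W$ to $\mathcal D_{\mathrm{par}}^{v,W}$, and similarly for each $\eta_u^W$. A direct argument is available: the intervals $(\tau_{\mathrm{exp}}^{v,W},\tau_{\mathrm{par}}^{v,W}]$ and $(\tau_u^-,\tau_u^+]$ are pairwise disjoint, and $\eta_{\mathrm{exp}}^h$, $\tau_{\mathrm{exp}}^h$, $\tau_u^-(\eta_{\mathrm{exp}}^h)$ depend only on $h$. Hence $\eta$ is recovered from $\Psi_h(\eta)$ by path-level reinsertion, which is Proposition~\ref{prop:path-fiber-reconstruction}.

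The identification of $\mathcal E(\Phi_h(\cdot))$ with the grafted configuration is then needed only for surjectivity. There it shows that $\Phi_h(\cdot)$ has truncated data $h$ and that $\Psi_h$ returns the inserted pieces. It follows from the agreement of the chronological pruning dynamics that you establish, because the canonical tree records where each loop removed in that chronological pruning was inserted.
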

This is a direct variant of Proposition~\ref{pro:one-to-one}; the proof is straightforward and omitted.

We next explain how to reconstruct \(\eta\) from the pieces \(\eta_{\mathrm{par}}^{v,W}\) and \(\eta_u^W\) in the bijection above.
Note that for \(\eta\in\mathsf{Path}^{s'}(h)\) with \(W=\mathcal E(\eta)\), the path \(\eta_{\mathrm{exp}}^{v,W}\) and the cut time \(\tau_{\mathrm{exp}}^{v,W}\) are determined by \(h\).
We denote the common values by
\[
\eta_{\mathrm{exp}}^h:=\eta_{\mathrm{exp}}^{v,W},
\qquad
\tau_{\mathrm{exp}}^h:=\tau_{\mathrm{exp}}^{v,W}.
\]

The inverse operation in the bijection of Theorem~\ref{thm:fiber-factorization-fixed-H} can then be described as follows.
\begin{proposition}[Fiber reconstruction]\label{prop:path-fiber-reconstruction}
Let \(\eta\in\mathsf{Path}^{s'}(h)\) and \(W=\mathcal E(\eta)\).
Then \(\eta\) is recovered from \(\eta_{\mathrm{par}}^{v,W}\) and the collection \(\{\eta_u^W:u\in\bigcup_{q=1}^{|v|-1}\mathcal Y_q^h\}\) by inserting them into \(\eta_{\mathrm{exp}}^h\): insert \(\eta_{\mathrm{par}}^{v,W}\) at time \(\tau_{\mathrm{exp}}^h\), and insert each \(\eta_u^W\) at time \(\tau_u^-(\eta_{\mathrm{exp}}^h)\).
Here \(\tau_u^-(\eta_{\mathrm{exp}}^h)\) means the time \(\tau_u^-\) from the definition above, applied with the path \(\eta\) replaced by \(\eta_{\mathrm{exp}}^h\) (it is easy to see that \(u\in V(\mathcal E(\eta_{\mathrm{exp}}^h))\), and hence \(\tau_u^-(\eta_{\mathrm{exp}}^h)\) is well-defined).

\end{proposition}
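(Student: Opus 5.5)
We will show that the insertion operation described in the proposition is a left inverse of \(\Psi_h\), and argue entirely through the \(E\)-depth-first traversal \(f=f_{\mathsf T^*(\eta)}\), using the step correspondence of Remark~\ref{rem:link}. The basic observation is that the removed time windows in the definition of \(I_{\mathrm{exp}}^{v,W}\), namely \((\tau_{\mathrm{exp}}^{v,W},\tau_{\mathrm{par}}^{v,W}]\) and \((\tau_u^-,\tau_u^+]\) for \(u\in\bigcup_q\mathcal Y_q^h\), are pairwise disjoint. The first corresponds to the traversal of \(\mathcal D_{\mathrm{par}}^{v,W}\) and the others to the \(\mathcal D_u^W\), and these vertex sets are disjoint by the partition in Definition~\ref{def:V-decomposition-relative-v}. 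Moreover, on each such window the path starts and ends at the same site. For \(u\) this holds because \(f(\tau_u^-)=f(\tau_u^+)=u\) is the same vertex of the tree, hence the same point of \(\eta\); traversing the subtree of \(u\) traces a union of complete loop clusters. The same argument applies to \(\mathrm{par}(v)\) between its \((\operatorname{pre}+1)\)-st and last visits. Consequently, deleting these windows from \(\eta\), as in the definition of \(\eta_{\mathrm{exp}}^{v,W}=\eta|_{I_{\mathrm{exp}}}\), yields a nearest-neighbour path, and reinserting the translated pieces \(\eta_{\mathrm{par}}^{v,W}\) and \(\eta_u^W\) at the positions of their left endpoints restores \(\eta\) verbatim. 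This is an elementary bookkeeping fact about cutting closed sub-excursions out of a sequence and pasting them back.

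The substantive point is to identify those insertion positions intrinsically, i.e. to show that the left endpoint of the window for \(u\) corresponds, inside \(\eta_{\mathrm{exp}}^h\), to the time \(\tau_u^-(\eta_{\mathrm{exp}}^h)\). The left endpoint of the window for \(\mathrm{par}(v)\) must similarly correspond to \(\tau_{\mathrm{exp}}^h\).

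\begin{enumerate}[label=(\alph*)]
\item We first show that the traversal of \(\mathsf T^*(\eta_{\mathrm{exp}}^h)\) is the concatenation of the remaining (gray) pieces of \(f\), and that \(\mathcal E(\eta_{\mathrm{exp}}^h)\) is obtained from \(W\) by removing the subtrees \(\mathcal D_u^W\) together with the blocks of \(\mathrm{par}(v)\) from index \(\operatorname{pre}+1\) on.

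\item We do this by induction over the recursive definition of \(\mathsf T^*\) via \(\mathsf{Prune}_1\). Deleting an entire descendant block corresponds to never having inserted those loops, since a descendant block consists of complete elementary sequences hanging below \(u\). Removing the loops in the deleted windows therefore does not change the order in which the other loops are detected and pruned. Here the staging property (Proposition~\ref{prop:stage}) and the fact that loops are simple, so that each completed loop is unambiguous, are what we use.

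\item Since the vertices \(u\in\mathcal Y_q^h\) and \(\mathrm{par}(v)\) keep their addresses, with the subtrees below them emptied, we get \(u\in V(\mathcal E(\eta_{\mathrm{exp}}^h))\). The first-visit time of \(u\) in the reduced traversal equals the number of retained steps of \(f\) before \(\tau_u^-\).

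\item Because all removed windows lie strictly after their own left endpoints, this count is exactly the index of \(\tau_u^-\) in \(I_{\mathrm{exp}}^{v,W}\). The same count gives \(\tau_{\mathrm{exp}}^h\) for \(\mathrm{par}(v)\).
\end{enumerate}

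The main obstacle is step (b): showing that pruning commutes with deleting whole descendant blocks, i.e. that the canonical tree of the reduced path is the reduced tree. The nested or overlapping interactions noted in the Remark after the definition of \(\mathsf{Prune}_k\) could in principle reorder detections. We would handle this by inducting on the number of loops in the removed blocks. At each stage we remove a deepest (leaf-level) elementary sequence, which is a contiguous occurrence of a single loop \(e^{(k)}\) in \(\eta\). We then check directly from the definition of \(\tau\) in \eqref{def:tau} that \(\mathsf{Prune}\) of the path with and without this occurrence produces trees differing exactly by that leaf block. Finally, independence of the insertion order among pieces follows because insertions at distinct retained times of \(\eta_{\mathrm{exp}}^h\) act on disjoint positions; performing them from the latest to the earliest avoids any reindexing.
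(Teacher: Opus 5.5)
Your proposal is correct and follows the argument the paper leaves implicit: the paper gives no proof, relying on the traversal-interval picture of Figure~\ref{fig:V-decomposition-relative-v}(b) for the cut-and-paste bookkeeping and calling $u\in V(\mathcal E(\eta_{\mathrm{exp}}^h))$ ``easy to see''. Your leaf-block induction through $\mathsf{Prune}_1$ (the first completed $E$-loop is either the leaf block itself or ends no later than the block starts, so the recursion commutes with deleting it) is a sound way to supply exactly that step.

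Two small precisions. In (a), only the strict descendants of each $u\in\mathcal Y_q^h$ are removed, as you correctly say in (c). For the parent piece, you should note that no removed window precedes $\tau_{\mathrm{exp}}^{v,W}$, so its index in $I_{\mathrm{exp}}^{v,W}$ is $\tau_{\mathrm{exp}}^{v,W}$ itself.
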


\section{Loop-pruning of simple random walk}\label{sec:loop-pruning-srw}

We now consider the simple random walk \( S = (S_n : n \ge 0) \) on \( \mathbb{Z}^d \), \(d\ge 3\). For any finite path $\eta$ starting from $0$, we define the path probability:
\[p(\eta)=\mathbb{P}(S[0,|\eta|]=\eta).\]

Throughout this section, we fix a finite set $E$ of loops such that
\[p(e)>0\quad\forall\,e\in E.\]
Write
\[
S' := \mathsf{Prune}\big(S, E\big),
\quad
N_n := N_n\big(S, E\big).
\]

\subsection{Retained portion}
In this subsection, we investigate the asymptotic behavior of $N_n$. Our main result is a moderate deviation bound stated below.

\begin{theorem}\label{thm:moderate}
	There exist constants \(\kappa=\kappa(E)\in(0,1)\) and \(c=c(E)>0\) such that
for every \(a\in(1/2,1)\) and all sufficiently large \(n\),
	\begin{align}\label{eq:moderate}
		\mathbb{P}\!\left(|N_n-\kappa n|>n^a\right)<\exp(-c n^{2a-1}).
	\end{align}
\end{theorem}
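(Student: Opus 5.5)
The plan is to write $N_n$ as an additive functional of the walk, up to a bounded error, and then apply a concentration inequality for sums with exponentially decaying dependence.

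\emph{Step 1: cut steps.} Extend $S$ to a two-sided walk by attaching an independent walk for negative times. Since $|e|\ge2$ for every $e\in E$ and $d\ge3$, I expect the following (this is the content of Section~\ref{sec:cut}). There are constants $L$ and $c_0>0$, depending on $E$, such that for every $i$ and every $k$,
\[
\mathbb{P}\big(\text{no }E\text{-cut step in }[i,i+k]\big)\le e^{-c_0k}.
\]
Moreover, whether a step $(S_{i-1},S_i)$ is retained should be determined by the window $S[i-L',i+L']$, except on an event whose probability decays exponentially in $L'$. To prove this, I would build a local event of positive probability: a short straight stretch of length longer than $\max_{e\in E}|e|$ after which the walk never returns. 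Such a stretch makes its middle step an $E$-cut step, because pruning can only erase a simple loop that is closed inside a window of bounded length. Transience together with a renewal (regeneration) argument then gives exponential tails for the gaps between consecutive cut steps.

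\emph{Step 2: additive representation.} Let $\chi_i$ be the indicator that step $i$ is retained in the two-sided pruning of Definition~\ref{def:erase_two_sided}. By Proposition~\ref{prop:stage} and the cut-step property, $N_n$ and $\sum_{i=1}^n\chi_i$ differ only by the contributions of the two boundary gaps: the one at time $0$ and the one near time $n$. Each gap has an exponential tail, so this difference is larger than $n^a/2$ only with probability at most $e^{-cn^a}$, which is much smaller than $e^{-cn^{2a-1}}$. The process $(\chi_i)$ is stationary because the increments of the two-sided walk are i.i.d. This is the retained-step process, and its exponential mixing is Proposition~\ref{prop:mixing}. Set $\kappa:=\mathbb{E}\chi_1$.
\begin{itemize}
\item $\kappa<1$, because with positive probability the walk traces a loop of $E$ (recall $p(e)>0$) that is then erased.
\item $\kappa>0$, because cut steps occur with positive density.
\end{itemize}

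\emph{Step 3: concentration.} Cut $[1,n]$ into blocks of length $\ell$, and make the retained-step indicators local: on each block, replace $\chi_i$ by its window approximation, so that blocks at distance more than $\ell$ apart are independent. Split the blocks into two interleaved families so that the blocks within each family are independent, and apply Hoeffding's inequality to each family. Since each block sum is bounded by $\ell$, this gives a bound of the form
\[
\mathbb{P}\Big(\Big|\sum_{i\le n}\chi_i-\kappa n\Big|>n^a\Big)\le 2\exp\big(-c\,n^{2a-1}/\ell\big)+n\,e^{-c\ell},
\]
where the last term accounts for the localization error. Taking $\ell=C\log n$ loses a logarithmic factor. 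To recover the sharp exponent $n^{2a-1}$, I would instead use regeneration at cut steps directly. Between consecutive cut steps of a suitable regenerative type, the walk pieces are i.i.d. For each piece, record its length and its number of retained steps; both have exponential moments. Cramér/Bernstein bounds for the resulting renewal-reward process then give the moderate deviation rate $e^{-cn^{2a-1}}$, uniformly for $a\in(1/2,1)$.

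The main obstacle is Step 1: constructing genuine regeneration times, that is, cut steps with a future-independence property, despite the nonlocal effects of nested pruning. This is exactly where the auxiliary estimates of Section~\ref{sec:cut} must be used.
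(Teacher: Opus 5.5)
Your Steps~1 and~2 are fine provided you invoke Theorem~\ref{thm:cut} itself rather than your sketch of its proof. The difference $|N_n-\sum_{i\le n}\chi_i|$ is at most the index of the first $E$-cut step in $[0,\infty)$, which has an exponential tail. Only the boundary at time $0$ matters, since $N_n$ already refers to the infinite one-sided pruning. Your arguments for $\kappa\in(0,1)$ are also sound.

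The genuine gap is Step~3, which is exactly where the rate $e^{-cn^{2a-1}}$ must come from. Your blocking bound $2\exp(-cn^{2a-1}/\ell)+ne^{-c\ell}$ does not merely lose a logarithm at $\ell=C\log n$. There the localization term is $n^{1-cC}$, which is only polynomially small. Balancing the two terms forces $\ell\asymp n^{a-1/2}$ and gives at best $\exp(-cn^{a-1/2})$.

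The regeneration fallback is not available either. $E$-cut steps depend on the whole two-sided path, the pieces between them are not i.i.d., and you supply no regeneration structure. Your candidate, a straight stretch that is never revisited, does not even have exponential gap tails. Confining the walk to a ball of radius $k^{1/d-\varepsilon}$ for time $2k$ costs only $\exp(-ck^{1-2/d+2\varepsilon})$, and on that event the walk covers the ball many times, so with high probability every stretch in $[0,k]$ is revisited. Finally, the claim that pruning only erases loops closed within a bounded window of the original path is false. Through nested pruning, a single loop of the pruned path can span an arbitrarily long piece of the walk; this is why Section~\ref{sec:cut} needs the return-pattern argument.

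The missing idea is that no regeneration is needed. You already have exponential $\alpha$-mixing of $(\chi_i)$ from Proposition~\ref{prop:mixing}. It follows from Theorem~\ref{thm:cut} by requiring an $E$-cut step in $[0,r/2]$ and in $[r/2,r]$, which decouples the prunings of the independent paths $S(-\infty,r/2]$ and $S[r/2,\infty)$. Feed this into the Bernstein inequality of Merlev\`ede--Peligrad--Rio for bounded, geometrically strong-mixing sequences:
\[
\mathbb P\Bigl(\Bigl|\sum_{i\in I}\chi_i-\kappa|I|\Bigr|\ge x\Bigr)\le\exp\Bigl(-\frac{cx^2}{|I|+x\log|I|\,\log\log|I|}\Bigr).
\]
With $x\asymp n^a$ and $a<1$, the denominator is $O(n)$, so this gives exactly $e^{-cn^{2a-1}}$. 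The transfer to $N_n$ is then as in your Step~2, e.g.\ using an $E$-cut step in $[0,n^b]$ with $b\in(2a-1,a)$.
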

\begin{remark}
	As an immediate consequence, with the same constant \(\kappa\) as in Theorem~\ref{thm:moderate},
	\[
	\lim_{n\to\infty}\frac{N_n}{n}=\kappa \qquad \text{a.s..}
	\]
\end{remark}

To prove Theorem~\ref{thm:moderate}, we extend $S$ to a two-sided random walk $S(-\infty,\infty)$ by setting
\[S_{-n}:=-\widetilde{S}_n,\quad\text{where $\widetilde{S}[0,\infty)$ is an i.i.d. copy of $S[0,\infty)$.}\]
Recall the $E$-cut steps defined in Definition~\ref{def:E_cut_steps}. For an integer interval \(I\subset\mathbb{Z}\), we define
\[
\mathrm{Cut}(I)
:=\Big\{\, i\in I:\ (S_{i-1},S_i)\ \text{is an $E$-cut step}\,\Big\},
\qquad |I|:=\max I-\min I.
\]
The proofs rely crucially on the following estimate, which ensures that \(E\)-cut points occur with positive density and have exponentially small lower-tail deviations.

\begin{theorem}\label{thm:cut}
There exist constants \(\theta=\theta(E)\in(0,1)\) and \(c=c(E)>0\) such that for every integer interval \(I\subset\mathbb{Z}\) whose length \(|I|\) is sufficiently large,
\begin{align}\label{eq:cut}
	\mathbb{P}\!\left(\#\,\mathrm{Cut}(I)>\theta |I|\right)\ge 1-\exp(-c|I|).
\end{align} 
\end{theorem}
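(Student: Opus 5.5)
We will prove Theorem~\ref{thm:cut} by exhibiting a local, finite-range sufficient condition for a step to be an $E$-cut step, showing this condition holds with positive probability at each site, and then using a block decomposition with approximate independence to get exponential concentration. Let $L:=\max_{e\in E}|e|$ and $D:=\max_{e\in E}\max_i|e_i|$; every loop in $E$ has diameter at most $D$.

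\textbf{Step 1: a sufficient condition for a cut step.} Fix a large radius $R\gg D$. Say that $i$ is \emph{good} if two things hold. First, the walk makes a straight run of length $2R$ through step $i$, i.e.\ $S_{k}-S_{k-1}=\mathbf e_1$ for all $k\in(i-R,i+R]$. Second, the past $S(-\infty,i-R]$ and the future $S[i+R,\infty)$ avoid the slab $\{x:|x_1-(S_i)_1|\le R/2\}$.

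We then argue that a good step is retained in $\mathsf{Prune}(S[m_1,m_2],E)$ for every window containing $i$. A straight segment contains no simple loop. Any occurrence of a loop $e\in E$ in any intermediate pruned path has diameter $\le D$, so it cannot straddle the slab. Pruning acts on contiguous pieces in the current path, and a loop occurrence completed at a time after crossing the slab cannot reach back across the slab. By induction on the pruning steps (using Proposition~\ref{prop:stage} to prune the part before the slab first), the straight run through the slab is never touched. The care needed here is the nested-loop issue from the remark after the definition of $\mathsf{Prune}$: the pruned path remains a subsequence of positions of the original path, so it still never re-enters the slab after leaving it.

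\textbf{Step 2: positive density.} By transience in $d\ge3$, the probability that a walk started at distance $R/2$ from a half-space boundary never hits that half-space is at least some $c_0>0$ uniformly; in fact, drift-free walk escapes with positive probability. Hence
\[
\mathbb P(i\text{ good})\ge (2d)^{-2R}c_0^2=:p_0>0,
\]
uniformly in $i$ by stationarity of increments of the two-sided walk.

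\textbf{Step 3: concentration (main obstacle).} Goodness depends on the whole path, so we replace it by a localized version. Require avoidance of the slab only during the time windows $[i-R-K,i-R]$ and $[i+R,i+R+K]$, and additionally impose a global no-return event. The cleanest route is to split $I$ into blocks of length $K$ and, in each block, take a candidate at the block's center. We then show that
\[
\#\{\text{blocks whose candidate is locally good but not good}\}
\]
is small with exponentially high probability. A long-range return to the slab after time $K$ has probability $\lesssim K^{-(d/2-1)}$, and the number of such failures over $|I|/K$ blocks is controlled by a Chernoff-type bound. For that bound we first prove, via the Markov property, that the conditional probability of a return given the past is at most $CK^{1-d/2}$; then we dominate the count by a binomial. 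The locally good events are i.i.d.\ across disjoint blocks because they depend only on increments inside each block. Chernoff then gives at least $p_0|I|/(2K)$ locally good blocks, with failure probability at most $e^{-c|I|}$. Subtracting the at most $p_0|I|/(4K)$ spoiled ones for $K$ large gives the bound with $\theta=p_0/(4K)$.

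The delicate point is making the return-domination rigorous. A return from a later block to an earlier slab, with many slabs, is the block-intersection control alluded to in Remark~\ref{rem:scope-extensions}. We would handle it by requiring slabs at increasing first-coordinate levels, which the straight runs enforce locally, and bounding returns using the first coordinate's behavior together with the $d\ge3$ Green's function sums.
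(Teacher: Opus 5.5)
\textbf{There is a genuine gap: the slab version of your good event has probability zero, and with a ball in its place the concentration step is still unproved.}

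\emph{Step 2 fails.} The first coordinate of the simple random walk is a lazy symmetric walk on $\mathbb Z$, hence recurrent. So almost surely both $S[i+R,\infty)$ and $S(-\infty,i-R]$ enter the slab $\{x:|x_1-(S_i)_1|\le R/2\}$. Transience in $d\ge3$ only gives positive escape probability from \emph{bounded} sets, not from half-spaces or slabs. Consequently:
\begin{itemize}
\item $\mathbb P(i\text{ good})=0$, so $p_0=0$;
\item your bound $\lesssim K^{-(d/2-1)}$ for a long-range return to the slab in Step 3 is wrong, since that return has probability $1$;
\item the proposed remedy of ``slabs at increasing first-coordinate levels'' fails for the same reason.
\end{itemize}
Steps 1--2 can be repaired by replacing the slab with the ball $B(S_i,R/2)$. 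Your Step 1 argument only uses two facts: every intermediate pruned path consists of original steps in chronological order, and it meets the region only along the straight run. The escape probability from a ball of radius $R/2$, started at distance $R$, is bounded below uniformly in $R$.

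\emph{Step 3 is still unproved after this repair, and it is where the whole difficulty of Theorem~\ref{thm:cut} lies.}
\begin{itemize}
\item The ``spoiled'' indicator of a block depends on the entire past and the entire future of the walk relative to that block's ball. So there is no filtration in which these indicators are revealed one at a time, and ``conditional return probability $\le CK^{1-d/2}$ given the past, then dominate by a binomial'' does not apply.
\item The conditional bound itself is also not uniform as stated. Given the path up to the end of the local window, the walk may sit next to the ball. A uniform bound of order $R^dK^{1-d/2}$ holds only if you condition at the start of the window, and then the later indicators are not measurable.
\item What you actually need is a multi-block estimate $\mathbb P(\text{all blocks in }A\text{ spoiled})\le q^{\#A}$, with $q\to0$ as $K\to\infty$, uniformly over sets $A$. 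One late excursion can spoil several blocks whose balls happen to be close, and the revisits can occur in any temporal order. This is a genuine block-intersection estimate, and you defer it rather than supply it.
\end{itemize}

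\emph{How the paper avoids this.} It never asks for global avoidance. It works with rod points only. Using the laminar structure of pruning intervals (Lemma~\ref{lem:pruning-intervals-laminar-and-order}, Propositions~\ref{prop:rod_laminar_structure} and~\ref{prop:visit_rod_segment}), it shows that pruning more than half of the rod points forces a stack-ordered return pattern with at least $Km/64$ returns to rod sub-segments. The stack order supplies exactly the temporal ordering you are missing: it lets the strong Markov property be chained, with each return between adjacent sub-segments of the same rod costing a factor $e^{-c_0}$. Comparing at most $2^r(e(K+1))^r$ patterns against the bound $e^{-cKr}$ then gives the exponential estimate.
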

\begin{remark}
The theorem implies in particular that the \(E\)-cut steps of \(S(-\infty,\infty)\) are unbounded both to the left and to the right.
This ensures that the two-sided pruning \(\mathsf{Prune}(S,E)\) is well-defined in the sense of Definition~\ref{def:erase_two_sided}.
\end{remark}

The proof of Theorem~\ref{thm:cut} is deferred to Section~\ref{sec:cut}. For the moment, we assume it and use it to derive Theorem~\ref{thm:moderate}. The key step is that \eqref{eq:cut} yields an exponential mixing property for the indicator sequence recording whether each step is retained; see Proposition~\ref{prop:mixing} below. An application of the Bernstein inequality for exponentially mixing sequences \cite[Theorem 1]{merlevede2009bernstein} then implies \eqref{eq:moderate}.

Consider the indicator sequence
\[
Y_n := \mathbf{1}\Big\{\text{the step $(S_{n-1},S_n)$ is retained in $\mathsf{Prune}(S,E)$}\Big\}, 
\quad n \in \mathbb{Z}.
\]
For integers $i \le j$, let
\[
\mathcal{F}_i^j := \sigma(Y_i, \ldots, Y_j).
\]

The strong mixing coefficients of the stationary process \(Y_n\) are defined by
\[
\alpha(r) := \sup_{A \in \mathcal{F}_{-\infty}^0,\, B \in \mathcal{F}_{r}^\infty} 
\big|\mathbb{P}(A \cap B) - \mathbb{P}(A)\mathbb{P}(B)\big|, 
\quad r \in \mathbb{N}^+.
\]

\begin{proposition}\label{prop:mixing}
The process \(Y_n\) is exponentially mixing. That is, there exists $c=c(E)>0$ such that for all sufficiently large $r \in \mathbb{N}^+$,
\[
\alpha(r) < \exp(-c r).
\]
\end{proposition}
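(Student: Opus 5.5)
We prove Proposition~\ref{prop:mixing} by coupling, using Theorem~\ref{thm:cut} to localize the retained-step indicators. The key structural observation comes from Definition~\ref{def:erase_two_sided}. Suppose \(m_1<i\le m_2\) are such that \((S_{m_1-1},S_{m_1})\) and \((S_{m_2-1},S_{m_2})\) are \(E\)-cut steps. Then \(Y_i\) is a deterministic function of the window \(S[m_1,m_2]\), namely whether the \(i\)-th step is retained in \(\mathsf{Prune}(S[m_1,m_2],E)\). Hence \(Y_i\) depends only on increments in a window between two cut steps.

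The difficulty is that being a cut step is itself a global property: it quantifies over all \(m_1<i\le m_2\). To handle this, we introduce a localized version. Call \(i\) an \(L\)-local cut step if \((S_{i-1},S_i)\) is retained in \(\mathsf{Prune}(S[m_1,m_2],E)\) for all \(i-L\le m_1<i\le m_2\le i+L\). This event is measurable with respect to the increments \(X_{i-L+1},\dots,X_{i+L}\).

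The first step, and the main obstacle, is to show that an \(L\)-local cut step fails to be a genuine cut step only with probability at most \(e^{-cL}\). Failure requires that a loop of \(E\) completed far away (at distance \(>L\)) interacts with the pruning near \(i\). That in turn requires a chain of nested or overlapping loop insertions spanning distance \(L\) around the step. Since loops in \(E\) are simple and of bounded length, such a chain forces the walk to return to a bounded neighbourhood of an earlier position after about \(L\) steps. By transience in \(d\ge3\), this has probability decaying only polynomially, \(L^{-(d/2-1)}\), which is not enough by itself. We would therefore try to obtain exponential decay as follows. The event that \(i\) is local but not global implies that no genuine cut step exists in an interval of length comparable to \(L\) near \(i\). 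Indeed, a genuine cut step \(j\in(i-L/2,i)\) and another in \((i,i+L/2)\) would screen off any influence from outside, because retention inside \([j,j']\) is decided by the window (this is the independence-of-choice statement in Definition~\ref{def:erase_two_sided}, combined with Proposition~\ref{prop:stage}). Theorem~\ref{thm:cut}, applied to \(I=(i-L/2,i)\) and \(I=(i,i+L/2)\), shows that the absence of cut steps in either interval has probability at most \(e^{-cL}\).

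The second step is to define approximations. Set \(L=r/3\). On the event \(G\) that \((-L,-L/2)\), \((-L/2,0)\), \((r,r+L/2)\) and \((r+L/2,r+L)\) each contain a cut step, the variables \(Y_j\) for \(j\le0\) are functions of the increments with index \(\le L/2\) (more precisely, \(\le\) the first cut step after \(0\)). Likewise the \(Y_j\) for \(j\ge r\) are functions of the increments with index \(\ge\) the last cut step before \(r\). To make this precise, define truncated variables \(\widetilde Y_j\) for \(j\le0\) computed from \(S(-\infty,r/2]\) only, using cut steps detected within the window, and similarly \(\widehat Y_j\) for \(j\ge r\) computed from the increments after \(r/2\). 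On \(G\) these truncated variables agree with the true ones, since retention in a window bounded by genuine cut steps does not depend on the choice of window. By Theorem~\ref{thm:cut} and a union bound, \(\mathbb P(G^c)\le 4e^{-cr/6}\).

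The final step is the mixing estimate. For \(A\in\mathcal F_{-\infty}^0\) and \(B\in\mathcal F_r^\infty\), let \(\widetilde A\) and \(\widehat B\) be the corresponding events for the truncated variables. Then \(\widetilde A\) and \(\widehat B\) are independent, because the increments of the two-sided walk are i.i.d. Moreover \(\mathbb P(A\triangle\widetilde A)\) and \(\mathbb P(B\triangle\widehat B)\) are both at most \(\mathbb P(G^c)\). This gives \(\alpha(r)\le 4\,\mathbb P(G^c)\le e^{-c'r}\) for \(r\) large.

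The delicate point is to define the truncated variables so that they are measurable with respect to the half-line of increments and coincide with the true ones on \(G\). We would do this via the local-cut notion: declare as cut only those steps verified inside the window, and check that, on \(G\), the genuine cut steps are among these and bracket the relevant indices.
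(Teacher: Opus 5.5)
You have the same architecture as the paper: a high-probability event from Theorem~\ref{thm:cut}, truncated versions of the left and right families built from disjoint increments, and the elementary coupling bound in your last step. But your good event $G$ is the wrong one, and the step you call delicate is never carried out.

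\textbf{The event $G$.} A genuine cut step $g$ protects only the steps before it. Since each appended step triggers at most one removal, of a terminal loop of the current pruned path, a step retained in every window is never removed. So everything before $g$ is frozen from time $g$ on, and everything after $g$ is pruned exactly as $S[g,\cdot\,]$ would be. Your cut steps in $(-L,-L/2)$ and $(-L/2,0)$ lie to the left of $0$. They do not stop a loop completed after time $r/2$ from removing, through a cascade of prunings, a step between such a cut step and $0$. Symmetrically, cut steps in $(r,r+L)$ do not decouple $Y_n$, for $r\le n$ below them, from the past. So ``the truncated variables agree with the true ones on $G$'' is false for $n$ near $0$ and near $r$. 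To make $(Y_n)_{n\le0}$ a function of the increments up to $r/2$, you need a genuine cut step in $[0,r/2]$; for $(Y_n)_{n\ge r}$ you need one in $[r/2,r]$. This is the paper's event
\[
A_r=\{\#\mathrm{Cut}([0,r/2])\ge1,\ \#\mathrm{Cut}([r/2,r])\ge1\},
\]
and $\mathbb P(A_r^{\rm c})\le e^{-c_1 r}$ by Theorem~\ref{thm:cut}.

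\textbf{The truncated variables.} The $L$-local-cut route does not close the gap. The truncated family must agree with $(Y_n)_{n\le 0}$ for infinitely many indices simultaneously. A per-index bound $e^{-cL}$ on ``local but not genuine'' cannot be summed over all $n\le 0$. Nor can you tell from $S(-\infty,r/2]$ which detected local cut steps are genuine.

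No detection is needed. For $n\le0$, let $\widetilde Y^{-}_n$ indicate that the step is retained in $\mathsf{Prune}(S[m,r/2],E)$ for all sufficiently negative $m$; this stabilizes because cut steps are unbounded below. For $n\ge r$, let $\widetilde Y^{+}_n$ indicate retention in the one-sided pruning $\mathsf{Prune}(S[r/2,\infty),E)$. These two families are functions of disjoint sets of increments, hence independent. By the freezing/decoupling property above (cf.\ Proposition~\ref{prop:stage}), on $A_r$ they coincide with $Y_n$ for every $n\le 0$ and every $n\ge r$ at once, deterministically.

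With this, your Step 1 is correct but unnecessary. Your final step then goes through verbatim with $G$ replaced by $A_r$.
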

\begin{proof}
	It is enough to show that there exists \(c>0\) such that for all sufficiently large \(r\),
	\begin{align}\label{eq:fg_exp}
		\sup_{f,g}\Big|\mathbb{E}\big[f(Y(-\infty,0])g(Y[r,\infty))\big]-\mathbb{E}\big[f(Y(-\infty,0])\big]\mathbb{E}\big[g(Y[r,\infty))\big]\Big|<\exp(-c r),
	\end{align}
	where the supremum is over all measurable functions \(f,g\ge 0\) with $\norm{f}_\infty,\norm{g}_\infty\le 1$.

	To this end, we consider the event:
	\[A_r:=\big\{\#\text{Cut}([0,r/2])\ge 1,\, \#\text{Cut}([r/2,r])\ge 1\big\}.\]
	It follows from \eqref{eq:cut} that there exists $c_1=c_1(E)>0$ such that for all $r\ge 1$,
	\begin{align}\label{eq:prob_Ar}
		\mathbb{P}(A_r)\ge 1-\exp(-c_1r).
	\end{align}

	On the other hand, with
	\begin{align*}
		\widetilde{Y}^{r,-}_n&:=\mathbf{1}\Big\{\text{the step $(S_{n-1},S_n)$ is retained in $\mathsf{Prune}(S(-\infty,r/2],E)$}\Big\}\footnotemark,\quad \text{for }n\le 0,\\
		\widetilde{Y}^{r,+}_n&:=\mathbf{1}\Big\{\text{the step $(S_{n-1},S_n)$ is retained in $\mathsf{Prune}(S[r/2,\infty),E)$}\Big\},\quad \text{for }n\ge r,
	\end{align*}
	\footnotetext{For \(n\le r/2\), the step \((S_{n-1},S_n)\) is said to be retained in \(\mathsf{Prune}(S(-\infty,r/2],E)\) if and only if, for every \(m\le n-1\), the same step is retained in \(\mathsf{Prune}(S[m,r/2],E)\).}
	we have on the event $A_r$,
	\begin{align}\label{eq:tilde_Y}
		\widetilde{Y}^{r,-}_n=Y_n\quad\text{for }n\le 0,\qquad \widetilde{Y}^{r,+}_n=Y_n\quad\text{for }n\ge r.
	\end{align}

	Combining \eqref{eq:prob_Ar} and \eqref{eq:tilde_Y}, we obtain
	\begin{align}\label{eq:fg_Ar}
	\begin{aligned}
		&\mathbb{E}\big[f(Y(-\infty,0])g(Y[r,\infty))\big]\\
		=\,&\mathbb{E}\big[f\big(\widetilde{Y}^{r,-}_n(-\infty,0]\big)g\big(\widetilde{Y}^{r,+}_n[r,\infty)\big);A_r\big]+\mathbb{E}\big[f(Y(-\infty,0])g(Y[r,\infty));(A_r)^{\rm c}\big]\\
		=\,&\mathbb{E}\big[f\big(\widetilde{Y}^{r,-}_n(-\infty,0]\big)g\big(\widetilde{Y}^{r,+}_n[r,\infty)\big)\big]+a_r\\
		\overset{(*)}{=}\,&\mathbb{E}\big[f\big(\widetilde{Y}^{r,-}_n(-\infty,0]\big)\big]\mathbb{E}\big[g\big(\widetilde{Y}^{r,+}_n[r,\infty)\big)\big]+a_r,
	\end{aligned}
	\end{align}
	where 
	\[
	\begin{aligned}
	a_r&:=\mathbb{E}\Big[f(Y(-\infty,0])g(Y[r,\infty))
	-f\big(\widetilde{Y}^{r,-}_n(-\infty,0]\big)g\big(\widetilde{Y}^{r,+}_n[r,\infty)\big);(A_r)^{\rm c}\Big],\\
	|a_r|&\overset{\eqref{eq:prob_Ar}}{\le} \exp(-c_1 r).
	\end{aligned}
	\]
	In $(*)$, we used that the prunings of \(S(-\infty,r/2]\) and \(S[r/2,\infty)\) depend on disjoint increments of the walk and are therefore independent.

	Taking $g\equiv 1$ or $f\equiv 1$ in \eqref{eq:fg_Ar} gives
	\begin{align*}
		\abs{\mathbb{E}\big[f\big(Y(-\infty,0]\big)\big]-\mathbb{E}\big[f\big(\widetilde{Y}^{r,-}_n(-\infty,0]\big)\big]}\le \exp(-c_1 r),\\
	\abs{\mathbb{E}\big[g\big(Y[r,\infty)\big)\big]-\mathbb{E}\big[g\big(\widetilde{Y}^{r,+}_n[r,\infty)\big)\big]}\le \exp(-c_1 r).
	\end{align*}
	Substituting these into the previous estimate \eqref{eq:fg_Ar} yields \eqref{eq:fg_exp}.
	\end{proof}

\begin{proof}[Proof of Theorem~\ref{thm:moderate} assuming Theorem \ref{thm:cut}]
Proposition~\ref{prop:mixing} verifies the mixing assumptions required in the Bernstein inequality of \cite[Theorem~1]{merlevede2009bernstein}. Hence, by \cite[Theorem~1, inequality (2.1)]{merlevede2009bernstein}, with \(\kappa:=\mathbb{E}[Y_0]\), there exists \(c>0\) such that for every integer interval \(I\subset\mathbb{Z}\) of sufficiently large length \(|I|\) and every \(x\ge 0\),
\begin{equation}\label{eq:bernstein}
\mathbb{P}\!\Big(\Big|\sum_{i\in I} Y_i-\kappa |I|\Big|\ge x\Big)
\le \exp\!\left(-\frac{c x^2}{|I|+x\,\log |I|\,\log\log |I|}\right).
\end{equation}

We now transfer this estimate back to the one-sided pruning. Recall that
\[
N_n=\sum_{i=1}^n Y'_i,
\qquad
Y'_i:=\mathbf{1}\Big\{\text{the step \((S_{i-1},S_i)\) is retained in \(\mathsf{Prune}(S[0,\infty),E)\)}\Big\}.
\]
Since the parameter \(a\) in the theorem satisfies \(a\in(1/2,1)\), we may choose \(b\in(2a-1,a)\). 
Consider the event
\[
E_{n,b}:=\big\{\#\,\mathrm{Cut}([0,n^b])\ge 1\big\}.
\]
By Theorem~\ref{thm:cut}, there exists \(c_1>0\) such that for all sufficiently large \(n\),
\[
\mathbb{P}(E_{n,b})\ge 1-\exp(-c_1 n^b).
\]
On the event \(E_{n,b}\), we have
\[
Y_i=Y'_i\qquad \text{for all } i>n^b.
\]

Therefore,
\begin{align*} &\mathbb{P}\Big(\Big|\sum_{i\in (n^b,n]}Y'_i- (n-n^b)\kappa\Big|\ge n^a/2\Big)\\ \le\,& \exp(-c_1 n^b)+\mathbb{P}\Big(\Big|\sum_{i\in (n^b,n]}Y_i- (n-n^b)\kappa\Big|\ge n^a/2\Big)\\ \overset{\eqref{eq:bernstein}}{\le}\,& \exp(-c_1 n^b)+\exp(-c_2n^{2a-1})\le \exp(-c_3n^{2a-1}), \end{align*}
for some constants \(c_2,c_3>0\) and all sufficiently large \(n\), since \(b>2a-1\).
Since \(b<a\), the initial block \(i\le n^b\) is negligible on the scale \(n^a\). 
This yields \eqref{eq:moderate}.
\end{proof}

\subsection{Local time of pruned path}

For any finite or infinite path $\eta=(\eta_j)$ and any $x\in\mathbb Z^d$, we write
\[
\ell_x(\eta):=\sum_j \mathbf 1{\{\eta_j=x\}}
\]
for the number of visits of $\eta$ to $x$.

The following proposition is the main conclusion of this subsection.
\begin{proposition}\label{prop:Sprime-localtime-tail}
For every \(\beta\in(0,1]\) and every \(q>0\), there exists a finite loop set
\(
E=E(\beta,q)
\)
such that, for every \(N\in\mathbb{N}^+\) and all sufficiently large \(n\in\mathbb{N}^+\),
\begin{align}\label{eq:Sprime-localtime-tail}
\mathbb{P}\bigl(\ell_0(\Prune(S[0,N],E))> \beta\alpha\log n\bigr)\le \frac{1}{n^q}.
\end{align}
\end{proposition}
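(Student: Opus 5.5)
The approach is to choose $E$ to be the family of \emph{all} simple nearest-neighbour loops of length at most $L$, for a large integer $L=L(\beta,q)$. This family is finite, every member has $|e|\ge 2$, and $p(e)>0$ for each $e\in E$. The key structural claim is that the pruned path has no short returns: if $s'=\Prune(S[0,N],E)$ and $s'_i=s'_j=0$ with $i<j$, then $j-i>L$. I would then transfer this gap back to the original walk and conclude with a geometric bound coming from the transience of $S$.

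\textbf{Step 1 (no short closed subpaths in an $E$-pruned path).} Suppose $s'[i,j]$ is closed, i.e.\ $s'_i=s'_j$, with $0<j-i\le L$. Let $j'\in(i,j]$ be the first index at which $s'$ revisits some earlier point $s'_{i'}$ with $i'\in[i,j')$. Then $s'[i',j']$ is a simple loop up to translation, and its length is at most $L$. Hence it equals $s'_{i'}+e$ for some $e\in E$. This contradicts the fact that $s'$ is $E$-pruned, meaning $\Prune(s',E)=s'$ and so $\Prune_1(s',E)=s'$. Consequently the successive visits of $s'$ to $0$ occur at indices $m_1<m_2<\cdots$ with $m_{k+1}-m_k\ge L+1$.

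\textbf{Step 2 (transfer to $S$).} Apply the loop-pruned decomposition \eqref{eq:decomposition} to the finite path $S[0,N]$. The $k$-th visit of $s'$ to $0$ corresponds to the time $t_k:=N^{-1}(m_k)$, and $S_{t_k}=s'_{m_k}=0$. The map $m\mapsto N^{-1}(m)$ is strictly increasing, and between $t_k$ and $t_{k+1}$ there are at least $m_{k+1}-m_k$ retained steps. Therefore $t_{k+1}-t_k\ge L+1$. It follows that
\[
\bigl\{\ell_0(\Prune(S[0,N],E))>K\bigr\}\subset\bigl\{\text{$S$ visits $0$ at $K+1$ times with consecutive gaps $\ge L+1$}\bigr\}.
\]
I would double-check the index bookkeeping around $N^{-1}$. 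In particular, the final segment $\xi_{|s'|}$ does not matter, since only the times $N^{-1}(m)$ with $s'_m=0$ are used.

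\textbf{Step 3 (probability bound).} Define stopping times $\sigma_0:=\inf\{t\ge0:S_t=0\}$, which equals $0$ here, and
\[
\sigma_{k+1}:=\inf\{t\ge \sigma_k+L+1:\ S_t=0\}.
\]
The event in Step 2 is contained in $\{\sigma_K<\infty\}$. Iterating the strong Markov property gives
\[
\mathbb P(\sigma_K<\infty)\le \bigl(\mathbb P(0\in S[L+1,\infty))\bigr)^K\le \bigl(C L^{1-d/2}\bigr)^K,
\]
where the last inequality is the standard return estimate for $d\ge3$. Taking $K=\lfloor\beta\alpha\log n\rfloor$ yields the bound
\[
\exp\bigl\{(\beta\alpha\log n-1)\log(CL^{1-d/2})\bigr\}.
\]
Choose $L$ so large that $\beta\alpha\log\bigl(CL^{1-d/2}\bigr)^{-1}>q+1$. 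Then this quantity is at most $n^{-q}$ for all large $n$, uniformly in $N$, which proves \eqref{eq:Sprime-localtime-tail}.

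\textbf{Main obstacle.} The only genuinely delicate point is Step 2. One must verify that the times $N^{-1}(m_k)$ are really visits of $S$ to $0$ and that they are separated by at least the number of retained steps between them. This should follow directly from the definitions of $N_n$, $N^{-1}$ and the identity $\mathbf{s}'_m=\mathbf{s}_{N^{-1}(m)}$. By contrast, Step 1 uses only the choice of $E$ as all short simple loops, and Step 3 is routine.
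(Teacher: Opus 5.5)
Your proof is correct, and it takes a genuinely different route from the paper's.

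\textbf{The paper's route.} The paper builds $E$ from the simple-loop constituents of a finite family of first-return loops carrying most of the return mass. This makes each excursion $\mathcal R_i$ of $S$ from $0$ $E$-erasable with probability $\vartheta_E$ close to $1$. Lemma~\ref{lem:Ri-in-Seg-implies-boundary-pruned} then bounds $\ell_0$ of the pruned path by the number of non-erasable excursions. The estimate follows from three further ingredients: a Bernoulli large-deviation bound with rate $I_{\vartheta_E}\to\infty$, a union over $k$ in a logarithmic range, and the geometric tail of $M_\infty$.

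\textbf{Your route.} You take $E$ to be all simple loops of length at most $L$. The pruned path then has no closed subpath of length $\le L$. You transfer index gaps to time gaps via $s'_m=S_{N^{-1}(m)}$, and conclude by the strong Markov property. This yields the explicit bound $\mathbb P(0\in S[L+1,\infty))^{\lfloor\beta\alpha\log n\rfloor}$, uniformly in $N$.

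\textbf{The step you flagged.} It does go through. Each time step either appends one step to $\Prune(S[0,n-1],E)$ or removes a terminal loop from it. Hence $N_n-N_{n-1}\in\{0,1\}$, every value in $[0,|s'|]$ is attained by $N_n$, and $S_{N^{-1}(m)}=s'_m$. So $N^{-1}(m_{k+1})-N^{-1}(m_k)\ge m_{k+1}-m_k\ge L+1$. Your induction $\sigma_k\le t_k$ then places the event inside $\{\sigma_K<\infty\}$.

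\textbf{What each approach buys.} Your argument is shorter. It avoids both the structural lemma on boundary steps and the large-deviation machinery. Instead it relies on a specific choice of $E$: all short simple loops, a finite set because the walk is nearest-neighbour. It also needs only that late-return probabilities vanish, which is just transience. The paper's argument works for any finite $E$ with $\vartheta_E$ close to one. That quantity is tied directly to the erasable classes $\mathsf{Seg}(E)$, and it does not require $E$ to exhaust all short loops, which would fail for walks with infinitely many short loops. Your $E$ also meets every requirement imposed elsewhere in the paper (finite, $p(e)>0$, $|e|\ge 2$), so it could equally be used in \eqref{eq:Echoice}.
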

The proof is based on the following preliminary lemmas.

Set
\(
M_\infty:=\xi(\infty,0).
\)
Since \(M_\infty\) has a geometric distribution with success probability \(\gamma\), we immediately obtain the following lemma.

\begin{lemma}\label{lem:N-geometric-tail}
For every \(\beta'>0\),
\begin{equation}\label{eq:N-geometric-tail}
\mathbb{P}\bigl(M_\infty>\beta'\alpha\log n\bigr)\le n^{-\beta'}.
\end{equation}
\end{lemma}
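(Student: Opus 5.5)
The plan is to compute the tail of $M_\infty$ exactly from the strong Markov property and then compare it with $n^{-\beta'}$. The one point that needs care is the convention, and I will fix it as the lemma's wording does. The lemma asserts that $M_\infty$ is geometric with success probability $\gamma$. I read this as the number of failures before the first success, that is,
\[
\mathbb{P}(M_\infty\ge m)=(1-\gamma)^m\qquad\text{for all integers } m\ge 0 .
\]
This is exactly the law of the number of returns of $S$ to $0$ after time $0$. The bound fails without this reading: if $M_\infty$ also counted the visit at time $0$, the tail would be $(1-\gamma)^{\lfloor x\rfloor}$, which can exceed $n^{-\beta'}$.

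First I would justify the geometric law. Let $T_1<T_2<\cdots$ be the successive return times of $S$ to $0$, with $T_k=\infty$ if there are fewer than $k$ returns. By the definition of $\gamma$, $\mathbb{P}(T_1<\infty)=1-\gamma$. By the strong Markov property at $T_{k}$ on $\{T_k<\infty\}$,
\[
\mathbb{P}(T_{k+1}<\infty)=(1-\gamma)\,\mathbb{P}(T_k<\infty).
\]
Induction then gives $\mathbb{P}(T_m<\infty)=(1-\gamma)^m$. Since $\{M_\infty\ge m\}=\{T_m<\infty\}$, this is the displayed law.

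Second comes the comparison. Set $x:=\beta'\alpha\log n>0$ and $m:=\lfloor x\rfloor+1$. Because $M_\infty$ is integer-valued, $\{M_\infty>x\}=\{M_\infty\ge m\}$. Since $m>x$ and $0<1-\gamma<1$,
\[
\mathbb{P}(M_\infty>x)=(1-\gamma)^m\le(1-\gamma)^x=\exp\bigl(x\log(1-\gamma)\bigr)=\exp\Bigl(-\tfrac{x}{\alpha}\Bigr)=n^{-\beta'},
\]
using $\alpha=-1/\log(1-\gamma)$.

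There is no genuine obstacle in this argument. The only step to get right is the convention above, namely that the geometric variable starts at $0$, so that the integer rounding $m=\lfloor x\rfloor+1>x$ works in our favour rather than costing a factor $(1-\gamma)^{-1}$.
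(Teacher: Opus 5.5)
Your argument is correct and is the paper's own: the paper simply invokes the fact that $M_\infty$ is geometric with success probability $\gamma$, and your strong Markov induction and the rounding step $(1-\gamma)^{\lfloor x\rfloor+1}\le(1-\gamma)^x=n^{-\beta'}$ spell that out. Your remark on conventions is also right. Under the literal definition $M_\infty:=\xi(\infty,0)$, which counts the visit at time $0$, the tail is $(1-\gamma)^{\lfloor\beta'\alpha\log n\rfloor}$, so the bound holds only up to a factor $(1-\gamma)^{-1}$; this is harmless for its later use. Your returns-count reading is the one consistent with the paper's indexing $0=\tau_0<\tau_1<\cdots<\tau_{M_\infty}$.
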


Let
\[
0=\tau_0<\tau_1<\cdots<\tau_{M_\infty}<\infty
\]
be the successive return times of \(S\) to \(0\), and define
\[
\mathcal R_r:=S[\tau_{r-1},\tau_r],\qquad 1\le r\le M_\infty,
\]
to be the \(r\)-th excursion of \(S\) from \(0\).

For a finite loop set \(E\), define
\begin{equation}\label{eq:def-varthetaE}
\vartheta_E
:=
\mathbb P\bigl(\mathcal R_1\in\mathsf{Seg}(E)\,\big|\,\tau_1<\infty\bigr).
\end{equation}

\begin{lemma}\label{lem:first-return-Seg-approx}
For every \(\varepsilon>0\), there exists a finite loop set \(E\), with \(p(e)>0\) and \(|e|\ge2\) for every \(e\in E\), such that
\[
\vartheta_E>1-\varepsilon.
\]

\end{lemma}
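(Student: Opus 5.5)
The plan is to take $E$ to be the family of all nearest-neighbour simple loops up to a large length $L$, and to show that every excursion of length at most $L$ is $E$-erasable. Then $\vartheta_E$ is at least the probability that the excursion has length at most $L$, and this tends to $1$.

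Fix $L\ge 2$ and let $E_L$ be the (finite) set of all simple loops $e$ with $2\le |e|\le L$ whose consecutive points are nearest neighbours in $\mathbb Z^d$. Every such $e$ satisfies $p(e)=(2d)^{-|e|}>0$ and $|e|\ge 2$, as required.

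\textbf{Step 1: every closed path of length at most $L$ lies in $\mathsf{Seg}(E_L)$.} Let $\eta$ be a nearest-neighbour path with $\eta_0=\eta_{|\eta|}=0$ and $|\eta|\le L$.

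First, each application of $\mathsf{Prune}_1$ preserves the relevant structure. It deletes the segment $s(\tau-|e|,\tau]$, where $s_{\tau-|e|}=s_\tau$ because $e$ is a loop. So the new path is still nearest-neighbour, still closed, still starts at $\eta_0$, and its length drops strictly by $|e|\ge 2$.

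Second, pruning cannot stop at a closed path of positive length. Suppose the current path $s$ is closed with $|s|>0$. Let $i$ be the first index for which there is $j<i$ with $s_j=s_i$; such $i$ exists because $s_{|s|}=s_0$. By minimality of $i$, the points $s_j,\dots,s_{i-1}$ are distinct. Hence $s[j,i]-s_j$ is a simple loop. Its length $i-j$ is at least $2$, since a nearest-neighbour step never stays put, and at most $|s|\le L$. So $s[j,i]=s_j+e$ for some $e\in E_L$, which gives $\tau\le i<\infty$ in \eqref{def:tau}.

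Since the length decreases strictly at every step, the iteration stabilises after finitely many steps. By the second point, the only closed path at which it can stabilise is the trivial one, so $\mathsf{Prune}(\eta,E_L)=(0)$, i.e.\ $\eta\in\mathsf{Seg}(E_L)$.

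\textbf{Step 2: conclude.} On $\{\tau_1<\infty\}$, the first excursion $\mathcal R_1$ (translated to start at $0$) is a closed nearest-neighbour path. By Step 1,
\[
\vartheta_{E_L}\ \ge\ \mathbb P\bigl(\tau_1\le L\,\big|\,\tau_1<\infty\bigr).
\]
Since $\tau_1<\infty$ holds a.s.\ on the conditioning event, the right-hand side tends to $1$ as $L\to\infty$ by monotone convergence. Choosing $L$ large enough that this probability exceeds $1-\varepsilon$ and setting $E=E_L$ proves the lemma.

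The only point needing care is the minimal-index argument in Step 1, which guarantees that a pruned loop always exists and lies in $E_L$. This is the expected main obstacle, though it is elementary.
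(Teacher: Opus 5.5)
Your proof is correct and follows essentially the same route as the paper: restrict to a finite family of first-return excursions carrying most of the conditional mass, and choose $E$ rich enough that each of these excursions is $E$-erasable. The paper takes $E$ to be the simple loops appearing in the chronological decompositions of a chosen finite family $\mathcal F$ and asserts $\mathcal F\subset\mathsf{Seg}(E)$ without proof, whereas your canonical choice of all nearest-neighbour simple loops of length at most $L$ lets your first-self-intersection argument verify erasability directly.
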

\begin{proof}
Let
\[
\mathcal L_0^{\mathrm{fr}}
:=\bigl\{\eta\in\mathcal L_0:|\eta|\ge1,\ 
\eta_j\ne0\text{ for }1\le j<|\eta|,\ p(\eta)>0\bigr\}
\]
be the set of first-return loops at the origin with positive probability. Since
\[
\mathbb P(\tau_1<\infty)=\sum_{\eta\in\mathcal L_0^{\mathrm{fr}}}p(\eta)=1-\gamma,
\]
we may choose a finite set \(\mathcal F\subset\mathcal L_0^{\mathrm{fr}}\) such that
\[
\sum_{\eta\in\mathcal F}p(\eta)>(1-\varepsilon)(1-\gamma).
\]
For each \(\eta\in\mathcal F\), take its canonical chronological decomposition into simple loops, and let \(E\) be the finite set of all simple loops appearing in these decompositions. Then \(p(e)>0\) and \(|e|\ge2\) for every \(e\in E\), and \(\mathcal F\subset\mathsf{Seg}(E)\). Therefore
\[
\vartheta_E
\ge
\frac{1}{1-\gamma}\sum_{\eta\in\mathcal F}p(\eta)
>1-\varepsilon.\qedhere
\]
\end{proof}

By the definition of loop-pruned decomposition, we easily obtain the following observation.

\begin{lemma}\label{lem:Ri-in-Seg-implies-boundary-pruned}
Fix \(1\le i<j\le k\). Assume that \(k\le M_\infty\) and that the boundary step
\[
(S_{\tau_i-1},S_{\tau_i})
\]
is not pruned in \(\Prune(S[0,\tau_k],E)\), and that
\[
\mathcal R_{i+1},\mathcal R_{i+2},\ldots,\mathcal R_j\in\mathsf{Seg}(E).
\]
Then every subsequent boundary step
\[
(S_{\tau_r-1},S_{\tau_r}),\qquad i+1\le r\le j,
\]
is pruned in \(\Prune(S[0,\tau_k],E)\).
\end{lemma}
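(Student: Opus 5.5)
Since \(r\mapsto\) each boundary step is treated the same way, the argument is an induction on \(r\) from \(i+1\) to \(j\), using the staging property of Proposition~\ref{prop:stage} to localize the effect of one excursion.

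\emph{Setup.} Set \(s'_r:=\Prune(S[0,\tau_r],E)\). By Proposition~\ref{prop:stage},
\[
\Prune(S[0,\tau_k],E)=\Prune\bigl(s'_{r-1}\circ_d S[\tau_{r-1},\tau_k],E\bigr),
\]
and similarly \(s'_r=\Prune(s'_{r-1}\circ_d \mathcal R_r,E)\). The point is to show that the excursion \(\mathcal R_r\in\mathsf{Seg}(E)\), when appended after the prefix \(s'_{r-1}\), is completely erased. Then \(s'_r=s'_{r-1}\), and the step of \(S\) arriving at \(0\) at time \(\tau_r\) is not a retained step.

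\emph{Base step (\(r=i+1\)).} The hypothesis that \((S_{\tau_i-1},S_{\tau_i})\) is retained in \(\Prune(S[0,\tau_k],E)\) implies, by the definition of \(N_n\) in \eqref{def:Nn} and the staging property, that this step is retained in \(\Prune(S[0,\tau_r],E)\) for every \(r\in[i,k]\). So the final vertex of \(s'_i\) is the site \(0\) reached at time \(\tau_i\), and that step is stable under every further extension up to \(\tau_k\).

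\emph{Key claim.} Suppose \(s'\) is an \(E\)-pruned path ending at \(0\) whose last step stays retained under the extension by \(\eta\in\mathsf{Seg}(E)\). Then \(\Prune(s'\circ_d\eta,E)=s'\).

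To prove it, recall that \(\eta\) is \(E\)-erasable, so running the pruning of \(\eta\) alone removes it loop by loop. I would run the same sequence of single-loop prunings inside \(s'\circ_d\eta\), arguing by induction on \(|\eta|\) through the tree representation (Proposition~\ref{pro:tree_represent}). The first loop found by \(\tau\) in \eqref{def:tau} inside \(s'\circ_d\eta\) either lies entirely within \(\eta\) or uses some steps of \(s'\). In the second case a step of \(s'\) would be removed, and in particular this would destroy the retained last step or an earlier one. That contradicts the compatibility condition \(N_n\ge|s'|\); this is exactly \eqref{eq:compatible-retained-prefix}, which is supplied by the retention hypothesis.

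This is the main obstacle. In the paper's language it amounts to showing \(\mathcal R_r\in\mathsf{Seg}(s'_{r-1},E)\), so that the decomposition of Proposition~\ref{pro:one-to-one} places \(\mathcal R_r\) entirely inside the pruned segment \(\xi_{|s'_{r-1}|}\). The argument must also handle the case where the first loop completed straddles the junction, and exclude it using the retention of the \(\tau_i\)-step.

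\emph{Inductive step and conclusion.} Given the claim, \(\mathcal R_r\) lies in the pruned segment, and in particular its last step, the boundary step at \(\tau_r\), is pruned. Moreover \(s'_r=s'_{r-1}\), so the same retained terminal step at time \(\tau_i\) is again the last step. The hypothesis of the claim therefore persists, and the induction continues up to \(r=j\).

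Finally, since the step at \(\tau_i\) remains retained through time \(\tau_k\), none of the erased material can be resurrected by later pruning. Hence each step \((S_{\tau_r-1},S_{\tau_r})\), \(i+1\le r\le j\), is pruned in \(\Prune(S[0,\tau_k],E)\).
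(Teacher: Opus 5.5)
Your proof is correct and is essentially the argument the paper leaves implicit (it only says the lemma follows from the loop-pruned decomposition): by staging, the step at $\tau_i$ remains the retained last step of every $\Prune(S[0,\tau_r],E)$, $i\le r\le k$, so no $E$-loop can straddle the junction, and each erasable excursion $\mathcal R_r$ is therefore pruned entirely. Two small tightenings. First, retention of that single last step already rules out straddling, without the full compatibility condition: a loop occurrence that starts before the junction and ends after it necessarily contains the last step of $s'$, and none lies inside $s'$ because $s'$ is $E$-pruned. Second, the final ``no resurrection'' step is just monotonicity of pruning under Proposition~\ref{prop:stage}, and does not rely on the $\tau_i$-step being retained.
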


For \(1\le i\le M_\infty\), let
\[
X_i:=\mathbf{1}\{\mathcal R_i\in \mathsf{Seg}(E)\}.
\]
Then for any \(k\in\mathbb{N}^+\), conditionally on \(\{k\le M_\infty\}\), the variables \((X_i)_{1\le i\le k}\) are i.i.d. Bernoulli\((\vartheta_E)\).

We record the corresponding large-deviation estimate (see \cite[Section~2.2]{dembo1998large}).

\begin{lemma}\label{lem:bernoulli-LDP-Seg}
For every $k\in\mathbb{N}^+$ and \(x\in(0,\vartheta_E)\),
\begin{equation}\label{eq:bernoulli-LDP-Seg}
\mathbb{P}\!\left(
\frac{1}{k}\sum_{i=1}^{k} X_i \le x
\,\Big|\, k\le M_\infty
\right)
\le \exp\bigl\{-I_{\vartheta_E}(x)\,k\bigr\},
\end{equation}
where
\[
I_{\vartheta_E}(x):=x\log\frac{x}{\vartheta_E}+(1-x)\log\frac{1-x}{1-\vartheta_E}.
\]

\end{lemma}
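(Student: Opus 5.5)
By the observation just before the statement, conditionally on $\{k\le M_\infty\}$ the variables $X_1,\dots,X_k$ are i.i.d.\ Bernoulli$(\vartheta_E)$, so the claim is the classical Chernoff bound for the lower tail of a binomial variable. I would spell this out as follows. Write $\mathbb{P}_k:=\mathbb{P}(\,\cdot\mid k\le M_\infty)$ and $\Sigma_k:=\sum_{i=1}^k X_i$. The i.i.d.\ structure comes from the strong Markov property at the return times $\tau_{i-1}$: on $\{k\le M_\infty\}=\{\tau_k<\infty\}$, the excursions $\mathcal R_1,\dots,\mathcal R_k$ are independent, each distributed as $\mathcal R_1$ given $\tau_1<\infty$. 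Each $X_i$ is a function of $\mathcal R_i$ alone, and by \eqref{eq:def-varthetaE} it has conditional mean $\vartheta_E$.

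For any $\lambda\ge0$, Markov's inequality applied to $e^{-\lambda\Sigma_k}$ gives
\[
\mathbb{P}_k(\Sigma_k\le xk)\le e^{\lambda xk}\,\mathbb{E}_k\bigl[e^{-\lambda\Sigma_k}\bigr]
=\exp\Bigl\{-k\bigl(-\lambda x-\log(1-\vartheta_E+\vartheta_E e^{-\lambda})\bigr)\Bigr\}.
\]
I would then optimize over $\lambda\ge0$. The maximizer of $\lambda\mapsto -\lambda x-\log(1-\vartheta_E+\vartheta_E e^{-\lambda})$ satisfies
\[
e^{-\lambda}=\frac{x(1-\vartheta_E)}{\vartheta_E(1-x)},
\]
and this choice has $\lambda>0$ precisely because $x<\vartheta_E$. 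Substituting it back, the exponent simplifies directly to
\[
x\log\frac{x}{\vartheta_E}+(1-x)\log\frac{1-x}{1-\vartheta_E}=I_{\vartheta_E}(x),
\]
which yields \eqref{eq:bernoulli-LDP-Seg}. Alternatively, one can cite \cite[Section~2.2]{dembo1998large}, where the Legendre transform of the Bernoulli log-moment generating function is computed.

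There is no real obstacle. The only points needing care are the justification of the conditional i.i.d.\ structure via the strong Markov property, and the edge cases $\vartheta_E\in\{0,1\}$. If $\vartheta_E=1$, then $\Sigma_k=k>xk$ almost surely and the bound holds trivially. The case $\vartheta_E=0$ is vacuous, since then there is no $x\in(0,\vartheta_E)$.
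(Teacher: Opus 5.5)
Your proof is correct and takes the same route as the paper: the paper uses the conditional i.i.d.\ Bernoulli$(\vartheta_E)$ structure of the $X_i$ given $\{k\le M_\infty\}$ and cites the Chernoff--Cram\'er bound in Section~2.2 of Dembo--Zeitouni. Your explicit Markov-inequality step and optimization over $\lambda$, including the strong Markov justification and the edge case $\vartheta_E=1$, are exactly the computation that citation supplies.
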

\begin{remark}\label{rem:rate-function-varthetaE-diverges}
Note that for any $x\in (0,1)$, as \(\vartheta_E\rightarrow 1\),
  \[I_{\vartheta_E}(x)\rightarrow \infty.\]

\end{remark}

\begin{lemma}\label{lem:xiS-etaM-event-inclusion}
For any \(k\in\mathbb{N}^+\) and \(\eta\in(0,1)\), on the event \(\{k\le M_\infty\}\),
\begin{equation}\label{eq:xiS-etaM-event-inclusion}
\Biggl\{
\ell_0\bigl(\Prune(S[0,\tau_k],E)\bigr)\le \lceil \eta k\rceil
\Biggr\}
\supset
\Biggl\{
\frac{1}{k}\sum_{i=1}^{k}X_i>1-\eta
\Biggr\}.
\end{equation}
\end{lemma}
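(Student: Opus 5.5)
We prove the inclusion \eqref{eq:xiS-etaM-event-inclusion} deterministically. On the event on the right-hand side, fewer than $\eta k$ of the excursions $\mathcal R_1,\dots,\mathcal R_k$ fail to lie in $\mathsf{Seg}(E)$, and we show that each visit to $0$ surviving in $\Prune(S[0,\tau_k],E)$ can be charged to a distinct such failing excursion, up to one extra visit.

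\textbf{Step 1: which visits survive.} The visits of $S[0,\tau_k]$ to $0$ are at times $\tau_0,\tau_1,\dots,\tau_k$, and pruning only deletes vertices. For $r\ge 1$, the visit at $\tau_r$ is retained exactly when the boundary step $(S_{\tau_r-1},S_{\tau_r})$ is retained, since the pruned path contains a vertex at $0$ created by that step. The initial point $\tau_0$ always contributes one visit. Hence
\[
\ell_0\bigl(\Prune(S[0,\tau_k],E)\bigr)=1+\#\{1\le r\le k:\ (S_{\tau_r-1},S_{\tau_r})\text{ is retained}\}.
\]
Care is needed that a retained step into $0$ corresponds bijectively to a visit of the pruned path to $0$. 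This follows from the skeleton description after \eqref{def:Nn}: the retained steps in chronological order are exactly the steps of $\mathbf s'$.

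\textbf{Step 2: charging retained boundary steps.} We claim that if $(S_{\tau_r-1},S_{\tau_r})$ is retained, then $X_r=0$, except that the time $\tau_0$ plays the role of a retained boundary (case $i=0$). Suppose $X_r=1$. Let $i<r$ be the largest index with $i=0$ or with $(S_{\tau_i-1},S_{\tau_i})$ retained. If $i\ge1$ and the excursions $\mathcal R_{i+1},\dots,\mathcal R_r$ all lie in $\mathsf{Seg}(E)$, then Lemma~\ref{lem:Ri-in-Seg-implies-boundary-pruned}, applied with $j=r$, shows that step $r$ is pruned. This is a contradiction, unless some intermediate excursion lies outside $\mathsf{Seg}(E)$.

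That last case is handled more simply. Take the single index $j=r$, $i=r-1$: if step $r-1$ is retained (or $r-1=0$) and $\mathcal R_r\in\mathsf{Seg}(E)$, then step $r$ is pruned. If instead step $r-1$ is pruned, we need a direct argument. When the previous boundary is pruned, $\mathcal R_r$ is appended to a pruned prefix ending at $0$, and being $E$-erasable, $\mathcal R_r$ erases itself, so its final step is still pruned.

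\textbf{Step 3: the main obstacle.} Making this last argument rigorous is the main obstacle. The cleanest route is to use the staging property, Proposition~\ref{prop:stage}, with $m=\tau_{r-1}$: $\Prune(S[0,\tau_r],E)=\Prune(\Prune(S[0,\tau_{r-1}],E)\circ_d \mathcal R_r,E)$. Since $\mathcal R_r$ is $E$-erasable and begins and ends at $0$, pruning it in place (its loops are found within the appended part first, or are created with earlier material only in ways that still remove the final step into $0$) deletes its last step. We then need that later pruning up to $\tau_k$ never restores a pruned step, which holds because pruning is monotone: a deleted step is never reinstated. A fully careful treatment would verify that the erasure of $\mathcal R_r$ is unaffected by the prefix, and this is essentially the content of Lemma~\ref{lem:Ri-in-Seg-implies-boundary-pruned} applied with $i$ equal to the last retained boundary.

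\textbf{Step 4: conclusion.} Granting Step 3, every retained boundary step $r\ge1$ has $X_r=0$. Hence
\[
\ell_0\bigl(\Prune(S[0,\tau_k],E)\bigr)\le 1+\#\{r\le k: X_r=0\}<1+\eta k.
\]
Since the left-hand side is an integer, it is at most $\lceil\eta k\rceil$.
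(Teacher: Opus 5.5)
Your Step 1 is fine, but the claim that drives Steps 2--4 is false: a retained boundary step $(S_{\tau_r-1},S_{\tau_r})$ does not force $X_r=0$. The argument in Step 3, that an $E$-erasable excursion appended to a pruned prefix ``erases itself'', is exactly where it breaks. An $E$-loop can straddle the junction between the pruned prefix and $\mathcal R_r$, and it can complete before any loop inside $\mathcal R_r$ does.

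Here is a concrete example. Let $a,b$ be orthogonal unit vectors, $E=\{(0,b,0),(0,-b,0)\}$, and $S[0,6]=(0,a,a+b,b,0,b,0)$. Then $\tau_1=4$ and $\tau_2=6$. The excursion $\mathcal R_1$ is a unit square containing no $E$-loop, so $X_1=0$, while $\mathcal R_2=(0,b,0)\in\mathsf{Seg}(E)$, so $X_2=1$. The first $E$-loop to complete is $(S_3,S_4,S_5)=(b,0,b)$ at time $5$. Removing it gives $\Prune(S[0,6],E)=(0,a,a+b,b,0)$, and one computes $N_3=N_4=N_5=3$ and $N_6=4$. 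So the boundary step at $\tau_1$ is pruned while the boundary step at $\tau_2$ is retained, even though $X_2=1$.

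Your appeal to Lemma~\ref{lem:Ri-in-Seg-implies-boundary-pruned} does not rescue this. That lemma needs a retained boundary at some $i\ge 1$ followed by \emph{all} of $\mathcal R_{i+1},\dots,\mathcal R_r$ lying in $\mathsf{Seg}(E)$. Here the last retained boundary before $r=2$ is the start, and $\mathcal R_1\notin\mathsf{Seg}(E)$, so the hypothesis fails. This is precisely the case you tried to dispatch ``more simply''.

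The inequality $\ell_0(\Prune(S[0,\tau_k],E))\le 1+\#\{r\le k: X_r=0\}$ in your Step 4 is nevertheless true. The charging just has to be done by blocks, as in the paper: charge a retained boundary step $r$ to $\max\{i\le r: X_i=0\}$ rather than to $r$ itself.

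This map is well defined. If $\mathcal R_1,\dots,\mathcal R_r\in\mathsf{Seg}(E)$, then Proposition~\ref{prop:stage} gives $\Prune(S[0,\tau_r],E)=(0)$, so $N_n=0$ for all $n\le\tau_r$ and no boundary step up to $\tau_r$ survives.

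It is also injective. Suppose two retained boundaries $q<q'$ had no failing excursion in $(q,q']$. Then Lemma~\ref{lem:Ri-in-Seg-implies-boundary-pruned}, applied with $i=q$ and $j=q'$, would force the boundary step at $q'$ to be pruned, a contradiction.

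Equivalently, between consecutive non-erasable excursions $i_j<i_{j+1}$ at most one boundary step is retained, which is exactly the paper's proof. In the example above, the single retained boundary $r=2$ is charged to the failing index $1$.
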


\begin{proof}
The event on the right-hand side is equivalent to
\[
\#\{1\le i\le k:\mathcal R_i\notin \mathsf{Seg}(E)\}<\eta k.
\]
Suppose this set is
\[
\{i_1<i_2<\cdots<i_m\},
\qquad m<\eta k,
\]
and set \(i_{m+1}:=k+1\). By Lemma~\ref{lem:Ri-in-Seg-implies-boundary-pruned}, for each \(1\le j\le m\), there is at most one index
\(
q\in [i_j,i_{j+1})
\)
such that the boundary step \((S_{\tau_q-1},S_{\tau_q})\) is retained in \(\Prune(S[0,\tau_k],E)\). In addition, the initial segment \(S[0,\tau_{i_1-1}]\) is entirely removed by pruning. Hence there are at most \(m\le \lceil\eta k\rceil-1\) retained steps of the form \((S_{\tau_q-1},S_{\tau_q})\).

Consequently, \(\Prune(S[0,\tau_k],E)\) returns to \(0\) at most \(\lceil\eta k\rceil-1\) times after time \(0\). Therefore,
\[
\ell_0\bigl(\Prune(S[0,\tau_k],E)\bigr)\le \lceil\eta k\rceil.
\]
This proves \eqref{eq:xiS-etaM-event-inclusion}.
\end{proof}
Combining Lemmas \ref{lem:bernoulli-LDP-Seg} and \ref{lem:xiS-etaM-event-inclusion}, we have the following:
\begin{corollary}
For every $k\in\mathbb{N}^+$ and \(\eta\in(1-\vartheta_E,1)\),
\begin{equation}\label{eq:bernoulli-LDP-l0}
\mathbb{P}\!\left(
\ell_0\bigl(\Prune(S[0,\tau_k],E)\bigr) > \lceil\eta k\rceil
\,\Big|\, k\le M_\infty
\right)
\le \exp\bigl\{-I_{\vartheta_E}(1-\eta)\,k\bigr\}.
\end{equation}

\end{corollary}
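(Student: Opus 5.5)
This corollary follows by combining Lemma~\ref{lem:xiS-etaM-event-inclusion} with Lemma~\ref{lem:bernoulli-LDP-Seg}. The plan has three steps: take complements in the event inclusion, restrict to the conditioning event, and apply the Bernoulli large-deviation bound at the point \(x=1-\eta\).

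\emph{Step 1: complement of the inclusion.} Fix \(k\in\mathbb{N}^+\) and \(\eta\in(1-\vartheta_E,1)\). On \(\{k\le M_\infty\}\), Lemma~\ref{lem:xiS-etaM-event-inclusion} gives \(\{\frac1k\sum_{i=1}^k X_i>1-\eta\}\subset\{\ell_0(\Prune(S[0,\tau_k],E))\le\lceil\eta k\rceil\}\). Taking complements within \(\{k\le M_\infty\}\), we get
\[
\Bigl\{\ell_0\bigl(\Prune(S[0,\tau_k],E)\bigr)>\lceil\eta k\rceil\Bigr\}\cap\{k\le M_\infty\}
\subset
\Bigl\{\tfrac1k\textstyle\sum_{i=1}^k X_i\le 1-\eta\Bigr\}\cap\{k\le M_\infty\}.
\]
The times \(\tau_1,\dots,\tau_k\) and the indicators \(X_1,\dots,X_k\) are all well defined on \(\{k\le M_\infty\}\). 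Moreover, \(\{k\le M_\infty\}\) has positive probability \((1-\gamma)^{k-1}\), so conditioning on it is legitimate. Dividing both sides by \(\mathbb{P}(k\le M_\infty)\) turns the inclusion into an inequality between the corresponding conditional probabilities.

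\emph{Step 2: check the range of \(x\).} The condition \(\eta\in(1-\vartheta_E,1)\) is exactly \(x:=1-\eta\in(0,\vartheta_E)\). This is the range in which Lemma~\ref{lem:bernoulli-LDP-Seg} applies. It also places \(x\) below the mean \(\vartheta_E\), which is the correct side for a lower-tail bound.

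\emph{Step 3: apply the Bernoulli bound.} Lemma~\ref{lem:bernoulli-LDP-Seg} with \(x=1-\eta\) bounds the right-hand conditional probability by \(\exp\{-I_{\vartheta_E}(1-\eta)k\}\). This uses that, conditionally on \(\{k\le M_\infty\}\), the \(X_i\) are i.i.d.\ Bernoulli\((\vartheta_E)\), which follows from the strong Markov property at the return times. This gives \eqref{eq:bernoulli-LDP-l0}.

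None of the steps is a real obstacle. The only points to check are that the inclusion from Lemma~\ref{lem:xiS-etaM-event-inclusion} holds pathwise on the conditioning event, and that the strict inequality \(>1-\eta\) complements to the non-strict \(\le 1-\eta\) used in Lemma~\ref{lem:bernoulli-LDP-Seg}. Both are immediate.
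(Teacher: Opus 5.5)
Your proposal is correct and follows the paper's route exactly: the paper obtains this corollary by complementing the inclusion of Lemma~\ref{lem:xiS-etaM-event-inclusion} on $\{k\le M_\infty\}$ and applying Lemma~\ref{lem:bernoulli-LDP-Seg} at $x=1-\eta\in(0,\vartheta_E)$. The exact value you quote for $\mathbb{P}(k\le M_\infty)$ is immaterial, since only its positivity is used, so the paper's off-by-one between $M_\infty=\xi(\infty,0)$ and the indexing $\tau_{M_\infty}<\infty$ does not affect your argument.
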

\begin{proof}[Proof of Proposition~\ref{prop:Sprime-localtime-tail}]
Fix \(\beta\in(0,1]\) and \(q>0\). Note that the map
\[
m\mapsto \ell_0(\Prune(S[0,m],E))
\]
can increase only at times \(m=\tau_k\), \(1\le k\le M_\infty\). Hence, for any \(N\in\mathbb{N}^+\),
\begin{align*}
&\bigl\{\ell_0(\Prune(S[0,N],E))>\beta\alpha\log n\bigr\}\\
&\subset
\bigcup_{k\ge \lfloor \beta\alpha\log n\rfloor}
\bigl\{
k\le M_\infty,\,
\ell_0(\Prune(S[0,\tau_k],E))>\beta\alpha\log n
\bigr\}.
\end{align*}
Therefore,
\begin{align}\label{eq:ell0-prune-upper}
\begin{aligned}
&\mathbb{P}\bigl(\ell_0(\Prune(S[0,N],E))>\beta\alpha\log n\bigr)\\
\le\;&
\mathbb{P}\bigl(M_\infty>(q+1)\alpha\log n\bigr)\\
&\quad+
\sum_{k=\lfloor \beta\alpha\log n\rfloor}^{\lfloor (q+1)\alpha\log n\rfloor}
\mathbb{P}\bigl(
\ell_0(\Prune(S[0,\tau_k],E))>\beta\alpha\log n
\,\big|\, k\le M_\infty
\bigr)\\
\overset{\eqref{eq:N-geometric-tail}}{\le}\;&
\frac{1}{n^{q+1}}
+
\sum_{k=\lfloor \beta\alpha\log n\rfloor}^{\lfloor (q+1)\alpha\log n\rfloor}
\mathbb{P}\Bigl(
\ell_0(\Prune(S[0,\tau_k],E))>\frac{\beta}{q+1}k
\,\Big|\, k\le M_\infty
\Bigr).    
\end{aligned}
\end{align}

By Lemma~\ref{lem:first-return-Seg-approx} and Remark~\ref{rem:rate-function-varthetaE-diverges}, we may choose \(E\) so that \(\vartheta_E\) is sufficiently close to \(1\), and hence
\[
1-\vartheta_E<\frac{\beta}{2(q+1)}
\qquad\text{and}\qquad
I_{\vartheta_E}\Bigl(1-\frac{\beta}{q+1}\Bigr)>\frac{2(q+1)}{\beta\alpha}.
\]
By continuity, for all sufficiently large \(n\) and all
\(
\lfloor \beta\alpha\log n\rfloor\le k\le \lfloor (q+1)\alpha\log n\rfloor,
\)
\[
I_{\vartheta_E}\Bigl(1-\frac{\beta}{q+1}+\frac1k\Bigr)>\frac{q+1}{\beta\alpha}.
\]

For such \(k\), set \(\eta_k:=\frac{\beta}{q+1}-\frac1k\). Since \(\lceil \eta_k k\rceil\le \frac{\beta}{q+1}k\), it follows from \eqref{eq:bernoulli-LDP-l0} that
\[
\mathbb{P}\Bigl(
\ell_0(\Prune(S[0,\tau_k],E))>\frac{\beta}{q+1}k
\,\Big|\, k\le M_\infty
\Bigr)\le \exp\{-I_{\vartheta_E}(1-\eta_k)k\}\le \frac{1}{n^{q+1}}.
\]

Substituting this back into \eqref{eq:ell0-prune-upper} yields
\[
\mathbb{P}\bigl(\ell_0(\Prune(S[0,N],E))>\beta\alpha\log n\bigr)
\le
\frac{1}{n^{q+1}}+\frac{(q+1)\alpha\log n}{n^{q+1}}
\le \frac{1}{n^q},
\]
for sufficiently large \(n\). This completes the proof.
\end{proof}

\subsection{Conditional distribution of ES-representations}\label{subsec:conditional-ES}

Recall that $S'=\mathsf{Prune}(S,E)$. Fix \(u \in \mathbb{N}\). By definition of \(N^{-1}\), the time \(N^{-1}(u+1)-1\) is exactly when the first \(u + 1\) pruned segments have just been completely generated. These pruned segments are denoted by 
\[
\Xi_k := \xi_k\big(S[0,N^{-1}(u+1)-1],E\big),\qquad 0\le k\le u,
\]
so that
\[
S[0, N^{-1}(u+1)-1] = S'[0, u] \oplus (\Xi_k)_{k=0}^{u}.
\]
We further set
\begin{equation}\label{eq:def-Wu}
\mathcal{W}=(\mathcal{W}_0,\ldots,\mathcal{W}_{u})
:=\mathcal{E}^\otimes\big((\Xi_k)_{k=0}^{u}\big)
=\bigl(\mathcal{E}(\Xi_0),\dots,\mathcal{E}(\Xi_{u})\bigr)
\in \bigl(\mathbf{K}(|E|)^{\mathcal U}\bigr)^{u+1}.
\end{equation}

The main result of this subsection is the following theorem, which quantifies the probability loss caused by ``forcibly suppressing loop insertions''; this is a key input for the downward-deviation estimates in the next section.
Recall the notation \(V^{\partial}(W)\), \(V_{\mathrm{exp}}^{v,W}\), and \(\mathfrak H^{v,W}\) from Section~\ref{subsec:decomposition-fiber-factorization}.

\begin{theorem}\label{thm:stop-prob}
Fix \(u\in\mathbb N\) and an \(E\)-pruned finite path \(s'\) with \(|s'|=u\). 
Fix \(i\in\{0,\ldots,u\}\) and
\[
h=(v,A,\omega^\partial)\in\mathcal H_{\mathrm{tr}}^{s'[0,i]}.
\]
Assume we condition on the following information:
\[
S'[0,u]=s',\qquad \mathcal W_0,\ldots,\mathcal W_{i-1},\qquad v\in V^{\partial}(W^i),\qquad
\mathfrak H^{v,\mathcal W_i}=h,
\]
and denote this conditioning event by \(\mathcal G\). Then
\[
\mathbb P(\operatorname{Next}_{\mathcal W_i}(v)=0\mid \mathcal G)\ge \gamma.
\]
\end{theorem}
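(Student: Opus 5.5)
The plan is to write $\mathbb P(\mathcal G)$ as a sum of path weights and to show that, after fixing all the other data, the weight splits as a product in which the loop cluster at $\mathrm{par}(v)$ appears as a free factor. The event $\{\operatorname{Next}_{\mathcal W_i}(v)=0\}$ should then correspond to that factor being the trivial path, and the ratio should come out as one over a Green's function, which is at most $\gamma^{-1}$.

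\textbf{Step 1: identify the event.} I would first check that, on $\mathcal G$, $\operatorname{Next}_{\mathcal W_i}(v)=0$ if and only if the piece $\eta_{\mathrm{par}}^{v,\mathcal W_i}$ of the segment $\Xi_i$ is the trivial path $(0)$. This should hold because $\eta_{\mathrm{par}}^{v,W}$ encodes exactly the subtree $\widehat{\mathcal D}_{\mathrm{par}}^{v,W}$, i.e. the elementary blocks at $\mathrm{par}(v)$ from the boundary $v$ onward together with their descendants. Hence it is trivial exactly when no further block starts at $v$.

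\textbf{Step 2: decompose the conditioning event.} Set $\sigma:=N^{-1}(u+1)-1$. I would write $\mathcal G$ as a disjoint union over the finite path $S[0,\sigma]$ of the event $\{S[0,\sigma]=\text{given path}\}$, intersected with the event that no later pruning ever removes a step of $s'$. Three facts are needed here.
\begin{enumerate}[label=(\alph*)]
\item By Proposition~\ref{pro:one-to-one}, the admissible values of $S[0,\sigma]$ are in bijection with $\mathsf{Seg}^{\otimes}$-tuples. The factor sets $\mathsf{Seg}(s'[0,m],E)$ for the segments $m\neq i$ depend only on $s'$.
\item Within the $i$-th factor, fixing $\mathfrak H^{v,\mathcal W_i}=h$ restricts $\Xi_i$ to $\mathsf{Path}^{s'[0,i]}(h)$. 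By Theorem~\ref{thm:fiber-factorization-fixed-H}, this fiber is in bijection with the product of $\mathsf{Seg}(s'[0,i]\circ_t\pi^h_{\mathrm{par}(v)},E)$ and the sets $\mathsf{Seg}(s'[0,i]\circ_t\pi^h_u,E)$. The choice of $\eta_{\mathrm{par}}$ is free and does not affect the admissible sets for the other pieces.
\item By the Markov property at time $\sigma$ together with Proposition~\ref{prop:stage}, the probability that the future never erases steps of $s'$ is a function of $\mathsf{Prune}(S[0,\sigma],E)=s'$ and the current position only. So it is the same for every choice of $\eta_{\mathrm{par}}$.
\end{enumerate}

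\textbf{Step 3: factorize the weight.} The probability of a fixed path of length $\sigma$ factorizes over its steps, and inserting a closed piece into a path multiplies the probability by that piece's weight. Hence
\[
\mathbb P(\mathcal G)=C\sum_{\eta\in\mathsf{Seg}(s'[0,i]\circ_t\pi^h_{\mathrm{par}(v)},E)}p(\eta),
\qquad
\mathbb P(\mathcal G,\operatorname{Next}=0)=C\,p((0))=C,
\]
where $C$ is a common factor collecting all the other pieces, the other segments, and the future probability.

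\textbf{Step 4: bound the denominator.} Every element of $\mathsf{Seg}(\cdot,E)$ is a closed path starting at $0$, because it is $E$-erasable. Therefore the sum in Step 3 is at most the sum of $p(\eta)$ over all closed nearest-neighbour paths at $0$, which is $G(0,0)=\mathbb E[M_\infty]=1/\gamma$. This gives $\mathbb P(\operatorname{Next}_{\mathcal W_i}(v)=0\mid\mathcal G)\ge\gamma$.

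\textbf{Main obstacle.} The main obstacle is making Step 2 rigorous. In particular, one has to verify that the conditioning information and the remaining constraints (the later segments, the non-erasure of $s'$ by the infinite future, and the value of $\sigma$ itself) really do not depend on the choice of $\eta_{\mathrm{par}}$. I would handle this through Theorem~\ref{thm:fiber-factorization-fixed-H}, Proposition~\ref{prop:path-fiber-reconstruction} and Proposition~\ref{prop:stage}: inserting any admissible $\eta_{\mathrm{par}}$ leaves every pruned prefix unchanged, so all later pruning behaves identically.
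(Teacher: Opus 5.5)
Your proposal is correct and follows essentially the same route as the paper. You identify $\{\operatorname{Next}_{\mathcal W_i}(v)=0\}$ with $\{\eta_{\mathrm{par}}^{v,W}=(0)\}$, use the product-form conditional law (the paper cites Proposition~\ref{prop:condi}; you re-derive it via the Markov property at time $\sigma$) together with Theorem~\ref{thm:fiber-factorization-fixed-H}, and bound the normalizing sum by the total weight $\gamma^{-1}$ of closed paths at $0$ (Lemma~\ref{lem:Seg-summable}).

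One small correction to Step~2: the future event must also require that the step $(S_\sigma,S_{\sigma+1})$ is retained forever, so that $\sigma$ really equals $N^{-1}(u+1)-1$. This event is still determined by $s'$ and the future increments, so your argument goes through unchanged.
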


For the proof of Theorem~\ref{thm:stop-prob}, we need two preliminary results.

\begin{lemma}\label{lem:Seg-summable}
For every finite loop set \(E\),
\[
\sum_{\eta\in\mathsf{Seg}(E)} p(\eta)\le\gamma^{-1}.
\]

\end{lemma}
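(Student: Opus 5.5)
We need to prove that $\sum_{\eta\in\mathsf{Seg}(E)}p(\eta)\le\gamma^{-1}$. The plan is to interpret the sum as the expected number of visits to the origin of the simple random walk. That expectation equals $\sum_{n\ge0}\mathbb P(S_n=0)=\mathbb E[M_\infty]=\gamma^{-1}$, since $M_\infty$ is geometric with success probability $\gamma$. It therefore suffices to show that the map $\eta\mapsto|\eta|$, restricted to $\mathsf{Seg}(E)$, is such that
\[
\sum_{\eta\in\mathsf{Seg}(E)}p(\eta)\le\sum_{n\ge 0}\sum_{\eta:\,|\eta|=n,\ \eta_0=\eta_n=0}p(\eta)=\sum_{n\ge0}\mathbb P(S_n=0).
\]

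\textbf{Step 1: every $\eta\in\mathsf{Seg}(E)$ is a loop at the origin.} Take $\eta\in\mathsf{Seg}(E)$ with $p(\eta)>0$; only such $\eta$ contribute, and in particular $\eta_0=0$. Each one-step pruning $\mathsf{Prune}_1$ removes a segment $s[\tau-|e|,\tau]$ that is a translate of a loop. Such a segment starts and ends at the same site, so the removal preserves the endpoint of the path. Since $\mathsf{Prune}(\eta,E)=(0)$ is reached after finitely many steps (the length strictly decreases), the endpoint of $\eta$ equals the endpoint of $(0)$. Hence $\eta_{|\eta|}=0$.

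\textbf{Step 2: distinct elements of $\mathsf{Seg}(E)$ are distinct paths.} Elements of $\mathsf{Seg}(E)$ are by definition distinct finite paths. Those of a fixed length $n$ form a subset of the set of length-$n$ paths from $0$ to $0$. Hence
\[
\sum_{\eta\in\mathsf{Seg}(E),\,|\eta|=n}p(\eta)\le\mathbb P(S_n=0).
\]

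\textbf{Step 3: sum over $n$.} Summing over $n\ge0$ gives $\sum_n\mathbb P(S_n=0)=\mathbb E[\xi(\infty,0)]=\mathbb E[M_\infty]$. Since $\mathbb P(M_\infty>k)=(1-\gamma)^k$, this equals $\gamma^{-1}$.

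There is no real obstacle in this argument. The only point needing care is Step 1, namely the endpoint invariance of pruning and the finiteness of the iteration for finite paths; both follow immediately from the definition of $\mathsf{Prune}_1$.
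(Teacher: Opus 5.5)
Your proof is correct and is essentially the paper's argument. The paper notes $\mathsf{Seg}(E)\subset\mathcal L_0$ (finite loops rooted at the origin) and cites the classical identity $\sum_{\eta\in\mathcal L_0}p(\eta)=\gamma^{-1}$; you verify that identity directly via $\sum_n\mathbb P(S_n=0)=\mathbb E[M_\infty]$, and you also spell out the endpoint-preservation of pruning, which the paper leaves implicit.
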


\begin{proof}
Let
\[
\mathcal L_0:=\bigl\{\eta:\eta\text{ is a finite path with }\eta_0=\eta_{|\eta|}=0\bigr\}
\]
be the set of rooted finite loops at the origin. The identity
\(
\sum_{\eta\in\mathcal L_0}p(\eta)=\gamma^{-1}
\)
is classical; see Theorem~4.1.1 in \cite{lawler2010random}. Since \(\mathsf{Seg}(E)\subset\mathcal L_0\), the claim follows.
\end{proof}

Recall the notation \(\mathsf{P}(s', E)\) and \(\mathsf{Seg}^\otimes(s', E)\), and their
relation from Proposition~\ref{pro:one-to-one}.

\begin{proposition}\label{prop:condi}
For any \(u \in \mathbb{N}\), conditionally on \(S'[0, u]\), we have that for every
\((\xi_k)_{k=0}^{u} \in \mathsf{Seg}^\otimes\big(S'[0, u], E\big)\),
\begin{equation}\label{eq:condi}
\mathbb{P}\Big\{(\Xi_k)_{k=0}^{u} = (\xi_k)_{k=0}^{u} \,\Big|\, S'[0, u]\Big\}
\ \propto\ \prod_{k=0}^u p(\xi_k).
\end{equation}
\end{proposition}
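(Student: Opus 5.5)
We prove Proposition~\ref{prop:condi} by writing the event $\{S'[0,u]=s',\ (\Xi_k)_{k=0}^u=(\xi_k)_{k=0}^u\}$ as a single cylinder event for the walk, up to a factor that does not depend on the $\xi_k$. Fix an $E$-pruned path $s'$ with $|s'|=u$ and $(\xi_k)\in\mathsf{Seg}^{\otimes}(s',E)$.

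The first step is to reconstruct the finite path
\[
s:=s'\oplus(\xi_k)_{k=0}^{u},
\]
obtained by inserting each $\xi_k$ at the $k$-th site of $s'$. By Proposition~\ref{pro:one-to-one}, $s$ is the unique finite path with $\mathsf{Prune}(s,E)=s'$ whose pruned segments are $(\xi_k)$. Its length is $|s|=u+\sum_k|\xi_k|$.

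The second step identifies the event. We claim
\[
\{S'[0,u]=s',\ (\Xi_k)_{k=0}^u=(\xi_k)_{k=0}^u\}=\{S[0,|s|]=s\}\cap B,
\]
where $B$ is the event that the step $(S_{|s|},S_{|s|+1})$ is retained forever, i.e.\ $N^{-1}(u+1)=|s|+1$. Given $S[0,|s|]=s$, the condition $N^{-1}(u+1)=|s|+1$ means that the next step creates the $(u+1)$-st retained step and is never pruned afterwards. Staging (Proposition~\ref{prop:stage}) lets us prune $s$ first and then continue with the future walk, so we may replace $S[0,|s|]$ by $s'$. Then $B$ becomes the event that the future increments $(S_{|s|+j}-S_{|s|})_{j\ge0}$, appended to $s'$ by $\circ_t$, never cause any step of $s'$, or the new step, to be pruned.

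The key point is that pruning only erases complete simple loops. Whether retained steps of $s'$ (and the new step) survive therefore depends only on $s'$ itself and on the future increments, not on which erasable segments $\xi_k$ were interspersed. Hence $B$ is determined by $s'$ together with the future increments, which are independent of $S[0,|s|]$.

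The third step is the factorization. By the Markov property,
\[
\mathbb{P}(S[0,|s|]=s,\ B)=p(s)\,\phi(s'),
\]
where $\phi(s')$ is the probability that a fresh walk appended to $s'$ satisfies $B$. Since $s$ is a concatenation of $s'$'s steps with the inserted loops,
\[
p(s)=(2d)^{-u}\prod_k p(\xi_k).
\]
Summing over $(\xi_k)\in\mathsf{Seg}^{\otimes}(s',E)$ gives $\mathbb{P}(S'[0,u]=s')$, and dividing yields \eqref{eq:condi}. The normalizing sum is finite: each factor satisfies $\sum_{\eta\in\mathsf{Seg}(E)}p(\eta)\le\gamma^{-1}$ by Lemma~\ref{lem:Seg-summable}.

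The main obstacle is the second step: showing rigorously that $B$ depends on the past only through $s'$. Concretely, we need that for any future path $w$,
\[
N_n\bigl((s'\oplus(\xi_k))\circ_t w,\,E\bigr)\quad\text{and}\quad N_{n-\sum_k|\xi_k|}\bigl(s'\circ_t w,\,E\bigr)
\]
agree in their retained-step pattern after time $|s|$. We would derive this from Proposition~\ref{prop:stage} applied at time $|s|$, using $\mathsf{Prune}(s,E)=s'$. The compatibility condition \eqref{eq:compatible-retained-prefix} built into $\mathsf{Seg}(s'[0,k],E)$ is what guarantees $\mathsf{Prune}(s,E)=s'$ with exactly these segments.
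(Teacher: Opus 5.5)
Your proposal is correct and follows essentially the same route as the paper. The paper also writes the event as $\{S[0,|\eta|]=\eta\}\cap A(s')$, with $A(s')$ measurable with respect to the future increments and depending on the past only through $s'$, and then concludes by the Markov property. You additionally justify the future-measurability via Proposition~\ref{prop:stage} and make explicit the factorization $p(s)=(2d)^{-u}\prod_k p(\xi_k)$, both of which the paper leaves implicit.
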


\begin{proof}
Fix an \(E\)-pruned finite path \(s'\) with \(|s'|=u\). Let
\(\eta\in \mathsf{P}(s', E)\) be the path corresponding to $(\xi_k)_{k=0}^{u}$ via the
one-to-one correspondence in Proposition~\ref{pro:one-to-one}.

Observe that
\[
\big\{(\Xi_k)_{k=0}^{u} = (\xi_k)_{k=0}^{u},\ S'[0, u]=s'\big\}
=
\big\{S[0,|\eta|]=\eta\big\}\cap A(s'),
\]
where \(A(s')\) is the event that the step \((S_{|\eta|},S_{|\eta|+1})\) is retained in \(\Prune(S,E)\), and that the portion \(s'\) is retained under the
subsequent pruning, i.e.,
\(N_n\big(s'\circ_t (S[|\eta|,k]-S_{|\eta|}),E\big)\ge |s'|\) for all
\(k\ge |\eta|\).
Note that $A(s')$ is measurable with respect to the future path
$\sigma\big(S_k-S_{|\eta|}:k\ge |\eta|\big)$.
Therefore, by the Markov property at time $|\eta|$, we obtain \eqref{eq:condi}.
\end{proof}

\begin{proof}[Proof of Theorem \ref{thm:stop-prob}]
By Proposition~\ref{prop:condi}, for every \(\eta\in\mathsf{Seg}(s'[0,i],E)\),
\begin{equation}\label{eq:conditional-fiber-proportionality}
\begin{aligned}
\mathbb P(\Xi_i=\eta\mid\mathcal G)
&\propto
p(\eta)\mathbf 1{\{\eta\in\mathsf{Path}^{s'[0,i]}(h)\}}\\
&\propto
p(\eta_{\mathrm{par}}^{v,W})
\mathbf 1{\{\eta_{\mathrm{par}}^{v,W}\in
\mathsf{Seg}(s'[0,i]\circ_t \pi_{\mathrm{par}(v)}^h,E)\}}\\
&\quad
\prod_{q=1}^{|v|-1}\prod_{u\in\mathcal Y_q^h}
\left[
p(\eta_u^W)
\mathbf 1{\{\eta_u^W\in\mathsf{Seg}(s'[0,i]\circ_t \pi_u^h,E)\}}
\right],
\qquad W=\mathcal E(\eta).
\end{aligned}
\end{equation}
Here \(\mathcal Y_q^h\) denotes the common value of \(\mathcal Y_q^{v,W}\) on the path fiber \(\mathsf{Path}^{s'[0,i]}(h)\), and \(\pi_a^h\) denotes the corresponding common pruned prefix, as given by Theorem~\ref{thm:fiber-factorization-fixed-H}. 
The second proportionality follows from Proposition~\ref{prop:path-fiber-reconstruction} and the admissible classes in Theorem~\ref{thm:fiber-factorization-fixed-H}.

Notice that, on the path fiber,
\[
\{\operatorname{Next}_W(v)=0\}=\{\eta_{\mathrm{par}}^{v,W}=(0)\}.
\]
Therefore by \eqref{eq:conditional-fiber-proportionality}, 
\[\mathbb P(\operatorname{Next}_{\mathcal W_i}(v)=0\mid \mathcal G)
=
p((0))/{\displaystyle\sum_{\eta'\in \mathsf{Seg}(s'[0,i]\circ_t \pi_{\mathrm{par}(v)}^h,E)}p(\eta')}
\ge
\displaystyle\Big(\sum_{\eta'\in \mathsf{Seg}(E)}p(\eta')\Big)^{-1}
\ge \gamma,
\]
where the last inequality follows from Lemma~\ref{lem:Seg-summable}.
This completes the proof.
\end{proof}

\section{Proof of Theorem \ref{thm:downward}}\label{sec:proof-downward}
In this section, we prove Theorem~\ref{thm:downward}. For simplicity, we only prove the case $u=0$
namely,
\begin{equation}\label{eq:downward_u0_target}
\mathbb{P}\big(\xi^*(n)\le \beta\alpha\log n\big)
\ge \exp\left\{-(1+o(1))\,c_{\beta,n,0}\,\gamma\,n^{1-\beta}\right\},
\end{equation}
where we recall that
\(c_{\beta,n,0}:=n^{\beta}(1-\gamma)^{\lfloor \beta\alpha\log n\rfloor}\in[1,(1-\gamma)^{-1}).\) The proof for a general $u\in\mathbb{R}$ is analogous.

The proof proceeds by exhibiting a strategy that realizes the downward
deviation event, and then estimating the probability of this strategy.

\subsection{Strategy realizing the downward deviation}

In this subsection, we reduce the target downward-deviation estimate
to an event in the loop-pruning framework that is more amenable to probability
estimation.

\subsubsection{Reduction to an estimation-friendly event}
We first perform a reduction step. Fix for the moment a finite loop set \(E\), whose choice will be specified in \eqref{eq:Echoice}; roughly speaking, \(E\) will be rich enough to make the loop-pruned path have small local times. Let
\(S':=\mathsf{Prune}(S,E)\), and let \(\kappa=\kappa(E)\) be the constant from
Theorem~\ref{thm:moderate}. Fix \(a\in(1-\beta/2,1)\), so that \(2a-1>1-\beta\), and fix a constant \(\delta\in(0,\kappa)\). Set \(u_n:=\big\lfloor \kappa n+\delta n^a\big\rfloor\).
Recall that, as in Section~\ref{subsec:conditional-ES}, \(N^{-1}(u_n+1)-1\) is exactly the time at which the pruned segments \(\xi_0,\ldots,\xi_{u_n}\) have all been completely generated.
By Theorem~\ref{thm:moderate}, there exists \(c=c(E,\delta)>0\) such that, for all sufficiently large \(n\),
\[
\begin{aligned}
\mathbb P\big(N^{-1}(u_n+1)-1\le n\big)
&\le \mathbb P\big(N_{n+1}\ge u_n+1\big)\\
&\le \mathbb P\big(|N_{n+1}-\kappa(n+1)|>\tfrac12\delta n^a\big)
\le e^{-c n^{2a-1}}.
\end{aligned}
\]
Therefore,
\[
\mathbb P\big(\xi^*(n)\le \beta\alpha\log n\big)
\ge \mathbb P\big(\xi^*(N^{-1}(u_n+1)-1)\le \beta\alpha\log n\big)-e^{-c n^{2a-1}}.
\]
Since \(2a-1>1-\beta\), it is enough to prove
\[
\mathbb P\big(\xi^*(N^{-1}(u_n+1)-1)\le \beta\alpha\log n\big)
\ge \exp\left\{-(1+o(1))\,c_{\beta,n,0}\,\gamma\,n^{1-\beta}\right\}.
\]

Before closing this part, we record a useful upper-tail estimate for
\(N^{-1}(u_n+1)-1\), which will be used below.
Its proof is the same moderate-deviation argument as above.

\begin{lemma}\label{lem:inverse_clock_upper_tail}
There exists \(c=c(E,\delta)>0\) such that, for all sufficiently large \(n\),
\begin{equation}\label{eq:inverse_clock_upper_tail}
\mathbb P\!\left(N^{-1}(u_n+1)-1>n+n^a\right)
\le \exp\left\{-c\,n^{2a-1}\right\}.
\end{equation}
\end{lemma}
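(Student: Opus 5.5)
The plan is to rewrite the event in terms of the retained-step counter $N_m$ and then apply Theorem~\ref{thm:moderate}, exactly as in the reduction step above. Set $m:=\lfloor n+n^a\rfloor$. By definition, $N^{-1}(u_n+1)$ is the first time the counter $N$ reaches $u_n+1$. The counter is nondecreasing in the sense relevant for creation times: once $N$ has reached the value $u_n+1$ at some time $n'\le m$, the definition of $N_n$ as a supremum over all later $i$ guarantees that $N_m\ge u_n+1$. We therefore get the inclusion
\[
\bigl\{N^{-1}(u_n+1)-1>n+n^a\bigr\}\subset\bigl\{N^{-1}(u_n+1)>m\bigr\}\subset\bigl\{N_{m}\le u_n\bigr\}.
\]
The one point that needs care is the monotonicity of $n\mapsto N_n$. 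This follows from the definition \eqref{def:Nn}: the condition required for $N_{n+1}$ ranges over a subset $[n+1,|\mathbf s|]$ of the indices used for $N_n$, so $N_{n+1}\ge N_n$. If $N_m\ge u_n+1$, then the first time $N$ equals $u_n+1$ is at most $m$. Increments of $N$ are in $\{0,1\}$ for retained steps, but they can in principle be larger or smaller after pruning. To avoid this subtlety, I would use $\{N_m\ge u_n+1\}\subset\{N^{-1}(u_n+1)\le m\}$ together with monotonicity, interpreting $N^{-1}$ as a first passage time, which is consistent with its use in the reduction above.

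It then remains to bound $\mathbb P(N_m\le u_n)$. Since $u_n\le \kappa n+\delta n^a$, we have
\[
\kappa m-u_n\ge \kappa(n+n^a-1)-\kappa n-\delta n^a=(\kappa-\delta)n^a-\kappa .
\]
Because $\delta<\kappa$, this is at least $\tfrac12(\kappa-\delta)n^a$ for large $n$. Choose $a'\in(1/2,a)$; this is possible since $a>1-\beta/2\ge 1/2$. Then $(\kappa-\delta)n^a/2> m^{a'}$ for large $n$, and Theorem~\ref{thm:moderate} gives
\[
\mathbb P(N_m\le u_n)\le \mathbb P\bigl(|N_m-\kappa m|>m^{a'}\bigr)\le \exp(-c\,m^{2a'-1}).
\]
This yields only the exponent $2a'-1$ rather than $2a-1$. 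To get the sharp exponent, I would instead note that the Bernstein inequality \eqref{eq:bernstein} underlying Theorem~\ref{thm:moderate} works at any deviation size $x\asymp n^a$. The deviation here is $x=\tfrac12(\kappa-\delta)n^a$, so rerunning the proof of Theorem~\ref{thm:moderate} with $n^a/2$ replaced by $c'n^a$ (which only changes constants) gives $\mathbb P(|N_m-\kappa m|>c'n^a)\le \exp(-c\,n^{2a-1})$. This proves \eqref{eq:inverse_clock_upper_tail}.

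The only step I expect to require any thought is justifying the passage from $\{N^{-1}(u_n+1)>m\}$ to $\{N_m\le u_n\}$, that is, the monotonicity of $N_n$ and the first-passage reading of $N^{-1}$. Everything else is the same moderate-deviation estimate used in the reduction step, with constants adjusted.
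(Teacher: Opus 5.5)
Your proposal is correct and follows the paper's route: the paper proves this lemma by the same moderate-deviation argument as in the reduction step. That is, reduce to $\{N_{\lfloor n+n^a\rfloor}\le u_n\}$, which is a downward deviation of order $(\kappa-\delta)n^a$ from $\kappa\lfloor n+n^a\rfloor$, and apply the Bernstein-based bound behind Theorem~\ref{thm:moderate} at a constant multiple of $n^a$ (the paper does this implicitly with $\tfrac12\delta n^a$), so your detour through $a'<a$ is unnecessary. One tidy-up: you need not reinterpret $N^{-1}$, because the increments of $N$ always lie in $\{0,1\}$. Indeed, $\mathsf{Prune}(S[0,k],E)$ arises from $\mathsf{Prune}(S[0,k-1],E)$ by appending one step and possibly deleting one terminal loop, so $N_k\le N_{k-1}+1$, and $\inf\{k:N_k=j\}$ is already the first-passage time your inclusion needs.
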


\subsubsection{Strategy event}

To describe the strategy event, it is useful to compare the original walk \(S\) and the loop-pruned walk \(S'\), and, when reconstructing \(S\) from \(S'\), to view the erased segments as being revealed in a canonical order. More precisely, for each erased segment \(\xi_i\) (indexed by \(0\le i\le u_n\)), write
\(W_i:=\mathcal{E}(\xi_i)\)
for its ES-representation, and imagine revealing first the root sequence \(W_i(\varnothing)\). This determines the vertex addresses of generation \(1\) (that is, addresses of length \(1\)). We then reveal, among all currently available vertex addresses, the one with minimal lexicographic order, and iterate this rule until all vertex addresses of \(W_i\) have been revealed. Revealing \(W_i\) is equivalent to revealing \(\xi_i\). Using this procedure, we reveal the pruned segments successively as
\(
\xi_0,\xi_1,\dots,\xi_{u_n}.
\)

In terms of the loop construction, this procedure reveals the erased segment loop by loop. The root sequence records the loops inserted directly into the pruned path, and revealing a child address amounts to opening one of those loops and exposing the loops inserted inside it. Thus the lexicographic exploration is simply a canonical order for uncovering all loop insertions in the tree representation of \(\xi_i\).

In the following, we shall use the notation from Section~\ref{sec:loop-pruned-decomposition}: the ES-representation \(\mathcal E\) from Section~\ref{sec:ES_represent}, the boundary-address set \(V^{\partial}(W)\) from Definition~\ref{def:boundary-addresses}, the path \(\eta_{\mathrm{exp}}^{v,W}\) and time \(\tau_{\mathrm{exp}}^{v,W}\) from Section~\ref{subsubsec:induced-path-decomposition}, and the insertion operation \(\triangleleft_j\) from \eqref{eq:loop-insertion-operation}.
Throughout this section, set \(\Lambda_n:=\lfloor\beta\alpha\log n\rfloor\).

\begin{definition}[Exceptional boundary pairs]\label{def:exceptional-boundary-pairs}
For each pair \((i,v)\in [0,u_n]\times \mathcal{U}\) with \(v\in V^{\partial}(W_i)\), called a boundary pair, define
\[
\eta^{(i,v)}
:=\bigl(S[0,N^{-1}(i)]\circ_t \eta_{\mathrm{exp}}^{v,W_i}\bigr)\circ_d S'[i,u_n],
\]
Equivalently, in the loop-pruned decomposition notation,
\[
\eta^{(i,v)}
=S'[0,u_n]\oplus(\xi_k^{(i,v)})_{0\le k\le u_n},
\qquad
\xi_k^{(i,v)}
=
\begin{cases}
\xi_k, & 0\le k<i,\\
\eta_{\mathrm{exp}}^{v,W_i}, & k=i,\\
(0), & i<k\le u_n.
\end{cases}
\]
Thus \(\eta^{(i,v)}\) is obtained from the pruned path \(S'[0,u_n]\) by adding back the pruned loops in their canonical order, stopping at the time corresponding to the boundary address \(v\) in the \(i\)-th erased segment.

For \(x\in\mathbb Z^d\), we say that \(x\) is \textit{critical for the boundary pair \((i,v)\)} if
\[
\begin{aligned}
&\ell_x\bigl(\eta^{(i,v)}\bigr)\ge \Lambda_n
\quad\text{and}\quad
\exists\,e\in E\ \text{such that }
\ell_x\bigl(\eta^{(i,v)}\triangleleft_j e\bigr)
>\ell_x\bigl(\eta^{(i,v)}\bigr),\\
&\text{with }j=N^{-1}(i)+\tau_{\mathrm{exp}}^{v,W_i}.
\end{aligned}
\]

We then define the exceptional boundary-pair set
\[
\mathsf{Exc}_n
:=
\Bigl\{
\text{boundary pairs }(i,v):
\exists\,x\in\mathbb Z^d
\text{ that is critical for }(i,v)
\Bigr\}.
\]
\end{definition}

Intuitively, \((i,v)\in\mathsf{Exc}_n\) means that, after adding back the pruned loops up to the time corresponding to the boundary address \(v\), some site has already reached the critical level \(\Lambda_n\), and inserting an \(E\)-loop at that time would increase its local time further.
\begin{remark}\label{rem:exceptional-boundary-pair-strategy}
The following two observations are useful consequences of this definition and the simplicity of loops in \(E\):
\begin{enumerate}[label=(\roman*)]
  \item Fix \(x\in\mathbb Z^d\), and let \((i,v)\) be the lexicographically first boundary pair for which \(x\) is critical. Then
  \[
  \ell_x\bigl(\eta^{(i,v)}\bigr)=\Lambda_n.
  \]
  Otherwise, one can find a lexicographically smaller boundary pair for which \(x\) is already critical.
  \item Suppose that whenever \(x\) is critical for some \((i,v)\), we do not ``attach any further loop'' \(e\in E\) that would worsen the local time at \(x\) (namely, increase it beyond its current value at that stage). Then the local time at \(x\) remains at
  \(\Lambda_n\).
\end{enumerate}
\end{remark}

We now define the strategy event \(G_n\) by
\begin{align}\label{def:G_n}
    G_n
:=
\Bigl\{
\xi^*(S'[0,u_n])\le \Lambda_n
\Bigr\}
\cap
\bigcap_{(i,v)\in \mathsf{Exc}_n}
\{\operatorname{Next}_{W_i}(v)=0\},
\end{align}
Here \(\xi^*(S'[0,u_n])\) denotes the maximum local time of the path \(S'[0,u_n]\).
In other words, on \(G_n\), whenever \((i,v)\in\mathsf{Exc}_n\), there is no further loop at the current parent after the prefix leading to \(v\).
Hence by Remark \ref{rem:exceptional-boundary-pair-strategy}, under this rule the local time at every site stays bounded by \(\Lambda_n\).

\begin{proposition}\label{prop:strategy-event-implies-no-overflow}
For any $\beta\in(0,1]$ and $n\in\mathbb{N}^+$,
\[
G_n
\subset
\Bigl\{
\xi^*\bigl(N^{-1}(u_n+1)-1\bigr)\le \beta\alpha\log n
\Bigr\}.
\]
\end{proposition}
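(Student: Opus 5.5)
We argue deterministically on $G_n$ by following the canonical reconstruction of $S[0,N^{-1}(u_n+1)-1]$ from $S'[0,u_n]$. The revealing order of Section~\ref{sec:proof-downward} (segments $\xi_0,\dots,\xi_{u_n}$ in turn, and within each $W_i$ the vertex addresses in lexicographic order) gives a chronological sequence of boundary pairs $(i,v)$ with paths $\eta^{(i,v)}$. The first path is $S'[0,u_n]$, with $i=0$ and $v$ the first root boundary. The last is the full path $S[0,N^{-1}(u_n+1)-1]$, obtained at the final boundary of the last segment. Passing from one boundary pair to the next inserts exactly one loop $e=e^{(\operatorname{Next}_{W_i}(v))}\in E$ at time $j=N^{-1}(i)+\tau_{\mathrm{exp}}^{v,W_i}$, or inserts nothing when $\operatorname{Next}_{W_i}(v)=0$; the latter case amounts to moving up to the parent's next boundary or to the next segment. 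This one-loop-at-a-time description is the first step. It should follow from Remark~\ref{rem:reconstruct2}, Proposition~\ref{prop:path-fiber-reconstruction}, and the fact that $\eta_{\mathrm{exp}}^{v',W_i}=\eta_{\mathrm{exp}}^{v,W_i}\triangleleft_{\tau_{\mathrm{exp}}} e$ for the successor boundary $v'$.

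The second step is an induction over this sequence on the statement $\max_x\ell_x(\eta^{(i,v)})\le\Lambda_n$. The base case is the first event in \eqref{def:G_n}. For the inductive step, fix $x$ and let $e$ be the loop about to be inserted. If $\ell_x(\eta^{(i,v)}\triangleleft_j e)=\ell_x(\eta^{(i,v)})$, nothing changes at $x$. Suppose instead the local time at $x$ increases. If $\ell_x(\eta^{(i,v)})\ge\Lambda_n$, then $x$ is critical for $(i,v)$, so $(i,v)\in\mathsf{Exc}_n$. On $G_n$ this forces $\operatorname{Next}_{W_i}(v)=0$, which contradicts the assumption that a loop is inserted. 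Hence $\ell_x(\eta^{(i,v)})\le\Lambda_n-1$. Because $e$ is simple, the inserted copy $\eta_{j}+e$ visits each site at most once apart from the common endpoint. The endpoint visit simply duplicates $\eta_j$, so it adds one to the count at $\eta_j$, and every other site gains at most one visit. Therefore the local time at $x$ rises by at most $1$, reaching at most $\Lambda_n$. This is exactly Remark~\ref{rem:exceptional-boundary-pair-strategy}.

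At the final boundary pair, $\xi^*(N^{-1}(u_n+1)-1)\le\Lambda_n\le\beta\alpha\log n$, which proves the claim. The main obstacle is the first step: checking that the lexicographic revealing order together with the boundary-address bookkeeping really enumerates every loop insertion exactly once. In particular, passing through a boundary with $\operatorname{Next}=0$ must never skip a loop elsewhere. The same check must confirm that the pieces not yet revealed, namely the younger-sibling blocks $\mathcal D_u^W$ and the later segments, are genuinely absent from $\eta^{(i,v)}$, so that each step changes the path by exactly $\triangleleft_j e$. I would verify this by induction on the $E$-depth-first traversal, using the partition of $V(W)$ in Definition~\ref{def:V-decomposition-relative-v}.
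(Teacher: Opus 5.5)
Your core mechanism is the one the paper uses through Remark~\ref{rem:exceptional-boundary-pair-strategy}. A simple loop raises each local time by at most one. A site already at level $\Lambda_n$ that an $E$-loop inserted at the current time would hit makes the pair exceptional. On $G_n$ this forces $\operatorname{Next}=0$. The gap is your first step: it is not merely unverified, it is false, and your induction is built on it.

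\textbf{Why the first step fails.} In the construction of $\eta_{\mathrm{exp}}^{v,W}$, only the intervals $(\tau_{u'}^-,\tau_{u'}^+]$ are deleted for $u'\in\mathcal Y_q^{v,W}$. The visit to $u'$ at time $\tau_{u'}^-$ itself is kept. So $\eta^{(i,v)}$ contains the bare loops of all later elementary blocks at the proper ancestors of $\mathrm{par}(v)$. The later blocks at $\mathrm{par}(v)$ itself are removed, since they lie in $\widehat{\mathcal D}_{\mathrm{par}}^{v,W}$. This has two consequences:
\begin{enumerate}[label=(\roman*)]
\item Passing from $v=u\,b_k(u)$ to its lexicographic successor $v1$ inserts all blocks $k+1,\dots,\ell$ at $u$ at once.
\item At $u\,b_{k+1}(u)$, blocks $k+2,\dots,\ell$ disappear again.
\end{enumerate}
If instead ``successor'' means $u\,b_{k+1}(u)$, the increment is block $k+1$ together with its whole subtree, not a single $e$.

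\textbf{Example.} Take $E=\{e\}$ with $e=(0,a,0)$, and the segment $\eta=(0,a,2a,a,0,a,0)$. Its tree has root children $1,2$ and one grandchild $11$. The boundary addresses in lexicographic order are $1,11,111,12,2,21,3$. The corresponding paths $\eta_{\mathrm{exp}}^{v,W}$ are $(0)$, $(0,a,0,a,0)$, $\eta$, $\eta$, $(0,a,2a,a,0)$, $\eta$, $\eta$. So the step $1\to 11$ adds two loops, and the step $12\to 2$ removes one. The second loop added at $1\to 11$ is the block governed by the pair $(0,2)$. Its exceptionality is tested on $\eta^{(0,2)}$, which comes later in your sequence. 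Hence your induction on $\max_x\ell_x(\eta^{(i,v)})$ cannot be closed step by step. The inductive statement is true on $G_n$, but only as a consequence of the proposition.

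\textbf{Repair.} Run the induction on a monotone auxiliary family and compare it with $\eta^{(i,v)}$ by domination rather than equality.
\begin{itemize}
\item Enumerate the boundary pairs with $\operatorname{Next}_{W_i}(v)\neq 0$ lexicographically as $(i_1,v_1),(i_2,v_2),\dots$. These are exactly the elementary blocks.
\item Let $P_t$ be $S'[0,u_n]$ with the loops of the first $t$ blocks inserted at their sites. This block set is ancestrally closed, because the vertex carrying a block lies in a block whose first vertex is lexicographically smaller. So $P_t$ is a genuine path.
\item We have $P_0=S'[0,u_n]$, the last $P_t$ is $S[0,N^{-1}(u_n+1)-1]$, and $P_t$ is $P_{t-1}$ with one $E$-loop attached at the site $\eta^{(i_t,v_t)}_j$, where $j=N^{-1}(i_t)+\tau_{\mathrm{exp}}^{v_t,W_{i_t}}$.
\item Every block of $P_{t-1}$ is present in $\eta^{(i_t,v_t)}$. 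Such a block either lies in an earlier segment, or has all its vertices in $V_{\mathrm{exp}}^{v_t,W_{i_t}}$, or is a block at a proper ancestor of $v_t$ whose bare loop is retained. Hence $\ell_x(P_{t-1})\le \ell_x(\eta^{(i_t,v_t)})$ for every $x$.
\item Whether inserting $e$ at a site $y$ raises $\ell_x$ depends only on $x-y$.
\end{itemize}
Your inductive step now goes through verbatim. If $P_t$ pushed some $\ell_x$ to $\Lambda_n+1$, then $x$ would be critical for $(i_t,v_t)$. On $G_n$ that forces $\operatorname{Next}_{W_{i_t}}(v_t)=0$, a contradiction.
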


Thus it remains to prove that
\begin{equation}\label{eq:Gn_lower_bound}
\mathbb{P}(G_n)\ge \exp\left\{-(1+o(1))\,c_{\beta,n,0}\,\gamma\,n^{1-\beta}\right\}.
\end{equation}

\subsection{Proof of \eqref{eq:Gn_lower_bound}}

We prove \eqref{eq:Gn_lower_bound} in three steps. First, we introduce the auxiliary law \(\mathbb Q\), analogous to the tilted measure in large deviations.
Next, we reduce the target bound to two propositions (Propositions~\ref{prop:Nn_small} and \ref{prop:QGn_conditional_lower}) that estimate quantities under \(\mathbb Q\): one estimates \(\#\mathsf{Exc}_n\), equivalently the number of forced no-loop operations in the strategy, and the other quantifies the resulting probability loss.
Finally, we prove these two propositions and conclude the desired lower bound.

\subsubsection{Tilted measure}\label{subsec:pruned-time-Q}

As in Section~4.3 of
\cite{li2026ldmaxlocal}, the first step is to introduce an auxiliary measure
\(\mathbb Q\). The measure \(\mathbb Q\) is a change-of-measure device, analogous to the standard tilting in large deviations. 

By Lemma~\ref{lem:inverse_clock_upper_tail}, \(N^{-1}(u_n+1)-1\le n+n^a\) with negligible failure probability. Fix \(\beta_1\in(0,\beta)\) and let
\[
T_n:=\lfloor n+n^a\rfloor,\qquad
\ell_n:=\lfloor n^{\beta_1}\rfloor,
\qquad
K_n:=\left\lfloor\frac{T_n}{\ell_n}\right\rfloor,
\qquad
t_j:=j\ell_n\ \ (0\le j\le K_n).
\]
Define the block intervals
\[
I_j:=[t_{j-1},t_j),\qquad 1\le j\le K_n,
\qquad I_{K_n+1}:=[t_{K_n},T_n].
\]
Next choose \(\beta_2\in\bigl(\tfrac{2}{d}\beta_1,\beta_1\bigr)\), and set \(r_n:=\lfloor n^{\beta_2}\rfloor\). For each block, define the inner interval
\[
I'_j:=[t_{j-1}+r_n,\,t_j),\qquad 1\le j\le K_n,
\qquad I'_{K_n+1}:=[t_{K_n}+r_n,T_n],
\]
with empty final intervals ignored. This is the same block construction as in \cite{li2026ldmaxlocal}.

We define the tilted event by requiring
that the maximum local time on each inner interval stays below a
common threshold. For a time interval \(I\), write
\[
\xi^*(I):=\max_{x\in\mathbb Z^d}\#\{m\in I:S_m=x\}.
\]
For \(1\le j\le K_n+1\), define
\begin{align}\label{def:An}
    A_{n,j}:=\Bigl\{\xi^*(I'_j)\le \Lambda_n\Bigr\},
\qquad
A_n:=\bigcap_{j=1}^{K_n+1}A_{n,j}.
\end{align}
We then define the probability measure \(\mathbb Q=\mathbb Q_n\) by
\[
\mathbb Q_n(\cdot):=\mathbb P(\cdot\mid A_n).
\]

\subsubsection{Reduction of \eqref{eq:Gn_lower_bound} to two propositions}
By the upward-deviation estimate in
\cite[Theorem~1.1]{li2026ldmaxlocal},
\[
\mathbb P\!\left(\xi^*(I'_j)>\Lambda_n\right)
=\bigl(1+o(1)\bigr)\,\gamma\,|I'_j|\,(1-\gamma)^{\Lambda_n+1},
\]
uniformly in $1\le j\le K_n+1$. Since $|I'_j|=\ell_n-r_n=(1+o(1))\ell_n$ for
all but the last block, applying the Markov property yields
\[
\mathbb P(A_n)
=\prod_{j=1}^{K_n+1} \big[1- \mathbb P\!\left(\xi^*(I'_j)>\Lambda_n\right)\big]
=\exp\left\{-(1+o(1))\,c_{\beta,n,0}\,\gamma\,n^{1-\beta}\right\}.
\]

Recall \eqref{eq:Gn_lower_bound}. The proof is reduced to the following two propositions. Set \(\mathcal{N}_n:=\#\mathsf{Exc}_n\).

\begin{proposition}\label{prop:Nn_small}
Fix a finite loop set \(E\) satisfying \eqref{eq:E-assumption-for-Nn-small} below. Then there exists a constant \(b<1-\beta\), depending only on \(E\), such that
\begin{equation}\label{eq:Nn_small}
\mathbb{Q}\bigl(\mathcal{N}_n\le n^b,\,
\xi^*(S'[0,u_n])\le \Lambda_n\bigr)=1-o(1).
\end{equation}
Consequently,
\begin{equation}\label{eq:Nn_small_P_version}
\mathbb{P}\bigl(\mathcal{N}_n\le n^b,\,
\xi^*(S'[0,u_n])\le \Lambda_n\bigr)
\ge
\exp\left\{-(1+o(1))\,c_{\beta,n,0}\,\gamma\,n^{1-\beta}\right\}.
\end{equation}
\end{proposition}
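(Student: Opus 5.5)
The consequence \eqref{eq:Nn_small_P_version} is immediate from \eqref{eq:Nn_small}. Indeed, for any event $B$ we have $\mathbb P(B)\ge\mathbb P(B\cap A_n)=\mathbb Q(B)\,\mathbb P(A_n)$. We already computed $\mathbb P(A_n)=\exp\{-(1+o(1))c_{\beta,n,0}\gamma n^{1-\beta}\}$, and the factor $\mathbb Q(B)=1-o(1)$ is absorbed in the $o(1)$ of the exponent. So the work is in \eqref{eq:Nn_small}.

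The main difficulty is that $\mathbb P(A_n)$ is stretched-exponentially small. Polynomial $\mathbb P$-bounds therefore cannot be transferred to $\mathbb Q$ by crude division, and every estimate has to be \emph{localized}. The plan rests on the structure of $\mathbb Q$: since $A_n=\bigcap_j A_{n,j}$ and the $A_{n,j}$ depend on increments in disjoint time windows, under $\mathbb Q$ the increments in distinct blocks $I_j$ are independent. Within block $j$ the law is $\mathbb P(\cdot\mid A_{n,j})$, and $\mathbb P(A_{n,j})=1-o(1)$. Hence any event $F$ depending only on increments in $O(1)$ consecutive blocks satisfies $\mathbb Q(F)\le(1+o(1))\mathbb P(F)$, uniformly in the blocks.

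The step-by-step plan is as follows.
\begin{enumerate}[label=(\arabic*)]
\item Condition on $\xi^*(S'[0,u_n])\le\Lambda_n$. For a site $x$, decompose the pruned visits to $x$ into those occurring within one window of $O(1)$ adjacent blocks, and those generated by returns to $x$ across a time separation of at least $\ell_n$. By Proposition~\ref{prop:stage} we may prune in stages. Applying Proposition~\ref{prop:Sprime-localtime-tail} with a large $q$ at the first hitting time of $x$ inside a window (strong Markov), the within-window part exceeds $\tfrac12\Lambda_n$ with $\mathbb P$-probability at most $n^{-q}$, and hence with $\mathbb Q$-probability at most $2n^{-q}$.
\item Bound the cross-window contribution using the return tail $\mathbb P(0\in S[\ell_n,\infty))\asymp \ell_n^{-(d/2-1)}$. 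A union bound over the at most $n^2$ pairs of windows, together with the fact that a site needs $\gtrsim\log n$ such long returns, makes this negligible; the product structure of $\mathbb Q$ is used for separated windows. A union bound over the $O(n)$ sites then gives $\mathbb Q(\xi^*(S'[0,u_n])>\Lambda_n)=o(1)$. The gaps of length $r_n$ are handled in the same way; the choice $\beta_2>\tfrac2d\beta_1$ is exactly what makes the return estimates across gaps summable, as in \cite{li2026ldmaxlocal}.
\item Bound $\mathcal N_n$ by a first-moment estimate under $\mathbb Q$. If $(i,v)\in\mathsf{Exc}_n$ with critical site $x$, then $x$ has at least $\Lambda_n$ visits in $S[0,T_n]$, because $\eta^{(i,v)}$ is a sub-path of $S$. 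Moreover the current point $S'_i+\eta^{(i,v)}_{\tau}$ lies within distance $\max_{e\in E}|e|$ of $x$. Thus $\mathcal N_n$ is at most $C(E)$ times the number of visits of $S$ to the $C(E)$-neighbourhoods of such $x$, counted with the $O(1)$ boundary addresses per step.
\item Split the thick sites $x$ according to whether their $\Lambda_n$ visits are spread over more than one window or are confined to one window. Spread-out sites have expected number at most $n^{1-\beta}\cdot n^{-\beta_1(d/2-1)+o(1)}$, again by transience. For sites confined to one window, I will use the $\mathbb Q$-constraint $\xi^*(I'_j)\le\Lambda_n$ and the assumption \eqref{eq:E-assumption-for-Nn-small} on $E$. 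The aim is to show that the pruned configuration $\eta^{(i,v)}$ reaches level $\Lambda_n$ at $x$ only at an $o(1)$-fraction of such sites, contributing $n^{1-\beta-\epsilon(E)}$ in expectation.
\item Markov's inequality with $b:=1-\beta-\tfrac12\min\{\epsilon(E),\beta_1(d/2-1)\}$ then yields $\mathbb Q(\mathcal N_n>n^b)=o(1)$.
\end{enumerate}

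The main obstacle is step (4). The expected number of sites with exactly $\Lambda_n$ visits inside one block is of order $n^{1-\beta}$ even under $\mathbb Q$, so a gain must come from the loop-pruning structure. Concretely, one needs that reaching $\Lambda_n$ \emph{in the partially reinserted path} at a boundary address where an $E$-loop through $x$ is insertable has a polynomially smaller density. My first attempt would be to use Theorem~\ref{thm:stop-prob} and the fiber factorization, Theorem~\ref{thm:fiber-factorization-fixed-H}, to compute the conditional probability, given $S'$, that the reinsertions bring $x$ to level $\Lambda_n$. I would then compare it with $(1-\gamma)^{\Lambda_n}$, exploiting that the pruned local time at $x$ is at most $\eta\Lambda_n$ by Lemma~\ref{lem:xiS-etaM-event-inclusion}.
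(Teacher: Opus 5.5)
Your deduction of \eqref{eq:Nn_small_P_version} from \eqref{eq:Nn_small} via $\mathbb P(B)\ge\mathbb Q(B)\,\mathbb P(A_n)$ is correct. Steps (1)--(3) follow the paper's route (Propositions~\ref{prop:Nn_small_local_time}, \ref{prop:Nn_small_expectation} and Lemma~\ref{lem:Nn-neighborhood-bound}), with one correction. In (1)--(2), a site whose pruned visits are spread over a few windows needs only a bounded number of long returns, not $\gtrsim\log n$ of them, so your split at $\tfrac12\Lambda_n$ does not close. The paper instead uses $\sup_x\#\mathcal J_x\le M$ (Proposition~\ref{prop:block-multiplicity}) and pigeonhole to find one block carrying more than $\tfrac{\beta\alpha}{M}\log n$ retained visits. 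That is exactly the threshold built into \eqref{eq:E-assumption-for-Nn-small}.

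The genuine gap is step (4), and it cannot be closed. The reinsertion ends at $S$ itself, so every $\Lambda_n$-thick site of $S$ is critical for some boundary pair.
\begin{itemize}
\item Let $x$ satisfy $\xi(S[0,N^{-1}(u_n+1)-1],x)\ge\Lambda_n$, and let $t$ be its last visit. The time $t$ lies in some segment $\xi_k$. By Remark~\ref{rem:link}, $t$ is the last traversal visit $\tau_w^+$ of some vertex $w$ of $\mathsf T^*(\xi_k)$; say $w$ carries $\ell$ blocks.
\item Take $v:=w\,b_\ell(w)\in V^{\partial}(W_k)$. Then $\tau^{v,W_k}_{\mathrm{exp}}=\tau_w^+$, and every piece removed in forming $\eta^{v,W_k}_{\mathrm{exp}}$ is traversed after $\tau_w^+$.
\item Hence $\eta^{(k,v)}$ contains all visits of $S$ to $x$, and its current point is $x$ itself. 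Every $e\in E$ returns to its base point, so any insertion there raises $\ell_x$.
\item So $(k,v)\in\mathsf{Exc}_n$. Distinct $x$ give distinct pairs, which yields
\[
\mathcal N_n\ \ge\ \#\bigl\{x:\ \xi\bigl(S[0,N^{-1}(u_n+1)-1],x\bigr)\ge\Lambda_n\bigr\}.
\]
\end{itemize}
Your own observation is correct: the tilt only forbids $\Lambda_n+1$ visits inside each inner block, and each $A_{n,j}$ has probability $1-o(1)$. So, summed over blocks, there are of order $n^{1-\beta}$ sites with exactly $\Lambda_n$ visits under $\mathbb Q$. Since the blocks are independent under $\mathbb Q$, this count concentrates. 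Also $N^{-1}(u_n+1)-1\ge n$ with $\mathbb Q$-probability $1-o(1)$. Therefore $\mathbb Q(\mathcal N_n\le n^b)\to0$ for every $b<1-\beta$, and \eqref{eq:Nn_small} fails as stated. No use of Theorem~\ref{thm:stop-prob} or the fiber factorization can deliver the $o(1)$-fraction you are aiming for.

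The paper's own proof does not get around this. At the matching point it bounds $\mathcal N_n\lesssim\log n\cdot\#\mathcal A_n$. It then asserts \eqref{eq:An-expectation-bound}, namely $\mathbb E^{\mathbb Q}[\#\mathcal A_n]\le n^b$ with $b<1-\beta$, citing \cite{li2026ldmaxlocal} without proof. Your count shows this bound is false when $\mathcal A_n$ and the tilt $A_n$ are taken at the same level $\Lambda_n$. So the obstruction lies in the strategy, not in a missing estimate: it comes from the definition of $\mathsf{Exc}_n$ combined with the $\gamma$-per-pair accounting of Proposition~\ref{prop:QGn_conditional_lower}. Sites whose $\Lambda_n$ visits are confined to one inner block, where the tilt already rules out further visits, would have to be excluded from $\mathsf{Exc}_n$ or charged differently.
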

\begin{remark}
The exponent \(b\) in Proposition~\ref{prop:Nn_small} is allowed to be negative. In that case, \(\mathcal{N}_n\le n^b\) is equivalent to \(\mathcal{N}_n=0\).
\end{remark}

\begin{proposition}\label{prop:QGn_conditional_lower}
For the same \(b<1-\beta\) as in Proposition~\ref{prop:Nn_small}, and for every \(n\in\mathbb{N}^+\),
\begin{equation}\label{eq:QGn_conditional_lower}
\mathbb{P}(G_n\,|\,\mathcal{N}_n\le n^b,\,\xi^*(S'[0,u_n])\le \Lambda_n)\ge \gamma^{n^b}.
\end{equation}
\end{proposition}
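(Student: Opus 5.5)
We will prove \eqref{eq:QGn_conditional_lower} by revealing the pruned segments in the canonical order described in Section~\ref{sec:proof-downward} and applying Theorem~\ref{thm:stop-prob} once at each forced stop. The order is: first $S'[0,u_n]$, then $\xi_0,\xi_1,\dots,\xi_{u_n}$ in turn, each $\xi_i$ through the lexicographic exploration of the boundary addresses of $W_i$.

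\textbf{Set-up of the sequential structure.} The boundary pairs $(i,v)$ are totally ordered: first by $i$, then by the order in which $v$ is reached in the exploration of $W_i$. The key observation is that whether $(i,v)\in\mathsf{Exc}_n$ depends only on
\[
\bigl(S'[0,u_n],\ \mathcal W_0,\ldots,\mathcal W_{i-1},\ \mathfrak H^{v,W_i}\bigr).
\]
Indeed, $\eta^{(i,v)}$ is built from $S'[0,u_n]$, from $\xi_0,\ldots,\xi_{i-1}$, and from $\eta_{\mathrm{exp}}^{v,W_i}$. The last two are determined by $\mathcal W_0,\ldots,\mathcal W_{i-1}$ and by $h=\mathfrak H^{v,W_i}$, as recorded before Proposition~\ref{prop:path-fiber-reconstruction}. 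The time $j=N^{-1}(i)+\tau_{\mathrm{exp}}^{v,W_i}$ is likewise determined by this data. So the exceptional status of $(i,v)$ is measurable with respect to the conditioning $\sigma$-field $\mathcal G$ of Theorem~\ref{thm:stop-prob}. The stop event $\{\operatorname{Next}_{W_i}(v)=0\}$ is exactly the next piece of information beyond $\mathfrak H^{v,W_i}$. Theorem~\ref{thm:stop-prob} then says this event has conditional probability at least $\gamma$, whatever the past.

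\textbf{Chain rule.} Let $(i_1,v_1),(i_2,v_2),\dots$ be the exceptional pairs in exploration order. These are random, but each is a stopping-time-type object for the exploration filtration. We expand $\mathbb P(G_n\cap\{\mathcal N_n\le n^b,\ \xi^*(S'[0,u_n])\le\Lambda_n\})$ as a sum over the full revealed configuration. Between consecutive exceptional pairs the exploration evolves freely and we simply integrate it out. At each exceptional pair we insert the factor $\mathbb P(\operatorname{Next}=0\mid\mathcal G)\ge\gamma$.

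More precisely, we proceed by backward induction on the number of remaining exceptional pairs. Let $\mathcal F_k$ be the $\sigma$-field at the moment the $k$-th exceptional boundary address is reached (truncated data only). Then
\[
\mathbb P\bigl(\operatorname{Next}_{W_{i_k}}(v_k)=0\mid\mathcal F_k\bigr)\ge\gamma .
\]
On the event that this stop occurs, the subsequent exploration continues, and we again condition at the next exceptional pair.

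\textbf{Conditioning on $\{\mathcal N_n\le n^b\}$.} The conditioning event $\{\xi^*(S'[0,u_n])\le\Lambda_n\}$ is $S'$-measurable, so it is harmless. The event $\{\mathcal N_n\le n^b\}$ is not adapted, since it depends on the whole exploration. We handle it with an adapted surrogate. On $G_n$, once all exceptional pairs have been stopped, the exploration is a well-defined path. We compare with the event $\{\mathcal N_n^{G}\le n^b\}$, where $\mathcal N^G_n$ counts exceptional pairs encountered along an exploration in which every exceptional pair is forced to stop. The plan is to show
\[
\mathbb P\bigl(G_n\cap\{\mathcal N_n\le n^b\}\cap B\bigr)\ \ge\ \gamma^{n^b}\,\mathbb P\bigl(\{\mathcal N_n\le n^b\}\cap B\bigr),\qquad B=\{\xi^*(S'[0,u_n])\le\Lambda_n\},
\]
by a coupling or induction argument that truncates after $n^b$ forced stops.

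\textbf{Main obstacle.} The hardest step is this last one: making the lower bound $\gamma^{n^b}$ hold conditionally on the non-adapted event $\{\mathcal N_n\le n^b\}$. A naive chain rule gives $\mathbb P(G_n\cap\{\mathcal N^G_n\le n^b\}\cap B)\ge\gamma^{n^b}\,\mathbb P(\cdot)$ only for the adapted count, not for $\mathcal N_n$ itself. What I would try first is to check whether, on $G_n$, the exceptional pairs actually visited coincide with those counted in $\mathcal N_n$. Stopping only removes boundary addresses, so forcing stops should not create new exceptional pairs. If that monotonicity holds, the surrogate argument applies directly to $\mathcal N_n$.
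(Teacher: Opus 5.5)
Your first half is sound and coincides with the paper's Lemma~\ref{lem:conditional-Aj-lower}. Splitting into atoms $(s',(i_j,v_j),W_0,\dots,W_{i_j-1},\mathfrak H^{v_j,W_{i_j}})$ and applying Theorem~\ref{thm:stop-prob} gives $\overline{\mathbb P}\bigl(\bigcap_{j\le\lfloor n^b\rfloor}\tilde E_j\bigr)\ge\gamma^{n^b}$, where $\tilde E_j=E_j\cup\{j>\mathcal N_n\}$. The gap is the step you flag yourself: passing to the conditioning on the non-adapted event $\{\mathcal N_n\le n^b\}$. This step needs an ingredient your proposal never invokes. A bound ``$\ge\gamma$ whatever the past'' says nothing about how the stop event interacts with the \emph{future} of the exploration. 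Both $\{\mathcal N_n\le n^b\}$ and your surrogate $\{\mathcal N_n^G\le n^b\}$ depend on that future. The natural way to compare the forced-stop exploration with $S$ is to build it from $S$ by deleting parent pieces. For that you need the following: given all data except $\eta_{\mathrm{par}}^{v_m,W_{i_m}}$ --- including $W_{i_m+1},\dots,W_{u_n}$ and the younger-sibling pieces $\eta_u^{W_{i_m}}$ --- the parent piece is still distributed $\propto p(\cdot)$ on $\mathsf{Seg}(s'[0,i_m]\circ_t\pi^{h_m}_{\mathrm{par}(v_m)},E)$. That conditional independence comes from Theorem~\ref{thm:fiber-factorization-fixed-H} together with the product form in Proposition~\ref{prop:condi}, not from Theorem~\ref{thm:stop-prob}.

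Your proposed monotonicity check also targets the wrong statement. On $G_n$, the pairs visited by the forced-stop exploration coincide with $\mathsf{Exc}_n$ trivially. What is needed is that, on an arbitrary configuration, replacing the parent piece at an exceptional pair by $(0)$ cannot increase $\mathcal N_n$. This holds because the deleted piece is a loop cluster rooted at the current site. Hence $S'[0,u_n]$ is unchanged, and so is the event $\{\xi^*(S'[0,u_n])\le\Lambda_n\}$. The surviving boundary pairs are a subset of the old ones with the same insertion sites, and every $\ell_x(\eta^{(i,v)})$ can only decrease. So criticality can be lost but not created.

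The paper combines the two inputs in Lemma~\ref{lem:Aj-Nn-factorization}. It uses the coupling $\mathcal N_n(\eta_\Gamma)\le\mathcal N_n(\eta_\Gamma\triangleleft_{t_\Gamma}\eta_{\mathrm{par}}^{v_m,W_{i_m}})$ on atoms $\Gamma$ fixing everything but the parent piece. From this it derives the positive association $\overline{\mathbb P}(\bigcap_j\tilde E_j,\,\mathcal N_n\le n^b)\ge\overline{\mathbb P}(\bigcap_j\tilde E_j)\,\overline{\mathbb P}(\mathcal N_n\le n^b)$. Together with $G_n\cap\{\mathcal N_n\le n^b\}=\bigcap_{j\le\lfloor n^b\rfloor}\tilde E_j\cap\{\mathcal N_n\le n^b\}$, this finishes the proof.

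Your surrogate route can be completed with the same two inputs. Write $\Phi$ for the deletion map, and grant the same adaptedness of the exploration that Lemma~\ref{lem:conditional-Aj-lower} uses. Then $\mathbb P(S=\sigma)\ge\gamma^{\mathcal N_n(\sigma)}\,\mathbb P(\Phi(S)=\sigma)$ for every fully stopped $\sigma$, since the normalizers are at most $\gamma^{-1}$ by Lemma~\ref{lem:Seg-summable}. Also $\mathcal N_n(\Phi(S))\le\mathcal N_n(S)$, and summing over $\sigma$ gives the claim. That version uses the bound $\ge\gamma$ pointwise. It therefore never averages the atomwise comparison over atoms $\Gamma$ on which $\mathbb P(E_m\mid\Gamma)$ depends on $h_m$. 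As written, however, your proposal supplies neither input.
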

\begin{proof}[Proof of \eqref{eq:Gn_lower_bound} assuming Propositions \ref{prop:Nn_small} and \ref{prop:QGn_conditional_lower}]
Multiplying \eqref{eq:Nn_small_P_version} and \eqref{eq:QGn_conditional_lower} yields the conclusion.
\end{proof}
\subsubsection{Preliminaries for Propositions \ref{prop:Nn_small} and \ref{prop:QGn_conditional_lower}}

The proofs of Propositions \ref{prop:Nn_small} and \ref{prop:QGn_conditional_lower}
follow the same general strategy as in \cite[Section~4.3.2]{li2026ldmaxlocal}.
In the present setting, one again needs some preliminary estimates controlling the probability
that a given point is visited by several time blocks $I_j$.
Since the arguments are essentially the same, we only record the corresponding statement
and omit its proof.

For \(x\in\mathbb Z^d\), define
\(\mathcal J_x:=\{1\le j\le K_n+1:\ x\in S(I_j)\}\), namely the set of blocks whose trajectories visit \(x\).

\begin{proposition}\label{prop:block-multiplicity}
There exists a constant \(M=M(\beta_2)<\infty\), independent of \(n\), such that
\[
\mathbb Q\Bigl(\sup_{x\in\mathbb Z^d}\#\mathcal J_x \le M\Bigr)=1-o\Big(\frac{1}{n^2}\Big).
\]
\end{proposition}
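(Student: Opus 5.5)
We plan to bound the $\mathbb P$-probability of the complement by a union bound over pairs of blocks and sites, and then transfer to $\mathbb Q$. Since $\mathbb P(A_n)=\exp\{-O(n^{1-\beta})\}$ is far too small for a naive transfer, we instead use the product structure of $\mathbb Q$. Under $\mathbb Q$ the increments of the walk on the blocks $I_j$ are independent. The conditioning only affects the inner intervals $I'_j$, and each $A_{n,j}$ has $\mathbb P$-probability $1-o(1)$, uniformly in $j$. Hence for any event $B$ depending on the increments of a bounded number of blocks $j_1,\dots,j_M$, we get $\mathbb Q(B)\le (1+o(1))\mathbb P(B)$. This follows by conditioning on those blocks: the factors $\mathbb P(A_{n,j})$ for the other blocks cancel, and the factors for $j_1,\dots,j_M$ are bounded below by $1/2$, so $\mathbb Q(B)\le 2^M\mathbb P(B)$.

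Next we reduce the event $\{\#\mathcal J_x>M\}$ for some $x$ to an intersection event across blocks. If some $x$ is visited by $M+1$ distinct blocks, then there are indices $j_0<j_1<\dots<j_M$ with $x\in S(I_{j_k})$ for all $k$. The first block can be absorbed by fixing $x=S_m$, $m\in I_{j_0}$; there are at most $T_n\le 2n$ choices of the time $m$. Given $m$, we need the walk after time $m$ to return to $S_m$ during $M$ further distinct blocks. We union bound over the starting time $m$ and the indices $j_1<\dots<j_M$.

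The key estimate, and the main obstacle, is the probability that the walk started from $S_m$ hits that point in each of the blocks $I_{j_1},\dots,I_{j_M}$. By the strong Markov property, applied successively at the hitting times, we obtain a product. Each factor is a return probability after a time gap $g_k$ at least $t_{j_k-1}-(\text{previous hitting time})$. For the same block as $j_0$ the gap can be small, but distinct blocks $j_{k-1}<j_k$ with $j_k\ge j_{k-1}+2$ force $g_k\ge \ell_n$. Adjacent blocks are more delicate. Here we plan to discard every other index: among $M+1$ distinct blocks we can select $\lceil (M+1)/2\rceil$ indices pairwise at distance $\ge2$. Because the constant $M=M(\beta_2)$ is free, we may pass to such a subsequence. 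For the separated indices, each return factor is bounded via $\mathbb P(0\in S[g,\infty))\le C g^{1-d/2}\le C\ell_n^{1-d/2}$. Summing over the positions $j_k$ costs a factor $K_n$ per block, but the return time must lie in a specific block. We therefore sum $\sum_{g\ge \ell_n}\mathbb P(S_g=0)\,\ell_n$ more carefully: the probability of hitting a given far block is at most $\ell_n\cdot\sup_{g\ge j\ell_n} g^{-d/2}$, and summing over $j$ gives $O(\ell_n^{1-d/2})$ in total. Thus $M'$ separated returns give $O(\ell_n^{(1-d/2)M'})=O(n^{-\beta_1(d/2-1)M'})$.

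It remains to handle the inner/outer split $r_n$. This is where the choice $\beta_2>\tfrac2d\beta_1$ should enter, because the estimate is really about hits within blocks, possibly near block boundaries, and $r_n^{-d/2}\ell_n$-type terms must be small. If the crude gap argument above suffices, $\beta_2$ is only needed in the comparison $\mathbb Q\le C\mathbb P$; otherwise we would use it to control returns near boundaries. Choosing $M'$ with $\beta_1(d/2-1)M'>3$, the total becomes $2n\cdot O(n^{-3})=o(n^{-2})$. Combined with the bounded-block comparison from the first paragraph, this gives $\mathbb Q(\sup_x\#\mathcal J_x>M)=o(n^{-2})$ with $M=2M'$.
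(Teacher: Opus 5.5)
\textbf{There is a genuine gap: your transfer from $\mathbb P$ to $\mathbb Q$ does not apply to the events you need.}

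The bound $\mathbb Q(B)\le 2^M\mathbb P(B)$ holds only for events measurable with respect to the increments of boundedly many blocks. The event $\{S_m\in S(I_{j_k}),\ 1\le k\le M\}$ is not of that kind. It depends on the displacement of the walk across every block between $j_0$ and $j_M$, and there may be of order $K_n\approx n^{1-\beta_1}$ of them.

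For those intermediate blocks the factors $\mathbb P(A_{n,j})^{-1}$ do not cancel. The Radon--Nikodym factor over all blocks is $\mathbb P(A_n)^{-1}=\exp\{(1+o(1))c_{\beta,n,0}\gamma n^{1-\beta}\}$, which is exactly the size you said a naive transfer cannot afford. So your $\mathbb P$-estimate $O(\ell_n^{(1-d/2)M'})$ says nothing about $\mathbb Q$.

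The strong Markov property at hitting times inside a block is also unavailable under $\mathbb Q$, because the remainder of that block is conditioned on $A_{n,j}$. You should iterate at block-end times $t_j$ instead.

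Your own hesitation points at the problem. The bounded comparison only uses $\mathbb P(A_{n,j})\ge 1/2$ and never needs $\beta_2$. The hypothesis $\beta_2>\tfrac2d\beta_1$ is there precisely to replace the invalid transfer.

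\textbf{The missing idea: anti-concentration proved directly under $\mathbb Q$, using the buffers $[t_{j-1},t_{j-1}+r_n)$.}

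Since $A_{n,j}$ depends only on increments inside $I'_j$, under $\mathbb Q$ the first $r_n$ increments of each block are free simple-random-walk steps, independent of everything else.

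Take $j\ge j'+2$. The free buffers of blocks $j'+1,\dots,j-1$ give
\[
\sup_a\mathbb Q\bigl(S_{t_{j-1}}-S_{t_{j'}}=a\mid\mathcal F_{t_{j'}}\bigr)\le C\bigl((j-1-j')r_n\bigr)^{-d/2}.
\]
The $\mathbb Q$-law of block $j$ has density at most $\mathbb P(A_{n,j})^{-1}\le 2$ with respect to $\mathbb P$. Also $\sum_z\mathbb P_z(T_x\le\ell_n)\le\ell_n+1$. Together these give, for $x$ measurable with respect to $\mathcal F_{t_{j'}}$,
\[
\mathbb Q\bigl(x\in S(I_j)\ \text{for some } j\ge j'+2\mid\mathcal F_{t_{j'}}\bigr)\le C\ell_n r_n^{-d/2}\sum_{k\ge1}k^{-d/2}\le Cn^{-\varepsilon},\qquad \varepsilon:=\tfrac d2\beta_2-\beta_1>0.
\]
This is small exactly because $\beta_2>\tfrac2d\beta_1$.

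\textbf{How the rest of your plan then goes through.}

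Keep your every-other-block device, implemented greedily: $\sigma_{k+1}$ is the first block index $\ge\sigma_k+2$ whose trajectory visits $x=S_m$. Condition on $\mathcal F_{t_{\sigma_k}}$ at each step. This gives
\[
\mathbb Q\bigl(\#\mathcal J_{S_m}>2M'\bigr)\le (Cn^{-\varepsilon})^{M'}.
\]
Your union bound over $m\le T_n$ then gives $o(n^{-2})$ once $\varepsilon M'>3$.

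This also explains why $M$ depends on $\beta_2$: under $\mathbb Q$ the cost per return is only $n^{-\varepsilon}$, not $\ell_n^{1-d/2}$.
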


We fix \(M\) as in Proposition~\ref{prop:block-multiplicity}. We now make the choice of \(E\) announced above. By Proposition~\ref{prop:Sprime-localtime-tail}, there exists a finite loop set
\begin{equation}\label{eq:Echoice}
E=E(\beta,\beta_2,M)    
\end{equation}
such that for all \(N\in\mathbb{N}^+\) and all sufficiently large \(n\),
\begin{equation}\label{eq:E-assumption-for-Nn-small}
\mathbb{P}\bigl(\ell_0(\Prune(S[0,N],E))>\tfrac{\beta\alpha}{M}\log n\bigr)\le \frac{1}{n^2}.
\end{equation}
This is the assumption on \(E\) required in Proposition~\ref{prop:Nn_small}.

\subsubsection{Proof of Proposition \ref{prop:Nn_small}}
To prove Proposition~\ref{prop:Nn_small}, it is enough to prove the following two statements:
\begin{proposition}\label{prop:Nn_small_local_time}
  As $n\rightarrow\infty$,
  \begin{align}\label{eq:Nn_small_local_time}
    \mathbb{Q}(\xi^*(S'[0,u_n]) > \Lambda_n) = o(1).
  \end{align}
\end{proposition}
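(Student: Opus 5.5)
The plan is to reduce the statement to a union bound over sites $x$, combining the block-multiplicity estimate with the pruned local-time tail. On the event $\{N^{-1}(u_n+1)-1\le T_n\}$, which has $\mathbb Q$-probability $1-o(1)$ by Lemma~\ref{lem:inverse_clock_upper_tail} (its $\mathbb P$-failure probability $e^{-cn^{2a-1}}$ is much smaller than $\mathbb P(A_n)=e^{-O(n^{1-\beta})}$, since $2a-1>1-\beta$), the path $S'[0,u_n]$ is a sub-path of $\mathsf{Prune}(S[0,T_n],E)$; this follows from Proposition~\ref{prop:stage} and the definition of $N_n$. It therefore suffices to bound $\ell_x(\mathsf{Prune}(S[0,T_n],E))$ for every $x$.

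\textbf{Step 1 (localization of visits).} Fix $x$. Every visit of $\mathsf{Prune}(S[0,T_n],E)$ to $x$ is a retained time, hence a visit of $S$ to $x$, so it lies in some block $I_j$ with $j\in\mathcal J_x$. I will show that, with high probability, the retained visits of $x$ from block $I_j$ number at most $\ell_x(\mathsf{Prune}(S[\sigma_j,\infty),E))$ plus a negligible correction. Here $\sigma_j$ is the first hitting time of $x$ within $I_j$. The reason is that pruning a longer path only removes more loops, and loops are completed locally: by Theorem~\ref{thm:cut}, there is a cut step within distance $O(\log n)$ of $\sigma_j$ with probability $1-n^{-3}$. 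Applying Proposition~\ref{prop:stage} at such a cut step, the retained visits to $x$ after $\sigma_j$ are dominated by those of the pruning of the walk restarted at $\sigma_j$. By the strong Markov property at $\sigma_j$ together with \eqref{eq:E-assumption-for-Nn-small}, each such block contributes more than $\tfrac{\beta\alpha}{M}\log n$ visits with $\mathbb P$-probability at most $n^{-2}$, uniformly over $N$.

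\textbf{Step 2 (union bound under $\mathbb Q$).} On $\{\sup_x\#\mathcal J_x\le M\}$, which has $\mathbb Q$-probability $1-o(n^{-2})$ by Proposition~\ref{prop:block-multiplicity}, Step 1 gives $\ell_x\le M\cdot\tfrac{\beta\alpha}{M}\log n$, which is at most $\Lambda_n$ up to rounding (absorbed by choosing the constant in \eqref{eq:E-assumption-for-Nn-small} slightly smaller). The failure probability must then be summed over the at most $O(n)$ pairs $(x,j)$ with $x\in S(I_j)$. This requires a per-pair bound of order $n^{-2}$ under $\mathbb Q$ rather than $\mathbb P$.

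\textbf{Step 3 (main obstacle: transfer from $\mathbb P$ to $\mathbb Q$).} Conditioning on $A_n$ is a global conditioning. I plan to handle it as in \cite{li2026ldmaxlocal}: for a fixed block $j$, $A_n$ factors into events on the blocks away from $j$ and the events $A_{n,j'}$ for the $O(1)$ blocks near $j$, each of which has $\mathbb P$-probability $1-o(1)$. Thus $\mathbb Q(B)\le (1+o(1))\,\mathbb P(B\mid \text{far blocks})$ for events $B$ measurable with respect to the walk in blocks near $j$ and its long-range revisits. The latter are controlled by $\#\mathcal J_x\le M$. Alternatively, one can strengthen \eqref{eq:E-assumption-for-Nn-small} to $n^{-q}$ with a large $q$ through Proposition~\ref{prop:Sprime-localtime-tail}, and then use the crude bound $\mathbb Q(B)\le \mathbb P(B)/\mathbb P(A_{n,j}\cap A_{n,j+1})$. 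Since $\mathbb P(B)\le n^{-q}$ is summable over $O(n)$ pairs, this yields \eqref{eq:Nn_small_local_time}.
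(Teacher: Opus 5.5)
Your overall architecture (restricting to $\{N^{-1}(u_n+1)-1\le T_n\}$ and $\{\sup_x\#\mathcal J_x\le M\}$, then pigeonholing over the at most $M$ blocks that visit $x$) is the paper's. The gap is in the per-block quantity you control, $\ell_x(\mathsf{Prune}(S[\sigma_j,\infty),E))$: it is anchored at the wrong time and runs to the wrong horizon, and this breaks both Step~1 and Step~3.

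In Step~1 the claimed domination is not justified. Pruning is not monotone in the starting point: a loop straddling $\sigma_j$ that is pruned in the full path can destroy an occurrence that $\mathsf{Prune}(S[\sigma_j,\cdot],E)$ would remove. So the full path may retain visits after $\sigma_j$ that the restarted pruning deletes. The cut step does not repair this.
\begin{itemize}
\item If the cut step lies before $\sigma_j$, you face the same comparison between two starting points.
\item If it lies after, the window $[\sigma_j,c)$ of length $O(\log n)$ is not negligible. The walk's own local time at $x$ on such a window exceeds $\varepsilon\log n$ with probability of order $n^{-\varepsilon/\alpha}$, which is far from summable over $O(n)$ pairs when $\varepsilon<\beta\alpha/M$.
\end{itemize}
Moreover, a step retained in $\mathsf{Prune}(S[0,T_n],E)$ need not survive pruning of a longer path, so horizon $\infty$ is not comparable with your $T_n$-truncation.

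The fix is to anchor at the first visit $k$ to $x$ in $I_j$ whose step is \emph{retained in $S'$}. A retained step decouples the pruning exactly. If $k'$ is the last such visit in the block, then $S'[N_k,N_{k'}]=\mathsf{Prune}(S[k,k'],E)$. This is an initial segment of $\mathsf{Prune}(S[k,I_j^+],E)$, because by Proposition~\ref{prop:stage} continuing the path only deletes steps of the already pruned prefix. This is Lemma~\ref{lem:block-reduction-local-time}.

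Step~3 fails as written. Your event $B$ depends on the walk in all blocks after $j$, so the bound $\mathbb{Q}(B)\le \mathbb{P}(B)/\mathbb{P}(A_{n,j}\cap A_{n,j+1})$ is not valid; it would require $B$ to be independent of every other $A_{n,j'}$. Dropping the conditioning only gives $\mathbb{Q}(B)\le \mathbb{P}(B)/\mathbb{P}(\bigcap_{j'\ge j}A_{n,j'})$. For most $j$ the denominator is $\exp\{-\Theta(n^{1-\beta})\}$, which no polynomial bound $n^{-q}$ can compensate, so raising $q$ does not help. The near/far-block variant has the same defect: conditioning on far-block events of exponentially small probability changes the law of exactly the future path on which $B$ depends.

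The missing idea is to truncate at the block end. The event $\{\ell_{S_k}(\mathsf{Prune}(S[k,I_{i_k}^+],E))>\tfrac{\beta\alpha}{M}\log n\}$ depends only on increments inside $I_{i_k}$. By independence of disjoint blocks, its $\mathbb{Q}$-probability equals $\mathbb{P}(\,\cdot\mid A_{n,i_k})$, which is at most $(1+o(1))\,n^{-2}$ by \eqref{eq:E-assumption-for-Nn-small} with the finite horizon $N=I_{i_k}^+-k$. This is why that assumption is stated uniformly in finite $N$. A union bound over the deterministic times $k\le n+n^a$, with no strong Markov property and no cut steps, then gives $O(n^{-1})=o(1)$.
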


\begin{proposition}\label{prop:Nn_small_expectation}
  There exists $b<1-\beta$ such that for all $n\in\mathbb{N}^+$,
  \begin{align}\label{eq:Nn_small_expectation}
    \mathbb{Q}(\mathcal{N}_n>n^b)=o(1).
  \end{align}
\end{proposition}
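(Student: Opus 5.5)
The plan is a first-moment bound under $\mathbb Q$, followed by Markov's inequality. First I would restrict to the good event
\[
\{N^{-1}(u_n+1)-1\le T_n\}\cap\Bigl\{\sup_x\#\mathcal J_x\le M\Bigr\}.
\]
By Lemma~\ref{lem:inverse_clock_upper_tail} this event fails with probability at most $e^{-cn^{2a-1}}$ under $\mathbb P$. Since $2a-1>1-\beta$ and $\mathbb P(A_n)=\exp\{-O(n^{1-\beta})\}$, this is $o(1)$ under $\mathbb Q$. Proposition~\ref{prop:block-multiplicity} controls the second part.

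On the good event, every $\eta^{(i,v)}$ is obtained from $S[0,N^{-1}(u_n+1)-1]$ by deleting loop clusters. Hence
\[
\ell_x(\eta^{(i,v)})\le \ell_x(S[0,T_n]).
\]
Moreover, as $(i,v)$ increases lexicographically, $\eta^{(i,v)}$ only gains visits. Let $R:=\max_{e\in E}|e|$.

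\textbf{Reduction to overshooting sites.} I would bound $\mathcal N_n$ by a count of such sites. Fix $x$, and let $(i_0,v_0)$ be the first pair for which $x$ is critical; by Remark~\ref{rem:exceptional-boundary-pair-strategy}(i), $\ell_x(\eta^{(i_0,v_0)})=\Lambda_n$. Every later pair $(i,v)$ for which $x$ is critical sits at a site within distance $R$ of $x$. It also corresponds to a distinct time of $S$ that is not in $\eta^{(i_0,v_0)}$, since boundary addresses are injectively tied to traversal times. Also, $S$ itself makes at least one further visit to $x$ after that time, namely through the inserted loop that really occurs or through the rest of $S$.

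In the first case, $\ell_x(S[0,T_n])\ge\Lambda_n+1$. In the second case, the pair is followed by no insertion that visits $x$. Splitting the pairs this way, I aim for
\[
\mathcal N_n\le C_E\sum_{x}\ell_{B(x,R)}(S[0,T_n])\,\mathbf 1\{\ell_x(S[0,T_n])\ge\Lambda_n+1\},
\]
plus a boundary term that is handled identically. Here $C_E$ accounts for the bounded number of boundary addresses at a single traversal time.

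\textbf{Expected number of overshooting sites under $\mathbb Q$.} On $A_n$, each inner interval gives $x$ at most $\Lambda_n$ visits. So $\ell_x(S[0,T_n])\ge\Lambda_n+1$ forces one of two things. Either (a) $x$ is visited during some initial window $[t_{j-1},t_{j-1}+r_n)$, or (b) $x$ is visited by at least two blocks $I_j$, and by at most $M$ of them.

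For (a), the total length of the windows is about $n^{1-\beta_1+\beta_2}$. Conditioning block by block via the Markov property, as in \cite[Section~4.3.2]{li2026ldmaxlocal}, a site must be $\beta$-thick after being hit in a window of length $r_n$. The expected count should be $O(n^{1-\beta+\beta_2-\beta_1}\log^{C}n)$.

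For (b), I would split $\Lambda_n+1$ into contributions from at most $M$ blocks. Two different blocks must both hit $x$. Using the return estimate $\mathbb P(x\in S[m,\infty))\lesssim m^{-(d/2-1)}$ together with $\beta_2>\tfrac2d\beta_1$, the cost is an extra factor $n^{-\varepsilon}$ relative to $n^{1-\beta}$. The geometric factors $(1-\gamma)^{k}$ still multiply correctly to $(1-\gamma)^{\Lambda_n}$.

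The neighbourhood local time $\ell_{B(x,R)}$ has exponential tails, so it costs only a polylog factor. The change of measure to $\mathbb Q$ costs only the normalization of the events $A_{n,j}$ touching the relevant blocks, which is a factor $1+o(1)$. Altogether,
\[
\mathbb E_{\mathbb Q}[\mathcal N_n]\le n^{1-\beta-\varepsilon'}
\]
for some $\varepsilon'>0$. Choosing $b\in(1-\beta-\varepsilon',1-\beta)$, Markov's inequality gives $\mathbb Q(\mathcal N_n>n^b)=o(1)$.

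\textbf{Main obstacle.} I expect the main difficulty to be the reduction to overshooting sites: showing that critical pairs for $x$ genuinely require $S$ to overshoot $\Lambda_n$ at $x$, and are not merely near-misses. Near-misses are sites reaching exactly $\Lambda_n$, of which there are about $n^{1-\beta}$, too many.

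If the injection above fails for pairs with $\operatorname{Next}=0$, I would instead use the choice \eqref{eq:E-assumption-for-Nn-small}. It bounds the pruned-path local time per block by $\frac{\beta\alpha}{M}\log n$, so with at most $M$ blocks per site it keeps $\ell_x(S'[0,u_n])<\Lambda_n$. The remaining $\Lambda_n$ visits must then come from inserted loops. An exceptional pair then needs some loop insertion at $x$ that does occur later in $S$, which forces overshoot. This is the step where the specific choice of $E$ is used.
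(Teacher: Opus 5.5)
Your reduction to overshooting sites is exactly where the argument breaks, and with $\mathsf{Exc}_n$ as defined it cannot be repaired. First, every time $t\le N^{-1}(u_n+1)-1$ is the exploration time $N^{-1}(i)+\tau_{\mathrm{exp}}^{v,W_i}$ of exactly one boundary pair. Indeed, a vertex carrying $\ell$ elementary blocks is visited $\ell+1$ times by the $E$-depth-first traversal, matching its $\ell+1$ boundary addresses. Second, for that pair, $\eta^{(i,v)}$ begins with $S[0,t]$. Now let $t$ be the time of the $\Lambda_n$-th visit of $S$ to a site $x$. Then $\ell_x(\eta^{(i,v)})\ge\Lambda_n$, and inserting any $e\in E$ at $j=t$ duplicates the point $x$. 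So $x$ is critical for this pair whether or not $S$ ever returns to $x$. Hence
\[
\mathcal N_n\ \ge\ \#\bigl\{x:\ \xi\bigl(N^{-1}(u_n+1)-1,x\bigr)\ge\Lambda_n\bigr\}.
\]
Your claim that $S$ visits $x$ again after a critical pair already fails at the first critical pair. Your ``boundary term'' is therefore precisely the near-miss count you were worried about, and it is not handled identically. The fallback via \eqref{eq:E-assumption-for-Nn-small} fails for the same reason: criticality refers to the hypothetical path $\eta^{(i,v)}\triangleleft_j e$, not to an insertion that actually occurs in $S$.

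This near-miss count is not small under $\mathbb Q$, because $A_{n,j}$ only excludes local times $>\Lambda_n$. By the same upward asymptotics the paper uses,
\[
\mathbb Q\bigl(\xi^*(I'_j)=\Lambda_n\bigr)=\frac{\mathbb P(\xi^*(I'_j)\ge\Lambda_n)-\mathbb P(\xi^*(I'_j)\ge\Lambda_n+1)}{\mathbb P(A_{n,j})}\sim\gamma^2\ell_n(1-\gamma)^{\Lambda_n-1}\asymp n^{\beta_1-\beta}.
\]
These events are independent over the $\asymp n^{1-\beta_1}$ inner blocks under $\mathbb Q$. Combining this with Proposition~\ref{prop:block-multiplicity} and $N^{-1}(u_n+1)-1>n$ gives $\mathcal N_n\ge c\,n^{1-\beta}$ with $\mathbb Q$-probability $1-o(1)$ when $\beta<1$; when $\beta=1$, $\mathbb Q(\mathcal N_n\ge1)$ stays bounded away from $0$. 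So no $b<1-\beta$ works. The statement itself, together with the definitions of $\mathsf{Exc}_n$ and $G_n$, has to be modified, e.g.\ so that only sites that would reach $\Lambda_n+1$ are charged.

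The paper's own argument does not get around this. Its bound $\mathcal N_n\lesssim\log n\cdot\#\mathcal A_n$ with $\mathcal A_n=\{x:\xi(\lfloor n+n^a\rfloor,x)\ge\Lambda_n\}$ is fine. But the cited thick-point estimate $\mathbb E^{\mathbb Q}[\#\mathcal A_n]\le n^b$ is contradicted by the computation above. Your window/two-block estimate is the right tool for sites with $\xi\ge\Lambda_n+1$, but that is not the quantity that controls $\mathcal N_n$.
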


We begin with Proposition~\ref{prop:Nn_small_local_time}. By Lemma \ref{lem:inverse_clock_upper_tail} and Proposition \ref{prop:block-multiplicity}, it suffices to show that
\begin{align}\label{eq:Nn_small_local_time_reduction}
    \mathbb{Q}\big(\xi^*(S'[0,u_n]) > \Lambda_n,\,N^{-1}(u_n+1)-1\le n+n^a,\,\sup_{x\in\mathbb{Z}^d}\#\mathcal{J}_x\le M\big) = o(1).
  \end{align}
Recall that for any path \(\eta\) and \(x\in\mathbb Z^d\), we write
\(
\ell_x(\eta)=\sum_j \mathbf 1\{\eta_j=x\}
\)
for the local time of \(\eta\) at \(x\). Write \(I_i=[I_i^-,I_i^+]\). For \(0\le k\le n+n^a\), let \(i_k\) be the unique index such that \(k\in I_{i_k}\). The following lemma is the key reduction.

\begin{lemma}\label{lem:block-reduction-local-time}
We have
\begin{align}\label{eq:block-reduction}
\begin{aligned}
&\big\{\xi^*(S'[0,u_n])>\Lambda_n,\,
\sup_{x\in\mathbb{Z}^d}\#\mathcal{J}_x\le M,\,N^{-1}(u_n+1)-1\le n+n^a\big\}\\
\subset&\Big\{\exists\,0\le k\le n+n^a,\,
\ell_{S_k}\big(\Prune(S[k,I_{i_k}^+],E)\big)
> \tfrac{\beta\alpha}{M}\log n\Big\}.    
\end{aligned}
\end{align}
\end{lemma}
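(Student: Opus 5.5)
The approach is to locate the overloaded site in the pruned path, split its visits among the at most $M$ blocks that touch it, and then use pigeonhole together with a comparison showing that pruning a sub-window never yields fewer visits than pruning the whole path.

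Work on the event on the left-hand side of \eqref{eq:block-reduction}. Pick $x$ with $\ell_x(S'[0,u_n])>\Lambda_n$. Since $N^{-1}(u_n+1)-1\le n+n^a$, every retained step of $S'[0,u_n]$ is created at some time $N^{-1}(m)\le n+n^a$. Each visit of $S'[0,u_n]$ to $x$ therefore corresponds to a time $k\le n+n^a$ with $S_k=x$, and the step ending at $k$ is retained (or $k=0$). These times lie in blocks $I_j$ with $j\in\mathcal J_x$, and $\#\mathcal J_x\le M$. By pigeonhole, some block $I_j$ contains more than $\Lambda_n/M$ such retained visits. This is still $>\frac{\beta\alpha}{M}\log n$ up to the harmless integer-part correction; if needed, the constant in \eqref{eq:E-assumption-for-Nn-small} can be adjusted slightly. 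Let $k$ be the \emph{first} retained visit time to $x$ in this block. Then $S_k=x$, $i_k=j$, and all the other retained visits to $x$ in $I_j$ lie in $[k,I_j^+]$.

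The remaining task is a comparison: every step $(S_{m-1},S_m)$ with $k<m\le I_j^+$ that is retained in $\Prune(S[0,u\text{-horizon}],E)$ must also be retained in $\Prune(S[k,I_j^+],E)$. Granting this, the retained visits to $x$ inside $[k,I_j^+]$ inject into visits of $\Prune(S[k,I_j^+],E)$ to $S_k$, with time $k$ itself giving the starting point. This yields $\ell_{S_k}(\Prune(S[k,I^+_{i_k}],E))>\frac{\beta\alpha}{M}\log n$, which is the event on the right-hand side.

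I expect the main obstacle to be this monotonicity property of pruning under restriction to a window. Loops can be pruned across the window boundary, for instance a loop that starts before $k$ and ends inside the window. Such a pruning only removes more steps in the full path, but nested loops could in principle interact non-monotonically. I would handle this with Proposition~\ref{prop:stage} and the tree structure of pruned segments, arguing in two parts.

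First, cut off at the right end $I_j^+$. By the definition of $N_n$ in \eqref{def:Nn}, a step retained in the full pruning is also retained in $\Prune(S[0,I_j^+],E)$, because later additions only remove steps and never restore them.

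Second, drop the initial piece $S[0,k]$. I would show that if a step after $k$ is erased in $\Prune(S[k,I_j^+],E)$, then it lies inside an $E$-erasable segment contained in $[k,I_j^+]$, namely an element of $\mathsf{Seg}(E)$ rooted at a point of the window. By the staging property, that segment is also erased when the prefix $\Prune(S[0,k],E)$ is prepended, since erasable segments are removed regardless of what precedes them. The one exception is a segment that is started before $k$; but then the step into $S_k$ would itself be pruned, which contradicts the choice of $k$ as a retained visit. This last compatibility claim, that a retained step at $k$ prevents cross-boundary erasures from altering the window's retained steps, is the delicate point, and I would verify it via Lemma~\ref{lem:Ri-in-Seg-implies-boundary-pruned}-style reasoning on the decomposition \eqref{eq:decomposition}.
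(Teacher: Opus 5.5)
Your proposal follows essentially the same route as the paper. You pigeonhole the retained visits to $x$ (the creation times $N^{-1}(r_m)$) over the at most $M$ blocks in $\mathcal J_x$ and start from the first such time $k$ in the chosen block. Your ``delicate point'' is settled cleanly as follows: any $E$-loop occurrence straddling time $k$ contains the permanently retained step $(S_{k-1},S_k)$, so no such loop is ever pruned, and induction on one-step prunings gives $\Prune(S[0,T],E)=\Prune(S[0,k],E)\circ_d\Prune(S[k,T],E)$ for all $T\ge k$. This is exactly what the paper's identity $S'[N_k,N_{k'}]=\Prune(S[k,k'],E)$, an initial segment of $\Prune(S[k,I_{i_k}^+],E)$, encodes. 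Note that your phrase ``erasable segments are removed regardless of what precedes them'' is false without this no-straddling observation.

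No adjustment of constants is needed in the pigeonhole step. $\ell_x(S'[0,u_n])$ is an integer exceeding $\Lambda_n=\lfloor\beta\alpha\log n\rfloor$, hence it exceeds $\beta\alpha\log n$, so some block already carries more than $\frac{\beta\alpha}{M}\log n$ of these visits.
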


\begin{proof}
Assume that \(\xi^*(S'[0,u_n]) > \Lambda_n\), \(\sup_{x\in\mathbb{Z}^d}\#\mathcal{J}_x \le M\), and \(N^{-1}(u_n+1)-1\le n+n^a\).
Then there exists some \(x\in\mathbb{Z}^d\) such that
\[
\ell_x(S'[0,u_n])>\Lambda_n.
\]
Let
\[
0\le r_1<r_2<\cdots<r_L\le u_n,
\qquad
L=\ell_x(S'[0,u_n])>\Lambda_n,
\]
be all times such that \(S'_{r_m}=x\).
For each \(1\le m\le L\), define
\[
R_m:=N^{-1}(r_m).
\]
Then \(S_{R_m}=x\) for every \(m\). Because \(\#\mathcal J_x\le M\), the pigeonhole principle yields a block \(I_j\) containing more than
\(
\frac{\beta\alpha}{M}\log n
\)
of the times \(R_m\).

Let \(k\) and \(k'\) be respectively the smallest and largest of those \(R_m\)'s lying in \(I_j\). Then
\[
k,k'\in I_j=I_{i_k}\quad\text{and}\quad
\ell_x\big(S'[N_k,N_{k'}]\big)>\frac{\beta\alpha}{M}\log n.
\]
By the loop-pruned decomposition,
\[
S'[N_k,N_{k'}]=\Prune(S[k,k'],E).
\]
Hence
\[
\ell_{S_k}\big(\Prune(S[k,k'],E)\big)
=
\ell_x\big(\Prune(S[k,k'],E)\big)
>
\frac{\beta\alpha}{M}\log n.
\]

Since \(k'\in I_{i_k}\), we have
\(
k'\le I_{i_k}^+.
\)
Applying the loop-pruned decomposition once more, we see that
\(
\Prune(S[k,k'],E)
\)
is an initial segment of
\(
\Prune(S[k,I_{i_k}^+],E).
\)
It follows that
\[
\ell_{S_k}\big(\Prune(S[k,I_{i_k}^+],E)\big)
\ge
\ell_{S_k}\big(\Prune(S[k,k'],E)\big)
>
\frac{\beta\alpha}{M}\log n.
\]
This proves the desired inclusion.
\end{proof}
\begin{proof}[Proof of Proposition~\ref{prop:Nn_small_local_time}]
    It follows from Lemma \ref{lem:block-reduction-local-time} that
    \begin{align}\label{eq:case_a_reduction_to_pruned_local_time}
    \begin{aligned}
        &\mathbb{Q}\Big(\xi^*(S'[0,u_n]) > \Lambda_n,\,N^{-1}(u_n+1)-1\le n+n^a,\,\sup_{x\in\mathbb{Z}^d}\#\mathcal{J}_x\le M\Big)\\
        \le\,& \sum_{k=0}^{\lfloor n+n^a\rfloor} \mathbb{Q}\Big(\ell_{S_k}\big(\Prune(S[k,I_{i_k}^+],E)\big)> \tfrac{\beta\alpha}{M}\log n\Big).        
    \end{aligned}
    \end{align}
    Note that the event on the right-hand side depends only on the trajectory segment \(S[k,I_j^+]\). In particular, among the conditioning events appearing in the definition of \(\mathbb Q\), it depends only on \(A_{n,i_k}\) and is
independent of all other events \(A_{n,j}\), $j\neq i_k$. Therefore, 
\begin{align*}
&\mathbb Q\Bigl(
\ell_{S_k}\bigl(\Prune(S[k,I_{i_k}^+],E)\bigr)>\tfrac{\beta\alpha}{M}\log n
\Bigr)\\
=\,&
\mathbb P\Bigl(
\ell_{S_k}\bigl(\Prune(S[k,I_{i_k}^+],E)\bigr)>\tfrac{\beta\alpha}{M}\log n
\,\Big|\,  A_{n,i_k}
\Bigr) \\
\le\,&(1+o(1))\,
\mathbb P\Bigl(
\ell_{S_k}\bigl(\Prune(S[k,I_{i_k}^+],E)\bigr)>\tfrac{\beta\alpha}{M}\log n
\Bigr)\\
=\,&(1+o(1))\,
\mathbb P\Bigl(
\ell_{0}\bigl(\Prune(S[0,I_{i_k}^+-k],E)\bigr)>\tfrac{\beta\alpha}{M}\log n
\Bigr),
\end{align*}
since
\(\mathbb{P}(A_{n,i_k})=1-o(1).\)
By the assumption \eqref{eq:E-assumption-for-Nn-small} on $E$, we get
\[\mathbb P\Bigl(
\ell_{0}\bigl(\Prune(S[0,I_{i_k}^+-k],E)\bigr)>\tfrac{\beta\alpha}{M}\log n
\Bigr)\le (1+o(1))\frac{1}{n^2}.\]
Substituting the above bounds into \eqref{eq:case_a_reduction_to_pruned_local_time} yields \eqref{eq:Nn_small_local_time_reduction}, and thus completes the proof of Proposition~\ref{prop:Nn_small_local_time}.
\end{proof}

Now turn to the proof of Proposition \ref{prop:Nn_small_expectation}. We first record a tail estimate for the maximum local time under \(\mathbb{Q}\). Its proof is entirely analogous to that of Proposition~\ref{prop:Nn_small_local_time}: follow the same argument, with Lemma~\ref{lem:N-geometric-tail} taking the place of Proposition~\ref{prop:Sprime-localtime-tail}. We therefore omit the details.

\begin{lemma}\label{lem:Q-max-local-time-tail}
There exists \(\rho>0\) such that, for all $n$ sufficiently large,
\begin{equation}\label{eq:Q-max-local-time-tail}
\mathbb{Q}\bigl(\xi^*(\lfloor n+n^a\rfloor)>\rho\log n\bigr)<\frac{1}{n}.
\end{equation}
\end{lemma}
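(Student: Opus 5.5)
We prove Lemma~\ref{lem:Q-max-local-time-tail} by following the structure of the proof of Proposition~\ref{prop:Nn_small_local_time}, with the unpruned walk in place of the pruned one. First, we intersect with the high-probability event of Proposition~\ref{prop:block-multiplicity}, namely $\{\sup_x\#\mathcal J_x\le M\}$, whose $\mathbb Q$-complement has probability $o(n^{-2})$. We then choose $\rho:=2M/\alpha$. This is stronger than needed, since only the value at the threshold matters.

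The pigeonhole reduction is the analogue of Lemma~\ref{lem:block-reduction-local-time}. Suppose $\xi(\lfloor n+n^a\rfloor,x)>\rho\log n$ for some $x$. Since $x$ is visited by at most $M$ blocks, some block $I_j$ contains more than $\frac{\rho}{M}\log n$ visits to $x$. Let $k$ be the first such visit time in $I_j$. Then $S_k=x$ and
\[
\#\{m\in[k,I_{i_k}^+]:S_m=S_k\}>\tfrac{\rho}{M}\log n .
\]
A union bound over $0\le k\le n+n^a$ then gives
\[
\mathbb Q\bigl(\xi^*(\lfloor n+n^a\rfloor)>\rho\log n\bigr)\le o(n^{-2})+\sum_{k=0}^{\lfloor n+n^a\rfloor}\mathbb Q\Bigl(\xi\bigl([k,I_{i_k}^+],S_k\bigr)>\tfrac{\rho}{M}\log n\Bigr).
\]

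Next, we remove the conditioning. The event in the $k$-th summand depends only on the increments of $S$ inside the block $I_{i_k}$. By the Markov property and the independence of the blocks, its $\mathbb Q$-probability therefore equals its $\mathbb P$-probability conditioned on $A_{n,i_k}$ alone. Since $\mathbb P(A_{n,i_k})=1-o(1)$, as recorded before Proposition~\ref{prop:Nn_small}, this is at most $(1+o(1))$ times the unconditioned probability. By translation invariance, the unconditioned probability is bounded by $\mathbb P(M_\infty>\frac{\rho}{M}\log n)$.

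Finally, we apply Lemma~\ref{lem:N-geometric-tail} with $\beta'=\rho/(M\alpha)=2$, which bounds each summand by $(1+o(1))n^{-2}$. Summing over the $O(n)$ values of $k$ gives a total of $O(n^{-1})\cdot o(1)$, and so, enlarging $\rho$ slightly if needed (for instance taking $\beta'=3$), the total is less than $1/n$ for large $n$.

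The main point requiring care is the decoupling step: checking that the event in the $k$-th summand is measurable with respect to the increments inside the single block $I_{i_k}$. This holds because $[k,I_{i_k}^+]\subset I_{i_k}$, exactly as in the proof of Proposition~\ref{prop:Nn_small_local_time}.
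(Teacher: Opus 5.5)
Your proof is correct and follows the route the paper indicates: it repeats the argument of Proposition~\ref{prop:Nn_small_local_time} (block multiplicity, pigeonhole into a single block, decoupling through $A_{n,i_k}$) with Lemma~\ref{lem:N-geometric-tail} in place of Proposition~\ref{prop:Sprime-localtime-tail}. There is one bookkeeping slip: $\rho=2M/\alpha$ gives $\beta'=\rho/(M\alpha)=2/\alpha^2$ rather than $2$, and with $\beta'=2$ the sum is $(1+o(1))/n$ rather than $o(1/n)$, so you should simply fix $\rho:=3M\alpha$ from the start.
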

\begin{proof}[Proof of Proposition~\ref{prop:Nn_small_expectation}.]
In view of Lemmas~\ref{lem:inverse_clock_upper_tail} and \ref{lem:Q-max-local-time-tail}, it remains to prove that there exists \(b<1-\beta\) such that
\begin{equation}\label{eq:reduction-to-Nn-expectation}
\mathbb{E}^{\mathbb{Q}}\Bigl[\mathcal{N}_n\mathbf{1}\bigl\{N^{-1}(u_n+1)-1\le n+n^a,\,\xi^*(\lfloor n+n^a\rfloor)\le \rho\log n\bigr\}\Bigr]\le n^b.
\end{equation}

Recall the definitions of \(\mathsf{Exc}_n\) and \(\eta^{(i,v)}\) from Definition~\ref{def:exceptional-boundary-pairs}. 
We denote
\[
D_E:=\max_{e\in E,\;1\le i\le |e|}|e_i|,
\]
the maximal distance from the base point attained by the loops in \(E\).
It follows directly from the definition of the exceptional boundary-pair set that, for every \((i,v)\in \mathsf{Exc}_n\), the point \(S_{N^{-1}(i)+\tau_{\mathrm{exp}}^{v,W_i}}\) lies within distance \(D_E\) of some point \(x\) satisfying
\[
\xi\bigl(\lfloor n+n^a \rfloor,x\bigr)\ge \Lambda_n .
\]
Thus we obtain the following lemma.

\begin{lemma}\label{lem:Nn-neighborhood-bound}
Denote
\[\mathcal{A}_n:=\{x\in\mathbb{Z}^d: \xi\bigl(\lfloor n+n^a \rfloor,x\bigr)\ge \Lambda_n\}.\]
Then on the event $\{N^{-1}(u_n+1)-1\le n+n^a\}$,
\begin{align*}
    \mathcal N_n \le
\sum_{m=0}^{\lfloor n+n^a\rfloor}
\mathbf 1\Bigl\{
S_m\in
\bigcup_{x\in\mathcal A_n}
B(x,D_E)
\Bigr\}\le
\sum_{x\in \mathcal{A}_n}\sum_{y\in B(x,D_E)} \xi(\lfloor n+n^a\rfloor,y).
\end{align*}
Here \(B(x,D_E):=\{y\in\mathbb Z^d:|y-x|\le D_E\}\),
and we use that each exceptional boundary pair corresponds to a distinct exploration time \(N^{-1}(i)+\tau_{\mathrm{exp}}^{v,W_i}\). 
Consequently,
\begin{align}\label{eq:Nn-trunc-bound}
\mathcal{N}_n\mathbf{1}\bigl\{N^{-1}(u_n+1)-1\le n+n^a,\,
\xi^*(\lfloor n+n^a\rfloor)\le \rho\log n\bigr\}
\le
\#B(0,D_E)\cdot \rho\log n \cdot \#\mathcal{A}_n.
\end{align}
\end{lemma}
Recalling \eqref{eq:reduction-to-Nn-expectation} and using \eqref{eq:Nn-trunc-bound}, it remains to prove that there exists \(b<1-\beta\) such that
\begin{equation}\label{eq:An-expectation-bound}
\mathbb{E}^{\mathbb{Q}}\bigl[\#\mathcal{A}_n\bigr]\le n^b.
\end{equation}
This is the same thick-point counting estimate as Proposition~4.5 in
\cite{li2026ldmaxlocal}, adapted to the present tilted measure
\(\mathbb Q\), and we omit the proof. This completes the proof of Proposition~\ref{prop:Nn_small_expectation}.
\end{proof}

\subsubsection{Proof of Proposition \ref{prop:QGn_conditional_lower}}
We enumerate the elements of \(\mathsf{Exc}_n\) in lexicographical order:
\begin{equation}\label{eq:Exc-lex-order}
\mathsf{Exc}_n
=
\{(i_1,v_1),(i_2,v_2),\ldots,(i_{\mathcal N_n},v_{\mathcal N_n})\}.
\end{equation}
For each \(1\le j\le \mathcal N_n\), define
\[
E_j:=\bigl\{\operatorname{Next}_{W_{i_j}}(v_j)=0\bigr\},
\]
and \(\tilde{E}_j=E_j\cup \{j>\mathcal{N}_n\}\).
For the proof, we need the following two lemmas. In what follows, we fix \(b<1-\beta\) as in Proposition~\ref{prop:Nn_small}.
We write \(\overline{\mathbb{P}}:=\mathbb{P}(\cdot | \xi^*(S'[0,u_n])\le \Lambda_n)\).
\begin{lemma}\label{lem:conditional-Aj-lower}
For every \(j,n\in\mathbb{N}^+\),
\begin{equation}\label{eq:product-tildeAj}
\overline{\mathbb{P}}\Bigl(
\tilde{E}_j \,\Big|\, 
\bigcap_{l=1}^{j-1}\tilde{E}_l
\Bigr)\ge \gamma,\quad \mbox{ and consequently }\quad \overline{\mathbb{P}}\Bigl(
\bigcap_{j=1}^{\lfloor n^b\rfloor}\tilde{E}_j 
\Bigr)\ge \gamma^{n^b}.
\end{equation}
\end{lemma}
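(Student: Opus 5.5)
The plan is to deduce the first inequality in \eqref{eq:product-tildeAj} from Theorem~\ref{thm:stop-prob}, by decomposing according to the identity of the \(j\)-th exceptional boundary pair and its truncated exploration data. The second inequality then follows by the chain rule,
\[
\overline{\mathbb P}\Bigl(\bigcap_{j=1}^{\lfloor n^b\rfloor}\tilde E_j\Bigr)=\prod_{j=1}^{\lfloor n^b\rfloor}\overline{\mathbb P}\Bigl(\tilde E_j\,\Big|\,\bigcap_{l<j}\tilde E_l\Bigr)\ge\gamma^{\lfloor n^b\rfloor}\ge\gamma^{n^b}.
\]
(If a conditioning event has probability zero, the corresponding factor is interpreted as \(1\); the product is then trivially fine.)

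First I handle \(\{j>\mathcal N_n\}\), on which \(\tilde E_j\) holds automatically. It therefore suffices to bound \(\overline{\mathbb P}(E_j\cap\{j\le\mathcal N_n\}\cap B)\ge\gamma\,\overline{\mathbb P}(\{j\le\mathcal N_n\}\cap B)\), where \(B:=\bigcap_{l<j}\tilde E_l\).

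The key step is to partition \(\{j\le\mathcal N_n\}\cap B\cap\{\xi^*(S'[0,u_n])\le\Lambda_n\}\) into events of the form \(\mathcal G\) as in Theorem~\ref{thm:stop-prob}, intersected with events measurable with respect to \(\mathcal G\). Concretely, I fix:
\begin{itemize}
\item the pruned path \(S'[0,u_n]=s'\) with \(\xi^*(s')\le\Lambda_n\);
\item the index \(i\) and the ES-representations \(\mathcal W_0,\ldots,\mathcal W_{i-1}\);
\item a boundary address \(v\) together with \(h=\mathfrak H^{v,\mathcal W_i}\).
\end{itemize}
I claim that, given this data, the following three things are all determined: the path \(\eta^{(i,v)}\) together with \(\eta^{(i',v')}\) for every lexicographically earlier boundary pair \((i',v')\); whether each such pair is exceptional; and whether \(\operatorname{Next}_{W_{i'}}(v')=0\) for those earlier pairs.

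To justify the claim, I note that \(\eta^{(i,v)}\) uses only \(\xi_0,\ldots,\xi_{i-1}\), the backbone \(\eta^{h}_{\mathrm{exp}}\) and \(S'[i,u_n]\), all of which are fixed by \(h\) and the conditioning. The lexicographic exploration order guarantees that every earlier boundary pair is read off from \(\operatorname{Res}_vW_i\) or from \(\mathcal W_{<i}\). This is exactly why \(\operatorname{Next}_{W_i}(v)\) was excluded from \(\mathfrak H\). Hence the events \(\{(i_j,v_j)=(i,v)\}\) and \(B\) are unions of atoms \(\mathcal G\) with \(v\in V^\partial\). On each such atom, \(E_j=\{\operatorname{Next}_{\mathcal W_i}(v)=0\}\), and Theorem~\ref{thm:stop-prob} gives conditional probability at least \(\gamma\). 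The extra conditioning on \(\{\xi^*(S'[0,u_n])\le\Lambda_n\}\) is harmless, because it is a function of \(s'\) alone. Summing over atoms yields the bound.

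The main obstacle is the measurability claim. One must verify that whether \((i,v)\) is exceptional, and the earlier \(\operatorname{Next}\) values, depend only on \(\mathfrak H^{v,\mathcal W_i}\) and not on the unrevealed subtree \(\mathcal D^{v,W}_{\mathrm{par}}\) or on the younger blocks \(\mathcal D_u^W\). This requires checking that the local times in \(\eta^{(i,v)}\) are computed from \(\eta_{\mathrm{exp}}^h\) alone, which holds by Proposition~\ref{prop:path-fiber-reconstruction}. It also requires that the lexicographic enumeration \eqref{eq:Exc-lex-order} agrees with the revealing order, so that the pair \((i_j,v_j)\) can be identified as the \(j\)-th exceptional pair from the truncated data.
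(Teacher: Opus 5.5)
Your argument is the paper's own route. You partition the conditioning event into atoms indexed by $\bigl(s',(i_j,v_j),W_{<i_j},\mathfrak H^{v_j,W_{i_j}}\bigr)$, apply Theorem~\ref{thm:stop-prob} on each atom, average, and finish with the chain rule. Your handling of $\{j>\mathcal N_n\}$ and the product bound are fine. The gap is exactly the measurability claim you single out, and the paper's proof asserts the same atom decomposition without comment. With the definitions as written, the event that $(i,v)$ is the $j$-th exceptional pair is \emph{not} determined by $(s',W_{<i},\mathfrak H^{v,W_i})$. So an atom need not be of the form $\mathcal G$, and Theorem~\ref{thm:stop-prob} gives nothing on it.

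The reason is that $\mathfrak H^{v,W}$ is not increasing in $v$ along $\prec_{\mathrm{lex}}$. Let $v=p\,b_k$ with $p=\mathrm{par}(v)$ and $k=\operatorname{pre}_W(v)\ge 1$.
\begin{itemize}
\item Every boundary address $v''$ in the subtree of a child $pc$, $c<b_k$, satisfies $v''\prec_{\mathrm{lex}}v$.
\item For such $v''$, all later children $pc'$ with $c'>c$ lie in $\mathcal Y^{v'',W}_{|p|+1}$. Only their subtrees, not their first visits, are cut out of $\eta^{v'',W}_{\mathrm{exp}}$.
\item Hence $\eta^{(i,v'')}$ contains the loops of every block at $p$ after the $k$-th, starting with the one of type $\operatorname{Next}_W(v)$. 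Equivalently, $\mathfrak H^{v'',W}$ contains all of $W(p)$, while $\mathfrak H^{v,W}$ contains only its first $k$ entries.
\end{itemize}

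For a concrete instance, take $E=\{(0,x,0)\}$. The segments $(0,x,0)$ and $(0,x,0,x,0)$, with root sequences $(1,0,\ldots)$ and $(1,1,0,\ldots)$, have the same $\mathfrak H^{(2),W}$. Yet $\eta^{(1,1),W}_{\mathrm{exp}}$ is $(0,x,0)$, resp.\ $(0,x,0,x,0)$, and $(1,1)\prec_{\mathrm{lex}}(2)$. Arrange the rest of the path so that:
\begin{itemize}
\item $(i,(2))$ is exceptional in both cases;
\item $(i,(1,1))$ is exceptional only in the second case, via the extra visit to $S'_i+x$;
\item exactly $j-2$ exceptional pairs precede $(i,(1,1))$.
\end{itemize}
Then the atom on which $(i,(2))$ is the $j$-th exceptional pair lies inside $\{\operatorname{Next}_{W_i}((2))\ne 0\}$, so $\mathbb P(E_j\mid\Gamma)=0$.

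Your second requirement fails for the same reason. Lexicographic order on boundary addresses passes through the subtrees of the first $\operatorname{pre}(v)$ blocks before reaching $v$. The revealing order of the strategy section instead reveals $W(p)$ before descending.

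To repair the argument you need exploration data forming a genuine filtration along the enumeration order of boundary pairs. It must make exceptionality of all earlier pairs measurable while keeping the fiber factorization behind Theorem~\ref{thm:stop-prob}. One candidate:
\begin{itemize}
\item enumerate boundary pairs in the vertex pre-order of the strategy section;
\item take as data at $v$ the sequences $W(u)$ for $u\prec_{\mathrm{lex}}p$, plus the first $\operatorname{pre}(v)$ entries of $W(p)$;
\item redefine $\eta^{(i,v)}$ and the fibers accordingly, with the subtrees of the children of $p$ as extra free factors.
\end{itemize}
With a consistent filtration, $(i_j,v_j)$ becomes a stopping time and your averaging argument goes through.
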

\begin{lemma}\label{lem:Aj-Nn-factorization}
For any \(n\in\mathbb{N}^+\), we have
\begin{equation}\label{eq:Aj-Nn-factorization}
\overline{\mathbb{P}}\Bigl(
\bigcap_{j=1}^{\lfloor n^b\rfloor}\tilde{E}_j,\,\mathcal{N}_n\le n^b \Bigr)
\ge
\overline{\mathbb{P}}\Bigl(
\bigcap_{j=1}^{\lfloor n^b\rfloor}\tilde{E}_j \Bigr)\cdot 
\overline{\mathbb{P}}\Bigl(
\mathcal{N}_n\le n^b \Bigr).
\end{equation}
\end{lemma}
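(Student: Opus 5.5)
The plan is to prove \eqref{eq:Aj-Nn-factorization} by showing that, under \(\overline{\mathbb P}\), conditioning on \(\bigcap_{j\le \lfloor n^b\rfloor}\tilde E_j\) can only make \(\mathcal N_n\) stochastically smaller. Equivalently, we aim for
\[
\overline{\mathbb P}\Bigl(\mathcal N_n\le n^b\,\Big|\,\bigcap_{j=1}^{\lfloor n^b\rfloor}\tilde E_j\Bigr)\ge \overline{\mathbb P}\bigl(\mathcal N_n\le n^b\bigr).
\]
The mechanism is monotonicity under loop deletion. Forcing \(\operatorname{Next}_{W_{i_j}}(v_j)=0\) amounts to replacing the loop cluster \(\eta_{\mathrm{par}}^{v_j,W_{i_j}}\) by the trivial path \((0)\). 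This deletion can only lower local times, so it can only remove critical sites and exceptional pairs. We will carry this out one forced step at a time and then iterate over \(j\).

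\textbf{Step 1 (product structure of the conditional law).} Throughout we work conditionally on \(S'[0,u_n]=s'\) with \(\xi^*(s')\le\Lambda_n\). By Proposition~\ref{prop:condi}, the pruned segments \((\Xi_k)\) then have law proportional to \(\prod_k p(\xi_k)\) on \(\mathsf{Seg}^\otimes(s',E)\), and the admissible class of each \(\xi_k\) depends only on \(s'[0,k]\). Next we fix a stage of the lexicographic exploration: a boundary pair \((i,v)\) together with all data revealed so far (\(\mathcal W_0,\dots,\mathcal W_{i-1}\) and \(\mathfrak H^{v,\mathcal W_i}=h\)). By Theorem~\ref{thm:fiber-factorization-fixed-H} and \eqref{eq:conditional-fiber-proportionality}, the conditional law factorizes. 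The unrevealed parent cluster \(\eta_{\mathrm{par}}^{v,W_i}\), the younger-sibling clusters \(\eta_u^{W_i}\), and the later segments \(\xi_{i+1},\dots,\xi_{u_n}\) are then independent. Each has law \(\propto p\) on its own admissible class, and these classes are determined by \(h\) and \(s'\) alone, not by the value of \(\eta_{\mathrm{par}}^{v,W_i}\).

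\textbf{Step 2 (deletion coupling).} Let \((i,v)\) be the first exceptional pair. Using Step 1, we couple two samples. One is drawn from the law conditioned on \(\{\operatorname{Next}_{W_i}(v)=0\}\); the other from the unconditioned law. The coupling uses the same independent pieces, except that \(\eta_{\mathrm{par}}^{v,W_i}\) is set to \((0)\) in the first sample. By Proposition~\ref{prop:path-fiber-reconstruction}, the first path is then the second path with one inserted cluster removed.

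\textbf{Step 3 (monotonicity of \(\mathcal N_n\)).} We then check \(\mathcal N_n(\text{deleted})\le\mathcal N_n(\text{original})\). Boundary pairs lying inside the removed cluster simply disappear. Every other boundary pair \((i',v')\) survives, and for it \(\eta^{(i',v')}\) is obtained from its original counterpart by deleting a sub-path. Hence \(\ell_x(\eta^{(i',v')})\) can only decrease at every site \(x\). Moreover, the set of sites whose local time a loop insertion at that time would increase is unchanged, since it depends only on the current position. So criticality of \(x\) for \((i',v')\) is monotone, and every exceptional pair of the deleted configuration is also exceptional in the original one.

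\textbf{Step 4 (induction and expected obstacle).} The exceptional pairs \((i_j,v_j)\) are stopping-time-like along the exploration, and \(\tilde E_j\) is trivially satisfied once \(j>\mathcal N_n\). Applying Steps 2--3 successively at \(j=1,\dots,\lfloor n^b\rfloor\), conditional on the past, shows that the conditional law of \(\mathcal N_n\) given \(\bigcap_j\tilde E_j\) is stochastically dominated by its \(\overline{\mathbb P}\)-law. This gives \eqref{eq:Aj-Nn-factorization}.

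The main obstacle is Step 1, specifically verifying that deleting \(\eta_{\mathrm{par}}\) leaves the admissible classes of all later pieces unchanged. The sibling classes are handled by the fact that \(\pi_u^h\) depends only on \(\operatorname{Res}_vW\). The later segments are handled by the fact that \(\mathsf{Seg}(s'[0,k],E)\) depends only on \(s'\). A further point needs care: the identity of the \(j\)-th exceptional pair itself depends on earlier forcings. To handle this, I would phrase the induction on the exploration filtration rather than on fixed indices.
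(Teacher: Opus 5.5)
Your route is essentially the paper's. You condition on all data except the parent cluster $\eta_{\mathrm{par}}^{v_m,W_{i_m}}$ at the $m$-th exceptional pair, couple the forced value $(0)$ against the free one, and use that deleting a closed cluster can only shrink $\mathsf{Exc}_n$. Steps~1--3 are sound. In particular, your monotonicity check is correct: surviving boundary pairs keep their addresses, each $\eta^{(i',v')}$ only loses a closed sub-loop, and the sites an $E$-insertion would affect depend only on the current position.

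The gap is in Step~4. It is the same step the paper's proof compresses into ``Averaging over $\Gamma$'', and it does not follow as written. Fiberwise you get, for each atom $\Gamma$,
\[
\mathbb P^m\bigl(\mathcal N_n\le n^b\mid \Gamma\cap E_m\bigr)=\mathbf 1\{\mathcal N_n(\eta_\Gamma)\le n^b\}\ \ge\ \mathbb P^m\bigl(\mathcal N_n\le n^b\mid \Gamma\bigr).
\]
However, $\mathbb P^m(\cdot\mid E_m)$ and $\mathbb P^m$ weight the atoms differently: $\mathbb P^m(\Gamma\mid E_m)=\mathbb P^m(\Gamma)\,Z_\Gamma^{-1}/\mathbb P^m(E_m)$, where $Z_\Gamma=\sum_{\eta\in\mathsf{Seg}(s'[0,i_m]\circ_t\pi^{h_m}_{\mathrm{par}(v_m)},E)}p(\eta)$.

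This $Z_\Gamma$ is not constant, because the compatibility constraint in $\mathsf{Seg}(\cdot,E)$ depends on the last steps of the retained prefix. For example, let $y_1,y_2$ be orthogonal unit vectors and let $E$ contain the squares $(0,y_1,y_1+y_2,y_2,0)$ and $(0,y_2,y_2-y_1,-y_1,0)$. If the second square is appended after the step $(0,y_1)$, it closes the first square at time $4$, before closing itself. So it lies in $\mathsf{Seg}(E)$ but not in $\mathsf{Seg}((0,y_1),E)$.

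Hence conditioning on $E_m$ tilts the law of $\Gamma$ by the non-constant factor $Z_\Gamma^{-1}$. Your claimed stochastic domination would need something like a positive correlation between $Z_\Gamma^{-1}$ and $\mathbf 1\{\mathcal N_n(\eta_\Gamma)\le n^b\}$, and nothing in Steps~1--3 provides this. The independence in Step~1 only says the remaining pieces are unaffected \emph{given} the revealed data. It does not say the revealed data are unaffected by conditioning on $E_m$.

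If what you ultimately need is Proposition~\ref{prop:QGn_conditional_lower}, you can bypass the correlation issue with a many-to-one argument that uses only your Step~3.
\begin{enumerate}
\item Fix $s'$ with $\xi^*(s')\le\Lambda_n$. Let $\Phi$ scan boundary pairs in lexicographic order and replace $\eta_{\mathrm{par}}$ by $(0)$ at every pair that is exceptional in the current configuration. Each replacement is admissible by Theorem~\ref{thm:fiber-factorization-fixed-H}.
\item Exceptionality of $(i,v)$ depends only on data up to $(i,v)$. Therefore $\Phi(\omega)\in G_n$, the processed pairs are exactly $\mathsf{Exc}_n(\Phi(\omega))$, and $\mathcal N_n(\Phi(\omega))\le\mathcal N_n(\omega)$ by Step~3.
\item The configuration $\omega$ is recovered from $\Phi(\omega)$ and the list $(c_1,\dots,c_k)$ of deleted clusters, where $k=\mathcal N_n(\Phi(\omega))$. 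By Proposition~\ref{prop:condi} (same $s'$, and $p$ is multiplicative), $\overline{\mathbb P}(\omega)=\overline{\mathbb P}(\Phi(\omega))\prod_j p(c_j)$.
\item Lemma~\ref{lem:Seg-summable} then gives $\overline{\mathbb P}(\Phi^{-1}(\omega'))\le\gamma^{-\mathcal N_n(\omega')}\,\overline{\mathbb P}(\omega')$.
\item Summing over $\omega'\in G_n\cap\{\mathcal N_n\le n^b\}$ yields $\overline{\mathbb P}(\mathcal N_n\le n^b)\le\gamma^{-n^b}\,\overline{\mathbb P}(G_n,\ \mathcal N_n\le n^b)$.
\end{enumerate}
This is Proposition~\ref{prop:QGn_conditional_lower} directly, without needing Lemma~\ref{lem:Aj-Nn-factorization}.
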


\begin{proof}[Proof of Proposition~\ref{prop:QGn_conditional_lower} assuming Lemmas~\ref{lem:conditional-Aj-lower} and \ref{lem:Aj-Nn-factorization}]
Note that
\[
G_n\cap\{\mathcal{N}_n\le n^b\}
=
\Bigl(\bigcap_{j=1}^{\lfloor n^b\rfloor}\tilde{E}_j\Bigr)\cap\{\mathcal{N}_n\le n^b\}.
\]
Therefore,
$$
\overline{\mathbb{P}}\bigl(
G_n \,\big|\, \mathcal{N}_n\le n^b
\bigr)
\ge
\overline{\mathbb{P}}\Bigl(
\bigcap_{j=1}^{\lfloor n^b\rfloor}\tilde{E}_j\,\Big|\,
\mathcal{N}_n\le n^b
\Bigr)
\overset{\eqref{eq:Aj-Nn-factorization}}{\ge}\overline{\mathbb{P}}\Bigl(
\bigcap_{j=1}^{\lfloor n^b\rfloor}\tilde{E}_j \Bigr)\overset{\eqref{eq:product-tildeAj}}{\ge} \gamma^{n^b}.
$$
This completes the proof.
\end{proof}
\begin{proof}[Proof of Lemma~\ref{lem:conditional-Aj-lower}]
It suffices to prove that
\begin{equation}\label{eq:conditional-Ej-lower}
\mathbb{P}\Bigl(
E_j \,\Big|\, \xi^*(S'[0,u_n])\le \Lambda_n,\,
\bigcap_{l=1}^{j-1}E_l,\,
j\le \mathcal N_n
\Bigr)\ge \gamma.
\end{equation}

Note that the conditioning event
\[
\Bigl\{
\xi^*(S'[0,u_n])\le \Lambda_n,\,
\bigcap_{l=1}^{j-1}E_l,\,
j\le \mathcal N_n
\Bigr\}
\]
can be written as a disjoint union of all possible values of the following data up to the exceptional boundary pair \((i_j,v_j)\):
\begin{align*}
&S'[0,u_n]=s',\qquad (i_j,v_j),\qquad W_0,\ldots,W_{i_j-1},\qquad
\mathfrak H^{v_j,W_{i_j}}=h_j.
\end{align*}
Fix one such atom \(\Gamma\) with positive probability. By Theorem~\ref{thm:stop-prob}, 
\[
\mathbb{P}(E_j\mid \Gamma)\ge \gamma.
\]
Averaging over all atoms \(\Gamma\), we obtain \eqref{eq:conditional-Ej-lower}.
\end{proof}

\begin{proof}[Proof of Lemma~\ref{lem:Aj-Nn-factorization}]

It suffices to show that for each \(1\le m\le \lfloor n^b\rfloor\),
\begin{equation*}
\overline{\mathbb{P}}\Bigl(\mathcal{N}_n\le n^b \,\Big|\, \bigcap_{j=1}^{m}\tilde{E}_j\Bigr)
\ge
\overline{\mathbb{P}}\Bigl(\mathcal{N}_n\le n^b \,\Big|\, \bigcap_{j=1}^{m-1}\tilde{E}_j\Bigr).
\end{equation*}
To this end, it is enough to prove
\begin{equation}\label{eq:Nn-monotone-with-m}
\mathbb{P}^m(\mathcal{N}_n\le n^b \mid E_m)\ge
\mathbb{P}^m(\mathcal{N}_n\le n^b),\quad\mbox{where}\quad \mathbb{P}^m:=\overline{\mathbb{P}}\Bigl(\,\cdot\,\Big|\, \bigcap_{j=1}^{m-1}E_j,\,
m\le \mathcal{N}_n\Bigr).
\end{equation}
Recall the decomposition of \( V(W)\) relative to \(v\) from Definition~\ref{def:V-decomposition-relative-v}. We write the event
\[
\Bigl\{\bigcap_{j=1}^{m-1}E_j,\,
m\le \mathcal{N}_n,\,
\xi^*(S'[0,u_n])\le \Lambda_n\Bigr\}
\]
as a disjoint union over all possible realizations of the following data:
\begin{align*}
&S'[0,u_n]=s',\qquad (i_m,v_m),\qquad
W_0,\ldots,W_{i_m-1},W_{i_m+1},\ldots,W_{u_n},\\
&\mathfrak H^{v_m,W_{i_m}}=h_m,\qquad
\bigl(\eta_u^{W_{i_m}}:1\le q\le |v_m|-1,\ u\in\mathcal Y_q^{h_m}\bigr).
\end{align*}
In other words, we condition on all the information except the still-unrevealed parent-level piece \(\eta_{\mathrm{par}}^{v_m,W_{i_m}}\).

Fix one such atom \(\Gamma\) with positive probability. 
By the reconstruction in Proposition~\ref{prop:path-fiber-reconstruction}, on \(\Gamma\) there are a deterministic path \(\eta_\Gamma\) and a deterministic time \(t_\Gamma\), corresponding to the address \((i_m,v_m)\), such that
\[
S[0,N^{-1}(u_n+1)-1]
=
\eta_\Gamma\triangleleft_{t_\Gamma}
\eta_{\mathrm{par}}^{v_m,W_{i_m}},
\]
where \(\eta_{\mathrm{par}}^{v_m,W_{i_m}}\) is the only remaining random path piece.

On \(\Gamma\), if \(E_m\) occurs, then the unexplored parent-level tail after the prefix encoded by \(h_m\) is trivial, i.e.,
\(\eta_{\mathrm{par}}^{v_m,W_{i_m}}=(0)\).
Thus, conditionally on \(\Gamma\cap E_m\), the path is deterministic:
\[
S^{(1)}[0,N^{-1}(u_n+1)-1]
:=
\eta_\Gamma\triangleleft_{t_\Gamma}(0)=\eta_\Gamma.
\]
We couple it with the path under
\(\mathbb P^m(\,\cdot\,\mid \Gamma)\) by sampling
\[
S^{(0)}[0,N^{-1}(u_n+1)-1]
=
\eta_\Gamma\triangleleft_{t_\Gamma}
\eta_{\mathrm{par}}^{v_m,W_{i_m}}
\]
under \(\mathbb P^m(\,\cdot\,\mid \Gamma)\). It is easy to see that, in this coupling,
\(\mathcal N_n(S^{(1)})\le \mathcal N_n(S^{(0)})\). Averaging over \(\Gamma\)
gives \eqref{eq:Nn-monotone-with-m}.
\end{proof}
\section{Proof of Theorem~\ref{thm:cut}}\label{sec:cut}
In this section, we prove Theorem~\ref{thm:cut}. Throughout, we work with the two-sided random walk \(S(-\infty,\infty)\).
The key idea is to single out a convenient subclass of \(E\)-cut steps, which we call \textit{rod \(E\)-cut steps}.
We then relate the mechanism that generates rod \(E\)-cut steps to a stack interpretation.
This correspondence allows us to show that rod \(E\)-cut steps occur with positive density, which yields Theorem~\ref{thm:cut}.

We now introduce rod \(E\)-cut steps.

\subsection{Rod paths and rod \(E\)-cut steps}

Fix a unit vector \(x_0\in\mathbb{Z}^d\).

\begin{definition}[Rod paths]\label{def:rod_paths}
A finite path \(\eta=(\eta_0,\ldots,\eta_{2L})\) is called a \textit{rod path} with parameter \(L\) if
\[
\eta_L=0,
\qquad
\eta_j-\eta_{j-1}=x_0\quad\text{for all }1\le j\le 2L.
\]
\end{definition}

Thus, a rod path is a straight trajectory of length \(2L\) in the direction \(x_0\) that passes through the origin at its midpoint.

\begin{definition}[Rod points and rod \(E\)-cut points]\label{def:rod_points}
Fix \(L\in\mathbb{N}^+\). A time \(j\in\mathbb{Z}\) is called a \textit{rod point with parameter \(L\)} if the translated path
\[
S[j-L,\,j+L]-S_j
\]
is a rod path with parameter \(L\).
If, in addition, the step \((S_{j-1},S_j)\) is an \(E\)-cut step in the sense of Definition~\ref{def:E_cut_steps}, then \(j\) is called a \textit{rod \(E\)-cut point with parameter \(L\)}.
In this case, we also refer to \((S_{j-1},S_j)\) as a \textit{rod \(E\)-cut step with parameter \(L\)}.
\end{definition}

For an integer interval \(I\subset\mathbb{Z}\), we write \(\mathrm{Cut}^{\mathrm{rod}}_L(I)\) for the set of rod \(E\)-cut points with parameter \(L\) contained in \(I\), namely
\[
\mathrm{Cut}^{\mathrm{rod}}_L(I)
:=\big\{\, j\in I:\ j \text{ is a rod \(E\)-cut point with parameter }L\,\big\}.
\]

The following theorem is the main result of this section.
Define
\begin{align}\label{def:LE}
L_E 
:= 1 + \max_{e \in E} \operatorname{diam}(e),
\qquad
\operatorname{diam}(e)
:= \sup_{0 \le i,j \le |e|} |e_i - e_j|.
\end{align}
Thus, \(L_E\) is one plus the maximal spatial diameter of loops in \(E\).
\begin{theorem}\label{thm:rod_cut}
For any \(L=(2K+1)L_E\) with \(K\) large enough satisfying \eqref{eq:K} below, there exist constants \(\theta=\theta(E,L)\in(0,1)\) and \(c=c(E,L)>0\) such that, for every integer interval \(I\subset\mathbb{Z}\) whose length \(|I|\) is sufficiently large and an integer multiple of \(2L\),

\[
\mathbb{P}\!\left(\#\,\mathrm{Cut}^{\mathrm{rod}}_L(I)>\theta |I|\right)\ge 1-\exp(-c|I|).
\]
\end{theorem}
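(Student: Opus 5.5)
Our plan is to partition $I$ into consecutive blocks of length $2L$, and within each block locate a candidate rod point at the block's center. A rod point $j$ with parameter $L$ means $S[j-L,j+L]$ is a straight segment in direction $x_0$. This event has probability $(2d)^{-2L}>0$, and the events are independent for disjoint blocks. Since $L=(2K+1)L_E$ exceeds the diameter of every loop in $E$, no loop of $E$ can be contained inside the rod. Moreover, any occurrence of an $E$-loop that uses a step of the rod must enter and exit the rod region within a window of spatial size $L_E$.

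The key structural step is to show that a rod point $j$ fails to be an $E$-cut point only if pruning from outside eats into the rod. Heuristically, pruning acts like a stack: the path to the left of the rod is pushed, and loops are popped when completed. Because the rod is straight, a loop of $E$ can involve rod steps only if the path after the rod returns within distance $L_E$ of the rod's edge. At that point, the retained portion must already have been pruned back through at least $K L_E$ rod steps, or else the loop closes in a way that violates simplicity. We therefore aim to prove a deterministic lemma. If $j$ is a rod point and, additionally, the retained (pruned) path after time $j+L$ never returns to the slab $\{y:|\langle y-S_j,x_0\rangle|\le K L_E\}$ near the rod, and likewise before time $j-L$ (a "good rod" condition), then $(S_{j-1},S_j)$ is an $E$-cut step. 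The stack picture makes this precise: pruning can only remove a contiguous top of the stack, and removing the central step would require $K$ successive nested loop completions each advancing back by at most $L_E$. This is impossible if the two halves of the path stay separated. Choosing $K$ large, as required by \eqref{eq:K}, ensures the two pieces of the rod serve as buffers.

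For the probabilistic part, we declare block $b$ good if its center is a rod point and the walk outside the rod never re-enters a fixed neighborhood of it. Transience in $d\ge3$ gives this a positive probability $p_0$, but it is not independent across blocks. To get exponential concentration, we truncate. We require the walk not to re-enter the neighborhood of the rod within the $m$ neighboring blocks, which is a local event, and handle long-range returns separately. By the return estimate $\mathbb P(0\in S[n,\infty))\asymp n^{-(d/2-1)}$, choosing $m$ large makes the conditional failure probability from distant returns small. The local events form an $m$-dependent sequence, so a Chernoff bound applied to each of the $2m+1$ residue classes yields at least $\theta' |I|/(2L)$ locally good blocks with probability $1-e^{-c|I|}$. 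We then bound the number of locally good blocks spoiled by far returns. Each far return to a rod neighborhood spoils at most a bounded number of blocks, and the total number of such returns can be bounded with an exponential tail, for instance by a block-intersection counting argument with stochastic domination.

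The main obstacle is the deterministic stack lemma: showing precisely that nested or overlapping loops cannot propagate pruning through a straight rod of length $(2K+1)L_E$ unless the path revisits the rod neighborhood. We would first try an induction on pruning steps. The invariant is that, throughout the iterative $\mathsf{Prune}_1$ operations on any window $S[m_1,m_2]$, the retained path contains the middle $K L_E$ rod steps intact, with the portion of the rod closest to any loop shrinking by at most $L_E$ per loop involving it. Such involvement can only come from excursions returning near the rod, which the good-rod condition excludes.
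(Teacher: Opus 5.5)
Your deterministic observation is correct, and it is easier than you make it. If no visit of $S$ at times outside $[j-L,j+L]$ comes within distance $L_E$ of $S_j$, then any $E$-loop pruned through $(S_{j-1},S_j)$ would consist of rod steps only, all in direction $x_0$, and such a loop cannot close. Note that the slab you wrote is unbounded and is revisited almost surely, so it has to be a bounded tube.

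\textbf{The gap is in the probabilistic step.} The claim that the number of locally good blocks spoiled by far returns has an exponential tail is false, and so is the resulting bound for good rods. At cost only $\exp(-C|I|/R^2)$, the walk can stay during $I$ in a ball of radius $R=(|I|/A)^{1/d}$. The rod-point count still concentrates, since Lemma~\ref{lem:many-rod-points} has exponentially small error. But each site of the ball is now visited of order $A$ times, at times spread over $I$, because the confined walk mixes in time $R^2\ll |I|$. So for $A$ large, all but a small fraction of the rod neighborhoods are revisited far from the rod. This gives $\mathbb P(\#\{\text{good rods}\}\le\theta|I|)\ge\exp(-C_\theta|I|^{1-2/d})$ for every fixed $\theta>0$, a stretched exponential. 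Hence no block-intersection or stochastic-domination argument can yield $1-e^{-c|I|}$ from the criterion ``no return at all''. The event ``the neighborhood is revisited at some time'' can be made nearly certain by a global confinement of sub-exponential cost.

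\textbf{What is missing.} You need to use the mechanism from your second paragraph as a \emph{cost}, rather than excluding all returns. That mechanism is that pruning the central step requires $K$ successive returns, each to the next sub-segment of length $L_E$ closer to the center. The paper makes this a rigorous necessary condition in three steps:
\begin{itemize}
\item it uses laminarity of the pruning intervals (Lemma~\ref{lem:pruning-intervals-laminar-and-order});
\item it uses the nesting index $k_u$ (Proposition~\ref{prop:rod_laminar_structure});
\item it shows that pruning times land in the rod sub-segments $R^{\pm}_{uv}$ (Proposition~\ref{prop:visit_rod_segment}).
\end{itemize}
From these, Theorem~\ref{thm:many-pruned-implies-global-pattern} shows that if more than half of the rod points are pruned, some return pattern $\mathsf{Ret}^{\pm}_J(\mathbf a)$ with $\|\mathbf a\|_1\ge Km/64$ occurs. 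In such a pattern the returns to different rods are interleaved in stack order. This is what allows the costs of different rods to be multiplied despite their long-range dependence, which your $m$-dependent truncation cannot capture.

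Along the pattern, each consecutive pair of hits to adjacent sub-segments of the same rod costs a factor $e^{-c_0}<1$, by the strong Markov property and transience, whatever the rest of the path does. So confinement does not help here. This gives $e^{-cKr}$ in Proposition~\ref{prop:GRet_bound}, which beats the entropy $2^r(e(K+1))^r$ of the choices of $(J,\mathbf a)$ once $K$ satisfies \eqref{eq:K}.
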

The integer-multiple assumption on \(|I|\) is imposed only for convenience in the proof. Since every rod \(E\)-cut step is an \(E\)-cut step, Theorem~\ref{thm:rod_cut} immediately implies Theorem~\ref{thm:cut}.

The proof proceeds by identifying a necessary condition for a rod point to be pruned by the pruning procedure. 
This condition is formulated in terms of an explicitly checkable return pattern, whose probability can be controlled uniformly. 
As a consequence, most rod points are not pruned, which yields the desired positive-density lower bound for \(\mathrm{Cut}^{\mathrm{rod}}_L(I)\).

Henceforth, we fix \(L=(2K+1)L_E\) with \(K\in\mathbb{N}^+\) chosen sufficiently large, and write \(I=[I^-,I^+]\), where \(|I|\) is an integer multiple of \(2L\).
\subsection{Preliminary estimates on rod points}
We begin with a preliminary estimate on the abundance of rod points. Set \(M:=|I|/(2L)\), and divide \(I\) into intervals of length \(2L\):
\[
I_r:=[I^-+2(r-1)L,\,I^-+2rL]\cap\mathbb{Z},
\qquad r=1,\ldots,M.
\]
Let
\[
\bar\rho_r:=I^-+(2r-1)L
\]
be the midpoint of \(I_r\). We then define \(\rho_1,\ldots,\rho_m\) to be the midpoints of those intervals \(I_r\) on which the path is a rod path, in chronological order:
\begin{align}\label{def:rho}
\begin{aligned}
\{r_1<\cdots<r_m\}
&:=\big\{\,1\le r\le M:\ S[I_r]-S_{\bar\rho_r}
\text{ is a rod path with parameter }L\,\big\},\\
\rho_i=\rho_i(I)&:=\bar\rho_{r_i},\qquad i=1,\ldots,m.
\end{aligned}
\end{align}
We use the convention \(\rho_i:=\infty\) for \(i>m\).

The i.i.d. Bernoulli structure of the rod-block indicators gives the following high-probability lower bound.

\begin{lemma}\label{lem:many-rod-points}
There exist constants \(\theta'=\theta'(L)\in(0,1)\) and \(c=c(L,\theta')>0\) such that for any
integer interval \(I\subset\mathbb{Z}\) whose length \(|I|\) is sufficiently large and an integer multiple of \(2L\),
\[
\mathbb{P}\!\left(m>\theta'|I|\right)\ge 1-\exp(-c|I|).
\]
\end{lemma}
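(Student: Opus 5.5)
The indicators $B_r:=\mathbf 1\{S[I_r]-S_{\bar\rho_r}\text{ is a rod path with parameter }L\}$, $r=1,\ldots,M$, form an i.i.d.\ Bernoulli family, and the lemma follows from a standard Chernoff bound for their sum. The blocks $I_r=[I^-+2(r-1)L,\,I^-+2rL]$ overlap only at their endpoints. The event $\{B_r=1\}$ depends only on the $2L$ increments $S_j-S_{j-1}$ for $j\in(I^-+2(r-1)L,\,I^-+2rL]$. These index sets are pairwise disjoint for different $r$. For the two-sided walk $S(-\infty,\infty)$ all increments are i.i.d.\ uniform on the $2d$ unit vectors, including those with negative index, since $S_{-n}=-\widetilde S_n$ and the increments of $-\widetilde S$ are again uniform. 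Hence $B_1,\ldots,B_M$ are i.i.d.\ Bernoulli with parameter
\[
p_L:=\mathbb P(B_r=1)=(2d)^{-2L}\in(0,1),
\]
because the rod condition forces each of the $2L$ increments to equal $x_0$. The condition $\eta_L=0$ is automatic after translating by $S_{\bar\rho_r}$.

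Since $m=\sum_{r=1}^M B_r$ with $M=|I|/(2L)$, set $\theta':=p_L/(4L)$. Then $\theta'|I|=\tfrac12 p_L M$. The lower-tail Cramér/Chernoff bound (e.g.\ \cite[Section~2.2]{dembo1998large}, as in Lemma~\ref{lem:bernoulli-LDP-Seg}) gives
\[
\mathbb P\bigl(m\le \tfrac12 p_L M\bigr)\le \exp\{-I_{p_L}(p_L/2)\,M\}=\exp\Bigl\{-\tfrac{I_{p_L}(p_L/2)}{2L}\,|I|\Bigr\}.
\]
Here $I_{p_L}(p_L/2)>0$, with $I_p$ the Bernoulli rate function defined in Lemma~\ref{lem:bernoulli-LDP-Seg}. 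Taking $c:=I_{p_L}(p_L/2)/(2L)$ yields the claim for all $|I|$ that are multiples of $2L$, and in particular for all sufficiently large ones.

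There is no real obstacle. The only point requiring care is the independence across blocks, and that the two-sided extension does not spoil the i.i.d.\ structure of the increments when $I$ straddles $0$; both were handled above.
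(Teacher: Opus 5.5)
Your proof is correct and follows the same approach the paper indicates. The paper justifies the lemma only by the one-line remark that the rod-block indicators are i.i.d.\ Bernoulli. Your argument fills this in properly: the blocks use disjoint sets of increments, the two-sided extension keeps the increments i.i.d., $p_L=(2d)^{-2L}$, and the lower-tail Chernoff bound yields the explicit constants $\theta'=p_L/(4L)$ and $c=I_{p_L}(p_L/2)/(2L)$.
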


In view of Lemma~\ref{lem:many-rod-points}, it suffices to work on the event
\(\{m>\theta'|I|\}\). Note also that, deterministically,
\(
m \le \tfrac{|I|}{2L},
\)
since there is at most one candidate midpoint in each interval \(I_r\).
It therefore remains to show that, on \(\{m>\theta'|I|\}\), pruning more than half
of the rod points is exponentially unlikely.

\subsection{A necessary condition for many pruned rod points}
For the remainder of this subsection, we restrict attention to the event
\(\{m>\theta'|I|\}\).

In this part, we derive a necessary condition for the \(E\)-pruning procedure
to remove more than half of the rod points. We work with the ordered collection
\(\rho_1,\ldots,\rho_m\) of rod points introduced in \eqref{def:rho}.
Roughly speaking, if more than half of these rod points are pruned, then the walk
must exhibit a comparably large number of returns to successive subsegments of the
associated rod paths, in an order compatible with the pruning dynamics.
We encode this requirement via return patterns, which are explicitly checkable and admit
uniform probability bounds.
We begin by introducing the necessary notation.
\begin{definition}[Rod sub-segments]\label{def:rod_subsegments}
For \(i=1,\ldots,m\) and \(j=1,\ldots,K\), define the time intervals
\begin{align*}
R^-_{ij} &:= \bigl\{ u \in \mathbb{Z} : (2j-1)L_E \le \rho_i-u \le 2jL_E \bigr\}, \\
R^+_{ij} &:= \bigl\{ u \in \mathbb{Z} : (2j-1)L_E \le u-\rho_i \le 2jL_E \bigr\}.
\end{align*}
The corresponding path segments are
\[
S(R^-_{ij}) := (S_u:\ u\in R^-_{ij}),
\qquad
S(R^+_{ij}) := (S_u:\ u\in R^+_{ij}).
\]
We refer to these path segments as the \textit{rod sub-segments}. See Figure~\ref{fig:rod_subsegments} for an illustration.
\end{definition}

\begin{figure}[ht]
\centering
\begin{tikzpicture}[thick,scale=0.8]

\draw[black] (0,0) -- (14,0) node[right,xshift=0.3cm] {\ rod path};

\filldraw (7,0) circle (2pt) node[below,yshift=-0.3cm] {$\rho_i$};

\draw[blue, line width=3pt] (1,0) -- (2,0);
\draw[blue, line width=3pt] (3,0) -- (4,0);
\draw[blue, line width=3pt] (5,0) -- (6,0);

\draw[red, line width=3pt] (8,0) -- (9,0);
\draw[red, line width=3pt] (10,0) -- (11,0);
\draw[red, line width=3pt] (12,0) -- (13,0);

\draw[dashed] (0,0.3) -- (0,-0.3) node[below] {$\rho_i-L$};
\draw[dashed] (14,0.3) -- (14,-0.3) node[below] {$\rho_i+L$};

\node[above, blue] at (1.5, 0.02) {$R^-_{i3}$};
\node[above, blue] at (3.5, 0.02) {$R^-_{i2}$};
\node[above, blue] at (5.5, 0.02) {$R^-_{i1}$};
\node[above, red] at (8.5, 0.02) {$R^+_{i1}$};
\node[above, red] at (10.5, 0.02) {$R^+_{i2}$};
\node[above, red] at (12.5, 0.02) {$R^+_{i3}$};

\end{tikzpicture}
\caption{Illustration of the rod path $S[\rho_i-L,\rho_i+L]$ with rod time intervals $R^-_{ij}$ (blue) and $R^+_{ij}$ (red).}
\label{fig:rod_subsegments}
\end{figure}
The necessary condition will be stated in terms of the following return patterns.

\begin{definition}[Return patterns]\label{def:return_pattern_subset}
Fix a nonempty subset \(J\subset [|I|]=\{1,\ldots,|I|\}\) (in later applications, mainly \(J\subset [m]\)), and write its elements in increasing order as
\[
J=\{j_1<j_2<\cdots<j_{\# J}\}.
\]

\smallskip
\noindent\textbf{Return patterns.}
A forward return pattern on \(J\) is a vector \(\mathbf a=(a_1,\ldots,a_{\# J})\in\mathbb{N}^{\# J}\) such that
\begin{equation}\label{eq:pattern_J_forward}
\sum_{k=1}^{\ell} a_k\le \ell K \qquad \text{for all } \ell=1,\ldots,\# J.
\end{equation}
We denote the set of all such forward patterns by \(\mathcal P^{+}(J)\).

A backward return pattern on \(J\) is a vector \(\mathbf a=(a_1,\ldots,a_{\# J})\in\mathbb{N}^{\# J}\) such that the reversed sequence
\[
\check{\mathbf a}:=(a_{\# J},a_{\# J-1},\ldots,a_1)
\]
satisfies \eqref{eq:pattern_J_forward}. We denote the set of all such backward patterns by \(\mathcal P^{-}(J)\).

\smallskip
\noindent\textbf{Priority orders.}
We order index pairs \((p,q)\in [m]\times[K]\) by
\[
(p,q)\succ_+(p',q')
\quad\Longleftrightarrow\quad
\bigl(p>p'\bigr)\ \text{or}\ \bigl(p=p'\ \text{and}\ q>q'\bigr),
\]
and
\[
(p,q)\succ_-(p',q')
\quad\Longleftrightarrow\quad
\bigl(p<p'\bigr)\ \text{or}\ \bigl(p=p'\ \text{and}\ q>q'\bigr).
\]

\smallskip
\noindent\textbf{Forward realization.}
The following definition is somewhat involved; the stack interpretation in Remark~\ref{rem:urgent-file-stack} gives an intuitive explanation of the forward realization rule below.
Let \(J\subset [m]\) and \(\mathbf a\in\mathcal P^{+}(J)\).
We define lists \(\Lambda^{+}_\ell(\mathbf a)\) recursively via used-index sets \(\mathcal V^{+}_\ell(\mathbf a)\).
Set \(\mathcal V^{+}_0(\mathbf a):=\varnothing\). For each \(\ell=1,\ldots,\# J\), given \(\mathcal V^{+}_{\ell-1}(\mathbf a)\), define
\[
\mathcal I^{+}_\ell:=\{(p,q):\ p\in J,\ p\le j_\ell,\ 1\le q\le K\},
\qquad
\mathcal U^{+}_\ell(\mathbf a):=\mathcal I^{+}_\ell\setminus \mathcal V^{+}_{\ell-1}(\mathbf a),
\]
and let \(\Lambda^{+}_\ell(\mathbf a)\) be the ordered list consisting of the first \(a_\ell\) elements of \(\mathcal U^{+}_\ell(\mathbf a)\)
when \(\mathcal U^{+}_\ell(\mathbf a)\) is arranged in decreasing order \(\succ_+\).
Finally, set
\[
\mathcal V^{+}_\ell(\mathbf a):=\mathcal V^{+}_{\ell-1}(\mathbf a)\cup \{\text{elements appearing in }\Lambda^{+}_\ell(\mathbf a)\}.
\]

Recall the right rod sub-segments \(R^{+}_{pq}\).
We say that \(S\) realizes \(\mathbf a\) on \(J\) in the forward sense, and denote this event by \(\mathsf{Ret}^{+}_J(\mathbf a)\), if for every \(\ell=1,\ldots,\# J\),
writing
\[
\Lambda^{+}_\ell(\mathbf a)=\bigl((p^{(\ell)}_1,q^{(\ell)}_1),\ldots,(p^{(\ell)}_{a_\ell},q^{(\ell)}_{a_\ell})\bigr),
\]
there exist times \(u^{(\ell)}_1\le \cdots\le u^{(\ell)}_{a_\ell}\) with
\[
u^{(\ell)}_k\in(\rho_{j_\ell}+L,\rho_{j_{\ell+1}}+L],
\quad\text{for }k=1,\ldots,a_\ell,
\qquad \rho_{j_{\# J+1}}:=+\infty,
\]

such that
\[
S_{u^{(\ell)}_k}\in S\!\left(R^{+}_{p^{(\ell)}_k\,q^{(\ell)}_k}\right),
\qquad k=1,\ldots,a_\ell.
\]

\smallskip
\noindent\textbf{Backward realization.}
Let \(\mathbf a\in \mathcal{P}^-(J)\), and define the time-reversed path
\[
\check S_n := S_{-n},\qquad n\in\mathbb{Z}.
\]
Then the points
\[
\check\rho_i:=-\rho_{m+1-i},\qquad i=1,\ldots,m,
\]
form the corresponding sequence of rod points for \(\check S\) (along the \(-x_0\) direction) on the time interval $-I$.
We say that \(S\) realizes \(\mathbf a\) on \(J\) in the backward sense, and we denote this event by \(\mathsf{Ret}^{-}_J(\mathbf a)\), if the time-reversed path \(\check S\) realizes \(\check{\mathbf a}\) on the index set \(\check J:=\{m+1-j:\ j\in J\}\) in the forward sense.

Under this time reversal, the right rod sub-segments for \(\check S\) correspond to the left rod sub-segments for \(S\), namely
\[
R^{+}_{pq}(\check S)=R^{-}_{(m+1-p)\,q}(S),
\]
and the forward priority order \(\succ_+\) for \(\check S\) matches the backward priority order \(\succ_-\) for \(S\).
\end{definition}

\begin{remark}[Stack interpretation]\label{rem:urgent-file-stack}
The following stack interpretation may help explain the ordering convention.
In particular, the dictionary below corresponds to the forward notion in Definition~\ref{def:return_pattern_subset}.

\smallskip
\noindent\textbf{The model.}
Fix integers \(m,K\ge 1\) and a nonempty subset \(J\subset[m]\). A clerk processes urgent files stored in a single stack. The stack is empty before day \(j_1\).

\smallskip
\noindent\textbf{Daily arrivals.}
On each day \(i\in J\), a batch of \(K\) new files arrives, labeled \((i,1),\ldots,(i,K)\).
They are placed on top of the current stack in the order \((i,1)\) up to \((i,K)\), so that \((i,1)\) is the lowest and \((i,K)\) is the topmost among the new files.

\smallskip
\noindent\textbf{Processing rule.}
The clerk always works from the top of the stack, and each processed file is immediately removed.
During a day, the clerk may process any number of files; in particular, the clerk can return to unfinished files from earlier days only after clearing the more recent batches stacked above them.

\smallskip
\noindent\textbf{Link with return patterns.}
We view the interval \((\rho_i+L,\rho_{i+1}+L]\) for \(i<m\), and \((\rho_m+L,\infty)\) for \(i=m\), as day \(i\); each rod sub-segment \(R^+_{pq}\) is then viewed as the file \((p,q)\) from day \(p\) (with arrivals only for \(p\in J\)).
The order \(\succ_+\) encodes the stack priority: files with larger \(p\) sit above those with smaller \(p\), and within the same day, larger \(q\) sits above smaller \(q\).
Processing the file \((p,q)\) corresponds to the walk returning to the rod sub-segment \(R^+_{pq}\) in the prescribed manner.
With this identification, a pattern \(\mathbf a=(a_1,\ldots,a_{\# J})\in\mathcal P^+(J)\) specifies how many files are processed from day \(j_\ell\) up to day \(j_{\ell+1}-1\),
and the associated list \(\Lambda^{+}_\ell(\mathbf a)\) records which files are processed (and in which order).
Accordingly, the event \(\mathsf{Ret}^{+}_J(\mathbf a)\) corresponds to a processing history in which the clerk processes exactly the files prescribed by the lists \(\Lambda^{+}_\ell(\mathbf a)\), for all \(\ell=1,\ldots,\# J\).
\end{remark}

We are now ready to state the main theorem of this subsection. Its proof is deferred to Section~\ref{sec:pf_necessary}.
For a vector \(\mathbf a=(a_1,\ldots,a_{\# J})\in\mathbb{N}^{\# J}\), we write
\[
\|\mathbf a\|_1:=\sum_{j=1}^{\# J} a_j .
\]
\begin{theorem}[Return-pattern reduction]\label{thm:many-pruned-implies-global-pattern}
If \(m>16\vee\theta'|I|\) and
\[
\#\big\{1\le i\le m:\ \rho_i\notin \mathrm{Cut}^{\mathrm{rod}}_L(I)\big\}>m/2
\]
then at least one of the following holds:
\begin{enumerate}[label=(\alph*)]
\item
there exist \(J\subset[m]\) and \(\mathbf a\in \mathcal{P}^+(J)\) with \(\|\mathbf a\|_1\ge \tfrac1{64} Km\) such that \(\mathsf{Ret}^{+}_J(\mathbf a)\) occurs;
\item
there exist \(J\subset[m]\) and \(\mathbf a\in \mathcal{P}^-(J)\) with \(\|\mathbf a\|_1\ge \tfrac1{64} Km\) such that \(\mathsf{Ret}^{-}_J(\mathbf a)\) occurs.
\end{enumerate}
\end{theorem}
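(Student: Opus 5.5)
The plan is to turn each pruned rod point into a ``return'' of the walk to rod sub-segments of earlier (or later) rod points, and then organize these returns into a forward or backward pattern via the stack bookkeeping of Remark~\ref{rem:urgent-file-stack}.

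\emph{Step 1: one pruned rod point forces many returns.} Fix a rod point \(\rho_i\) whose step \((S_{\rho_i-1},S_{\rho_i})\) is not an \(E\)-cut step. By Definition~\ref{def:E_cut_steps} there is a window \([m_1,m_2]\ni\rho_i\) in whose pruning this step is erased, so it lies inside some erased loop cluster (a segment in \(\mathsf{Seg}(E)\) produced by the one-step operations). Each loop in \(E\) has diameter \(<L_E\), and consecutive rod sub-segments \(R^\pm_{ij}\) are separated by gaps of length \(L_E\) along a straight line. Hence erasing the step at \(\rho_i\) requires one of the following. Either all \(K\) right sub-segments \(R^+_{i1},\dots,R^+_{iK}\) are themselves erased, which forces the walk after time \(\rho_i+L\) to come back within distance \(L_E\) of each of them, and to do so in order of decreasing \(q\), since a later loop must close against the outermost surviving sub-segment first. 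Or symmetrically all left sub-segments are erased, which forces returns before \(\rho_i-L\) in reversed time. The nesting of loop clusters gives the last-in-first-out constraint. I will encode ``the walk returns within \(L_E\) of \(S(R^+_{pq})\)'' as a visit \(S_u\in S(R^+_{pq})\) up to an enlargement of the rod sub-segment, adjusting constants if needed. When a return is charged to day \(\ell\), it is a return occurring in \((\rho_{j_\ell}+L,\rho_{j_{\ell+1}}+L]\).

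\emph{Step 2: choose a direction and a subset.} Classify each pruned rod point as forward (right side erased by later returns) or backward (left side erased by earlier returns, in reversed time). More than \(m/2\) rod points are pruned, so one class, say forward, contains at least \(m/4\) of them. Let \(J\) be the set of indices in that class. Then run the clerk model: on day \(\ell\) the files \((j_\ell,1),\dots,(j_\ell,K)\) arrive, and \(a_\ell\) is the number of files whose erasing return occurs on that day. I need to check that the erasure order agrees with the top-of-stack rule \(\succ_+\). This is the step I expect to be the main obstacle, because a loop cluster started after \(\rho_{j_\ell}\) cannot reach below an unerased rod sub-segment of a more recent rod point without first erasing it. The argument will proceed by induction over the pruning steps, using Proposition~\ref{prop:stage} to prune in stages day by day. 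This gives the lists \(\Lambda^+_\ell(\mathbf a)\) and the monotone times \(u^{(\ell)}_k\).

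\emph{Step 3: counting.} The constraint \(\sum_{k\le\ell}a_k\le \ell K\) holds automatically, since no more files can be processed than have arrived, so \(\mathbf a\in\mathcal P^+(J)\). Each forward-pruned rod point contributes all of its \(K\) files, but a file may be charged to a day beyond \(j_{\#J}\) or be shared. To guard against this I will discard the at most half of \(J\) for which processing is incomplete, using \(m>16\) to absorb rounding. This yields \(\|\mathbf a\|_1\ge \tfrac14\cdot\tfrac14\cdot\tfrac14 Km\ge \tfrac1{64}Km\), which is case (a). The backward class gives case (b) by applying the same argument to \(\check S\), using the correspondence \(R^+_{pq}(\check S)=R^-_{(m+1-p)q}(S)\).
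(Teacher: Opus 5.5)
Your outline has the right overall shape: forward versus backward returns, with the last-in-first-out order encoded by $\succ_+$. But the step you flag as the main obstacle is where the whole proof lies, and the route you propose for it does not work. Each non-cut rod point comes with its \emph{own} witness window, and pruning depends on the window. Lemma~\ref{lem:pruning-intervals-laminar-and-order} makes pruning intervals laminar only when they are all computed in one and the same window. Proposition~\ref{prop:stage} only compares prunings with a common left endpoint. So there is no single pruning process to run your induction over, and the stack order can genuinely fail. Take $\rho_a<\rho_b$, with rod $b$ lying inside the erased interval of the step $t_{a,1}$ in $[\rho_a,s_a]$, and suppose it is pruned there by loops attached to earlier visits of the walk in $S[\rho_a,\rho_b-L)$. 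In rod $b$'s own window $[\rho_b,s_b]$ those visits are absent, so its right sub-segments may be erased only by returns occurring after $\zeta^+(t_{a,1})$. Then file $(a,1)$ is processed while the files $(b,q)$ still sit above it, and $\mathsf{Ret}^+_J(\mathbf a)$ need not occur for the $\mathbf a$ you build. A related point: the window supplied by Definition~\ref{def:E_cut_steps} need not contain $[\rho_i-L,\rho_i+L]$. So even your Step~1 dichotomy first needs the reduction, via Proposition~\ref{prop:stage}, to the one-sided witnesses (L)/(R) with windows $[\rho_i,s]$ or $[s,\rho_i]$.

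The missing idea is a way to force a common window. The paper greedily extracts from $J^+$ indices $j_{\ell_1},j_{\ell_2},\dots$ whose windows $\mathsf W_i^+=[\rho_{j_{\ell_i}},\zeta^+_{j_{\ell_i},1}]$ are disjoint and time-ordered. Either at least $m/8$ of these top-level windows close within a single day, which gives the pattern $(K,\dots,K)$ directly from Corollary~\ref{cor:one-sided-zeta-chain}. Or at least $m/32$ rod points lie strictly inside some $\mathsf W_i^+$. For those rod points the pruning is \emph{recomputed} in the common window $\mathsf W^+_{\iota(j)}$ rather than in each rod's own witness window. Laminarity then gives the $\succ_+$ order inside a window, and disjointness of the windows gives it across windows. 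The direction of the final pattern is decided by $k_j\le 0$ versus $k_j>0$ in that common window (Proposition~\ref{prop:rod_laminar_structure}). So a rod point with a right witness may well contribute backward returns, as rod $b$ does above. This case analysis, not an unexplained ``discard at most half'', is what produces the constant $\frac1{64}$. Finally, you cannot enlarge the rod sub-segments. $\mathsf{Ret}^\pm_J$ is defined through exact visits $S_u\in S(R^\pm_{pq})$, and enlarging by $L_E$ would make consecutive sub-segments overlap, destroying the $e^{-c_0}$ bound in Proposition~\ref{prop:GRet_bound}. Exact visits in fact come for free: $S_{\zeta^+_{uv}}=S_{\zeta^-_{uv}}$, and Proposition~\ref{prop:visit_rod_segment} places $\zeta^-_{uv}$ in $R^+_{uv}$.
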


\subsection{Probability bounds for \(\mathsf{Ret}^{\pm}_J(\mathbf a)\)}
We now bound the probabilities of the realization events \(\mathsf{Ret}^{\pm}_J(\mathbf a)\) appearing in Theorem~\ref{thm:many-pruned-implies-global-pattern}.

\begin{proposition}\label{prop:GRet_bound}
There exists \(c=c(E)>0\) such that for any \(K\ge 256\) and any integer interval \(I\subset\mathbb{Z}\) whose length \(|I|\) is sufficiently large and an integer multiple of \(2L\),
\begin{align}
&\mathbb{P}\big(m=r,\, \mathsf{Ret}^{+}_J(\mathbf a)\big)\le e^{-cKr}
\ \  \forall\,r\in\mathbb{N}^+,\,J\subset[r],\,\mathbf a\in \mathcal{P}^+(J)\text{ with }\|\mathbf a\|_1\ge \tfrac1{64} Kr,\label{eq:GRet_bound1}\\
&\mathbb{P}\big(m=r,\, \mathsf{Ret}^{-}_J(\mathbf a)\big)\le e^{-cKr}
\ \  \forall\,r\in\mathbb{N}^+,\,J\subset[r],\,\mathbf a\in \mathcal{P}^-(J)\text{ with }\|\mathbf a\|_1\ge \tfrac1{64} Kr.\label{eq:GRet_bound2}
\end{align}
\end{proposition}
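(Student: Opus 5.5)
The plan is to prove \eqref{eq:GRet_bound1} directly and obtain \eqref{eq:GRet_bound2} by time reversal. Indeed, \(\mathsf{Ret}^-_J(\mathbf a)\) is by definition the forward event for \(\check S\) with pattern \(\check{\mathbf a}\) on \(\check J\). The two-sided walk is invariant in law under \(n\mapsto -S_{-n}\), up to replacing \(x_0\) by \(-x_0\), which does not affect any estimate. So it suffices to treat the forward case.

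\textbf{Step 1: condition on the rod structure and reveal returns one file at a time.} Fix \(r\), \(J\subset[r]\), and \(\mathbf a\in\mathcal P^+(J)\) with \(\|\mathbf a\|_1\ge Kr/64\). On \(\{m=r\}\), \(\mathsf{Ret}^+_J(\mathbf a)\) requires a chronologically ordered list of \(\|\mathbf a\|_1\) prescribed files \((p^{(\ell)}_k,q^{(\ell)}_k)\), with distinct labels. The list is determined by \(\mathbf a\) alone, and for each file a time \(u^{(\ell)}_k\) after \(\rho_{j_\ell}+L\) at which \(S\) hits the set \(S(R^+_{pq})\).

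The key geometric point is that \(R^+_{pq}\) lies at temporal distance at least \((2q-1)L_E\) from \(\rho_p\) along a straight rod. Every return required on day \(j_\ell\) occurs after time \(\rho_{j_\ell}+L\), that is, after the walk has left the far end of the rod on which it currently sits. I will use the strong Markov property at successive stopping times \(\sigma_1\le\sigma_2\le\cdots\), where \(\sigma_k\) is the first hitting time of the \(k\)-th required sub-segment after \(\sigma_{k-1}\) (and after the appropriate \(\rho_{j_\ell}+L\)).

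The cost of one hit should be bounded via Green's function by \(\sum_{y\in S(R^+_{pq})}G(z,y)\), where \(z\) is the current position. The sub-segment has \(L_E+1\) points. The current position is at distance of order at least \(L_E\) from it: either the walk is ending a different rod, or it is at a distance comparable to the offset \(q\), or it has just exited the current rod at distance \(\ge (2K+1-2q)L_E\). This gives per-hit bounds of the form \(C L_E\,(\mathrm{dist})^{2-d}\).

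\textbf{Step 2: summing the product over files.} In the worst case a file is adjacent to where the walk currently is, and then the bound is only a constant \(c_0<1\). That constant is the escape-type probability that a walk started at distance \(\ge L_E\) from a short segment hits it. Since \(L_E\) exceeds the diameter of the loops, this is bounded away from \(1\) uniformly.

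The stack ordering helps here. Files in \(\Lambda^+_\ell\) are taken in decreasing \(\succ_+\) order, so successive hits within one rod proceed toward smaller \(q\), that is, toward the rod midpoint. Meanwhile the walk must come back from beyond the rod end. So it is natural to charge each file \((p,q)\) a factor \(\le C(K+1-q)^{2-d}L_E^{3-d}\) or at most \(c_0\), whichever is smaller. Even using only the trivial factor \(c_0\) per file, the product is \(c_0^{\|\mathbf a\|_1}\le c_0^{Kr/64}=e^{-c'Kr}\).

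\textbf{Step 3: the main obstacle.} The main difficulty is that the hitting times \(u^{(\ell)}_k\) are not pinned down. The event is an existence statement, and on \(\{m=r\}\) the rod positions \(\rho_i\) are random and the rods themselves are parts of the path. My plan is to union-bound over the combinatorial data that fix which rod-block indices \(r_1<\dots<r_m\) among \(M=|I|/(2L)\) blocks are rods. There are at most \(\binom{M}{r}\le 2^{|I|/(2L)}\) such choices. Then, given the rod blocks, I take the greedy first-hitting stopping times, so each file contributes a factor \(\le c_0\) conditionally on the past.

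The entropy factor \(2^{|I|/(2L)}\) could beat \(e^{-c'Kr}\) only if \(r\ll |I|/(KL)\). To handle this I will instead condition on \(\{m=r\}\) through the rod indicators at the midpoint blocks and treat non-rod blocks as unconstrained. Alternatively, I will choose the decay constant \(c\) to absorb \(\theta'\). This is where I would concentrate effort: showing that the Markov decomposition survives the conditioning on the rod indicators. Rods occupy disjoint time blocks, so I expect the conditioning to only force straight segments, which does not increase hitting probabilities of far sub-segments. The requirement \(K\ge 256\) will enter here, to make \(c_0^{K/64}\) small enough to beat the rod-indicator entropy per block.
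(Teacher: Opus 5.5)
\textbf{Gap in Steps 1--2.} The claim that every file contributes a factor $c_0<1$ conditionally on the past is false. The target sets $S(R^+_{pq})$ are pieces of the path itself, and rods with different indices can overlap in space, since the walk may later retrace a straight line it already traversed. Moreover, $\mathsf{Ret}^+_J(\mathbf a)$ only requires $u^{(\ell)}_1\le\cdots\le u^{(\ell)}_{a_\ell}$, so equal times are allowed. Hence, when two consecutive files of $\Lambda^+_\ell(\mathbf a)$ belong to different rods $p'>p$, the walk may already sit on $S(R^+_{pq})$ at the moment it hits $S(R^+_{p'q'})$. The conditional hitting probability is then $1$, and your distance claim in Step 1 (``either the walk is ending a different rod, or \ldots'') fails exactly there.

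A uniform factor $e^{-c_0}<1$, with $c_0=c_0(E)$, is available only between consecutive files of the same rod, $(p,q+1)\to(p,q)$. There the walk is on $S(R^+_{p,q+1})$ and must reach the disjoint segment $S(R^+_{pq})$ of the same straight line, a configuration of fixed shape, so transience gives the bound. The missing ingredient is a count of these transitions. Inside each list the first coordinate is non-increasing, and $p$ changes only when that rod's remaining files are exhausted, which happens at most once per $p\in J$. Discarding also the first file of each day, the number $N$ of same-rod transitions satisfies $N\ge\|\mathbf a\|_1-2\#J\ge(K/64-2)r\ge Kr/128$. This is where $K\ge256$ is used, not to beat an entropy factor. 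The Green's-function refinements in $q$ are unnecessary.

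\textbf{Step 3 is a misidentified obstacle, and your proposed fix would break the estimate.} A union bound over the $\binom{M}{r}$ choices of rod blocks cannot yield $e^{-cKr}$ with $c=c(E)$: for fixed $r$ this factor grows with $|I|$, while the target bound does not. Letting $c$ absorb $\theta'=\theta'(L)$ would make $c$ depend on $K$, which is incompatible with the later choice of $K$ via $2e(K+1)e^{-cK}<1$.

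No conditioning on rod indicators is needed. Whether a fixed block is a rod is determined by the path on that block, so:
\begin{itemize}
\item each $\rho_{j_\ell}+L$ is a stopping time;
\item all sub-segments required on day $\ell$ are $\mathcal F_{\rho_{j_\ell}+L}$-measurable;
\item the day-$\ell$ requirement is $\mathcal F_{\rho_{j_{\ell+1}}+L}$-measurable.
\end{itemize}
You then bound $\mathbb P(m=r,\mathsf{Ret}^+_J(\mathbf a))\le\mathbb P(J\subset[m],\mathsf{Ret}^+_J(\mathbf a))$. Apply the strong Markov property at $\rho_{j_\ell}+L$ to the greedy successive first-hitting times, dropping the upper end of the time window, which only enlarges the event. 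Chaining over $\ell$ with the tower property gives $e^{-c_0N}$. The entropy over $J$ and $\mathbf a$ is paid later, in the proof of Theorem~\ref{thm:rod_cut}, not inside this proposition.
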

\begin{proof}
We prove \eqref{eq:GRet_bound1}; the bound \eqref{eq:GRet_bound2} follows by time reversal, e.g.\ by considering \(\check S_n=S_{-n}\) and noting that backward realizations for \(S\) correspond to forward realizations for \(\check S\). For the proof of \eqref{eq:GRet_bound1}, we begin with a lemma.
\begin{lemma}
    Fix a nonempty finite set \(J=\{j_1<\cdots<j_{\#J}\}\subset [|I|]\) and \(\mathbf a=(a_1,\ldots,a_{\#J})\in\mathcal P^+(J)\).
For each \(\ell=1,\ldots,\#J\), write
\[
\Lambda^{+}_\ell(\mathbf a)=\bigl((p^{(\ell)}_1,q^{(\ell)}_1),\ldots,(p^{(\ell)}_{a_\ell},q^{(\ell)}_{a_\ell})\bigr),
\] 
and define
\[N=N(J,\mathbf a)=\#\big\{(\ell,k):\ 1\le \ell\le \#J,\ 2\le k\le a_\ell,\ p^{(\ell)}_{k-1}=p^{(\ell)}_k\big\}.\]
Then we have 
\begin{equation}\label{eq:tower_J}
\mathbb{P}\big(J\subset[m],\ \mathsf{Ret}^{+}_J(\mathbf a)\big)
\le
\exp\!\big(-c_0\,N\big).
\end{equation}
\end{lemma}
\begin{proof}
    Recall the definition of $\mathsf{Ret}^{+}_J(\mathbf a)$ in Definition \ref{def:return_pattern_subset}.
Let \(D_\ell\) be the event that the required ordered hits occur within the corresponding time window:
\[
\begin{aligned}
D_\ell:=\Big\{&
\rho_{j_\ell}<\infty,\ \exists\,u^{(\ell)}_1\le \cdots\le u^{(\ell)}_{a_\ell}
\ \text{with }u^{(\ell)}_k\in(\rho_{j_\ell}+L,\rho_{j_{\ell+1}}+L],\\
&S_{u^{(\ell)}_k}\in S\!\left(R^{+}_{p^{(\ell)}_k\,q^{(\ell)}_k}\right)
\ \forall\,k=1,\ldots,a_\ell
\Big\}.
\end{aligned}
\]
Then we have
\[
\big\{J\subset [m],\ \mathsf{Ret}^{+}_J(\mathbf a)\big\}\subset \bigcap_{\ell=1}^{\#J} D_\ell.
\]

For any set \(A\subset\mathbb{Z}^d\), write \(T_A:=\inf\{n\ge 0:\ S_n\in A\}\).
By the strong Markov property at time \(\rho_{j_\ell}+L\), we obtain
\begin{align}\label{eq:Dell_cond}
\begin{aligned}
&\mathbb{P}\big(D_\ell\,\big|\,\rho_{j_\ell}<\infty,\ S[I^-,\rho_{j_\ell}+L]\big)\\
\le\,&
\mathbb{P}_{S_{\rho_{j_\ell}+L}}\!\big(\exists\,u_1\le\cdots\le u_{a_\ell}\ \text{s.t. } S_{u_k}\in I_k,\ k=1,\ldots,a_\ell\big)\Big|_{I_k=S\big(R^{+}_{p^{(\ell)}_k\,q^{(\ell)}_k}\big)}\\
\le\,&
\mathbb{E}_{S_{\rho_{j_\ell}+L}}\!\left(
\mathbf 1_{\{T_{I_1}<\infty\}}
\mathbb{E}_{T_{I_1}}\!\left(
\mathbf 1_{\{T_{I_2}<\infty\}}\cdots
\mathbb{E}_{T_{I_{a_\ell-1}}}\!\big(\mathbf 1_{\{T_{I_{a_\ell}}<\infty\}}\big)
\right)\right)\Big|_{I_k=S\big(R^{+}_{p^{(\ell)}_k\,q^{(\ell)}_k}\big)}.
\end{aligned}
\end{align}

If \(p^{(\ell)}_{k-1}=p^{(\ell)}_k\), then (by the \(\succ_+\)-ordering) necessarily \(q^{(\ell)}_{k-1}=q^{(\ell)}_k+1\), so the walk must hit two successive rod sub-segments on the same rod path.
By transience and and the fixed separation of successive rod sub-segments, there exists \(c_0=c_0(E)>0\) such that for every \(k\) and \(\ell\) satisfying
\(p^{(\ell)}_{k-1}=p^{(\ell)}_k\),
\[
\mathbb{P}_{T_{I_{k-1}}}\big(T_{I_k}<\infty\big)\big|_{I_{k-1}=S\big(R^{+}_{p^{(\ell)}_{k-1}\,q^{(\ell)}_{k-1}}\big),\,I_k=S\big(R^{+}_{p^{(\ell)}_k\,q^{(\ell)}_k}\big)}\le e^{-c_0}\in(0,1).
\]
Substituting into \eqref{eq:Dell_cond} yields
\[
\mathbb{P}\big(D_\ell\,\big|\,\rho_{j_\ell}<\infty,\ S[I^-,\rho_{j_\ell}+L]\big)
\le
\exp\!\big(-c_0\,N_\ell\big),
\quad
N_\ell:=\#\big\{2\le k\le a_\ell:\ p^{(\ell)}_{k-1}=p^{(\ell)}_k\big\}.
\]
The desired conclusion \eqref{eq:tower_J} then follows by applying the tower property over \(\ell=1,\ldots,\# J\) and noting that $N=\sum_{\ell=1}^{\# J} N_\ell$. 
\end{proof}

Return to the proof of \eqref{eq:GRet_bound1}. By \eqref{eq:tower_J}, for any $r\in\mathbb{N}^+$, $J\subset [r]$ and $\mathbf a\in\mathcal{P}^+(J)$,
\begin{align}\label{eq:tower_J_r}
    \mathbb{P}\big(m=r,\, \mathsf{Ret}^{+}_J(\mathbf a)\big)\le \mathbb{P}\big(J\subset [m],\, \mathsf{Ret}^{+}_J(\mathbf a)\big)\le \exp\!\big(-c_0\,N\big)
\end{align}

It remains to lower bound \(N\). For each \(\ell\), within the list \(\Lambda^{+}_\ell(\mathbf a)\) the first coordinate \(p^{(\ell)}_k\) is non-increasing in \(k\).
Moreover, \(\Lambda^{+}_\ell(\mathbf a)\) is an initial segment of \(\mathcal U^{+}_\ell(\mathbf a)\) arranged in decreasing order \(\succ_+\).
Consequently, if \(p^{(\ell)}_{k-1}\neq p^{(\ell)}_k\), then the remaining indices with first coordinate \(p^{(\ell)}_{k-1}\) have just been completely exhausted at stage \(\ell\).
Since each \(p\in J\) can be exhausted at most once over the entire construction, we obtain the crude bound
\[
\#\big\{(\ell,k):\ 1\le \ell\le \# J,\ 2\le k\le a_\ell,\ p^{(\ell)}_{k-1}\neq p^{(\ell)}_k\big\}\le \#J.
\]
Therefore,
\begin{align*}
N
&=\#\big\{(\ell,k):\ 1\le \ell\le \#J,\ 2\le k\le a_\ell,\ p^{(\ell)}_{k-1}=p^{(\ell)}_k\big\}\\
&=\#\big\{(\ell,k):\ 1\le \ell\le \#J,\ 2\le k\le a_\ell\big\}
-\#\big\{(\ell,k):\ 1\le \ell\le \#J,\ 2\le k\le a_\ell,\ p^{(\ell)}_{k-1}\neq p^{(\ell)}_k\big\}\\
&\ge \Big(\sum_{\ell=1}^{\#J} a_\ell - \#J\Big)-\#J
= \|\mathbf a\|_1-2\#J.
\end{align*}
In particular, for \(J\subset [r]\) and \(\mathbf a\in\mathcal P^+(J)\) with \(\|\mathbf a\|_1\ge \tfrac1{64} Kr\),
\[
N\ge \|\mathbf a\|_1-2r\ge \Big(\tfrac1{64}K-2\Big)r\ge \frac{K}{128}\,r,
\qquad\text{for }K\ge 256.
\]
Combining this with \eqref{eq:tower_J_r} yields
\[
\mathbb{P}\big(m=r,\  \mathsf{Ret}^{+}_J(\mathbf a)\big)\le \exp\!\Big(-\frac{c_0}{128}\,Kr\Big),
\]
which proves \eqref{eq:GRet_bound1}.
\end{proof}
\subsection{Proof of Theorem~\ref{thm:rod_cut}}

\begin{proof}[Proof of Theorem~\ref{thm:rod_cut}]
Since \(|I|\) is sufficiently large, the event \(m>\theta'|I|\) implies \(m>16\vee\theta'|I|\).
By Theorem~\ref{thm:many-pruned-implies-global-pattern}, we have
\begin{align*}
&\mathbb{P}\!\left(\#\,\mathrm{Cut}^{\mathrm{rod}}_L(I)\le \frac12 \theta'|I|\right)\\
\le\,& \mathbb{P}\big(m\le \theta'|I|\big)
 +\mathbb{P}\!\left(m> \theta'|I|,\,
\#\big\{1\le i\le m:\ \rho_i\notin \mathrm{Cut}^{\mathrm{rod}}_L(I)\big\}>m/2\right)\\
\le\,& \mathbb{P}\big(m\le \theta'|I|\big)
+\sum_{*\in\{\pm\}}\sum_{r,J,\mathbf a}
\mathbb{P}\big(m=r,\  \mathsf{Ret}^{*}_J(\mathbf a)\big),
\end{align*}
where the second sum is over all $r>\theta'|I|$, $J\subset [r]$ and $\mathbf a\in \mathcal{P}^*(J)$ with $\|\mathbf a\|_1 \ge \frac1{64} K r$.

By Lemma~\ref{lem:many-rod-points}, the first term \(\mathbb{P}(m\le \theta'|I|)\) is exponentially small in \(|I|\), so it remains to show that there exist \(K\in\mathbb{N}^+\) and \(c=c(E,K,\theta')>0\) such that for all intervals \(I\) whose length \(|I|\) is sufficiently large and an integer multiple of \(2L\), and each $*\in\{\pm\}$,
\begin{equation}\label{eq:Ret_suffice}
\begin{aligned}
\sum_{r,J,\mathbf a}
\mathbb{P}\big(m=r,\  \mathsf{Ret}^{*}_J(\mathbf a)\big)\le e^{-c|I|}.	
\end{aligned}
\end{equation}

Proposition~\ref{prop:GRet_bound} bounds each probability inside the sums. Thus we only need to control the number of return profiles that appear.
For \(\mathbf a\in\mathcal{P}^+(J)\cup\mathcal{P}^-(J)\) we have \(\|\mathbf a\|_1\le K\# J\) by definition. Hence, for any fixed \(J\subset[r]\), the number of admissible forward or backward patterns is bounded by
\[
\#\Big\{\mathbf a\in \mathbb{N}^{\# J}:\ 0\le \|\mathbf a\|_1 \le Kr\Big\}
= \sum_{s=0}^{Kr}\binom{s+r-1}{r-1}
=\binom{Kr+r}{r}\le \big(e(K+1)\big)^{r},\]
where we used \(\sum_{s=0}^{M}\binom{s+k-1}{k-1}=\binom{M+k}{k}\) and the standard bound
\(\binom{n}{k}\le (en/k)^k\).
Combining this counting estimate with Proposition~\ref{prop:GRet_bound} yields that for each $*\in\{\pm\}$,
\begin{align}\label{eq:Ret_estimate}
\begin{aligned}
\sum_{r,J,\mathbf a}
\mathbb{P}\big(m=r,\  \mathsf{Ret}^{*}_J(\mathbf a)\big)&\le \sum_{r=\theta'|I|+1}^\infty \sum_{J\subset [r]} \big(e(K+1)\big)^{r} e^{-c_0Kr}\\
&= \sum_{r=\theta'|I|+1}^\infty \big(2e(K+1)e^{-c_0K}\big)^r.
\end{aligned}
\end{align}
for some \(c_0=c_0(E)>0\) and all \(K\ge 256\). Choosing \(K\) sufficiently large so that
\begin{align}\label{eq:K}
    2e(K+1)e^{-c_0K}<1,
\end{align}
we obtain \eqref{eq:Ret_suffice}, which completes the proof.
\end{proof}

\subsection{Preliminaries for the proof of Theorem~\ref{thm:many-pruned-implies-global-pattern}}\label{sec:pf_necessary}
It remains to prove Theorem~\ref{thm:many-pruned-implies-global-pattern}.
We introduce the notions of pruning times and the associated pruning intervals.
Using the pruning times, we define the return patterns that satisfy the requirements in
Theorem~\ref{thm:many-pruned-implies-global-pattern}.
\subsubsection{\texorpdfstring{Pruning times and basic properties}{Pruning times and basic properties}}
\begin{definition}[Pruning times]\label{def:pruning_times}
Let \(I=[I^-,I^+]\subset\mathbb{Z}\) be an integer interval and let \(j\in(I^-,I^+]\).
If the step \((S_{j-1},S_j)\) is pruned in \(\mathsf{Prune}(S(I),E)\), we define its forward and backward pruning times by
\begin{align*}
\zeta^+(j)=\zeta^+(j,I)
&:=\inf\Big\{i\in [j,I^+]:\ (S_{j-1},S_j)\text{ is pruned in }\mathsf{Prune}(S[I^-,i],E)\Big\},\\
\zeta^-(j)=\zeta^-(j,I)
&:=\sup\Big\{i\in (I^-,j]:\ (S_{i-1},S_i)\text{ is retained in }\mathsf{Prune}\big(S[I^-,\zeta^+(j,I)],E\big)\Big\},
\end{align*}
where \(\sup\emptyset:=I^-\).

If \((S_{j-1},S_j)\) is not pruned in \(\mathsf{Prune}(S(I),E)\), we set \(\zeta^+(j):=+\infty\) and \(\zeta^-(j):=-\infty\).
\smallskip
\noindent We will also use the shorthand
\[
\zeta(j)=\zeta(j,I):=\bigl[\zeta^-(j),\,\zeta^+(j)\bigr]
\]
for the associated pruning interval.
\end{definition}
\begin{remark}[Interpretations of \(\zeta^\pm(j,I)\) in the loop-pruned decomposition]\label{rem:pruning_times_last_segment}
Recall the loop-pruned decomposition introduced in Section~\ref{sec:basic}. 
Assume that the step \((S_{j-1},S_j)\) is pruned in \(\mathsf{Prune}(S(I),E)\).
Apply the decomposition to the truncated path \(S[I^-,\zeta^+(j)]\). By definition of \(\zeta^+(j)\), this is the first time at which \((S_{j-1},S_j)\) is pruned, namely when extending the path from \(S[I^-,\zeta^+(j)-1]\) to \(S[I^-,\zeta^+(j)]\).
Consequently, in the loop-pruned decomposition of \(S[I^-,\zeta^+(j,I)]\), the final pruned segment is exactly
\[
S\big(\zeta(j)\big)-S_{\zeta^-(j)}.
\]
\end{remark} 
We record a structural property of pruning intervals.
\begin{lemma}\label{lem:pruning-intervals-laminar-and-order}
Assume $i<j$ in $I$. Then $\zeta(i)$ and $\zeta(j)$ are laminar: either they are disjoint up to endpoints, or one is contained in the other.
Moreover, if both steps are pruned and the intervals are in the containment case, then exactly one of the following alternatives holds:
\[
\textup{(C1)}\ \zeta(j)\subsetneq \zeta(i),\ i\le \zeta^-(j);
\qquad
\textup{(C2)}\ \zeta(i)\subsetneq \zeta(j),\ \zeta^+(i)\le j-1;
\qquad
\textup{(C3)}\ \zeta(i)=\zeta(j).
\]
\end{lemma}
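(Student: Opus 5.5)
We would derive the lemma from the interpretation in Remark~\ref{rem:pruning_times_last_segment}: if $(S_{j-1},S_j)$ is pruned, then at time $\zeta^+(j)$ the final pruned segment of $S[I^-,\zeta^+(j)]$ is exactly $S(\zeta(j))-S_{\zeta^-(j)}$. Equivalently, $\zeta^-(j)$ is the creation time $N^{-1}$ of the last retained step of $\mathsf{Prune}(S[I^-,\zeta^+(j)],E)$. So $\zeta(j)$ is the interval spanning the erased block that swallows step $j$ at the moment it is first swallowed. The basic tool is the staging property, Proposition~\ref{prop:stage}. It says that as the right endpoint $t$ grows, the pruned path $\mathsf{Prune}(S[I^-,t],E)$ evolves by appending the new step and then repeatedly deleting, at the tip, a loop from $E$ that has just closed. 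Hence the retained steps at time $t$ form a stack. The set of pruned steps only grows. Each deletion removes a top portion of the stack, which in the original time parametrization is a contiguous range of retained steps together with everything already erased between them.

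Step 1 (stack description). We first establish that at each time $t$ the set of retained steps is $\{k\le t: k\text{ retained at } t\}$, and that the erased material between consecutive retained steps is exactly a union of intervals $\zeta(k)$ for steps $k$ already pruned. When a deletion occurs at time $t$, with the new tip retained step at creation time $r$, every step in $(r,t]$ that was still retained becomes pruned with $\zeta^+=t$ and $\zeta^-=r$. Steps already pruned keep their earlier intervals, which lie inside $[r,t]$. So every pruning interval born at time $t$ equals $[r,t]$. Any earlier interval $\zeta(k)$ with $\zeta^+(k)<t$ either lies inside $(r,t]$ or is disjoint from it. The earlier interval cannot straddle $r$, because $r$ is retained at time $t$, hence retained at all earlier times, and an interval $\zeta(k)$ contains no step retained at time $\zeta^+(k)$ in its interior.

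Step 2 (laminarity). We argue by induction on time. Two intervals born at the same time $t$ coincide, since both equal $[r,t]$; this gives (C3). Suppose $\zeta^+(i)<\zeta^+(j)=t$. Step 1 shows that $\zeta(i)$ is either contained in $[\zeta^-(j),t]$ or disjoint from it up to endpoints. The symmetric situation $\zeta^+(j)<\zeta^+(i)$ is handled the same way. If a step is unpruned its interval is degenerate, and laminarity is trivial.

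Step 3 (alternatives). Assume containment with $i<j$ and both steps pruned, and that the intervals are not equal. Consider first $\zeta(j)\subsetneq\zeta(i)$. Then $\zeta^+(j)\le\zeta^+(i)$, and equality would force equal intervals, so $\zeta^+(j)<\zeta^+(i)$. At time $\zeta^+(j)$, step $i$ is still retained and lies at or before the tip of $\mathsf{Prune}(S[I^-,\zeta^+(j)],E)$. By definition $\zeta^-(j)$ is the last retained index $\le j$, and $i$ is such an index, so $i\le\zeta^-(j)$. This is (C1). Now consider $\zeta(i)\subsetneq\zeta(j)$. Then $\zeta^+(i)<\zeta^+(j)$. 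Step $j$ cannot lie in $(\zeta^-(i),\zeta^+(i)]$: if it did, it would be pruned no later than $\zeta^+(i)$, and Step 1 would then give $\zeta(j)=\zeta(i)$. Since $j>i>\zeta^-(i)$, this forces $j>\zeta^+(i)$, i.e.\ $\zeta^+(i)\le j-1$. This is (C2). Exclusivity is immediate: (C1) and (C2) are strict and opposite inclusions, and neither is compatible with (C3).

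The main obstacle is Step 1, specifically the claim that a deletion removes a contiguous top block, with all newly pruned steps sharing the interval $[r,t]$. This needs care for nested loops in $E$, as in the remark following the definition of $\mathsf{Prune}$, and for the fact that the full pruning may cascade into several successive $\mathsf{Prune}_1$ deletions at one time. We would first handle the case where a single time $t$ triggers several deletions. We would define $r$ as the final tip after all of them, and check that every intermediate deletion happens at the tip, using the fact that the path up to time $t-1$ is already $E$-pruned.
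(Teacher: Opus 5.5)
Your proposal is correct and is essentially the paper's proof. Your observation that $\zeta^-(j)$, being retained at time $\zeta^+(j)$, is also retained at the earlier time $\zeta^+(i)$, and so cannot lie in the interior of $\zeta(i)$ (which is entirely pruned at that time), is exactly how the paper excludes interlacing via Proposition~\ref{prop:stage} and Remark~\ref{rem:pruning_times_last_segment}. Your Step~3 reproduces the paper's derivation of (C1)--(C3).

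The obstacle you anticipate in Step~1 does not arise. Since $\mathsf{Prune}(S[I^-,t-1],E)$ and all of its prefixes are $E$-pruned, appending one step creates at most one loop occurrence (ending at the tip), and deleting it leaves a prefix, so there is at most one deletion per time step. Also, under the paper's convention an unpruned step has $\zeta=[-\infty,+\infty]$, which contains every other interval, not a degenerate interval; laminarity is still trivial in that case.
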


\begin{figure}[htbp]
    \centering
    \begin{tikzpicture}[scale=0.8,
  x=1.05cm,y=0.72cm,
  line cap=round,
  >=latex,
  zi/.style={line width=1.2pt, draw=blue!75!black},
  zj/.style={line width=1.2pt, draw=orange!80!black},
  lab/.style={font=\small}
]
  \draw[zi] (7.15,-0.70) -- (7.75,-0.70);
  \node[lab,anchor=west,text=blue!75!black] at (7.80,-0.70) {$\zeta(i)$};
  \draw[zj] (7.15,-1.45) -- (7.75,-1.45);
  \node[lab,anchor=west,text=orange!80!black] at (7.80,-1.45) {$\zeta(j)$};
  \node[lab,anchor=west] at (0, -1.0) {Disjoint};
  \draw[zi] (2.0,-1.0) -- (3.2,-1.0);
  \draw[zj] (4.6,-1.0) -- (5.8,-1.0);
  \fill (2.6,-1.0) circle (1.2pt) node[lab,anchor=north] {$i$};
  \fill (5.2,-1.0) circle (1.2pt) node[lab,anchor=north] {$j$};
  \node[lab,anchor=west] at (0, -2.15) {Containment};
  \node[lab,anchor=west] at (0.55, -2.85) {(C1)};
  \draw[zi] (2.0,-2.78) -- (6.0,-2.78);
  \draw[zj] (3.5,-2.92) -- (4.8,-2.92);
  \fill (2.9,-2.78) circle (1.2pt) node[lab,anchor=north] {$i$};
  \fill (4.15,-2.92) circle (1.2pt) node[lab,anchor=north] {$j$};
  \node[lab,anchor=west] at (0.55, -4.10) {(C2)};
  \draw[zj] (2.0,-4.18) -- (6.0,-4.18);
  \draw[zi] (3.0,-4.02) -- (4.05,-4.02);
  \fill (3.45,-4.02) circle (1.2pt) node[lab,anchor=north] {$i$};
  \fill (5.2,-4.18) circle (1.2pt) node[lab,anchor=north] {$j$};
  \node[lab,anchor=west] at (0.55, -5.35) {(C3)};
  \draw[zi] (2.0,-5.28) -- (6.0,-5.28);
  \draw[zj] (2.0,-5.44) -- (6.0,-5.44);
  \fill (3.0,-5.28) circle (1.2pt) node[lab,anchor=north] {$i$};
  \fill (5.0,-5.44) circle (1.2pt) node[lab,anchor=north] {$j$};
\end{tikzpicture}
\caption{Visualization on the time axis of the disjoint case and the three containment alternatives for $\zeta(i)$ and $\zeta(j)$.}
\label{fig:pruning-intervals}
\end{figure}

\begin{proof}
If one of the two steps is not pruned in \(\mathsf{Prune}(S(I),E)\), the conclusion is immediate from the convention in Definition~\ref{def:pruning_times}.
Assume that both steps are pruned.
We first rule out the interlacing configuration
\begin{equation}\label{eq:interlacing_zeta}
\zeta^-(i)<\zeta^-(j)<\zeta^+(i)<\zeta^+(j).
\end{equation}
Suppose for contradiction that \eqref{eq:interlacing_zeta} holds.
By Proposition~\ref{prop:stage}, pruning up to time $\zeta^+(j)$ can be performed in two stages: first prune up to time $\zeta^+(i)$, and then continue pruning up to time $\zeta^+(j)$.
Since $\zeta^-(i)<\zeta^-(j)<\zeta^+(i)$, the step $(S_{\zeta^-(j)-1},S_{\zeta^-(j)})$ is already pruned in $\mathsf{Prune}(S[I^-,\zeta^+(i)],E)$, and hence it is also pruned in $\mathsf{Prune}(S[I^-,\zeta^+(j)],E)$.
This contradicts the definition of $\zeta^-(j)$, which requires $(S_{\zeta^-(j)-1},S_{\zeta^-(j)})$ to be retained in $\mathsf{Prune}(S[I^-,\zeta^+(j)],E)$.
Thus \eqref{eq:interlacing_zeta} is impossible.
The opposite crossing order is also impossible: if
\[
\zeta^-(j)<\zeta^-(i)<\zeta^+(j)<\zeta^+(i),
\]
then the step \((S_{i-1},S_i)\) lies in the final pruned segment \(S(\zeta(j))\) in the loop-pruned decomposition of \(S[I^-,\zeta^+(j)]\). Hence \(\zeta^+(i)\le \zeta^+(j)\), contradicting \(\zeta^+(j)<\zeta^+(i)\).
	Therefore $\zeta(i)$ and $\zeta(j)$ are laminar.

	It remains to identify the containment cases. We need the following observation.
	\begin{lemma}\label{lem:same-forward-pruning-time}
	If \(\zeta^+(i)=\zeta^+(j)\), then \(\zeta(i)=\zeta(j)\).
	\end{lemma}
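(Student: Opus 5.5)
The plan is to prove the statement directly from the definition of \(\zeta^-\), using Remark~\ref{rem:pruning_times_last_segment}. Set \(t:=\zeta^+(i)=\zeta^+(j)<\infty\); both steps are then pruned in \(\mathsf{Prune}(S(I),E)\). By definition,
\[
\zeta^-(i)=\sup\bigl\{k\in(I^-,i]:\ (S_{k-1},S_k)\text{ retained in }\mathsf{Prune}(S[I^-,t],E)\bigr\},
\]
and \(\zeta^-(j)\) is the same supremum taken over \(k\in(I^-,j]\). Both suprema refer to the same pruned path \(\mathsf{Prune}(S[I^-,t],E)\). Since \(\zeta^+(i)=\zeta^+(j)=t\), only the left endpoints need to be compared.

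Consider the loop-pruned decomposition of \(S[I^-,t]\). By Remark~\ref{rem:pruning_times_last_segment}, applied to \(i\), the final pruned segment is \(S[\zeta^-(i),t]-S_{\zeta^-(i)}\). Applied to \(j\), the same final segment equals \(S[\zeta^-(j),t]-S_{\zeta^-(j)}\). The final pruned segment of a fixed finite path is uniquely determined: by the definition of \(\xi_{|\mathbf s'|}\), it starts at the creation time \(N^{-1}(|\mathbf s'|)\) of the last retained step. Hence \(\zeta^-(i)=N^{-1}(|\mathbf s'|)=\zeta^-(j)\), where \(\mathbf s'=\mathsf{Prune}(S[I^-,t],E)\). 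If \(|\mathbf s'|=0\), both suprema are over empty sets, so both equal \(I^-\).

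The one point to verify is that Remark~\ref{rem:pruning_times_last_segment} really places all of \((\zeta^-(k),t]\) inside the final segment, for both \(k=i\) and \(k=j\). I will check this directly, since it is the step I expect to need care. Take \(k\in\{i,j\}\). At time \(t\), the step \((S_{k-1},S_k)\) is pruned, having been retained in \(\mathsf{Prune}(S[I^-,t-1],E)\). Passing from time \(t-1\) to \(t\) prunes only a terminal block of the previous pruned path, by Proposition~\ref{prop:stage} together with the fact that pruning one appended step can only remove a suffix of loops ending at the new point. So every step after \(k\) that was retained at time \(t-1\) is also pruned at time \(t\). Therefore no retained step of \(\mathsf{Prune}(S[I^-,t],E)\) occurs at a time in \((k,t]\). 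Combined with the maximality in the definition of \(\zeta^-(k)\), this says that \(\zeta^-(k)\) is the time of the last retained step of \(\mathsf{Prune}(S[I^-,t],E)\) overall, which is \(N^{-1}(|\mathbf s'|)\). This expression does not depend on \(k\), so \(\zeta^-(i)=\zeta^-(j)\), and hence \(\zeta(i)=\zeta(j)\).
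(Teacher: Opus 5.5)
Your proof is correct and follows the paper's argument: both \(\zeta(i)\) and \(\zeta(j)\) are identified, via Remark~\ref{rem:pruning_times_last_segment}, with the final pruned segment of the same truncated path \(S[I^-,t]\), and that segment is uniquely determined. Your direct check of the remark (appending one step to an \(E\)-pruned path removes at most one terminal loop) is a welcome addition; one small correction is that when \(t=j\), the step \((S_{j-1},S_j)\) is pruned the moment it is appended rather than being retained at time \(t-1\), which does not affect the argument.
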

	\begin{proof}
	By Remark~\ref{rem:pruning_times_last_segment}, applied to the same truncated path \(S[I^-,\zeta^+(i)]\), both intervals correspond to the final pruned segment in its loop-pruned decomposition.
	\end{proof}
	If the two intervals \(\zeta(i)\) and \(\zeta(j)\) are equal, then alternative (C3) holds.
	Suppose next that \(\zeta(j)\subsetneq\zeta(i)\). If \(i>\zeta^-(j)\), then the step \((S_{i-1},S_i)\) lies in the final pruned segment \(S(\zeta(j))\), and therefore \(\zeta^+(i)\le\zeta^+(j)\). The containment gives the reverse inequality, so \(\zeta^+(i)=\zeta^+(j)\), and Lemma~\ref{lem:same-forward-pruning-time} gives \(\zeta(i)=\zeta(j)\), a contradiction. Hence \(i\le \zeta^-(j)\), giving alternative (C1).
	The case \(\zeta(i)\subsetneq\zeta(j)\) is analogous and gives alternative (C2).
\end{proof}

We shall also use the following corollary.
\begin{corollary}\label{cor:pruning_interval_contained}
Suppose $u\in I'$ and the step $(S_{u-1},S_u)$ is contained in a pruned segment $[S_{l_1},S_{l_2}]$ (before translation) in the loop-pruned decomposition of $S(I')$, i.e.\ $l_1<u\le l_2$. Then $\zeta(u,I')\subset [l_1,l_2]$.
\end{corollary}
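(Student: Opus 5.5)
The approach is to reduce the statement to Lemma~\ref{lem:pruning-intervals-laminar-and-order}, applied on the interval \(I'\). The key input is Remark~\ref{rem:pruning_times_last_segment}: pruned segments of the decomposition of a longer path are unions of final pruned segments of truncated paths.

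Step 1: identify the pruned segment \([l_1,l_2]\) with a pruning interval. Since the segment is a pruned segment of the decomposition of \(S(I')\), the step \((S_{l_1},S_{l_1+1})\) is pruned (when \(l_2>l_1\), which holds because \(u\) lies in it). Choose \(w\in(l_1,l_2]\) with \(\zeta^+(w,I')\) maximal among steps of the segment; this is the step pruned last. Then \(\zeta^+(w)\le l_2\). Indeed, every step of the segment is erased before time \(l_2\) ends; otherwise a step at time \(>l_2\) would be needed, contradicting the description of the segment via \(N^{-1}\). Also \(\zeta^-(w)\ge l_1\), because the step \((S_{l_1-1},S_{l_1})\), the retained skeleton step preceding the segment, is retained in every prefix pruning. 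This uses the definition of \(N_n\): retained skeleton steps are never pruned at any later time, which is the content of \(N^{-1}\) marking a permanent creation time. By maximality of \(\zeta^+(w)\) and Remark~\ref{rem:pruning_times_last_segment}, at time \(\zeta^+(w)\) all steps of \((l_1,l_2]\) preceding it have already been erased. So the final pruned segment of \(S[I'^-,\zeta^+(w)]\) is exactly \(S[l_1,\zeta^+(w)]\), which gives \(\zeta(w)=[l_1,\zeta^+(w)]\subset[l_1,l_2]\).

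Step 2: compare \(\zeta(u)\) with \(\zeta(w)\) using laminarity. By Lemma~\ref{lem:pruning-intervals-laminar-and-order}, \(\zeta(u)\) and \(\zeta(w)\) are either disjoint up to endpoints or nested. The step \((S_{u-1},S_u)\) has \(u\le\zeta^+(u)\le\zeta^+(w)\) by maximality, and \(\zeta^-(u)<u\). Hence \(\zeta(u)\) meets the interior of \([l_1,\zeta^+(w)]\), so the intervals are not disjoint. If \(\zeta(w)\subsetneq\zeta(u)\), then \(\zeta^+(u)\ge\zeta^+(w)\), so \(\zeta^+(u)=\zeta^+(w)\). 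Lemma~\ref{lem:same-forward-pruning-time} then forces equality of the two intervals, a contradiction. Therefore \(\zeta(u)\subseteq\zeta(w)\subset[l_1,l_2]\).

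The main obstacle is the rigorous justification in Step 1 that \(\zeta^-(w)\ge l_1\) and \(\zeta^+(w)\le l_2\). This requires that the retained step at \(l_1\) survives all intermediate prunings \(\mathsf{Prune}(S[I'^-,i],E)\) for \(i\ge l_1\), and that nothing after \(l_2\) is involved. I would prove both directly from the definition of \(N_n\) in \eqref{def:Nn} together with Proposition~\ref{prop:stage}. The point is that \(N_{l_1}\) stays equal to the index of the skeleton step for all later prefixes, so no prefix pruning removes it. Moreover, by \(N^{-1}\), the segment's last time is \(l_2=N^{-1}(m+1)-1\), and all of its steps are pruned by time \(l_2\) in the prefix pruning. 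In degenerate edge cases, such as \(l_1=I'^-\) with \(\sup\emptyset=I'^-\), the same argument applies using the convention in Definition~\ref{def:pruning_times}.
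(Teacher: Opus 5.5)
Your argument is correct and is essentially the paper's: the paper likewise identifies the segment with a pruning interval, namely \(\zeta(l_2,I')=[l_1,l_2]\), and uses the laminarity in Lemma~\ref{lem:pruning-intervals-laminar-and-order} to force \(\zeta(u,I')\subset\zeta(l_2,I')\). Your maximal-\(\zeta^+\) step \(w\) is a roundabout way of choosing that reference interval, since one always has \(\zeta^+(l_2,I')=l_2\); and your exclusion of \(\zeta(w)\subsetneq\zeta(u)\) via Lemma~\ref{lem:same-forward-pruning-time} plays the role of the paper's appeal to alternative (C1).
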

\begin{proof}
If \(u=l_2\), the conclusion is immediate. Assume \(u<l_2\).
By the definitions of \(\zeta^+\) and the loop-pruned decomposition, \(\zeta^+(u,I')\le l_2\) and \([l_1,l_2]=\zeta(l_2,I')\). Since \(u\in [l_1,l_2]\), Lemma~\ref{lem:pruning-intervals-laminar-and-order} gives either \(\zeta(u,I')\subset \zeta(l_2,I')\), or \(\zeta(l_2,I')\subset \zeta(u,I')\). In the latter case, strict containment is impossible by alternative (C1) of Lemma~\ref{lem:pruning-intervals-laminar-and-order}, since it would force \(u\le \zeta^-(l_2,I')=l_1\). Hence equality holds in the latter case, and therefore \(\zeta(u,I')\subset \zeta(l_2,I')=[l_1,l_2]\).
\end{proof}

\subsubsection{\texorpdfstring{Rod-point pruning profiles}{Rod-point pruning profiles}}
This subsubsection states the two structural propositions for rod-point pruning profiles; their proofs are given in the next subsubsection.

Recall the rod points $\rho_1,\ldots,\rho_m$. Define
\[
U := \{(u,v)\in \mathbb{Z}^2 : 1\le u\le m,\ |v|\le K\},\qquad
t_{uv} := \rho_u + 2vL_E \quad \text{for } (u,v)\in U.
\]

Fix a sub-interval \(I_0=[I_0^-,I_0^+]\subset I\).
For \((u,v)\in U\) with \(t_{uv}\in (I_0^-,I_0^+]\), set
\[
\zeta_{uv}^\pm := \zeta^\pm(t_{uv}, I_0),
\qquad
\zeta_{uv} := \zeta(t_{uv}, I_0).
\]

Lemma~\ref{lem:pruning-intervals-laminar-and-order} says that pruning intervals form a laminar family: they do not cross.
Equivalently, after grouping intervals that are linked by containment, one obtains disjoint laminar clusters, with the only remaining relations inside each cluster being containments.
The next proposition applies this structure, for each fixed \(u\), to the finite family \(\{\zeta_{uv}:|v|\le K\}\).
It shows that the relevant containments are organized by a single index \(k_u\): the intervals on each side of \(k_u\) form a nested chain.

\begin{proposition}\label{prop:rod_laminar_structure}
For any sub-interval \(I_0=[I_0^-,I_0^+]\subset I\) and any \(u\in[m]\) such that \([\rho_u-L,\rho_u+L]\subset I_0\),
there exists $k_u \in [-K,K]$ such that:

\begin{enumerate}[label=(\arabic*)]

\item for $k_u \le v \le K$, the pruning intervals are nested as
\[
\zeta_{uK} \subset \zeta_{u,K-1} \subset \cdots \subset \zeta_{u,k_u}.
\]

\item for $-K \le v < k_u$, the pruning intervals are nested as
\[
\zeta_{u,-K} \subset \zeta_{u,-K+1} \subset \cdots \subset \zeta_{u,k_u-1}.
\]

\end{enumerate}
\end{proposition}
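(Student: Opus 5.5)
We would prove Proposition~\ref{prop:rod_laminar_structure} by combining the laminarity from Lemma~\ref{lem:pruning-intervals-laminar-and-order} with the geometry of the rod. The points $t_{uv}$, $|v|\le K$, lie on the straight rod path $S[\rho_u-L,\rho_u+L]$ and are spaced $2L_E$ apart in time. Since $L_E$ exceeds the diameter of every loop in $E$, any occurrence of an $E$-loop is spatially confined to a ball of diameter $<L_E$.

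The first step is a locality claim. Suppose a pruned segment, namely a final segment $S(\zeta(t))-S_{\zeta^-(t)}$ as in Remark~\ref{rem:pruning_times_last_segment}, contains the step at some $t_{uv}$. We claim it must contain the entire rod stretch between $t_{uv}$ and the retained point $S_{\zeta^-(t)}$. This is because an $E$-erasable segment is a loop at $S_{\zeta^-(t)}$, built by nested insertions of loops of diameter $<L_E$. Removing the rod steps in chronological order therefore forces the walk to come back along the rod; the rod steps inside $\zeta(t_{uv})$ cannot jump over each other.

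The second step derives the key monotonicity from this. If $t_{uv}$ lies in $\zeta(t_{uv'})$ for $v<v'$, then $\zeta(t_{uv})\supseteq\zeta(t_{uv'})$ or the two intervals are equal. Now define $k_u$ as follows. For each $v$, decide whether $\zeta(t_{uv})$ contains $t_{u,v+1}$, i.e.\ whether pruning of the step at $t_{uv}$ happens through a loop closing to the right, or whether it contains $t_{u,v-1}$. Let $k_u$ be the smallest index from which the intervals are "right-opening".

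The third step establishes the chains. For consecutive $v,v+1\ge k_u$, we would argue that $\zeta_{u,v+1}\subset\zeta_{uv}$. By Lemma~\ref{lem:pruning-intervals-laminar-and-order}, the two intervals are either disjoint or nested. Disjointness is excluded because the segment pruning $t_{uv}$ must also traverse the rod forward past $t_{u,v+1}$, which is the distance argument from the first step. The nesting then has the stated direction by alternative (C1)/(C3), because $t_{uv}<t_{u,v+1}$ and the retained base point lies to the left. Symmetrically, for $v<k_u$ the base point lies to the right of the rod portion, and (C2)/(C3) give $\zeta_{u,v-1}\subset\zeta_{uv}$.

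Steps with infinite pruning intervals, i.e.\ retained steps, have to be handled separately. By the convention in Definition~\ref{def:pruning_times}, their interval is $[-\infty,+\infty]$, which contains everything. Such a $v$ can therefore sit only at the tops of the chains, namely at $k_u$ or $k_u-1$. We would choose $k_u$ to be a retained index when one exists.

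The main obstacle is the first step: making rigorous that a pruned segment cannot remove the step at $t_{uv}$ without also removing all rod steps between $t_{uv}$ and its base point. We would try to prove it by induction on the tree representation of Section~\ref{sec:represent}. Each inserted loop has diameter less than $L_E$, so any chronological piece of an erasable segment that leaves the neighborhood of the rod must return the same way. Consequently the retained endpoint $S_{\zeta^-}$ lies on one side of the rod, and the side switches only once, which gives the single index $k_u$.
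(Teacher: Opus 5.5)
\textbf{There is a genuine gap: the step that drives the whole proposition is never proved, and your second step states the key containment backwards.}

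Your $k_u$, the first index whose pruning interval reaches $t_{u,v+1}$, is the paper's $k_u=\inf\{v:\zeta^+_{uv}\ge t_{u,v+1}\}$. But with this definition, chain (1) needs right-openness to propagate: if $\zeta^+_{u,k_u}\ge t_{u,k_u+1}$, then $\zeta^+_{uw}\ge t_{u,w+1}$ for every $w\in\{k_u,\dots,K-1\}$. Without that, your third step cannot exclude disjointness or alternative (C2) for consecutive pairs. (If you instead read $k_u$ as the index after which all intervals are right-opening, the same propagation is needed for chain (2).) You defer this to the ``locality claim'' and an unexecuted induction on the tree representation. In the time variable that claim is trivial, since a pruned segment is a time interval. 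Its spatial meaning is never made precise, so it does not by itself give ``the side switches only once''.

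In your second step, the direction is wrong. If the step at $t_{uv}$ lies strictly inside $\zeta_{uv'}$, i.e.\ $\zeta^-_{uv'}<t_{uv}$, then $\zeta_{uv}\subseteq\zeta_{uv'}$ by Corollary~\ref{cor:pruning_interval_contained}, not $\supseteq$. Your version contradicts chain (2) itself: $t_{u,-K}\in\zeta_{u,k_u-1}$, yet $\zeta_{u,-K}\subsetneq\zeta_{u,k_u-1}$ whenever $k_u\ge -K+2$.

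The correct direction closes the gap, using two elementary facts and no tree induction.
\begin{itemize}
\item[(i)] The loop removed at time $\zeta^+_{uv}$ contains $S_{t_{uv}}$ and ends at $S_{\zeta^+_{uv}}$, so $|S_{t_{uv}}-S_{\zeta^+_{uv}}|\le L_E-1$. Rod points at times in $[t_{u,v+1},t_{uK}]$ are at distance at least $2L_E$ from $S_{t_{uv}}$. Hence $\zeta^+_{uv}\ge t_{u,v+1}$ forces $\zeta^+_{uv}>t_{uK}$.
\item[(ii)] Suppose $w>v$ and $\zeta^+_{uw}\in[t_{uw},t_{u,w+1})$. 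Since $S_{\zeta^-_{uw}}=S_{\zeta^+_{uw}}$ and the rod is self-avoiding, $\zeta^-_{uw}<\rho_u-L$. So the step at $t_{uv}$ lies inside the final pruned segment $\zeta_{uw}$, and the correct containment gives $\zeta^+_{uv}\le\zeta^+_{uw}<t_{u,w+1}\le t_{uK}$, contradicting (i).
\end{itemize}
This is exactly the paper's Lemma~\ref{lem:rod_retention}, and it yields chain (1) by Lemma~\ref{lem:pruning-intervals-laminar-and-order}, with (C2) excluded because $\zeta^+_{uw}>t_{uK}$.

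Fact (ii) alone also gives chain (2). For every $v<k_u$ one has $\zeta^+_{uv}<t_{u,v+1}$, so all these intervals start before the rod and pairwise overlap. By laminarity they are then ordered by their right endpoints.

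A smaller point: a retained step cannot sit at $k_u-1$. Its convention $\zeta^+=+\infty\ge t_{u,k_u}$ would make $k_u-1$ itself right-opening.
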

Figure~\ref{fig:rod_nesting_two_sides} illustrates the left- and right-nesting patterns
of the pruning intervals $\zeta_{uv}$ described in Proposition~\ref{prop:rod_laminar_structure}.
While the intervals are totally ordered by inclusion within each of the two families
$\{\zeta_{uv}: v\le k_u-1\}$ and $\{\zeta_{uv}: v\ge k_u\}$, the relative position between a left interval
and a right interval may vary.

\begin{figure}[htbp]
\centering
\resizebox{0.6885\linewidth}{!}{
\begin{tikzpicture}[x=0.9cm,y=0.85cm,line cap=round]

\def\gap{3.5}
\def\L{2.4}
\def\grow{0.3}
\def\ysep{0.65}

\begin{scope}[shift={(-\gap/2,0)}]

\node at (-2,1.2) {\small Left nesting};

\foreach \j/\lab/\col/\xl/\xr/\tx in
{0/{-K}/blue/-1.70/-1.10/-1.30,
 1/{-K+1}/orange/-2.30/-0.45/-0.65,
 2/{\cdots}/magenta/-2.70/0.20/0.00,
 3/{k_u-1}/red/-3.05/0.85/0.65}{
    \pgfmathsetmacro{\y}{-\ysep*\j }
    \draw[thick,draw=\col] (\xl,\y) -- (\xr,\y);
    \node[left,text=\col] at (-3.75,\y) {$\zeta_{u,\lab}$};

    \fill[fill=\col] (\tx,\y) circle (1.3pt);
    \node[above,text=\col] at (\tx,\y) {${t_{u,\lab}}$};
}

\end{scope}

\begin{scope}[shift={(\gap/2,0)}]

\node at (2,1.2) {\small Right nesting};

\foreach \j/\lab/\col/\xl/\xr/\tx in
{0/{k_u}/blue/0.25/3.55/0.45,
 1/{k_u+1}/orange/0.90/3.30/1.10,
 2/{\cdots}/magenta/1.55/3.05/1.75,
 3/{K}/red/2.20/2.80/2.40}{
    \pgfmathsetmacro{\y}{-\ysep*\j}
    \draw[thick,draw=\col] (\xl,\y) -- (\xr,\y);
    \node[right,text=\col] at (3.75,\y) {$\zeta_{u,\lab}$};

    \fill[fill=\col] (\tx,\y) circle (1.3pt);
    \node[above,text=\col] at (\tx,\y) {$t_{u,\lab}$};
}

\end{scope}

\end{tikzpicture}
}
\caption{Left and right nesting patterns of the pruning intervals $\zeta_{uv}$ for fixed $u$.}
\label{fig:rod_nesting_two_sides}
\end{figure}

Recall the rod segments defined in Definition \ref{def:rod_subsegments}.
The next proposition, together with Proposition~\ref{prop:rod_laminar_structure}, provides the input used later to build return patterns.

\begin{proposition}\label{prop:visit_rod_segment}
    For any sub-interval \(I_0=[I_0^-,I_0^+]\subset I\) and any \(u\in[m]\) such that
    \([\rho_u-L,\rho_u+L]\subset I_0\),
    \begin{align}
        &S_{\zeta^+_{uv}}\in S(R^+_{uv})
        \quad\text{whenever }1\vee (k_u+1)\le v\le K\text{ and }\zeta^+_{uv}<+\infty,\label{eq:visit_rod_segment_+}\\
        &S_{\zeta^-_{uv}}\in S(R^-_{uv})
        \quad\text{whenever }-K\le v\le (-1)\wedge (k_u-1)\text{ and }\zeta^-_{uv}>-\infty.\label{eq:visit_rod_segment_-}
    \end{align}
\end{proposition}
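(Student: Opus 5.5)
The plan is to prove \eqref{eq:visit_rod_segment_+}. The bound \eqref{eq:visit_rod_segment_-} should then follow by the same argument applied to the time-reversed path $\check S$, which exchanges $R^+$ with $R^-$, the order $\succ_+$ with $\succ_-$, and the two nesting chains of Proposition~\ref{prop:rod_laminar_structure}.

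Fix $v$ with $1\vee(k_u+1)\le v\le K$ and $\zeta^+_{uv}<\infty$, and write $t:=t_{uv}=\rho_u+2vL_E$. Recall that $R^+_{uv}$ is the stretch of the rod of length $L_E$ that ends exactly at time $t$. By Remark~\ref{rem:pruning_times_last_segment}, $S(\zeta_{uv})-S_{\zeta^-_{uv}}$ is the final pruned segment of $S[I_0^-,\zeta^+_{uv}]$, so it is $E$-erasable. In particular it is a loop, and
\[
S_{\zeta^+_{uv}}=S_{\zeta^-_{uv}}.
\]
It therefore suffices to locate the retained time $\zeta^-_{uv}<t$ on the rod, namely to show $(2v-1)L_E\le \zeta^-_{uv}-\rho_u\le 2vL_E$.

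\textbf{Step 1: a lower bound from nesting.} Since $v-1\ge k_u$ (and $v\ge1$), Proposition~\ref{prop:rod_laminar_structure}(1) gives $\zeta_{uv}\subset\zeta_{u,v-1}$. I will first exclude equality. If $\zeta_{uv}=\zeta_{u,v-1}$, the single final pruned segment would contain both rod steps at $t_{u,v-1}$ and $t_{uv}$, which are $2L_E$ apart in space along a straight line. I expect this to contradict the diameter bound of Step 2 below. With strict containment, alternative (C1) of Lemma~\ref{lem:pruning-intervals-laminar-and-order} (with $i=t_{u,v-1}$, $j=t$) yields $t_{u,v-1}\le\zeta^-_{uv}$. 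Hence $\zeta^-_{uv}\in[\rho_u+2(v-1)L_E,\,t)$ lies on the rod. For $v=1$ with $k_u\le 0$ the same argument uses $t_{u0}=\rho_u$, which is legitimate because $k_u\le0$ puts $v=0$ in the right chain.

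\textbf{Step 2: an upper bound on the distance $|S_t-S_{\zeta^-_{uv}}|$.} This is the main obstacle. I would analyse the single loop $e\in E$ removed by $\mathsf{Prune}_1$ at time $\zeta^+_{uv}$. This loop is rooted at $S_{\zeta^-_{uv}}$ in the current pruned path: it is the root-level block of the final segment, that is, the last-inserted loop in the tree representation $\mathsf T^*$. The key claim is that the step $(S_{t-1},S_t)$ is itself still present, at root level, in the pruned path just before that removal. If so, $S_t$ lies on a translate of $e$ based at $S_{\zeta^-_{uv}}$, and therefore $|S_t-S_{\zeta^-_{uv}}|\le\operatorname{diam}(e)\le L_E-1$. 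Since the rod is a straight line in direction $x_0$, this gives $t-\zeta^-_{uv}\le L_E-1$. Hence $\zeta^-_{uv}-\rho_u\ge(2v-1)L_E$, and so $S_{\zeta^+_{uv}}=S_{\zeta^-_{uv}}\in S(R^+_{uv})$.

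To justify the key claim I would use the definition of $\zeta^+$. The step at $t$ is retained in $\mathsf{Prune}(S[I_0^-,\zeta^+_{uv}-1],E)$ and pruned one time later. By Proposition~\ref{prop:stage}, the only new removal at time $\zeta^+_{uv}$ starts with one loop completed at the end; any cascade then consists of further loops closing successively at the newly exposed endpoint. A step that was retained at time $\zeta^+_{uv}-1$ can only disappear by belonging to one of these loops. If $t$ were removed by a later loop of the cascade rather than by the outermost one, I would replace $t$ by that loop's structure and use the laminar ordering of Lemma~\ref{lem:pruning-intervals-laminar-and-order} and Corollary~\ref{cor:pruning_interval_contained}. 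The aim is to show that the loop removing $t$ is based at a retained rod point between $t_{u,v-1}$ and $t$, which again gives the same distance bound.

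I expect making this cascade analysis rigorous to be the delicate point, so I would first try to prove directly that the removal at time $\zeta^+$ is a single-loop event at the root of the final segment.
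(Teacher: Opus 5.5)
Your mechanism is the right one, and it is the paper's: the steps erased at time $\zeta^+_{uv}$ form a single translated loop $e\in E$ of diameter at most $L_E-1$, and a long straight rod stretch cannot lie on it. The gap is that you never establish this single-loop fact. The key claim of Step~2 is only announced, the cascade analysis you sketch for it is not carried out, and the exclusion of $\zeta_{uv}=\zeta_{u,v-1}$ in Step~1 is deferred to it. Note also that the diameter bound cannot be applied to the final pruned segment $S(\zeta_{uv})-S_{\zeta^-_{uv}}$ itself. That segment is $E$-erasable, hence closed, but it is generally not a simple loop, and nested insertions can make its diameter much larger than $L_E$.

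\textbf{The missing observation: online pruning never cascades.} Let $P:=\mathsf{Prune}(S[I_0^-,\zeta^+_{uv}-1],E)$. Since $\mathsf{Prune}(P,E)=P$, no subpath of $P$ is a translate of a loop in $E$; otherwise $\mathsf{Prune}_1$ would strictly shorten $P$. By Proposition~\ref{prop:stage}, $\mathsf{Prune}(S[I_0^-,\zeta^+_{uv}],E)=\mathsf{Prune}(P\circ_d(S_{\zeta^+_{uv}-1},S_{\zeta^+_{uv}}),E)$. In this extended path an occurrence can only end at the new last index, and it is unique by simplicity. Erasing it leaves a prefix of $P$, which again contains no occurrence.

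Hence exactly one loop $e$ is erased at time $\zeta^+_{uv}$. It is based at the endpoint of the new pruned path, namely $S_{\zeta^-_{uv}}=S_{\zeta^+_{uv}}$. It carries every step that is retained at time $\zeta^+_{uv}-1$ and pruned at time $\zeta^+_{uv}$, and by the definition of $\zeta^+$ this includes the step at $t$. This gives $|S_t-S_{\zeta^-_{uv}}|\le\operatorname{diam}(e)\le L_E-1$. In the equality case of Step~1, it also puts both rod steps at $t_{u,v-1}$ and $t$ on $e$, forcing $2L_E\le L_E-1$. When $\zeta^+_{u,v-1}>\zeta^+_{uv}$, the inequality $\zeta^-_{uv}\ge t_{u,v-1}$ is immediate from the definition of $\zeta^-$; this covers the case where the step at $t_{u,v-1}$ is never pruned and (C1) does not literally apply.

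With this inserted, your proof of \eqref{eq:visit_rod_segment_+} is complete. It differs from the paper's only in how $\zeta^-_{uv}$ is located on the rod. You use alternative (C1) against $\zeta_{u,v-1}$, whereas the paper first shows that all rod steps in $(t_{u,v-1},t_{uv})$ are still retained at time $\zeta^+_{uv}-1$.

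Separately, \eqref{eq:visit_rod_segment_-} does not follow by applying the $+$ case to $\check S$. Chronological pruning by a fixed family $E$, which is not assumed closed under reversal, does not commute with time reversal, while $\zeta^\pm$ are defined through the forward pruning of $S(I_0)$. A direct analogue is needed.
\begin{itemize}
\item For $v<k_u$ one has $t_{uv}\le\zeta^+_{uv}<t_{u,v+1}$.
\item Every rod step $l\in(t_{uv},\zeta^+_{uv})$ is still retained at time $\zeta^+_{uv}-1$. Otherwise $\zeta^+(l,I_0)<\zeta^+_{uv}$ lies on the rod, self-avoidance puts $\zeta^-(l,I_0)$ before the rod, and Lemma~\ref{lem:pruning-intervals-laminar-and-order} (with (C1) excluded) gives $\zeta^+_{uv}\le\zeta^+(l,I_0)$, a contradiction.
\item So these steps, the step at $t_{uv}$, and the new step all lie on the single erased loop. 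Hence $\zeta^+_{uv}-t_{uv}\le L_E-1$, and $S_{\zeta^-_{uv}}=S_{\zeta^+_{uv}}\in S(R^-_{uv})$.
\end{itemize}
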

We also record the following consequences of Lemma~\ref{lem:pruning-intervals-laminar-and-order} and Proposition~\ref{prop:visit_rod_segment}.
In particular, the strict inequalities in the chains below are ensured by Proposition~\ref{prop:visit_rod_segment}.

\begin{corollary}\label{cor:one-sided-zeta-chain}
The following hold.
\begin{enumerate}
\item Let \(u\in[m]\), \(s>\rho_u+L\), and
\(I_0=[t_{u0},s]\subset I\). If \(\zeta^+_{u1}<+\infty\), then
\[
\zeta^+_{u1}>\zeta^+_{u2}>\cdots>\zeta^+_{uK}.
\]
\item Let \(u\in[m]\), \(s<\rho_u-L\), and
\(I_0=[s,t_{u,-1}]\subset I\). If \(\zeta^-_{u,-1}>-\infty\), then
\[
\zeta^-_{u,-1}<\zeta^-_{u,-2}<\cdots<\zeta^-_{u,-K}.
\]
\end{enumerate}
\end{corollary}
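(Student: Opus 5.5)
The plan is to derive part (1) from Proposition~\ref{prop:rod_laminar_structure} and Proposition~\ref{prop:visit_rod_segment}, applied to the one-sided window \(I_0=[t_{u0},s]\). Part (2) should then follow from part (1) by the time reversal \(\check S_n=S_{-n}\) already used in Definition~\ref{def:return_pattern_subset}. That reversal exchanges \(R^+_{uv}\) with \(R^-_{(m+1-u)v}\) and \(\zeta^+\) with \(\zeta^-\).

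There is an obstacle at the outset. Both propositions assume \([\rho_u-L,\rho_u+L]\subset I_0\), but here \(I_0^-=t_{u0}=\rho_u\). So the first step is to check that their proofs only use the right half \(S[\rho_u,\rho_u+L]\) of the rod. I also need to show that in this window the splitting index can be taken to be \(k_u\le 1\). For \(v\le 0\) the steps \(t_{uv}\) do not lie in \((I_0^-,I_0^+]\), so only the right family \(\{\zeta_{uv}:1\le v\le K\}\) matters. I will argue directly that this family is a single nested chain \(\zeta_{uK}\subset\cdots\subset\zeta_{u1}\):
\begin{enumerate}
\item By Remark~\ref{rem:pruning_times_last_segment}, \(S(\zeta(t_{uv}))-S_{\zeta^-_{uv}}\) is the final pruned segment of \(S[I_0^-,\zeta^+_{uv}]\). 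It is \(E\)-erasable, hence a closed path, so \(S_{\zeta^+_{uv}}=S_{\zeta^-_{uv}}\).
\item Since \(I_0\) starts at \(\rho_u\), we have \(\zeta^-_{uv}\in[\rho_u,t_{uv})\). The path \(S[\rho_u,\rho_u+L]\) is a straight, injective segment.
\item Therefore the return to \(S_{\zeta^-_{uv}}\) must happen after the rod ends, i.e.\ \(\zeta^+_{uv}>\rho_u+L\).
\item Hence for \(v<v'\) the intervals \(\zeta_{uv}\) and \(\zeta_{uv'}\) share the time \(\rho_u+L\) in their interiors, so they are not disjoint. Lemma~\ref{lem:pruning-intervals-laminar-and-order} then leaves only containment.
\item Among (C1)--(C3), alternative (C2) would require \(\zeta^+_{uv}\le t_{uv'}-1<\rho_u+L\), which contradicts step 3. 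So \(\zeta_{uv'}\subseteq\zeta_{uv}\).
\end{enumerate}
With \(v=1\) and \(v'=v\), the hypothesis \(\zeta^+_{u1}<+\infty\) makes every \(\zeta^+_{uv}\) finite for \(v\ge1\). The containments then give the weak chain \(\zeta^+_{u1}\ge\zeta^+_{u2}\ge\cdots\ge\zeta^+_{uK}\).

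For strictness, the plan is to invoke \eqref{eq:visit_rod_segment_+}, legitimately applicable once \(k_u\le1\) is established in this window. It gives \(S_{\zeta^+_{uv}}\in S(R^+_{uv})\) for all \(1\le v\le K\). The rod sub-segments \(S(R^+_{uv})\), \(v=1,\ldots,K\), are pairwise disjoint sets of sites, being separated pieces of one straight line. Hence the sites \(S_{\zeta^+_{uv}}\) are pairwise distinct, so the times \(\zeta^+_{uv}\) are distinct, and the weak chain becomes strict.

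I expect the main obstacle to be justifying \eqref{eq:visit_rod_segment_+} in the one-sided window, since its statement assumes the two-sided containment. I would first try to re-run its proof using only the right half of the rod: the base point \(\zeta^-_{uv}\) lies on the rod, and a chain of loops of diameter \(<L_E\), each nested within the previous pruning interval, must pass close to \(R^+_{uv}\) when it closes up. Failing that, I would enlarge the window to \([\rho_u-L,s]\) to apply the proposition as stated, and compare the two prunings using the staging property (Proposition~\ref{prop:stage}).
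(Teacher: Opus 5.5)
\textbf{Gap: part (2) does not follow from part (1) by time reversal.}

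The reversal $\check S_n=S_{-n}$ does carry right rod sub-segments to left ones. It does not, however, exchange $\zeta^+$ with $\zeta^-$. These times are defined through the chronological pruning $\Prune(S[I_0^-,i],E)$, and pruning the reversed path is not the reversal of pruning the path. For example, take the reversal-symmetric family $E=\{(0,x_0,0),(0,-x_0,0)\}$. Forward pruning of $(x,x+x_0,x,x+x_0)$ removes its first two steps. Pruning the reversed path removes the reversals of its last two steps.

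The paper uses time reversal only for \eqref{eq:GRet_bound2}, where just the law of the walk enters. The left-hand pruning statements are proved by an ``analogous'' argument, not by reflection. The pruning geometry of part (2) also differs from that of part (1): in part (2) the closing times $\zeta^+_{uv}$ lie on the rod and the base points $\zeta^-_{uv}$ lie before it.

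\textbf{Direct argument for part (2).} Work in $I_0=[s,t_{u,-1}]$.
\begin{itemize}
\item A pruned step $t_{uv}$ with $v\le -1$ has $\zeta^+_{uv}\in[t_{uv},t_{u,-1}]$, which lies on the straight rod. Since $S_{\zeta^-_{uv}}=S_{\zeta^+_{uv}}$, injectivity of the rod forces $\zeta^-_{uv}<\rho_u-L$.
\item Hence every such interval contains $\rho_u-L$ in its interior. Alternative (C1) of Lemma~\ref{lem:pruning-intervals-laminar-and-order} would need $t_{uv'}\le\zeta^-_{uv}$, so it is excluded. This gives $\zeta(t_{uv'})\subseteq\zeta(t_{uv})$ for $v'<v\le-1$, i.e.\ the weak chain in $\zeta^-$.
\item For strictness, the single loop removed at time $\zeta^+_{uv}$ contains both $S_{t_{uv}}$ and $S_{\zeta^+_{uv}}$, so $|S_{t_{uv}}-S_{\zeta^+_{uv}}|\le L_E-1$ and hence $\zeta^+_{uv}\in[t_{uv},t_{uv}+L_E-1]$. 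These windows are disjoint for different $v$, so the rod sites $S_{\zeta^-_{uv}}=S_{\zeta^+_{uv}}$ are distinct.
\end{itemize}

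\textbf{Part (1).} This matches the paper's route: laminarity for the nesting and Proposition~\ref{prop:visit_rod_segment} for strictness. Your observation that every $\zeta_{uv}$ with $v\ge1$ contains $\rho_u+L$ in its interior, which rules out (C2), is a clean way to get the nesting in the one-sided window. Two small repairs are needed.
\begin{itemize}
\item Finiteness of $\zeta^+_{uv}$ for $v\ge2$ should come from $t_{uv}\in(\zeta^-_{u1},\zeta^+_{u1})$: the step then lies in the final pruned segment of $S[t_{u0},\zeta^+_{u1}]$. Your steps 4--5 already assume both steps are pruned, so they cannot supply this.
\item $k_u\le 1$ only gives \eqref{eq:visit_rod_segment_+} for $v\ge2$. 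Moreover, $k_u$ is not even defined through $v\le0$ in this window. Your proposed re-run handles all $v\ge1$ at once. Since $\zeta^-_{uv}\in[\rho_u,t_{uv})$ lies on the rod and $|S_{t_{uv}}-S_{\zeta^-_{uv}}|=|S_{t_{uv}}-S_{\zeta^+_{uv}}|\le L_E-1$, you get $\zeta^-_{uv}\in[t_{uv}-L_E+1,t_{uv}-1]\subset R^+_{uv}$. These ranges are disjoint across $v$, which gives strictness directly.
\end{itemize}
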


\begin{corollary}\label{cor:zeta-interlacing}
For any sub-interval \(I_0=[I_0^-,I_0^+]\subset I\) and any indices
\(u<u'\) in \([m]\) such that \([\rho_u-L,\rho_{u'}+L]\subset I_0\),
the following hold:
\begin{enumerate}
\item If $\zeta^+_{uv}<+\infty\text{ and }\zeta^+_{uv}\ge t_{u',K}$ for some $-K\le v\le K$, then
\[
\zeta^+_{uv}\ge \zeta^+_{u',k_{u'}}>\zeta^+_{u',k_{u'}+1}>\cdots> \zeta^+_{u',K}.
\]
\item If $\zeta^-_{u',v}>-\infty\text{ and }\zeta^-_{u',v}<t_{u,-K}$ for some $-K\le v\le K$, then
\[
\zeta^-_{u',v}\le \zeta^-_{u,k_u-1}<\zeta^-_{u,k_u-2}<\cdots< \zeta^-_{u,-K}.
\]
\end{enumerate}
\end{corollary}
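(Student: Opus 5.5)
The plan is to derive both parts from the laminar structure of Lemma~\ref{lem:pruning-intervals-laminar-and-order} and the chains of Proposition~\ref{prop:rod_laminar_structure}, using Proposition~\ref{prop:visit_rod_segment} for the strict inequalities. We prove part~1 in detail; part~2 follows from the same argument applied to the time-reversed path $\check S$, which swaps $\zeta^+$ with $\zeta^-$ and the right rod sub-segments with the left ones.

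\textbf{Part 1.} Fix $u<u'$ and suppose $\zeta^+_{uv}<+\infty$ with $\zeta^+_{uv}\ge t_{u',K}$.

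\emph{Step 1: containment of $\zeta_{uv}$.} First locate $\zeta^-_{uv}$. Since $t_{uv}\le\rho_u+L<\rho_{u'}-L$, the interval $\zeta_{uv}=[\zeta^-_{uv},\zeta^+_{uv}]$ starts at or before $t_{uv}<t_{u',k_{u'}}$. It ends at or after $t_{u',K}\ge t_{u',k_{u'}}$. So $\zeta_{uv}$ contains the time $t_{u',k_{u'}}$, strictly after its left endpoint.

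\emph{Step 2: the step at $t_{u',k_{u'}}$ is pruned before time $\zeta^+_{uv}$.} Use Remark~\ref{rem:pruning_times_last_segment}. The path $S(\zeta_{uv})$ is the final pruned segment in the decomposition of $S[I_0^-,\zeta^+_{uv}]$. Every step inside this segment, in particular the step ending at $t_{u',k_{u'}}$, is therefore pruned in $\mathsf{Prune}(S[I_0^-,\zeta^+_{uv}],E)$. By the definition of $\zeta^+$ this gives $\zeta^+_{u',k_{u'}}\le\zeta^+_{uv}$, which is the first inequality. Corollary~\ref{cor:pruning_interval_contained}, applied with $I'=[I_0^-,\zeta^+_{uv}]$, gives the same conclusion. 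One must check that the pruning time in the window $I_0$ agrees with the one in the truncated window. This holds because pruning is performed in stages (Proposition~\ref{prop:stage}), so $\zeta^\pm(\cdot,I_0)$ only depend on $S[I_0^-,\zeta^+]$.

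\emph{Step 3: the chain.} Proposition~\ref{prop:rod_laminar_structure}(1) gives the nested chain $\zeta_{u'K}\subset\cdots\subset\zeta_{u',k_{u'}}$. Each $\zeta^+_{u'v'}$ in it is finite, because the nested intervals lie inside $\zeta_{u',k_{u'}}$, which has $\zeta^+_{u',k_{u'}}<\infty$ by Step 2. Inclusion yields the non-strict inequalities $\zeta^+_{u',k_{u'}}\ge\zeta^+_{u',k_{u'}+1}\ge\cdots\ge\zeta^+_{u',K}$.

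\emph{Step 4: strictness.} For $v'\ge 1\vee(k_{u'}+1)$, Proposition~\ref{prop:visit_rod_segment} gives $S_{\zeta^+_{u'v'}}\in S(R^+_{u'v'})$. The right rod sub-segments $R^+_{u'v'}$ are pairwise disjoint for distinct $v'$, and they lie on a straight rod, so $S(R^+_{u'v'})$ are disjoint sets of sites. Hence consecutive endpoints $\zeta^+_{u'v'}$ sit at distinct sites, and so cannot coincide.

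The indices $v'\le 0$ in the chain, which occur when $k_{u'}\le 0$, need separate care. There I would use the simple-loop structure: two nested intervals with the same right endpoint fall under alternative (C3) or (C1) of Lemma~\ref{lem:pruning-intervals-laminar-and-order}. By Lemma~\ref{lem:same-forward-pruning-time}, equal right endpoints force equal intervals. Equal intervals would place two rod points separated by $2L_E$, larger than any loop diameter, inside one last-inserted loop of $E$; this contradicts the definition of $L_E$. Disjointness of sites then separates the remaining endpoints.

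\textbf{Main obstacle.} The delicate step is Step 4 near $v'=k_{u'}$ and for $v'\le 0$. There Proposition~\ref{prop:visit_rod_segment} does not apply directly, and one must argue that equal forward pruning times would force two distinct rod times into a single loop of $E$. I would try first to handle this via Lemma~\ref{lem:same-forward-pruning-time} combined with the diameter bound $L_E$.
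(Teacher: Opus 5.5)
Part~1 is correct and follows the route the paper indicates. You show that the step at $t_{u',k_{u'}}$ lies in the final pruned segment $S(\zeta_{uv})$, and then use Proposition~\ref{prop:rod_laminar_structure}(1) for the weak chain.

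Your diameter argument for strictness is the cleanest option, because it covers every consecutive pair. That includes $(k_{u'},k_{u'}+1)$ and the indices $v'\le 0$, where Proposition~\ref{prop:visit_rod_segment} says nothing. Phrase it through first-pruning times rather than equal intervals. Suppose $\zeta^+_{u'v'}=\zeta^+_{u',v'+1}=T$. Then both steps are first removed when the path is extended from time $T-1$ to $T$. A one-step extension of an $E$-pruned path removes exactly one loop of $E$, so that loop would contain $S_{t_{u'v'}}$ and $S_{t_{u',v'+1}}$, which are at distance $2L_E$. This is impossible. Equality of the intervals alone would not suffice, since $S(\zeta)$ generally consists of many inserted loops.

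The genuine gap is part~2. Loop pruning is chronological and not invariant under time reversal. Consequently $\zeta^\pm(\cdot,I_0)$ and $k_u$ for $S$ are not mirror images of the corresponding objects for $\check S$. For example, take $E=\{(0,y,0),(0,-y,0)\}$ with $y$ a unit vector and the path $(0,y,0,y)$. Forward pruning removes the first two steps and keeps the third. Pruning the reversed path $(y,0,y,0)$ instead keeps the reversal of the first step. Consistently with this, the two halves of Proposition~\ref{prop:rod_laminar_structure} are proved by different arguments, and $k_u$ is defined through $\zeta^+$ only. So part~2 needs its own proof, though it is short:
\begin{itemize}
\item \emph{First inequality.} Since $\zeta^-_{u',v}<t_{u,-K}$ and $\zeta^+_{u',v}\ge t_{u',v}>\rho_u+L$, every step at $t_{uw}$ with $|w|\le K$ lies in $(\zeta^-_{u',v},\zeta^+_{u',v}]$. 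Hence it is pruned, and $\zeta_{uw}$ overlaps $\zeta_{u',v}$ on a nondegenerate interval ending at $t_{uw}$. Apply Lemma~\ref{lem:pruning-intervals-laminar-and-order} with $i=t_{uw}<j=t_{u',v}$. Alternative \textup{(C1)} is excluded because $i>\zeta^-(j)$, so $\zeta_{uw}\subseteq\zeta_{u',v}$. In particular $\zeta^-_{u',v}\le\zeta^-_{u,k_u-1}$.
\item \emph{Weak chain.} This follows from Proposition~\ref{prop:rod_laminar_structure}(2).
\item \emph{Strictness.} The mirror of your part~1 argument is unavailable: Lemma~\ref{lem:same-forward-pruning-time} concerns equal $\zeta^+$, and equal $\zeta^-$ does not force a common loop. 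Instead, for $w<k_u$ the definition of $k_u$ gives $\zeta^+_{uw}\in[t_{uw},t_{u,w+1})$, which is a rod time. Also $S_{\zeta^-_{uw}}=S_{\zeta^+_{uw}}$, because the final pruned segment is $E$-erasable and hence returns to its start. Distinct rod times give distinct sites by self-avoidance. Therefore the $\zeta^-_{uw}$ with $w<k_u$ are pairwise distinct. For $w\le -1$ you could alternatively invoke \eqref{eq:visit_rod_segment_-}.
\end{itemize}
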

\subsubsection{\texorpdfstring{Proofs of the rod-point pruning profiles}{Proofs of the rod-point pruning profiles}}
For the proof of Proposition~\ref{prop:rod_laminar_structure}, we will need the following lemma.

\begin{lemma}\label{lem:rod_retention}
Fix a sub-interval \(I_0=[I_0^-,I_0^+]\subset I\) and \(u\in[m]\) such that \([\rho_u-L,\rho_u+L]\subset I_0\).
Suppose that, for some \(v_0\in\{-K,\ldots,K-1\}\), \(\zeta^+_{u v_0}\ge t_{u,v_0+1}\).
Then
\begin{equation}\label{eq:rod-retention-forward-times}
\zeta^+_{uw}>t_{uK}\qquad\text{for all }w\in\{v_0,\ldots,K-1\},
\end{equation}
and consequently
\begin{equation}\label{eq:rod-retention-forward-nesting}
\zeta_{u,K}\subset \zeta_{u,K-1}\subset\cdots\subset \zeta_{u v_0}.
\end{equation}
\end{lemma}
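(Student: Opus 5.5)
The plan is to exploit one fact: a rod is a straight run in direction \(x_0\), while every loop in \(E\) has diameter at most \(L_E-1\). So no single pruning step can remove a window of the current pruned path that contains a straight run of \(L_E\) or more rod steps. Throughout I use Proposition~\ref{prop:stage} to view pruning dynamically: at each time \(i\), \(\mathsf{Prune}(S[I_0^-,i],E)\) is obtained from \(\mathsf{Prune}(S[I_0^-,i-1],E)\) by appending \(S_i\) and repeatedly deleting loop occurrences at the tail. Each deleted occurrence is a terminal window of the current pruned path, of length \(|e|\) for some \(e\in E\). The proof has three steps.

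\textbf{Step 1 (no pruning inside the rod after a delay).} I claim that at any time \(i\in(t_{u,v_0+1},\rho_u+L]\) no step of the rod lying after \(t_{uv_0}\) is pruned. Suppose some loop occurrence is deleted at such a time \(i\).
\begin{itemize}
\item The deleted window ends at the current endpoint. By the inductive hypothesis, the tail of the current pruned path consists of the rod steps from \(t_{uv_0}\) (which survives by the hypothesis \(\zeta^+_{uv_0}\ge t_{u,v_0+1}\)) through \(i\), all pointing in direction \(x_0\).
\item These rod steps span at least \(t_{u,v_0+1}-t_{uv_0}=2L_E\) consecutive positions of that tail.
\item A window consisting only of \(x_0\)-steps cannot close a loop. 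On the other hand, any window that reaches back before \(t_{uv_0}\) contains a straight segment of length at least \(2L_E>\operatorname{diam}(e)\).
\end{itemize}
Either way we have a contradiction. Hence no deletions occur at these times, and all rod steps at times \(t_{uw}\), \(v_0\le w\le K\), are still retained at time \(\rho_u+L\ge t_{uK}\).

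\textbf{Step 2 (forward times, \eqref{eq:rod-retention-forward-times}).} From Step 1 I get \(\zeta^+_{uw}>t_{uK}\) for every \(w\in\{v_0,\dots,K-1\}\), and even \(\zeta^+_{uw}>\rho_u+L\). For \(w=v_0\) this uses the hypothesis to reach time \(t_{u,v_0+1}\) and then Step 1 afterwards. For \(w>v_0\), step \(t_{uw}\) is created at a time when Step 1 already applies, so it also survives the whole rod.

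\textbf{Step 3 (nesting, \eqref{eq:rod-retention-forward-nesting}).} Fix \(v_0\le w<w'\le K\) with \(\zeta^+_{uw}<\infty\), where the case \(\zeta^+_{uw}=\infty\) is handled by the conventions of Definition~\ref{pruning} as in the proof of Lemma~\ref{lem:pruning-intervals-laminar-and-order}.
\begin{itemize}
\item At time \(\zeta^+_{uw}\) the deleted window contains step \(t_{uw}\), hence all retained steps after it.
\item If step \(t_{uw'}\) were still retained, then by Step 1 the straight block between \(t_{uw}\) and \(t_{uw'}\) would also still be present. That block has length at least \(2L_E\), which exceeds the loop diameter, a contradiction.
\item Hence \(\zeta^+_{uw'}\le\zeta^+_{uw}\). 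Moreover \(t_{uw'}\in(t_{uw},\zeta^+_{uw}]\subset\zeta_{uw}\) is an interior point, so the two intervals are not disjoint up to endpoints.
\item By Lemma~\ref{lem:pruning-intervals-laminar-and-order} they are nested. The ordering of forward endpoints excludes \(\zeta_{uw}\subsetneq\zeta_{uw'}\), so \(\zeta_{uw'}\subset\zeta_{uw}\).
\end{itemize}
Chaining these inclusions gives \eqref{eq:rod-retention-forward-nesting}.

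The main obstacle is the bookkeeping in Step 1. One must make rigorous that the terminal window of the current pruned path really contains the straight rod block. This requires that no earlier deletion, for instance one triggered just after time \(t_{uv_0}\) using pre-rod steps, has already removed part of that block. I would handle it by an induction on \(i\) along the rod, with the hypothesis \(\zeta^+_{uv_0}\ge t_{u,v_0+1}\) supplying the base case. Within distance \(L_E\) after \(t_{uv_0}\), the only possible deletions involve windows reaching before \(t_{uv_0}\), and those would prune \(t_{uv_0}\) itself before time \(t_{u,v_0+1}\), which the hypothesis forbids.
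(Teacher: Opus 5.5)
Your proof is correct, but it takes a different route from the paper's.

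\textbf{The paper's route.} The paper argues statically about where a pruning time can lie. If the step at $t_{uv_0}$ is erased at time $\zeta^+_{uv_0}$, the erased loop passes through $S_{t_{uv_0}}$ and is based at $S_{\zeta^+_{uv_0}}$, so $|S_{t_{uv_0}}-S_{\zeta^+_{uv_0}}|\le L_E-1$. Every rod point at times in $[t_{u,v_0+1},t_{uK}]$ is at distance at least $2L_E$, so the hypothesis pushes $\zeta^+_{uv_0}$ beyond $t_{uK}$. For $w>v_0$ the paper argues by contradiction: $\zeta^+_{uw}<t_{u,w+1}$ would force $\zeta^-_{uw}<t_{uv_0}$ by self-avoidance of the rod. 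Lemma~\ref{lem:pruning-intervals-laminar-and-order} would then give $\zeta_{uv_0}\subset\zeta_{uw}$, contradicting the first step. The nesting is again read off from that lemma.

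\textbf{Your route.} You run the staged pruning forward in time and show that nothing at all is erased at times in $[t_{uv_0},\rho_u+L]$. Before $t_{u,v_0+1}$, any tail loop would have to contain the step at $t_{uv_0}$, which the hypothesis forbids. Afterwards, the straight tail is longer than any loop in $E$. This handles all $w$ at once without the laminar lemma. It also yields the slightly stronger bound $\zeta^+_{uw}>\rho_u+L$ for every $w\in\{v_0,\ldots,K\}$. The paper's version is shorter, and its contradiction pattern is the one reused in the proof of Proposition~\ref{prop:visit_rod_segment}.

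\textbf{A justification to repair in Step~3.} Since $\zeta^+_{uw}>\rho_u+L$, Step~1 says nothing about the path at that time. So it cannot be cited to claim that the straight block between $t_{uw}$ and $t_{uw'}$ is still present. You do not need the block. Suppose step $t_{uw'}$ were still retained when the tail window containing step $t_{uw}$ is erased. That window, a translate of some $e\in E$, would then contain both $S_{t_{uw}}$ and $S_{t_{uw'}}$, whose distance is $2(w'-w)L_E>\operatorname{diam}(e)$. Hence step $t_{uw'}$ has already been erased, and $\zeta^+_{uw'}\le\zeta^+_{uw}$.

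To exclude $\zeta_{uw}\subsetneq\zeta_{uw'}$, the cleanest argument is the following. Alternative \textup{(C2)} of Lemma~\ref{lem:pruning-intervals-laminar-and-order} would require $\zeta^+_{uw}\le t_{uw'}-1$, which is incompatible with $\zeta^+_{uw}>\rho_u+L$.
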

\begin{proof}
We first prove \(\zeta^+_{u v_0}>t_{uK}\).
Assume that the step at time $t_{u v_0}$ is pruned in $\mathsf{Prune}(S(I_0),E)$, and let $\zeta^+_{u v_0}<\infty$ be its pruning time.
Then at time $\zeta^+_{u v_0}$, a loop $e\in E$ containing the step $(S_{t_{u v_0}-1},S_{t_{u v_0}})$ is pruned.
In particular, recalling the definition of $L_E$ in \eqref{def:LE}, we have
\begin{equation}\label{eq:close_to_pruning_time}
|S_{t_{u v_0}}-S_{\zeta^+_{u v_0}}|\le L_E-1.
\end{equation}

On the other hand, by the definition of rod paths, for every integer $l$ with $t_{u,v_0+1}\le l\le t_{uK}$,
\begin{equation}\label{eq:far_along_rod}
|S_{t_{u v_0}}-S_l|\ge 2L_E\,|x_0|\ge 2L_E.
\end{equation}
Comparing \eqref{eq:close_to_pruning_time} with \eqref{eq:far_along_rod} shows that $\zeta^+_{u v_0}$ cannot belong to $[t_{u,v_0+1},t_{uK}]$.
Since $\zeta^+_{u v_0}\ge t_{u,v_0+1}$ by assumption, we conclude that $\zeta^+_{u v_0}>t_{uK}$.

Now fix \(w\in\{v_0+1,\ldots,K-1\}\). We prove \eqref{eq:rod-retention-forward-times} for this \(w\); by the preceding argument with \(w\) in place of \(v_0\), it suffices to show \(\zeta^+_{uw}\ge t_{u,w+1}\).
If \(\zeta^+_{uw}<t_{u,w+1}\), then the self-avoidance of the rod segment \(S[t_{u v_0},t_{u,w+1}]\) forces \(\zeta^-_{uw}<t_{u v_0}\).
The intervals \(\zeta_{u v_0}\) and \(\zeta_{uw}\) overlap. By Lemma~\ref{lem:pruning-intervals-laminar-and-order}, applied to the times \(t_{u v_0}<t_{uw}\), alternative \textup{(C1)} is impossible because \(\zeta^-_{uw}<t_{u v_0}\). Hence \(\zeta_{u v_0}\subset \zeta_{uw}\), and therefore
\[
\zeta^+_{u v_0}\le \zeta^+_{uw}<t_{u,w+1}\le t_{uK},
\]
contradicting \(\zeta^+_{u v_0}>t_{uK}\).
Thus \(\zeta^+_{uw}\ge t_{u,w+1}\), and hence \eqref{eq:rod-retention-forward-times} holds for this \(w\).

The nesting assertion \eqref{eq:rod-retention-forward-nesting} follows directly from Lemma~\ref{lem:pruning-intervals-laminar-and-order}, applied successively.
\end{proof}
\begin{proof}[Proof of Proposition \ref{prop:rod_laminar_structure}]
Define
\[
k_u:=\inf\Bigl\{v\in\{-K,\dots,K-1\}:\ \zeta^+_{uv}\ge t_{u,v+1}\Bigr\},
\]
with the convention that $k_u:=K$ if the set is empty.

\medskip
\noindent\textbf{(1) Nesting for $k_u\le v\le K$.}
If \(k_u=K\), there is nothing to prove.
Suppose \(k_u<K\). By definition of \(k_u\),
\(\zeta^+_{u,k_u}\ge t_{u,k_u+1}\).
Lemma~\ref{lem:rod_retention}, applied with \(v_0=k_u\), gives the desired nesting.

\medskip

\noindent\textbf{(2) Nesting for $-K \le v < k_u$.}
For any $-K\le v<k_u$, we have $t_{uv}\le \zeta^+_{uv}<t_{u,v+1}$. Using again the self-avoidance of the rod paths, it follows that
\(
\zeta^-_{uv}< t_{u,-K}.
\)
The nesting conclusion then follows from Lemma~\ref{lem:pruning-intervals-laminar-and-order}.
\end{proof}
\begin{proof}[Proof of Proposition \ref{prop:visit_rod_segment}]
We only prove \eqref{eq:visit_rod_segment_+}; the proof of \eqref{eq:visit_rod_segment_-} is analogous.

We claim that for every \(v\) with \(1\vee(k_u+1)\le v\le K\), and every \(l\in (t_{u,v-1},t_{uv})\), we have
\(
\zeta^+(l,I_0)\ge t_{uv}.
\)
Indeed, if not, then \(\zeta^+(l,I_0)<t_{uv}\). Exactly as in the proof of Lemma~\ref{lem:rod_retention}, using the self-avoidance of the rod path and Lemma~\ref{lem:pruning-intervals-laminar-and-order}, this implies
\[
\zeta^+_{u,v-1}\le \zeta^+(l,I_0)<t_{uv},
\]
which contradicts \eqref{eq:rod-retention-forward-times}, applied with \(v_0=k_u\), since \(v-1\ge k_u\).

Fix \(v\) with \(1\vee(k_u+1)\le v\le K\) and \(\zeta^+_{uv}<+\infty\).
By this claim and Lemma~\ref{lem:pruning-intervals-laminar-and-order}, for every
\(l\in (t_{u,v-1},t_{uv})\) we have
\(\zeta_{uv}\subset \zeta(l,I_0).
\)
In particular, all these steps \((S_{l-1},S_l)\) are retained in
\(\mathsf{Prune}(S[I_0^-,\zeta^+_{uv}-1],E)\).

Now suppose, for contradiction, that \(S_{\zeta^+_{uv}}\notin S(R^+_{uv})\). Since
\(
S_{\zeta^-_{uv}}=S_{\zeta^+_{uv}}\notin S(R^+_{uv}),
\)
we must have \(\zeta^-_{uv}\le \rho_u-L_E\). It then follows from
Corollary~\ref{cor:pruning_interval_contained} that, when extending the path from
\(S[I_0^-,\zeta^+_{uv}-1]\) to \(S[I_0^-,\zeta^+_{uv}]\), all steps
\[
(S_{l-1},S_l),\qquad \rho_u-L_E\le l<\rho_u,
\]
are pruned. Consequently, the pruned loop created at that time contains all these
steps. Hence its diameter is at least \(L_E = 1 + \max_{e \in E} \operatorname{diam}(e)\), contradicting the fact that this pruned
loop belongs to \(E\). This proves \eqref{eq:visit_rod_segment_+}.
\end{proof}

\subsection{Proof of Theorem~\ref{thm:many-pruned-implies-global-pattern}}
We shall construct the return patterns that satisfy the requirements in
Theorem~\ref{thm:many-pruned-implies-global-pattern}. 
For this, we first record a dichotomy for the event
\(\rho_i\notin \mathrm{Cut}^{\mathrm{rod}}_L(I)\).
It follows easily from Proposition~\ref{prop:stage} that a sufficient condition for \(\rho_i\) to be a rod \(E\)-cut
point is the following: for every integer interval \(R=[R^-,R^+]\supset[\rho_i-L,\rho_i+L]\),
one has (recall \(L=(2K+1)L_E\))
\begin{enumerate}[label=(\roman*)]
\item the segment \(S[\rho_i-2L_E,\rho_i]\) is fully retained in \(\mathsf{Prune}(S[R^-,\rho_i],E)\);
\item the segment \(S[\rho_i,\rho_i+2L_E]\) is fully retained in \(\mathsf{Prune}(S[\rho_i,R^+],E)\).
\end{enumerate}

Conversely, if \(\rho_i\notin \mathrm{Cut}^{\mathrm{rod}}_L(I)\), then at least one of the following holds:
\begin{enumerate}
\item[\textup{(L)}]\customitemlabel{\textup{L}}{item:rod-cut-left}
\(\exists\ s<\rho_i-2L_E\) s.t. \(S[\rho_i-2L_E,\rho_i]\) is not fully retained in \(\mathsf{Prune}(S[s,\rho_i],E)\);
\item[\textup{(R)}]\customitemlabel{\textup{R}}{item:rod-cut-right}
\(\exists\ s>\rho_i+2L_E\) s.t.  \(S[\rho_i,\rho_i+2L_E]\) is not fully retained in \(\mathsf{Prune}(S[\rho_i,s],E)\).
\end{enumerate}
By the self-avoidance of the rod path, the conditions \(s<\rho_i-2L_E\) in \eqref{item:rod-cut-left} and \(s>\rho_i+2L_E\) in \eqref{item:rod-cut-right} can equivalently be replaced by \(s<\rho_i-L\) and \(s>\rho_i+L\), respectively.
Define
\begin{align*}
J^-&:=\Big\{1\le i\le m:\ \rho_i\notin \mathrm{Cut}^{\mathrm{rod}}_L(I)\ \text{and case }(\ref{item:rod-cut-left})\text{ occurs}\Big\},\\
J^+&:=\Big\{1\le i\le m:\ \rho_i\notin \mathrm{Cut}^{\mathrm{rod}}_L(I)\ \text{and case }(\ref{item:rod-cut-right})\text{ occurs}\Big\}.
\end{align*}

The above dichotomy immediately yields the following elementary observation.

\begin{lemma}\label{lem:Jpm_large}
If
\(
\#\big\{1\le i\le m:\ \rho_i\notin \mathrm{Cut}^{\mathrm{rod}}_L(I)\big\}>\frac{m}{2},
\)
then either \(\#J^->\frac{m}{4}\) or \(\#J^+>\frac{m}{4}\).
\end{lemma}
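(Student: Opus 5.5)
The statement follows from the dichotomy just recorded together with a union bound. Set \(J_0:=\{1\le i\le m:\ \rho_i\notin \mathrm{Cut}^{\mathrm{rod}}_L(I)\}\), so that \(\#J_0>m/2\) by hypothesis. Every \(i\in J_0\) satisfies at least one of the alternatives \eqref{item:rod-cut-left} or \eqref{item:rod-cut-right}. By the definitions of \(J^-\) and \(J^+\), this means \(J_0\subset J^-\cup J^+\).

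From the inclusion we get
\[
\#J^-+\#J^+\ \ge\ \#(J^-\cup J^+)\ \ge\ \#J_0\ >\ \frac m2 .
\]
If both \(\#J^-\le m/4\) and \(\#J^+\le m/4\) held, the left-hand side would be at most \(m/2\), which is a contradiction. So at least one of \(\#J^->m/4\) or \(\#J^+>m/4\) holds.

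The only point that needs checking is the converse direction of the dichotomy: that \(\rho_i\notin\mathrm{Cut}^{\mathrm{rod}}_L(I)\) forces \eqref{item:rod-cut-left} or \eqref{item:rod-cut-right}. This is the contrapositive of the sufficient condition (i)–(ii) stated just before the lemma. If neither alternative holds, then for every \(R\supset[\rho_i-L,\rho_i+L]\) both (i) and (ii) hold. Indeed, taking \(s=R^-\) and \(s=R^+\) gives these directly, using the replacement of \(s<\rho_i-2L_E\) by \(s<\rho_i-L\) permitted by self-avoidance of the rod. Hence \(\rho_i\) is a rod \(E\)-cut point.

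Since the paper records this direction as already established, the lemma requires no further work beyond the counting above.
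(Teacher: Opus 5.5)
Your proof is correct and matches the paper's, which simply observes that the dichotomy puts every non-cut \(\rho_i\) into \(J^-\cup J^+\), so \(\#J^-+\#J^+>m/2\) and one of the two exceeds \(m/4\). In your check of the converse direction, the appeal to the self-avoidance replacement is unnecessary: \(R^-\le\rho_i-L<\rho_i-2L_E\) already lets you take \(s=R^-\) in the original form of (L), and \(R^+\) works the same way for (R).
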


Thus, to prove Theorem~\ref{thm:many-pruned-implies-global-pattern}, it suffices to prove the following implication.

\begin{proposition}\label{prop:Jpm_implies_global_pattern}
If \(m\ge16\) and either \(\#J^->\frac{m}{4}\) or \(\#J^+>\frac{m}{4}\) occurs, then at least one of the
following holds:
\begin{enumerate}[label=(\alph*), ref=\alph*]
\item\label{item:global-forward}
there exist \(J\subset[m]\) and \(\mathbf a\in \mathcal{P}^+(J)\) with \(\|\mathbf a\|_1\ge \tfrac1{64} Km\)
such that \(\mathsf{Ret}^{+}_J(\mathbf a)\) occurs;
\item\label{item:global-backward}
there exist \(J\subset[m]\) and \(\mathbf a\in \mathcal{P}^-(J)\) with \(\|\mathbf a\|_1\ge \tfrac1{64} Km\)
such that \(\mathsf{Ret}^{-}_J(\mathbf a)\) occurs.
\end{enumerate}
\end{proposition}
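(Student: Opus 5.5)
By the time-reversal symmetry built into Definition~\ref{def:return_pattern_subset}, it suffices to treat the case \(\#J^+>m/4\) and produce a forward pattern. The case \(\#J^->m/4\) then gives a backward pattern by applying the same argument to \(\check S\).

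\textbf{Step 1: extracting returns from a single rod point.} Fix \(i\in J^+\). By (R), for some \(s_i>\rho_i+L\) the segment \(S[\rho_i,\rho_i+2L_E]\) is not fully retained in \(\mathsf{Prune}(S[\rho_i,s_i],E)\). Take \(I_0=[t_{i0},s_i]\). Since some step at a time in \((\rho_i,\rho_i+2L_E]\) is pruned, the laminarity and rod-retention arguments (Lemma~\ref{lem:rod_retention}, applied at \(v_0=0\)) should give \(\zeta^+_{i1}<\infty\) with \(k_i\le 1\). Corollary~\ref{cor:one-sided-zeta-chain}(1) and Proposition~\ref{prop:visit_rod_segment} then yield times
\[
\zeta^+_{i1}>\zeta^+_{i2}>\cdots>\zeta^+_{iK}>\rho_i+L
\]
with \(S_{\zeta^+_{iq}}\in S(R^+_{iq})\). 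In stack language, file \((i,q)\) is processed at time \(\zeta^+_{iq}\), and within batch \(i\) the order is \(q=K,K-1,\dots,1\). This is exactly the top-to-bottom order \(\succ_+\).

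\textbf{Step 2: assembling the single-point returns into a global pattern.} The difficulty is that the intervals \(I_0\) differ for different \(i\), and the pruning times are taken relative to different windows. I would fix a single large window, namely \(I_0=[\rho_{i_1}-L,\,\sup_i s_i]\) or iterate over a suitable common right end, and re-derive the returns inside it. Corollary~\ref{cor:zeta-interlacing}(1) then gives the stack discipline across batches. If a file of an earlier rod \(u\) is processed after time \(t_{u',K}\) of a later rod \(u'\), then the files of \(u'\) above level \(k_{u'}\) were all processed before it, in decreasing order.

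Concretely, I would choose \(J\subset J^+\) and define \(a_\ell\) as the number of files \((p,q)\), \(p\le j_\ell\), whose return time lies in the day window \((\rho_{j_\ell}+L,\rho_{j_{\ell+1}}+L]\). Laminarity (no crossing) is what should force the processed set in each window to be an initial segment of the remaining stack in \(\succ_+\) order, so that the lists coincide with \(\Lambda^+_\ell(\mathbf a)\). Since a file can only be processed after it arrives, \(\sum_{k\le\ell}a_k\le \ell K\), so \(\mathbf a\in\mathcal P^+(J)\).

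\textbf{Step 3: counting, and the main obstacle.} The main obstacle is the change of window. A rod point retained in the global pruning may still have been pruned relative to its own \(s_i\), and conversely, so some rods lose their \(K\) returns. I would handle this by a greedy or dyadic selection. Either at least half of the \(i\in J^+\) keep \(\ge K/2\) certified levels \(q>k_i\) in the common window, or a later interval \(\zeta_{i'v}\) swallows them, which via Corollary~\ref{cor:zeta-interlacing} produces certified returns for the swallowing rod instead. Losing constant factors at each step, with one \(1/2\) for the dichotomy, one for \(J^\pm\), and one or two for the selection and for \(k_u\), should give \(\|\mathbf a\|_1\ge \frac{1}{4}\cdot\frac14\cdot\frac14 Km\ge \frac1{64}Km\). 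The hypothesis \(m\ge16\) absorbs boundary rods, namely the first and last, for which \([\rho_u-L,\rho_u+L]\not\subset I_0\).
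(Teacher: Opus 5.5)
Your opening reduction is where the argument breaks. Time reversal exchanges $J^+$ with $J^-$ \emph{and} forward with backward patterns at the same time. So it lets you assume $\#J^+>m/4$, but it does not let you aim for (a), and the implication ``$\#J^+>m/4\Rightarrow$ (a)'' is in fact false.

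Here is a path showing this. Take $E$ containing the backtrack loops, and place the rods and connecting paths far apart. Let every rod $p\ge 2$ be immediately preceded by a block in which the walk runs along the same line in direction $-x_0$, so that the rod retraces it. Join consecutive rods by paths leaving the right ends sideways. After the last rod, let the walk retrace the connecting paths back to rod $1$. Then, for $p=1,2,\ldots,m$ in turn, let it run in direction $-x_0$ along rod $p$ past its midpoint and back, and then follow the connecting path to rod $p+1$.

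In $\mathsf{Prune}(S[\rho_p,\cdot\,],E)$ everything between the right end of rod $p$ and its probe cancels. So the probe prunes $S[\rho_p,\rho_p+2L_E]$, and every $p$ lies in $J^+$. However, after time $\rho_p+L$ the sets $S(R^+_{pq})$ are visited only during probe $p$, and the probes occur in increasing order of $p$, all after the last arrival. The stack rule then lets any forward pattern process at most the $K$ files of $\max J$. Hence (a) fails once $m>64$, and only (b) holds, through the reversed blocks.

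The same example defeats your Steps 2--3:
\begin{itemize}
\item In a common window, the rods $p\ge2$ are cancelled at once and certify no forward returns.
\item Returns certified in the separate windows $[\rho_p,s_p]$ come in the wrong order across rods.
\item Transferring credit to a swallowing rod cannot help, because that rod has only $K$ files however many rods it swallows.
\end{itemize}

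The missing idea is the paper's choice of windows, combined with a dichotomy on $k_j$ \emph{within} the $J^+$ case. Greedily select a chain $j_{\ell_1},j_{\ell_2},\ldots$ in $J^+$, where $\ell_{i+1}$ is the day containing $\zeta^+_{j_{\ell_i},1}$.

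If at least $m/8$ chain indices satisfy $\ell_{i+1}=\ell_i+1$ (the rod finishes its $K$ returns before the next arrival), then $\mathbf a=(K,\ldots,K)$ on those rods is forward-realized.

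Otherwise at least $m/32$ rods of $J^+$ are swallowed. All marked steps $t_{jv}$, $|v|\le K$, of a swallowed rod $j$ lie in the final pruned segment of the single window $\mathsf W^+_i=[\rho_{j_{\ell_i}},\zeta^+_{j_{\ell_i},1}]$, where $i=\iota(j)$. These windows are disjoint, so laminarity and Corollary~\ref{cor:zeta-interlacing} apply inside one window and give the stack order. Then split according to whether at least half of these rods have $k_j\le 0$ or $k_j>0$. Proposition~\ref{prop:visit_rod_segment} supplies $K$ certified forward returns per rod ($1\le v\le K$) in the first case, and $K$ certified backward returns per rod ($-K\le v\le -1$) in the second. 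The second branch, which produces (b) from the hypothesis on $J^+$, is exactly the one your plan rules out.
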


\begin{proof}
We prove that if \(\#J^+>\frac{m}{4}\), then either \eqref{item:global-forward} or
\eqref{item:global-backward} holds. The case \(\#J^->\frac{m}{4}\) is symmetric and is omitted. 

Assume $\# J^+>\tfrac{m}{4}$. We first introduce some notation. We write
\[
J^+=\{j_1<j_2<\cdots<j_{\# J^+}\}.
\]
For each \(i\), choose a witness \(s_i>\rho_{j_i}+L\) for the equivalent form of case \(\eqref{item:rod-cut-right}\), so that
\[
S[\rho_{j_i},\rho_{j_i}+2L_E]
\quad\text{is not fully retained in}\quad
\mathsf{Prune}(S[\rho_{j_i},s_i],E).
\]
By the self-avoidance of the rod segment \(S[\rho_{j_i},\rho_{j_i}+L]\), the last step \((S_{t_{j_i,1}-1},S_{t_{j_i,1}})\), where \(t_{j_i,1}=\rho_{j_i}+2L_E\), is pruned only after time \(\rho_{j_i}+L\).
We write
\[
\zeta^+_{j_i,1}:=\zeta^+(t_{j_i,1},[\rho_{j_i},s_i])<+\infty.
\]
In particular, \(\zeta^+_{j_i,1}>\rho_{j_i}+L\).

We use the times \(\rho_{j_u}+L\) as cut points, and record roughly where a selected subsequence of the pruning times
\(\zeta^+_{j_i,1}\) are located relative to these cut points as follows.
\begin{align*}
\ell_1&:=1,\qquad
\ell_{i+1}:=\inf\big\{\ell_i< u\le \# J^+: \rho_{j_{u-1}}+L<\zeta^+_{j_{\ell_i},1}\le \rho_{j_u}+L\big\}\quad\text{for }i\ge 1,\\
q&:=\#\big\{i\ge 1:\ell_i<\infty\big\}.
\end{align*}
with the convention \(\inf\emptyset:=\# J^+ + 1\), so that $\ell_{q+1}=\# J^+ + 1$.
We then define
\[
R=\big\{r_1<r_2<\cdots<r_{\# R}\big\}:=\big\{\ell_i: i\in[q-1],\,\ell_{i+1}-\ell_i=1\big\}.
\]

\begin{figure}[htbp]
\centering
\scalebox{0.75}{
\begin{tikzpicture}[
  x=1.1cm, y=1.05cm,
  tick/.style={line width=0.9pt},
  axis/.style={line width=1.1pt},
  Y/.style={text=orange!85!black},
  G/.style={text=green!60!black},
  B/.style={text=blue!70!black},
  lab/.style={font=\large},
]

\draw[axis] (0,3.2) -- (12.5,3.2);

\foreach \x/\t in {0/1, 1/2, 2/3, 3/4, 12.5/m} {
  \fill[black] (\x,3.2) circle (2.2pt);
  \node[lab,anchor=north] at (\x,2.95) {$\t$};
}

\coordinate (j1) at (1,0);
\coordinate (j2) at (3,0);
\coordinate (j3) at (6.2,0);
\coordinate (j4) at (7.2,0);
\coordinate (j5) at (8.2,0);
\coordinate (j6) at (10.4,0);
\coordinate (j7) at (11.4,0);

\coordinate (jl1) at (1,0);
\coordinate (jl2) at (6.2,0);
\coordinate (jl3) at (7.2,0);
\coordinate (jl4) at (8.2,0);
\coordinate (jl5) at (11.4,0);

\coordinate (jr1) at (6.2,0);
\coordinate (jr2) at (7.2,0);

\newcommand{\CrossLabel}[4]{
  \node[#3,lab] at ($(#1)+(0,#2)$) {$\times$};
  \node[#3,lab,anchor=north] at ($(#1)+(0,#2-0.15)$) {$#4$};
}

\CrossLabel{j1}{2.05}{Y}{j_1}
\CrossLabel{j2}{2.05}{Y}{j_2}
\CrossLabel{j3}{2.05}{Y}{j_3}
\CrossLabel{j4}{2.05}{Y}{j_4}
\CrossLabel{j5}{2.05}{Y}{j_5}
\CrossLabel{j6}{2.05}{Y}{j_6}
\CrossLabel{j7}{2.05}{Y}{j_7}

\CrossLabel{jl1}{1.05}{G}{j_{\ell_1}}
\CrossLabel{jl2}{1.05}{G}{j_{\ell_2}}
\CrossLabel{jl3}{1.05}{G}{j_{\ell_3}}
\CrossLabel{jl4}{1.05}{G}{j_{\ell_4}}
\CrossLabel{jl5}{1.05}{G}{j_{\ell_5}}

\CrossLabel{jr1}{0.05}{B}{j_{r_1}}
\CrossLabel{jr2}{0.05}{B}{j_{r_2}}

\end{tikzpicture}
}
\caption{A schematic of the indices \(j_i\) and the extracted subsequences \(\{j_{\ell_i}\}\), \(\{j_{r_i}\}\).
The green row records the selected subsequence \(\{j_{\ell_i}\}\). For instance, in this schematic \(\ell_2=3\), which indicates that \(\zeta^+_{j_{\ell_1},1}=\zeta^+_{j_1,1}\in (\rho_{j_2}+L,\rho_{j_3}+L]\). Since \(\ell_2,\ell_3,\ell_4\) are consecutive indices \(3,4,5\), the indices \(\ell_2\) and \(\ell_3\) belong to \(R\), and hence appear in the blue row.}
\end{figure}

The following analysis will be divided into the cases: (1) $\# R\ge \tfrac{m}{8}$, and (2) $\# R< \tfrac{m}{8}$.

\smallskip
\textbf{Case (1) $\# R\ge \tfrac{m}{8}$}.
For each \(r\in R\), the definition of \(R\) gives
\[
\rho_{j_r}+L<\zeta^+_{j_r,1}\le \rho_{j_{r+1}}+L.
\]
Together with Corollary~\ref{cor:one-sided-zeta-chain} and the self-avoidance of the rod path, this gives
\[\rho_{j_r}+L<\zeta^+_{j_r,K}<\zeta^+_{j_r,K-1}<\cdots<\zeta^+_{j_r,1}\le \rho_{j_{r+1}}+L.\]
Thus taking
\[J:=\{j_r:r\in R\}\qquad\text{and}\qquad \mathbf{a}:=(\overbrace{K,K,\ldots,K}^{\# J})\in \mathcal{P}^+(J),\]
we have \(\|\mathbf{a}\|_1=K\#R\ge \tfrac{1}{8}Km\ge \tfrac1{64}Km\), and the event \(\mathsf{Ret}^{+}_J(\mathbf a)\) occurs.

\smallskip
\textbf{Case (2) $\# R< \tfrac{m}{8}$.}
Set
\[
\widetilde J^+:=J^+\setminus \{j_{\ell_1},\ldots,j_{\ell_q}\}.
\]
For \(i=1,\ldots,q\), write
\[
\mathsf W_i^+:=[\rho_{j_{\ell_i}},\zeta^+_{j_{\ell_i},1}].
\]
\begin{lemma}\label{lem:Jtilde-large}
If \(\#R<m/8\) and \(m\ge16\), then \(\#\widetilde J^+\ge m/32\).
\end{lemma}
\begin{proof}
Observe that, whenever \(i\in[q-1]\) and \(j_{\ell_i}\notin \{j_r:r\in R\}\), we have \(\ell_{i+1}-\ell_i>1\). Hence \(j_{\ell_i+1}\in \widetilde J^+\), and different such \(i\)'s give different elements of \(\widetilde J^+\). Therefore, 
\[
\#\Big(\{j_{\ell_1},\ldots,j_{\ell_{q-1}}\}\setminus\{j_r:r\in R\}\Big)
\le \#\widetilde J^+
\qquad\Longleftrightarrow\qquad
q-1-\#R\le \#\widetilde J^+.
\]
Since \(\#\widetilde J^+=\#J^+-q\), \(\#J^+>m/4\), and \(\#R<m/8\), it follows that
\[
\#\widetilde J^+
\ge \frac12(\#J^+-\#R-1)
> \frac{m-8}{16}
\ge \frac{m}{32},
\qquad m\ge16.
\]
This finishes the proof.
\end{proof}

Observe that for every $j\in \widetilde J^+$, there exists a unique \(\iota(j)\ge 1\) such that
\(
j\in (j_{\ell_{\iota(j)}},j_{\ell_{\iota(j)+1}}).
\)
Writing \(i=\iota(j)\), all the steps $t_{jv}$, $-K\le v\le K$, are contained in the pruning interval
\(\zeta(t_{j_{\ell_i},1})\). Therefore, by Lemma~\ref{lem:pruning-intervals-laminar-and-order},
we have \(\zeta(t_{jv},\mathsf W_i^+)\subset \zeta(t_{j_{\ell_i},1})\) for all such $(j,v)$, and consequently
\begin{align}\label{eq:finite_zeta_+}
    \rho_{j_{\ell_i}}+L
    < \zeta^-(t_{j_{\ell_i},1})
    \le \zeta^-(t_{jv},\mathsf W_i^+)
    < \zeta^+(t_{jv},\mathsf W_i^+)
    \le \zeta^+(t_{j_{\ell_i},1})
    \le \rho_{j_{\ell_{i+1}}}+L.
\end{align}

Recall the index $k_u$ defined in Proposition~\ref{prop:rod_laminar_structure}. We further divide into the following two subcases:
\begin{enumerate}[label=(\roman*),ref=\roman*]
\item\label{item:case2-many-nonpositive}
$\#\{j\in \widetilde J^+:k_{j}\le 0\}\ge \frac12 \#\widetilde J^+\ge \frac{m}{64}$;
\item\label{item:case2-many-positive}
$\#\{j\in \widetilde J^+:k_{j}> 0\}\ge \frac12 \#\widetilde J^+\ge \frac{m}{64}$.
\end{enumerate}

We shall prove that, if \eqref{item:case2-many-nonpositive} occurs, then \eqref{item:global-forward} in the statement of the proposition holds.
The point is that, in this subcase, the times
\[
\bigl\{\zeta^+(t_{jv},\mathsf W_{\iota(j)}^+):j\in\widetilde J^+,\ k_j\le0,\ 1\le v\le K\bigr\}
\]
provide the return times for the forward return pattern that we will construct below.
In the same way, if \eqref{item:case2-many-positive} occurs, then the corresponding \(\zeta^-\)-times provide the return times for a backward return pattern, giving \eqref{item:global-backward}; we omit the symmetric argument.

Now assume that \eqref{item:case2-many-nonpositive} occurs. Let
\[
J=\{w_1<w_2<\cdots<w_{\# J}\}:=\bigl\{j\in \widetilde J^+:k_{j}\le 0\bigr\},
\]
and define
\[
\begin{aligned}
\mathbf a&=(a_1,\ldots, a_{\# J}),\\
a_i&:=\#\bigl\{(j,v):j\in J,\, 1\le v\le K,\,
\rho_{w_i}+L<\zeta^+(t_{jv},\mathsf W_{\iota(j)}^+)\le \rho_{w_{i+1}}+L\bigr\},
\end{aligned}
\]
with the convention \(\rho_{w_{\#J+1}}:=+\infty\). It is clear that $\mathbf a\in \mathcal{P}^+(J)$.
By the definition of the subcase \eqref{item:case2-many-nonpositive},
\[
\|\mathbf a\|_1=\#\bigl\{(j,v):j\in J,\, 1\le v\le K\bigr\}=K\#J
\ge \frac12 K\# \widetilde J^+
\ge \frac1{64}Km.
\]

On the other hand, recall the priority order $\succ_+$ defined in Definition~\ref{def:return_pattern_subset}.
By Proposition~\ref{prop:rod_laminar_structure}, Corollary~\ref{cor:zeta-interlacing}, and the pairwise disjointness of the intervals
\(\mathsf W_r^+\), \(1\le r\le q\), coming from the construction of the indices \(j_{\ell_r}\), for each $i$,
the $a_i$ forward pruning times $\zeta^+(t_{jv},\mathsf W_{\iota(j)}^+)$ must be associated with those $t_{jv}$ for which $(j,v)$ are the first $a_i$ elements of
\[
\begin{aligned}
&\bigl\{(j,v):j\in J,\,j\le w_i,\, 1\le v\le K\bigr\}\\
&\quad\setminus
\Bigl\{(j,v):j\in J,\,j\le w_{i-1},\, 1\le v\le K,\,
\zeta^+(t_{jv},\mathsf W_{\iota(j)}^+)\le \rho_{w_i}+L\Bigr\}
\end{aligned}
\]
according to the order $\succ_+$.
The set after \(\setminus\) is the set of already used indices in the present construction.
It corresponds to \(\mathcal V^+_{i-1}(\mathbf a)\) in Definition~\ref{def:return_pattern_subset}.

Moreover, these pruning times occur in the same order as the corresponding pairs $(j,v)$
under $\succ_+$. Combining with Proposition \ref{prop:visit_rod_segment}, we have the event $\mathsf{Ret}^{+}_J(\mathbf a)$ occurs, and thus \eqref{item:global-forward} holds.
\end{proof}

\bibliographystyle{imsart-number}
\bibliography{localtime}

\end{document}